\newcounter{dummy}
\newcommand\myitem[1][]{\item[#1]\refstepcounter{dummy}\def\@currentlabel{#1}}
\numberwithin{equation}{section}							
\let\originalleft\left
\let\originalright\right
\renewcommand{\left}{\mathopen{}\mathclose\bgroup\originalleft}
\renewcommand{\right}{\aftergroup\egroup\originalright}
\renewcommand*{\eqref}[1]{\hyperref[{#1}]{\textup{\tagform@{\ref*{#1}}}}}		
\newcommand{\ad}{\mathrm{ad}}
\newcommand{\del}{\partial}
\newcommand{\tr}{\mathop{\mathrm{tr}}\nolimits}
\def\ad{\mathrm{ad}}
\def\tr{\mathrm{tr}}
\def\<{\mathopen{}\left<}
\def\>{\right>\mathclose{}}
\def\({\mathopen{}\left(}
\def\){\right)\mathclose{}}
\newtheorem{theorem}{Theorem}[section]
\newtheorem{corollary}[theorem]{Corollary}
\newtheorem{lemma}[theorem]{Lemma}
\newtheorem{proposition}[theorem]{Proposition}
\theoremstyle{definition} \newtheorem{definition}[theorem]{Definition}
\newtheorem{remark}[theorem]{Remark}
\newtheorem{notation}[theorem]{Notation}
\crefname{theorem}{Theorem}{Theorems}						
\crefname{Mtheorem}{Main Theorem}{Main Theorems}			
\crefname{lemma}{Lemma}{Lemmata}							
\crefname{corollary}{Corollary}{Corollaries}				
\crefname{proposition}{Proposition}{Propositions}			
\crefname{ineq}{inequality}{inequalities}					
\crefname{cond}{condition}{conditions}						
\crefname{hypoth}{Hypothesis}{Hypotheses}					
\crefname{def}{Definition}{Definitions}						
\crefname{appsec}{Appendix}{Appendices}
\crefname{sec}{Section}{Sections}
\begin{document}

\author{Daniel Fadel} 
\address[Daniel Fadel]{Universidade Estadual de Campinas, Campinas, Brazil / Université de Bretagne Occidentale, Brest, France}
\urladdr{\href{https://sites.google.com/view/daniel-fadel-math-homepage/home}{sites.google.com/view/daniel-fadel-math-homepage/home}}
\email{\href{mailto:fadel.daniel@gmail.com}{fadel.daniel@gmail.com}}


\date{\today}
\keywords{Gauge theory, Yang--Mills--Higgs theory, Monopoles, Asymptotically conical manifolds}
\subjclass[2020]{Primary 53C07, 53C21, 58E15, 58J35; Secondary 70S15, 35R01}

\title{Asymptotics of finite energy monopoles on AC $3$-manifolds}
\maketitle

\begin{abstract}
    We study the asymptotic behavior of finite energy $\rm SU(2)$ monopoles, and general critical points of the $\rm SU(2)$ Yang--Mills--Higgs energy, on asymptotically conical $3$-manifolds with only one end. Our main results generalize classical results due to Groisser and Taubes in the particular case of the flat $3$-dimensional Euclidean space $\mathbb{R}^3$. Indeed, we prove the integrality of the monopole number, or charge, of finite energy configurations, and derive the classical energy formula establishing monopoles as absolute minima. Moreover, we prove that the covariant derivative of the Higgs field of a critical point of the energy decays quadratically along the end, and that its transverse component with respect to the Higgs field, as well as the corresponding component of the curvature of the underlying connection, actually decay exponentially. Additionally, under the assumption of positive Gaussian curvature on the asymptotic link, we prove that the curvature of any critical point connection decays quadratically. Furthermore, we deduce that any irreducible critical point converges uniformly along the conical end to a limiting configuration at infinity consisting of a reducible Yang--Mills connection and a parallel Higgs field. 
\end{abstract}

\begingroup
\hypersetup{linkcolor=black}
\tableofcontents
\endgroup

\section{Introduction}

\subsection{Background}\label{subsec: background}

Given a complete, noncompact, connected and oriented Riemannian $3$-manifold $(X^3,g)$, and a principal $G$-bundle $P$ over $X^3$, where $G$ is a compact Lie group, we shall consider pairs $(A,\Phi)$ consisting of a smooth connection $A\in\mathscr{A}(P)$ on $P$ and a smooth \emph{Higgs field} $\Phi\in\Gamma(\mathfrak{g}_P)$, \emph{i.e.} a smooth section of the associated adjoint bundle $\mathfrak{g}_P:=P\times_{\mathrm{Ad}}\mathfrak{g}$. Such a pair $(A,\Phi)$ is called a \emph{monopole} if it is a solution to the \emph{Bogomolnyi equation}:
\begin{equation}\label{eq: monopole}
F_A = \ast\nabla_A\Phi.
\end{equation} Here $F_A\in\Omega^2(X,\mathfrak{g}_P)$ denotes the curvature of the connection $A$, while $\nabla_A\Phi\in\Omega^1(X,\mathfrak{g}_P)$ is the covariant derivative of $\Phi$ with respect to the connection induced by $A$ on $\mathfrak{g}_P$, and $\ast$ stands for the Hodge star operator induced by the metric $g$.


Combining \eqref{eq: monopole} with the \emph{Bianchi identity}, ${d}_AF_A=0$, it readily follows that monopoles are solutions to the second order equations\footnote{Here $\Delta_A\Phi := d_A^{\ast}d_A\Phi = \nabla_A^{\ast}\nabla_A\Phi$ since $\Phi\in\Omega^0(X,\mathfrak{g}_P)=\Gamma(\mathfrak{g}_P)$; see \S\ref{sec: notations}.}\begin{subequations}
	\begin{align}
		\Delta_A \Phi &= 0, \label{eq:2nd_Order_Eq_1}\\
		d_A^* F_A	&= [\nabla_A \Phi, \Phi], \label{eq:2nd_Order_Eq_2}
	\end{align}
\end{subequations}
which correspond to the Euler--Lagrange equations of the \emph{Yang--Mills--Higgs energy} functional
\begin{equation}\label{eq: finite_energy}
\mathcal{E}_X(A,\Phi):=\frac{1}{2}\int_X |F_A|^2+|\nabla_A\Phi|^2,   
\end{equation} defined over the \emph{configuration space}
\[
\mathscr{C}(P):=\{(A,\Phi)\in \mathscr{A}(P)\times \Gamma(\mathfrak{g}_P): |F_A|,|\nabla_A\Phi|\in L^2(X)\}.
\] Here the norms $|\cdot{}|$ are induced by $g$ together with a metric on $\mathfrak{g}_P$ arising from a choice of an Ad$_G$-invariant inner product on the compact Lie algebra $\mathfrak{g}$ of $G$. In this paper, we shall mainly restrict ourselves to the structure group $G=\rm SU(2)$, in which case we fix the metric on $\mathfrak{g}_P$ to be the one arising from the inner product $(a,b)\mapsto -2\text{tr}(ab)$ on the Lie algebra $\mathfrak{g}=\mathfrak{su}(2)$. 

Thus, finite energy monopoles are in particular critical points of $\mathcal{E}_X:\mathscr{C}(P)\to [0,\infty)$. Now note that equation \eqref{eq:2nd_Order_Eq_1} implies
\begin{equation}\label{eq: subharmonic}
\Delta\lvert\Phi\rvert^2 = -2\lvert\nabla_A\Phi\rvert^2\leqslant 0.
\end{equation} As a consequence, if $(A,\Phi)$ is a solution to the second order equations \eqref{eq:2nd_Order_Eq_1} and \eqref{eq:2nd_Order_Eq_2} (\emph{e.g.} a monopole) and if $X$ was to be a compact manifold (without boundary), then $|\Phi|$ would be constant, $\nabla_A \Phi = 0$ and $A$ would be a \emph{Yang--Mills connection}, \emph{i.e.} $d_A^{\ast} F_A=0$ (note that $A$ would be flat, $F_A=0$, in the monopole case); in particular, $A$ would be reducible if $\Phi\neq 0$. Since we are interested in \emph{irreducible} critical points of $\mathcal{E}_X$, meaning those satisfying $\nabla_A\Phi\neq 0$, it follows that we must assume $X$ is \emph{noncompact}\footnote{Note that we are restricting ourselves to \emph{smooth} configurations $(A,\Phi)$ on a manifold $X$ \emph{without boundary}.}. 
	
In this article we shall focus our study on asymptotically conical $3$-manifolds. We say that $(X^3,g)$ is \emph{asymptotically conical} (AC) with rate $\nu>0$ if there exist a compact set $K\subset X$, a closed, connected and oriented Riemannian surface $(\Sigma^2,g_{\Sigma})$, and an orientation preserving diffeomorphism
	\[
	\varphi:C(\Sigma):=(1,\infty)_r\times \Sigma\to X\setminus K
	\] such that the cone metric $g_C:= dr^2+r^2g_{\Sigma}$ on $C(\Sigma)$ and its Levi--Civita connection $\nabla_C$ satisfy
	\[
	\lvert\nabla_{C}^j\left(\varphi^{\ast}g - g_C\right)\rvert_C = O(r^{-\nu - j}),\quad\forall j\in\mathbb{N}_0.\quad\text{(as $r\to\infty$)}
	\] Note that we impose, for simplicity, that $X$ has only one end ($\Sigma$ is connected); $X\setminus K$ is called the \emph{(conical) end} of $X$, while $\Sigma$ is called the asymptotic \emph{link}. A \emph{radius function} on $X$ is any smooth extension $\rho:X\to [1,\infty)$ of $r\circ\varphi^{-1}|_{\varphi([2,\infty)\times\Sigma)}$; in particular, for any reference point $o\in K$ we have $\rho(x)\sim (1+d(x,o)^2)^{1/2}$.
    
    The flat $3$-dimensional Euclidean space $(\mathbb{R}^3,g_{\mathbb{R}^3})$ is the model example of an AC $3$-manifold with only one end, where the link $(\Sigma^2,g_{\Sigma})=(\mathbb{S}^2,g_{\mathbb{S}^2})$ is the round $2$-sphere and the rate $\nu=\infty$; in fact, we can write $g_{\mathbb{R}^3}=dr^2 + r^2g_{\mathbb{S}^2}$ on $\mathbb{R}^3\setminus\{0\}\cong\mathbb{R}^{+}\times\mathbb{S}^2$. The literature on monopoles in $\mathbb{R}^3$ is vast, the theory has been developed since the mid 1970s by both physicists and mathematicians; we refer the reader to the standard textbooks \cites{Jaffe1980,Atiyah88} and the references therein for the fundamental classical developments. 
    
    More generally, an AC $3$-manifold with rate $\nu>0$ and asymptotic link $(\Sigma^2,g_{\Sigma})=(\mathbb{S}^2,g_{\mathbb{S}^2})$ the round $2$-sphere is called \emph{asymptotically Euclidean} (AE). The monopole theory on AE manifolds was first investigated by Ernst \cites{Ernst1995,ernst1995ends} and Floer \cites{Floer1995config,Floer1995}. 
    
    We remark that on a general AC $3$-manifold $(X^3,g)$ the Ricci curvature tensor $\mathcal{R}ic_g$ decays (at least) quadratically along the end, \emph{i.e.} $\rho^2|\mathcal{R}ic_g|\leqslant C<\infty$ as $\rho\to\infty$. If $(X^3,g)$ is an AE $3$-manifold $(X^3,g)$ then the Ricci curvature tensor $\mathcal{R}ic_g$ decays faster than quadratically, \emph{i.e.} $\rho^2|\mathcal{R}ic_g|\to 0$ as $\rho\to\infty$ (see Appendix \ref{app: A}). Conversely, an AC $3$-manifold $(X^3,g)$ satisfying the later condition is automatically AE by the works \cites{bando1989construction,tian2005bach}. Also, in this AE case if furthermore the Ricci curvature is nonnegative, \emph{i.e.} $\mathcal{R}ic_g\geqslant 0$, then $(X^3,g)$ is in fact isometric to $(\mathbb{R}^3,g_{\mathbb{R}^3})$. More generally, by combining the results in \cite{zhu1993finiteness,liu20133} one has that if $(X^3,g)$ is AC and $\mathcal{R}ic_g\geqslant 0$ then the manifold $X^3$ is necessarily diffeomorphic to $\mathbb{R}^3$ (see also \cite[Corollary 1.1]{reiris2015ricci}).  
    
    A general AC oriented $3$-manifold $(X^3,g)$ (with only one end) still shares a lot of nice geometric-analytic properties with $(\mathbb{R}^3,g_{\mathbb{R}^3})$. Besides being a manifold of bounded geometry, with quadratically decaying curvature, it has been shown by van Coevering \cite{van2009regularity} that $(X^3,g)$ satisfies the (Euclidean-type) $L^2$-Sobolev inequality, and in particular satisfies the uniform volume growth lower bound $V(x,r):=\text{Vol}_g(B(x,r))\gtrsim r^3$, for all $x\in X$ and $r>0$. Moreover, van Coevering showed that $(X^3,g)$ satisfies a two-sided Gaussian bound on the heat kernel or, equivalently, a uniform parabolic Harnack inequality, so that in particular it satisfies the strong Liouville property -- namely, it admits no nonconstant semibounded harmonic functions. These later properties are key to the analysis of this paper, and in fact we shall be able to prove general results that are valid not only on AC manifolds but also on other general geometries satisfying analogous properties (see Remark \ref{rmk: generalizations}).
    
    The study of monopoles on general AC $3$-manifolds was initiated only recently by the works of Oliveira \cites{oliveira2014thesis,oliveira2016monopoles} and Kottke \cite{kottke2015dimension}; see also \cite{fadel2019limit} and Remark \ref{rmk: large_mass}. In particular, Kottke \cite{kottke2015dimension} computed the virtual dimension of the moduli spaces of $\rm SU(2)$ monopoles on AC $3$-manifolds and Oliveira \cite{oliveira2016monopoles} tackled the problem of existence of such monopoles by proving an AC version of Taubes' original gluing theorem of well-separated multi-monopoles on $\mathbb{R}^3$, giving a construction which covers a smooth open set in the moduli space of $\rm SU(2)$ monopoles on any AC $3$-manifold with vanishing second Betti number.
    
    This paper is dedicated to the study of the asymptotic behavior of finite energy $\rm SU(2)$ monopoles, and more generally of any critical point of the Yang--Mills--Higgs energy, on general AC $3$-manifolds with only one end. In particular, we generalize important classical results due to Groisser and Taubes, namely the smooth version of the main result in \cite{groisser1984integrality}, and Theorems IV.10.3 and IV.10.5 in \cite{Jaffe1980}. 
   
\subsection{Main results}

In the following statements, let $(X^3,g)$ be an AC oriented $3$-manifold with only one end and rate $\nu>0$. Denote the asymptotic link of the conical end by $(\Sigma^2,g_{\Sigma})$ and let $\rho$ be a radius function on $X$. For each $R>0$, we let $B_R:=\{x\in X: \rho(x)<R\}$ and $\overline{B}_R := \{x\in X : \rho(x)\leqslant R\}$. For large enough $R$, each $\overline{B}_R$ is a smooth $3$-manifold with boundary, where $\Sigma_R := \partial \overline{B}_R$ is diffeomorphic to $\Sigma$. 

\begin{theorem}[Finite mass, integrality of charge and energy formula]\label{thm: main_1}
	Let $P\to X$ be a principal $G$-bundle, where $G$ is a compact Lie group, and let $(A,\Phi)\in\mathscr{C}(P)$ be an arbitrary finite energy configuration. Then:
	\begin{itemize}
		\myitem[(i)] \label{itm: i_main_1} There is a unique number $m=m(|\Phi|)\in [0,\infty)$ such that $m - |\Phi|\in L^6(X)$.
	
		\myitem[(ii)] \label{itm: ii_main_1} If $m>0$ and $G=\rm SU(2)$ then the \emph{charge} (or \emph{monopole number}) of $(A,\Phi)$, given by
		\begin{equation}\label{eq: charge_integer}
		k=k(A,\Phi):=\frac{1}{4\pi m}\int_X\langle F_A\wedge\nabla_A\Phi\rangle,
		\end{equation} is an integer, \emph{i.e.} $k\in\mathbb{Z}$.
		
		\myitem[(iii)] \label{itm: iii_main_1} If $(A,\Phi)$ satisfies \eqref{eq:2nd_Order_Eq_1}, \emph{i.e.} if $\Delta_A\Phi=0$, then the Higgs field norm $|\Phi|$ converges uniformly to the constant $m=m(|\Phi|)$ at infinity:
		\begin{equation}\label{eq: finite_mass_main}
			\lim_{\rho\to\infty} |\Phi| = m.
		\end{equation} Moreover, $\|\Phi\|_{L^{\infty}(X)}\leqslant m$; in particular, if $m=0$ then $\Phi=0$.
		
		\myitem[(iv)] \label{itm: iv_main_1} Whenever the uniform convergence \eqref{eq: finite_mass_main} holds we say that $m$ is the \emph{mass} of $(A,\Phi)$. In this case, assuming furthermore that we have $m>0$ and $G=\rm SU(2)$, then the charge \eqref{eq: charge_integer} can be calculated as
	\begin{equation}\label{eq: charge_calculation}
	k = \lim_{R\to\infty}\frac{1}{4\pi}\int_{\Sigma_R}|\Phi|^{-1}\langle\Phi,F_A\rangle.
	\end{equation} Moreover, for every $R\gg_{A,\Phi} 1$, restricting $\Phi/|\Phi|$ to $\Sigma_R\cong\Sigma$ determines a homotopy class of maps $\Sigma^2\to\mathbb{S}^2\subset\mathfrak{su}(2)$, and $k$ is the Brouwer degree of this class. Alternatively, the restrictions of the associated vector bundle $P\times_{\rm SU(2)}\mathbb{C}^2$ over $\Sigma_R$ split as $\mathscr{L}\oplus \mathscr{L}^{-1}$, where $\mathscr{L}$ is a complex line bundle over $\Sigma_{R}\cong\Sigma$, corresponding to one of the eigenspaces of $\Phi$, and the degree of any such $\mathscr{L}$ does not depend on $R$ and equals the charge $k$.
	\end{itemize}  In particular, if $(A,\Phi)$ is a monopole, \emph{i.e.} a solution to equation \eqref{eq: monopole}, and $G=\rm SU(2)$ then the charge $k$ of $(A,\Phi)$ is \emph{a priori} a positive integer $k\in\mathbb{N}$, the Higgs field $\Phi$ must have zeros, \emph{i.e.} $\Phi^{-1}(0)\neq\emptyset$, and the following energy formula holds
\begin{equation}\label{eq: Energy_Formula}
\mathcal{E}_X(A,\Phi) = 4\pi m k.
\end{equation} 
\end{theorem}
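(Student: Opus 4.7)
The plan is to establish (i)--(iv) in order, with the monopole consequences at the end.

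For (i), the Kato inequality $|\nabla|\Phi||\leq|\nabla_A\Phi|$ gives $|\nabla|\Phi||\in L^2(X)$, and combined with the $L^2$-Sobolev inequality of van Coevering---which on AC manifolds asserts $\|f-c\|_{L^6(X)}\lesssim\|\nabla f\|_{L^2(X)}$ for a unique $c=c(f)\in\mathbb{R}$---this produces the unique $m:=c(|\Phi|)$ with $m-|\Phi|\in L^6(X)$, necessarily $m\geq 0$ since $|\Phi|\geq 0$ and $\vol_g(X)=\infty$. For (ii), the Bianchi identity together with metric compatibility yield the global exact expression $\omega := \langle F_A\wedge\nabla_A\Phi\rangle = d\langle F_A,\Phi\rangle$; since $\omega\in L^1(X)$, Stokes gives $\int_X\omega=\lim_{R\to\infty}\int_{\Sigma_R}\langle F_A,\Phi\rangle$. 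Prior to part~(iii), $|\Phi|$ is only known to converge to $m$ in $L^6$, so a coarea/Fubini argument on $\rho$ using $m-|\Phi|\in L^6$ and $|F_A|\in L^2$ is needed to extract a sequence $R_n\to\infty$ with $|\Phi|>m/2$ pointwise on $\Sigma_{R_n}$. On each such sphere $\hat\Phi:=\Phi/|\Phi|$ smoothly reduces $P|_{\Sigma_{R_n}}$ to a $\U(1)$-bundle $\mathscr{L}_{R_n}$, and the classical identity
$$\tfrac{1}{4\pi}\int_{\Sigma_{R_n}}\langle F_A,\hat\Phi\rangle = c_1(\mathscr{L}_{R_n})[\Sigma_{R_n}]=\deg\bigl(\hat\Phi|_{\Sigma_{R_n}}\co\Sigma\to\mathbb{S}^2\bigr)\in\mathbb{Z},$$
combined with the vanishing remainder $\|(m-|\Phi|)F_A\|_{L^1(\Sigma_{R_n})}\to 0$, forces $k\in\mathbb{Z}$.

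For (iii), \eqref{eq: subharmonic} and the Kato inequality give $\Delta|\Phi|\leq 0$ wherever $|\Phi|>0$. To obtain $|\Phi|\leq m$, I test the subharmonic function $(|\Phi|-m)_+$ against $(|\Phi|-m)_+\chi_R^2$ for a Lipschitz radial cut-off $\chi_R$: integration by parts, together with bounded geometry and the $L^6$-membership of $(|\Phi|-m)_+$, forces $(|\Phi|-m)_+\equiv 0$. Uniform decay of $u:=m-|\Phi|\in L^6(X)\cap[0,m]$ then follows from the Moser mean-value inequality applied on balls $B(x,\rho(x)/2)$ of bounded geometry:
$$u(x)\lesssim\bigl(\vol(B(x,\rho(x)/2))^{-1}\|u\|^6_{L^6(B(x,\rho(x)/2))}\bigr)^{1/6}\lesssim\rho(x)^{-1/2}\|u\|_{L^6(X\setminus B_{\rho(x)/2})}\to 0$$
as $\rho(x)\to\infty$. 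In particular $\|\Phi\|_{L^\infty}\leq m$, and $m=0$ forces $\Phi\equiv 0$.

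For (iv) and the monopole corollaries, the uniform convergence $|\Phi|\to m$ allows replacing $\Phi$ by $m\hat\Phi$ in the boundary integrals modulo an error of order $\|m-|\Phi|\|_{L^\infty(\Sigma_R)}\|F_A\|_{L^1(\Sigma_R)}$, which vanishes in the limit along a generic sequence of radii; this yields \eqref{eq: charge_calculation} and identifies $k$ simultaneously with the Brouwer degree $\deg(\hat\Phi|_{\Sigma_R}\co\Sigma\to\mathbb{S}^2)$ and with $c_1(\mathscr{L})[\Sigma]$, both $R$-independent for large $R$ by homotopy invariance. For a monopole, the pointwise Bogomolnyi square $|F_A-\ast\nabla_A\Phi|^2=|F_A|^2+|\nabla_A\Phi|^2-2\langle F_A,\ast\nabla_A\Phi\rangle$, combined with $\langle F_A,\ast\nabla_A\Phi\rangle\,dV_g=\omega$, integrates to $\mathcal{E}_X(A,\Phi)=\tfrac12\|F_A-\ast\nabla_A\Phi\|^2_{L^2}+\int_X\omega$; equation \eqref{eq: monopole} kills the first summand, yielding \eqref{eq: Energy_Formula} and forcing $k\geq 0$ from $\mathcal{E}_X\geq 0$. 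Finally, if $\Phi$ had no zeros, $\mathscr{L}$ would extend globally over $X$ and $c_1(\mathscr{L})[\Sigma_R]=0$ by Stokes (since $\Sigma_R$ bounds $\overline{B}_R$), contradicting $k\geq 1$; hence $\Phi^{-1}(0)\neq\emptyset$. I anticipate the principal obstacle to be item~(ii): extracting integrality of $k$ from only $L^6$-asymptotic control on $|\Phi|$ requires a careful coarea selection of good radii and sharp bookkeeping for the error term; the a priori bound $|\Phi|\leq m$ in~(iii) is a close second, since it must precede---and be proved without---the Moser decay argument.
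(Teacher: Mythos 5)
Your overall architecture---degree theory on large level sets of $\rho$ for integrality, the Bogomolnyi completion of the square for the energy formula, and a Caccioppoli/Yau-type cutoff test for $\|\Phi\|_{L^\infty}\leqslant m$---matches the paper's, and the latter two steps are sound (your cutoff argument is a valid substitute for the weak maximum principle used in Lemma \ref{lem: Phi_bounded}). But there are genuine gaps. First, in (i) you attribute to the $L^2$-Sobolev inequality the statement that every $f$ with $df\in L^2(X)$ satisfies $\|f-c\|_{L^6}\lesssim\|df\|_{L^2}$ for a unique constant $c$. Theorem \ref{thm: Sobolev_AC} is only for \emph{compactly supported} functions; passing from ``$df\in L^2$'' to ``$f$ minus a constant lies in the completion $\mathcal{H}$ of $C_c^\infty$'' is precisely the content of Lemma \ref{lem: mass}, whose proof requires the Liouville-type Theorem \ref{thm: Liouville} (no nonconstant harmonic function with finite Dirichlet energy). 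That is one of the main technical inputs of the paper and cannot be taken off the shelf. Second, in (iii) your uniform-decay step applies the mean value inequality $\sup_{B(x,r/2)}u^6\lesssim V(x,r)^{-1}\int_{B(x,r)}u^6$ to $u=m-|\Phi|$. This estimate (Proposition \ref{prop: MV_ineqs} \ref{itm: MV_i}) holds for nonnegative functions with $\Delta u\leqslant 0$ in the paper's convention, i.e.\ for $|\Phi|$; the function $m-|\Phi|$ satisfies the \emph{reverse} inequality $\Delta(m-|\Phi|)\geqslant 0$, for which only a weak Harnack bound on the infimum is available, and a sup bound by the $L^6$ average is false in general (truncated Green's functions give counterexamples). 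The paper instead proves $\nabla_A^2\Phi\in L^2$ (Lemma \ref{lem: W12_integrability}), deduces $d|\Phi|\in L^6$ by Sobolev embedding, and obtains uniform decay from $m-|\Phi|\in W^{1,6}$ via Lemma \ref{lem: uniform_decay}; alternatively one can use the Green's-function route of Theorem \ref{thm: alternative_finite_mass}. Your proposal contains neither input.

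The most serious issue is (ii), which you yourself flag. Integrality is claimed for an \emph{arbitrary} finite-energy pair with $m>0$, for which (iii) is not available, so $|\Phi|\to m$ is known only in $L^6$. A coarea selection of radii then yields only that $\{|\Phi|\leqslant m/2\}\cap\Sigma_{R_n}$ has small \emph{measure}, not that it is empty; hence $\hat\Phi$ need not be defined on any entire sphere and the reduction to a $\U(1)$-bundle collapses. The paper's resolution is Lemma \ref{lem: charge}: $k'$ is unchanged when $\Phi$ is replaced by the unique $\Phi'$ with $\Phi'-\Phi\in\mathcal{H}_A$ and $\Delta_A\Phi'=0$, which has the same $m$ (Corollary \ref{cor: mass_characterization}) and genuine finite mass by Theorem \ref{thm: finite_mass}; integrality is then proved for $(A,\Phi')$. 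Some such reduction is indispensable. A smaller but real defect in the same step: the curvature of the induced connection on the reduced line bundle is $\langle\hat\Phi,F_A-\tfrac12[\nabla_A\hat\Phi,\nabla_A\hat\Phi]\rangle$, not $\langle\hat\Phi,F_A\rangle$, so your ``classical identity'' is off by a second-fundamental-form term at each finite radius; the paper shows this correction tends to zero using $\nabla_A\Phi\in L^2$ against the $\rho^2$ growth of the spheres, and that estimate must appear somewhere in a complete proof.
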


\begin{remark}
	Consider $G=\rm SU(2)$ in Theorem \ref{thm: main_1}. Then the energy of any $(A,\Phi)\in\mathscr{C}(P)$ with $m=m(|\Phi|)>0$ and charge $k\in\mathbb{Z}$ is given by
	\begin{equation}\label{eq: general_energy_formula}
		\mathcal{E}_X(A,\Phi)=\pm 4\pi mk + \frac{1}{2}\|F_A \mp \ast\nabla_A\Phi\|_{L^2(X)}^2;
	\end{equation} in particular we have
	\[
	\mathcal{E}_X(A,\Phi)\geqslant 4\pi m|k|.
	\] Therefore, for fixed $m>0$ and $k\in\mathbb{Z}$, the absolute minima of the $\rm SU(2)$ Yang--Mills--Higgs energy $\mathcal{E}_X$ are either solutions to the monopole equation \eqref{eq: monopole} or to the \emph{anti-monopole} equation $F_A = - \ast\nabla_A\Phi$, according to whether $k\geqslant 0$ or $\leqslant 0$ respectively. Since the transformation $(A,\Phi)\mapsto (A,-\Phi)$ gives a one-to-one correspondence between solutions of the monopole equation and solutions of the anti-monopole equation, we concentrate our attention on monopoles.
\end{remark}
 
\begin{remark}\label{rmk: generalizations}
	Theorem \ref{thm: main_1} is a consequence of the main results we prove in Section \ref{sec: mass_charge}, and some of those results are proved in more generality. In particular, we prove that both parts \ref{itm: i_main_1} and \ref{itm: iii_main_1} of Theorem \ref{thm: main_1} also hold \emph{e.g.} on any complete $3$-manifold with nonnegative Ricci curvature and maximal volume growth (see Remark \ref{rmk: nR_case_finite_mass}). Moreover, we prove a very general finite mass result, stated as Theorem \ref{thm: alternative_finite_mass}, implying \ref{itm: iii_main_1} for any smooth configuration $(A,\Phi)\in\mathscr{A}(P)\times\Gamma(\mathfrak{g}_P)$ such that $\nabla_A\Phi\in L^2(X)\cap L^{2(n-1)}(X)$ on any complete nonparabolic $n$-manifold, $n\geqslant 3$, satisfying a uniform parabolic Harnack inequality (PHI). (See \S\ref{subsec: greens_functions} and Theorem \ref{thm: PHI} about (PHI); this property is in particular satisfied by manifolds with nonnegative Ricci curvature and AC manifolds with only one end). In particular, in \S\ref{subsec: e-reg}, using the $\varepsilon$-regularity for critical points of the Yang--Mills--Higgs energy on $3$-manifolds with bounded geometry, we deduce on Corollary \ref{cor: finite_mass_for_crit_pts} that \ref{itm: iii_main_1} holds for any critical point of the energy on any complete nonparabolic $3$-manifold of bounded geometry satisfying (PHI). 
\end{remark} 
 
\begin{theorem}[Asymptotics of finite energy monopoles on AC $3$-manifolds]\label{thm: main_2}
	Let $P\to X$ be a principal $\rm SU(2)$-bundle, and let $(A,\Phi)\in\mathscr{C}(P)$ be a solution to the second order equations \eqref{eq:2nd_Order_Eq_1} and \eqref{eq:2nd_Order_Eq_2}, \emph{i.e.} a critical point of $\mathcal{E}_X:\mathscr{C}(P)\to [0,\infty)$. Denote by $m$ the finite mass of $(A,\Phi)$ given by Theorem \ref{thm: main_1} and suppose that $m>0$. Then there is a constant $R_0=R_0(A,\Phi)>1$ with the following significance.
	\begin{itemize}
		\myitem[(i)] \label{itm: i_main_2} The $\Phi$-transverse components of $F_A$ and $\nabla_A\Phi$ decay exponentially along the end:
		\[
		|[\nabla_A\Phi,\Phi]| + |[F_A,\Phi]| \lesssim_{A,\Phi} e^{-cm(\rho-R_0)}\quad\text{for }\rho\geqslant R_0.
		\]
		\myitem[(ii)] \label{itm: ii_main_2} $m-|\Phi|$ decays linearly along the end; in fact:
		\[
		m-|\Phi|\sim_{A,\Phi}\rho^{-1}\quad\text{for }\rho\geqslant R_0.
		\]
		\myitem[(iii)] \label{itm: iii_main_2} $\nabla_A\Phi$ decays quadratically:
		\[
		|\nabla_A\Phi|\lesssim_{A,\Phi} \rho^{-2}\quad\text{for }\rho\geqslant R_0.
		\]
	\end{itemize}
		Furthermore, if we suppose at least one of the following holds:
		\begin{itemize}
			\myitem[($\dagger$)] \label{itm: a_main_2} $(A,\Phi)$ is a monopole, \emph{i.e.} a solution to equation \eqref{eq: monopole};
			\myitem[($\dagger\dagger$)] \label{itm: b_main_2} $\Sigma$ has positive Gaussian curvature;
		\end{itemize} then\footnote{To be clear: under \emph{any} of the further assumptions \ref{itm: a_main_2} and/or \ref{itm: b_main_2} we get \ref{itm: iv_main_2} and \ref{itm: v_main_2}. Also, \ref{itm: a_main_2} and \ref{itm: b_main_2} are not mutually exclusive.}: 
	\begin{itemize}
		\myitem[(iv)] \label{itm: iv_main_2} The curvature $F_A$ decays quadratically:
		\[
		|F_A|\lesssim_{A,\Phi} \rho^{-2}\quad\text{for }\rho\geqslant R_0.
		\]
		\myitem[(v)] \label{itm: v_main_2} There exists a principal $\rm SU(2)$-bundle $P_{\infty}\to\Sigma$ and a configuration at infinity $(A_{\infty},\Phi_{\infty})\in\mathscr{A}(P_{\infty})\times\Gamma(\mathfrak{su}(2)_{P_{\infty}})$ such that the following hold:
			\begin{itemize}
				\myitem[(v.a)] \label{itm: v.a_main_2} $(A,\Phi)|_{\Sigma_R}\to (A_{\infty},\Phi_{\infty})$ uniformly as $R\to\infty$.
				\myitem[(v.b)] \label{itm: v.b_main_2} $\nabla_{A_{\infty}}\Phi_{\infty} = 0$.
				\myitem[(v.c)] \label{itm: v.c_main_2} $A_{\infty}$ is a reducible Yang--Mills connection on $(\Sigma^2,g_{\Sigma})$.
			\end{itemize}
	\end{itemize}
\end{theorem}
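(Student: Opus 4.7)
The plan is to exploit Theorem \ref{thm: main_1}\ref{itm: iii_main_1}: since $|\Phi|\to m>0$ uniformly, there is $R_0$ such that on the outer region $\{\rho\geqslant R_0\}$ the Higgs field never vanishes and $\mathfrak{su}(2)_P$ splits orthogonally into the line $\langle\Phi\rangle$ and its rank-two transverse complement, on which $\ad_\Phi^2$ acts as $-|\Phi|^2\cdot\mathrm{id}$. For \ref{itm: i_main_2}, set $\psi:=[\nabla_A\Phi,\Phi]$ and $\chi:=[F_A,\Phi]$; applying the Weitzenböck formula to $d_A\psi$ and $d_A^{\ast}\chi$ and using \eqref{eq:2nd_Order_Eq_1}--\eqref{eq:2nd_Order_Eq_2} together with $\ad_\Phi$ to eliminate longitudinal terms produces a coupled Bochner inequality schematically of the form
\[
\Delta\!\left(|\psi|^2+|\chi|^2\right) + c\,|\Phi|^2\!\left(|\psi|^2+|\chi|^2\right) \leqslant C\rho^{-2}\!\left(|\psi|^2+|\chi|^2\right)
\]
on $\{\rho\geqslant R_0\}$, where the $\rho^{-2}$ error absorbs the quadratically decaying Ricci curvature. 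For $R_0$ large the mass-like term dominates, and comparison on annuli $\{R_0\leqslant\rho\leqslant R\}$ against the supersolution $e^{-cm(\rho-R_0)/2}$ via the maximum principle---with boundary data at $\rho=R_0$ controlled by the $\varepsilon$-regularity of Section \ref{subsec: e-reg} and vanishing data at $\rho=R$ as $R\to\infty$---yields the exponential decay.

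For \ref{itm: ii_main_2} I would use that $\Delta|\Phi|^2=-2|\nabla_A\Phi|^2\in L^1(X)$. Since on AC $3$-manifolds with one end the minimal positive Green's function satisfies $G(x,y)\asymp d(x,y)^{-1}$ at large distance (from van Coevering's heat-kernel bounds \cite{van2009regularity}), the representation
\[
m^2-|\Phi(x)|^2 = 2\int_X G(x,y)\,|\nabla_A\Phi|^2(y)\,dy
\]
gives the upper bound $m-|\Phi|\lesssim\rho^{-1}$, while the matching lower bound follows because irreducibility forces $\int_{B_{R_0}}|\nabla_A\Phi|^2>0$, so this integral stays comparable to $\rho(x)^{-1}$ at infinity. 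For \ref{itm: iii_main_2}, I would decompose $|\nabla_A\Phi|^2 = |d|\Phi||^2+|\nabla_A^{\perp}\Phi|^2$: the transverse summand is exponentially small since $|\nabla_A^{\perp}\Phi|=|\psi|/|\Phi|$ by \ref{itm: i_main_2}, whereas the longitudinal summand equals $|d(m-|\Phi|)|^2$, and applying scale-invariant interior gradient estimates on balls of radius $\rho/2$ to the Laplace-type equation satisfied by $u:=m-|\Phi|$ converts the $\rho^{-1}$ pointwise bound from \ref{itm: ii_main_2} into the desired $\rho^{-2}$ bound on $|du|$.

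Under \ref{itm: a_main_2}, \ref{itm: iv_main_2} is immediate from $|F_A|=|\nabla_A\Phi|$. Under \ref{itm: b_main_2} only the longitudinal piece $F_A^{\parallel}:=|\Phi|^{-2}\langle F_A,\Phi\rangle\Phi$ must be controlled, since $F_A^{\perp}$ is handled by \ref{itm: i_main_2}. The idea is that $F_A^{\parallel}$ approximates the curvature of the induced $\mathrm{U}(1)$ connection on the line subbundle $\mathscr{L}$ from Theorem \ref{thm: main_1}\ref{itm: iv_main_1}; Bianchi together with \eqref{eq:2nd_Order_Eq_2} makes $dF_A^{\parallel}$ and $d^{\ast}F_A^{\parallel}$ exponentially small, so when pulled back to the cone $C(\Sigma)$, $F_A^{\parallel}$ is nearly harmonic. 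The Weitzenböck formulas for $1$- and $2$-forms on $(\Sigma^2,g_\Sigma)$ involve the Gauss curvature, whose positivity both ensures a spectral gap on $1$-forms and forces $\Sigma\cong\mathbb{S}^2$ by Gauss--Bonnet (killing harmonic $1$-forms); combining this spectral gap with a standard separation-of-variables and indicial analysis on the cone closes a differential inequality forcing the $\rho^{-2}$ decay dictated by the cone geometry. I expect this to be the main technical obstacle of the proof, as it must simultaneously control cross-terms, isolate indicial modes that naively give only $O(\rho^{-1})$ decay, and absorb the exponentially small errors from the equations---none of which is needed under \ref{itm: a_main_2}.

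Finally, for \ref{itm: v_main_2}, once $|F_A|,|\nabla_A\Phi|\lesssim\rho^{-2}$ is established on the end, rescaling each cross-section $(\Sigma_R,g|_{\Sigma_R})$ by the conformal factor $R^{-1}$ produces a family of configurations $(A_R,\Phi_R)$ on the link $(\Sigma,g_\Sigma)$ (modulo AC error that vanishes in the limit) with uniformly bounded curvature and $|\nabla_{A_R}\Phi_R|_{g_\Sigma}\to 0$. Uhlenbeck compactness extracts a smooth subsequential limit, up to gauge, $(A_\infty,\Phi_\infty)$ on some principal $\rm SU(2)$-bundle $P_\infty\to\Sigma$ with $\nabla_{A_\infty}\Phi_\infty=0$; passing \eqref{eq:2nd_Order_Eq_2} to the limit gives $d^{\ast}_{A_\infty}F_{A_\infty}=0$, and since $|\Phi_\infty|\equiv m>0$ and $\Phi_\infty$ is parallel, its stabilizer reduces the structure group of $P_\infty$ to $\mathrm{U}(1)\subset\rm SU(2)$, proving \ref{itm: v.b_main_2}--\ref{itm: v.c_main_2}. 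Upgrading to full (non-subsequential) uniform convergence in \ref{itm: v.a_main_2} uses that the topological class of $\Phi/|\Phi|\co\Sigma_R\to\mathbb{S}^2$ is constant in $R$ by Theorem \ref{thm: main_1}\ref{itm: iv_main_1}, combined with a Coulomb gauge fixing along the end and the decay rates established above.
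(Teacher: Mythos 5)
Parts \ref{itm: i_main_2}--\ref{itm: iii_main_2} and \ref{itm: v_main_2} of your proposal are essentially sound and close to the paper's route. Your \ref{itm: i_main_2} is the paper's Theorem \ref{thm: exp_decay_transv} (Bochner inequality for $\Xi=([\nabla_A\Phi,\Phi],[F_A,\Phi])$ plus comparison with an exponential supersolution). For \ref{itm: ii_main_2} you use the Green's representation $m^2-|\Phi|^2=2\int G(x,\cdot)|\nabla_A\Phi|^2$ instead of the paper's weighted-H\"older route (Corollary \ref{cor: decay_m-Phi}); this works, provided you split the integral at $d(x,y)\sim\rho(x)/2$ and use the a priori decay $|\nabla_A\Phi|^2\lesssim\rho^{-3}$ from $\varepsilon$-regularity (Lemma \ref{lem: rough_estimate}) on the near-diagonal piece, and the two-sided bound \eqref{ineq: Green_bounds} for the lower bound. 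Your \ref{itm: iii_main_2}, via scaled interior gradient estimates for $\Delta(m-|\Phi|)=|\Phi|^{-1}|(\nabla_A\Phi)^\perp|^2$ with exponentially small right-hand side, is actually more direct than the paper's Theorem \ref{thm: quadratic_decay_nablaPhi}, which establishes the stronger statement $|\nabla_A\Phi|^2\in C^1_{-3}(X)$ (needed there for the asymptotic expansion, not just for \ref{itm: iii_main_2}). For \ref{itm: v_main_2} your rescaling/Uhlenbeck argument is the paper's; the only vague point is the upgrade from subsequential to full convergence, where constancy of the topological class does not by itself prevent oscillation between gauge classes --- the paper instead works in radial gauge and uses $\int_R^{R+1}|\partial_r A|\,dr\lesssim R^{-1}$, which follows from \ref{itm: iv_main_2} and gives a Cauchy estimate; you should make that explicit.

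The genuine gap is in \ref{itm: iv_main_2} under \ref{itm: b_main_2}. Your reduction to the longitudinal $2$-form $\beta:=\langle F_A,\hat\Phi\rangle$, which is closed and coclosed up to exponentially small errors and lies in $L^2$ of the end, is correct. But the claim that positivity of $K^{\Sigma}$ "ensures a spectral gap on $1$-forms" that "forces the $\rho^{-2}$ decay" is not quantified, and mere positivity does not suffice for the indicial analysis you invoke. Separating variables on the cone, the closed-and-coclosed system couples the $\mathrm{vol}_\Sigma$-coefficient $u$ to the $dr\wedge(\cdot)$ part through the eigenvalues $\lambda_j$ of $\Delta_\Sigma$ on functions, and the indicial roots solve $c(c-1)=-\lambda_j$; besides the topological mode $|\beta|\sim r^{-2}$, one finds for $0<\lambda_j<1/4$ an $L^2$ mode with $|\beta|\sim r^{c-2}$, $c=\frac{1-\sqrt{1-4\lambda_j}}{2}\in(0,1/2)$, which decays strictly slower than $r^{-2}$. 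Lichnerowicz only gives $\lambda_1\geqslant 2\min K^{\Sigma}$, and there are metrics on $S^2$ with $K^{\Sigma}>0$ and $\lambda_1$ arbitrarily small (long thin convex surfaces), so a qualitative gap does not close the window between the $L^2$ threshold $r^{-3/2}$ and the target $r^{-2}$. You must therefore identify the precise spectral threshold, explain why the hypothesis delivers it, and in addition carry out the perturbation from the exact cone to the AC metric and the absorption of the exponentially small source terms --- none of which is sketched. For comparison, the paper avoids the mode-by-mode analysis entirely: it proves refined Kato inequalities with error terms (Proposition \ref{prop: refined_Kato}), normalizes $K^{\Sigma}\geqslant 1$ by scaling so that the $O(\rho^{-2})$ Ricci terms in the Bochner formula acquire a favorable sign, obtains $\Delta|\Psi|^{1/2}\lesssim\rho^{-2-\nu}|\Psi|^{1/2}$ with a right-hand side in $L^{3/2}$, and then applies the Bando--Kasue--Nakajima Moser iteration (Proposition \ref{prop: BKN_decay}) followed by weighted-H\"older regularity. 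Whatever route you take, the quantitative role of the curvature hypothesis is exactly the point that must be nailed down, and your proposal currently leaves it open.
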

\begin{remark}
	The asymptotic decay rates of Theorem \ref{thm: main_2} are well known to be sharp; they are attained by the basic spherically symmetric monopole solution in $\mathbb{R}^3$ of Bogomolnyi--Prasad--Sommerfield \cites{bps1975exact,bogomol1976stability}, see \cite[pp. 104-105]{Jaffe1980}.
\end{remark}
\begin{remark}
	The assumption \ref{itm: b_main_2} implies, by the Gauss--Bonnet theorem, that $\Sigma$ must have genus zero, \emph{i.e.} be topologically a $2$-sphere. Nevertheless, it does not imply that $(X^3,g)$ is necessarily AE; \emph{e.g.} $(\Sigma^2,g_{\Sigma})$ could be any ellipsoid in $\mathbb{R}^3$.
\end{remark}
\begin{remark}
	 Since any AE $3$-manifold satisfies \ref{itm: b_main_2}, Theorem \ref{thm: main_2} is a direct generalization of the classical Jaffe--Taubes' \cite[Theorems IV.10.3 and IV.10.5]{Jaffe1980} critical point asymptotics for the $\rm SU(2)$ Yang--Mills--Higgs energy in $(\mathbb{R}^3,g_{\mathbb{R}^3})$. In fact, note that restricting to monopoles, that is, in case \ref{itm: a_main_2} holds, our results generalize the classical ones to \emph{any} AC $3$-manifold with only one end. 
	
	The key difficulty in the analysis for general critical points lies in deriving the sharp quadratic decay of the curvature \ref{itm: iv_main_2} (then part \ref{itm: v_main_2} follows as a consequence, see Theorem \ref{thm: limiting_configuration}). In the monopole case \ref{itm: a_main_2}, the conclusion is direct from the quadratic decay \ref{itm: iii_main_2} of $\nabla_A\Phi$, which in turn is valid for any critical point and is easier to deduce, using the exponential decay \ref{itm: i_main_2} combined with Bochner formulas and a mean value inequality (see the proof of Theorem \ref{thm: quadratic_decay_nablaPhi}, and the summary in \S\ref{subsec: organization}).  
	
	We treat the general case inspired by classical methods that use certain refined Kato inequalities to substantially improve the subelliptic estimates from the Bochner formulas. This type of argument goes back at least to the work of Bando--Kasue--Nakajima \cite{bando1989construction}; for related work in Yang--Mills theory see \emph{e.g.} \cites{rade1993decay,groisser1997sharp} and the very recent work \cite{cherkis2021instantons}. 
	
	The assumption \ref{itm: b_main_2} then appears naturally, after a scaling argument, to deal with higher order terms of the Ricci curvature in the Bochner formulas for $\nabla_A\Phi$ and $F_A$, which combined with the refined Kato inequalities yields a differential inequality along the end that allow us to improve the deducible order of decay of the curvature to the sharpest. We dedicate \S\ref{subsec: quadratic} on this matter; see also a brief summary in \S\ref{subsec: organization}. 
\end{remark}

In the conditions of Theorem \ref{thm: main_2}, it follows that $P_{\infty}\times_{\rm SU(2)}\mathbb{C}^2\cong \mathscr{L}\oplus \mathscr{L}^{-1}$, where $\mathscr{L}\to\Sigma$ is a complex line bundle with $\text{deg}(\mathscr{L})=k$. In fact, the parameters $m$ and $k$ determine the asymptotic configuration $(A_{\infty},\Phi_{\infty})$ \emph{up to gauge}:
\begin{equation}
	\Phi_{\infty} = \frac{1}{2}\begin{pmatrix}
		im & 0 \\
		0  & -im
	\end{pmatrix} \ \text{and}  \  
	F_{A_{\infty}} = \begin{pmatrix}
		F_{\mathscr{L}} & 0 \\
		0  & -F_{\mathscr{L}}
	\end{pmatrix},\text{  $F_{\mathscr{L}} \in -2 \pi i c_1(\mathscr{L}) \in H^2(\Sigma, -2 \pi i \mathbb{Z})$.}
\end{equation} If $(A,\Phi)$ is a monopole, \emph{i.e.} a solution to equation \eqref{eq: monopole}, of mass $m>0$ and charge $k$, then the energy formula \eqref{eq: Energy_Formula} reads
	\begin{equation}\label{eq: Energy_Formula_2}
	\mathcal{E}_X(A,\Phi)=\lim_{R\to\infty}\int_{\Sigma_R}\langle\Phi,F_A\rangle=\int_{\Sigma}\langle \Phi_{\infty}, F_{A_{\infty}}\rangle = 4\pi m k.
	\end{equation}

\begin{remark}\label{rmk: Oliveira_con_at_infty}
	The energy formula \eqref{eq: Energy_Formula}--\eqref{eq: Energy_Formula_2} is well known in the Euclidean case \cite[Proposition 3.7]{Jaffe1980}. For general AC $3$-manifolds this formula was established in \cite[Corollary 1.4.11]{oliveira2014thesis} under the \emph{a priori} stronger hypotheses that $(A,\Phi)$ is a finite mass irreducible monopole whose connection $A$ is asymptotic to a connection $A_{\infty}$ on a principal $\rm{SU}(2)$-bundle $P_{\infty}$ over $\Sigma$, satisfying $\varphi^{\ast}\left(P|_{X\setminus K}\right)\cong\pi^{\ast}P_{\infty}$, where $\pi:(1,\infty)\times\Sigma\to\Sigma$ is the projection onto the second factor, and such that for some $\varepsilon>0$ one has
	\begin{equation}\label{eq: strong_convergence_hyp}
	\varphi^{\ast}\nabla_A = \pi^{\ast}\nabla_{A_\infty} + a,\quad\text{where }\quad\lvert\nabla_{A_\infty}^j a\rvert = O(\rho^{-1-j-\varepsilon}),\quad\forall j\in\mathbb{N}_0.
	\end{equation} These assumptions turn out to imply finite energy by \cite[Corollary 1.4.4]{oliveira2014thesis}. Thus, our energy formula \eqref{eq: Energy_Formula} in Theorem \ref{thm: main_1}, together with the further equality \eqref{eq: Energy_Formula_2} implied by Theorem \ref{thm: main_2}, improves on \cite[Corollary 1.4.11]{oliveira2014thesis} in which we only assume finite energy in its derivation. In fact, note that it follows from Theorem \ref{thm: main_2} that, in the monopole case, the finite energy condition is equivalent to the convergence \ref{itm: iv_main_2}, which \emph{a priori} only implies the less restrictive version of \eqref{eq: strong_convergence_hyp} where $\varepsilon=0$. 
\end{remark}
Using Theorem \ref{thm: main_2} \ref{itm: i_main_2}, \ref{itm: ii_main_2} and \ref{itm: iii_main_2}, together with elliptic regularity results and the energy formula \eqref{eq: Energy_Formula}, we obtain the following sharp asymptotic expansion improving on \ref{itm: ii_main_2} (see Corollary \ref{cor: asymp_exp_Phi}):
\begin{corollary}[Asymptotic expansion of $|\Phi|$]\label{cor: main_asymp_exp_Phi}
	In the situation of Theorem \ref{thm: main_2}, there is $\mu\in (0,\nu)$ such that we have
	\begin{equation}
		|\Phi| = m - \frac{\|\nabla_A\Phi\|_{L^2(X)}^2}{m\mathrm{Vol}(\Sigma)}\frac{1}{\rho} + O(\rho^{-1-\mu})\quad\text{as }\rho\to\infty.
	\end{equation} In particular, if $(A,\Phi)$ is furthermore a monopole of mass $m>0$ and charge $k$ then
\begin{equation}
	|\Phi| = m - \frac{4\pi k}{\mathrm{Vol}(\Sigma)}\frac{1}{\rho} + O(\rho^{-1-\mu})\quad\text{as }\rho\to\infty.
\end{equation}
\end{corollary}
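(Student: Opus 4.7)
My plan is to exploit the equation $\Delta(m^2 - |\Phi|^2) = 2|\nabla_A\Phi|^2$ coming from \eqref{eq: subharmonic}, and convert the Poisson problem into a Green's function representation. Writing $v := m^2 - |\Phi|^2$, Theorem~\ref{thm: main_1}\ref{itm: iii_main_1} gives $v \to 0$ at infinity, and finite energy of $(A,\Phi)$ forces the source $|\nabla_A\Phi|^2$ to lie in $L^1(X)$. Since $(X,g)$ is nonparabolic and satisfies the strong Liouville property (Remark~\ref{rmk: generalizations}), the difference between $v(x)$ and the Green's potential $2\int_X G(x,y)|\nabla_A\Phi(y)|^2\, dV(y)$ is a bounded harmonic function vanishing at infinity, hence identically zero. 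This yields the representation
\[
v(x) = 2\int_X G(x,y)\, |\nabla_A\Phi(y)|^2 \, dV(y).
\]

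The key analytic input I would then use is the leading-order asymptotic expansion of the Green's function on an AC $3$-manifold: uniformly for $y$ in compact sets,
\[
G(x,y) = \frac{1}{\mathrm{Vol}(\Sigma)\, \rho(x)} + O(\rho(x)^{-1-\mu})\quad\text{as }\rho(x)\to\infty,
\]
for some $\mu\in (0,\nu)$ with $\mu\leqslant 1$. This should be derived by perturbing off the explicit Green's function on the model cone $C(\Sigma)$ (built by separation of variables and spectral theory on $\Sigma$) and absorbing the error via Schauder/$L^p$ elliptic regularity, using the AC decay $\varphi^*g - g_C = O(r^{-\nu})$. Substituting this into the representation for $v$ and splitting the integral over the regions $\{\rho(y)\leqslant\rho(x)/2\}$ and its complement, the near-field piece reproduces $\frac{2\|\nabla_A\Phi\|_{L^2(X)}^2}{\mathrm{Vol}(\Sigma)\,\rho(x)}$ up to an error of order $\rho(x)^{-1-\mu}$, while the far-field piece -- using $|\nabla_A\Phi|^2\lesssim\rho^{-4}$ from Theorem~\ref{thm: main_2}\ref{itm: iii_main_2} together with $G(x,y)\lesssim d(x,y)^{-1}$ -- is of order $\rho(x)^{-2}$ and can be absorbed into the same error term.

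Finally, I would convert the asymptotic for $v$ back into one for $|\Phi|$ by writing $v = u(2m-u)$ with $u := m - |\Phi|$. Theorem~\ref{thm: main_2}\ref{itm: ii_main_2} gives $u \lesssim \rho^{-1}$, so $u^2 = O(\rho^{-2})$ and
\[
u = \frac{v}{2m} + O(\rho^{-2}) = \frac{\|\nabla_A\Phi\|_{L^2(X)}^2}{m\,\mathrm{Vol}(\Sigma)}\cdot\frac{1}{\rho} + O(\rho^{-1-\mu}),
\]
which is the desired expansion. The monopole case then follows instantly from the energy identity \eqref{eq: Energy_Formula}: since $|F_A|^2 = |\nabla_A\Phi|^2$ for monopoles, one has $\|\nabla_A\Phi\|_{L^2(X)}^2 = \mathcal{E}_X(A,\Phi) = 4\pi m k$, turning the leading coefficient into $4\pi k/\mathrm{Vol}(\Sigma)$. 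The main obstacle will be establishing the sharp leading-order expansion of the Green's function on a general AC $3$-manifold (with the remainder controlled quantitatively by $\nu$); once that input is in place, the rest is a matter of assembling the decay estimates already produced by Theorem~\ref{thm: main_2}.
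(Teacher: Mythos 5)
Your route is genuinely different from the paper's and is viable, but it hinges on one substantial input that you leave as a black box. The paper does not go through the Green's representation at the last step: it applies the weighted H\"older theory for the Laplacian (Theorem \ref{thm: VC_regularity} \ref{itm: VC_iv}) directly to $u=m-|\Phi|$ (cut off near $Z(\Phi)$), using that $\Delta u=|\Phi|^{-1}|(\nabla_A\Phi)^{\perp}|^2$ decays \emph{exponentially} by Theorem \ref{thm: exp_decay_transv}, so that $u=A\rho^{-1}+O(\rho^{-1-\mu})$ with $A=\mathrm{Vol}(\Sigma)^{-1}\int_X\Delta u$; the coefficient is then identified with $\frac{1}{m}\|\nabla_A\Phi\|_{L^2}^2/\mathrm{Vol}(\Sigma)$ by a Stokes computation on $\partial B_R$ that uses the quadratic decay of $\nabla_A\Phi$ and $|\Phi|\to m$. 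Your representation $m^2-|\Phi|^2=2\int_X G(x,\cdot)|\nabla_A\Phi|^2$ is already available (it is exactly Theorem \ref{thm: alternative_finite_mass}), and the algebraic conversion $u=v/(2m)+O(\rho^{-2})$ and the monopole specialization via \eqref{eq: Energy_Formula} are both correct.

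The crux is your claimed expansion $G(x,y)=\mathrm{Vol}(\Sigma)^{-1}\rho(x)^{-1}+O(\rho(x)^{-1-\mu})$. Two remarks. First, you do not need to build it from the model cone by separation of variables: it follows from tools already in the paper. Apply Theorem \ref{thm: VC_regularity} \ref{itm: VC_iv} to $\phi\,G(\cdot,o)$ for a cutoff $\phi$ vanishing near $o$ (so that $\Delta(\phi\,G(\cdot,o))$ is smooth and compactly supported with total integral $1$) to get $G(x,o)=\mathrm{Vol}(\Sigma)^{-1}\rho(x)^{-1}+O(\rho(x)^{-1-\mu})$, and then transport $o$ to $y$ using the H\"older bound \eqref{eq: Holder_mu}. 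Second, uniformity over compact sets in $y$ is not enough for your near-field integral: you integrate over $\{\rho(y)\leqslant\rho(x)/2\}$, a region growing with $x$. There \eqref{eq: Holder_mu} gives $|G(x,y)-G(x,o)|\lesssim\rho(y)^{\mu}\rho(x)^{-1-\mu}$, so the replacement error is controlled by $\rho(x)^{-1-\mu}\int_X\rho^{\mu}|\nabla_A\Phi|^2$, and the latter integral is finite only because $|\nabla_A\Phi|^2\lesssim\rho^{-4}$ and $\mu<1$ (shrink $\mu$ if necessary). With these two points supplied, your argument closes; as written, the Green's function asymptotics constitute a genuine gap, and the derivation you sketch for them is considerably heavier than what the paper's machinery already affords.
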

\begin{remark}\label{rmk: large_mass}
	In the work \cite{fadel2019limit}, co-authored with Oliveira, we considered the problem of the limiting behavior of sequences of $\rm SU(2)$ monopoles with fixed charge and arbitrarily large masses on an AC $3$-manifold with only one end. We proved that (a) the failure of compactness is entirely due to monopole bubbling; (b) monopole bubbling happens at finitely many isolated points; (c) these isolated points are exactly the asymptotic zero set of the Higgs fields; and (d) the number of points in the bubbling locus is controlled by the charge; see \cite[Theorem 1.1]{fadel2019limit}. We also prove an analogous result regarding more general
	sequences of critical points of the Yang--Mills--Higgs energy, see \cite[Theorem 1.2]{fadel2019limit}.
	
	In that paper we assume, in the definition of finite mass (see \cite[Definition 2]{fadel2019limit}), that the configuration connection is asymptotic to a connection at infinity as in Remark \ref{rmk: Oliveira_con_at_infty}. But this assumption was made only to have at our disposal the energy formula \eqref{eq: Energy_Formula} as proved in \cite[Corollary 1.4.11]{oliveira2014thesis} and also a rougher version of the asymptotic expansion of Corollary \ref{cor: main_asymp_exp_Phi}, proved in \cite[Proposition 2.2]{fadel2019limit} assuming the results in \cite[\S 1.4]{oliveira2014thesis}; in that version we had $o(\rho^{-1})$ instead of $O(\rho^{-1-\mu})$. Therefore, it follows from the present work that the theory developed in \cite{fadel2019limit} is actually valid for any finite energy monopole without further assumptions. 
\end{remark}

\begin{remark}
In \cite{kottke2015partial} Kottke and Singer constructed a partial compactification of the moduli space of finite energy $\rm SU(2)$ monopoles of charge $k$ on $\mathbb{R}^3$, studying a particular asymptotic region of the moduli space and the behavior of the $L^2$ metric in such region. The first step to their approach (see \cite[{\S}1.1]{kottke2015partial}) was to pass to the radial compactification $\overline{X}$ of $\mathbb{R}^3$ in order to conveniently deal with the noncompactness of the later; $\overline{X}$ is a compact $3$-manifold with boundary $\partial \overline{X}$ diffeomorphic to the $2$-sphere $\mathbb{S}^2$, and one can regard the Euclidean metric $g_{\mathbb{R}^3}$ as a \emph{scattering metric} on $\overline{X}$, see \cite{melrose1994spectral}. Then the classical results in Jaffe--Taubes \cite[Theorems IV.10.3 and IV.10.5]{Jaffe1980} guarantee that the finite energy condition on the monopoles imply decay properties equivalent to smoothness up to the boundary of $\overline{X}$. Kottke--Singer's approach has a natural generalization to any AC $3$-manifold $(X^3,g)$ with only one end, thought of as a general scattering manifold $(\overline{X}^3,g_{sc})$; indeed, by assuming the same decay properties of the Euclidean case (\emph{i.e.} smoothness of the monopoles up to the boundary $\partial\overline{X}$), some of the results in \cite{kottke2015partial} are proved in this generality (see e.g. \cite[proof of Proposition 3.3]{kottke2015partial}), in particular appealing to Kottke's previous work \cite{kottke2015dimension}. That said, we note that our main decay results for finite energy monopoles on general AC manifolds given by Theorem \ref{thm: main_2} provides the formal justification of the validity of their decay assumptions in the general case. 

Theorem \ref{thm: main_2} also has important physics consequences. It can be interpreted, in particular, as proving a generalized inverse square law for gauge group $\rm SU(2)$ which is of fundamental importance for example in establishing the quantization of magnetic charge, see \cite{goddard1977gauge}.
\end{remark}

\subsection{Organization}\label{subsec: organization}
This paper is divided into three parts. In a nutshell, the first part is concerned with the general geometric-analytic background of the paper, while the other two focus respectively on the proofs of the two main theorems stated in the previous section. I have also added Appendix \ref{app: A} containing a simple but important computation of the Ricci tensor on an AC manifold in an adapted frame along the end, which is used particularly in \S\ref{subsec: quadratic}.

Let us give a more detailed description of each section of the paper. We start in Section \ref{sec: analysis} reviewing important concepts in harmonic function theory on complete noncompact manifolds, and bringing attention to the important class of those satisfying a uniform parabolic Harnack inequality (PHI) or, equivalently, a two-sided Gaussian bound on the heat kernel. These include, for instance, complete manifolds with nonnegative Ricci curvature and, by the work of van Coevering \cite{van2009regularity}, it includes also AC manifolds with only one end. We review important Green's function bounds on nonparabolic manifolds satisfying (PHI), the validity of the $L^2$-Sobolev inequality on AC manifolds proven by van Coevering, as well as its relation with volume growth on this class of complete manifolds satisfying (PHI), and some fundamental results on solutions of the Poisson equation on the nonparabolic case. We include, in particular, regularity results on weighted H\"older spaces on the AC $3$-manifold case, which were also proven by van Coevering using the available Green's function bounds. Next, we give a general criteria to prove uniform decay of functions satisfying certain integrability properties (see Lemma \ref{lem: uniform_decay}), generalizing a classical Euclidean case result \cite[Proposition III.7.5]{Jaffe1980}, and we also revisit decay results for nonnegative solutions to differential inequalities of the form $\Delta u\leqslant \gamma+fu$, under certain assumptions on $u,\gamma,f$, on general geometric contexts. In particular, we recall an important classical Moser iteration decay result of Bando--Kasue--Nakajima \cite[Proposition 4.8 (1)]{bando1989construction} (stated as Proposition \ref{prop: BKN_decay}) on complete noncompact manifolds satisfying the $L^2$-Sobolev inequality and having at most Euclidean volume growth. We also prove other general decay results on manifolds with quadratically decaying Ricci curvature and polynomial volume growth (see Lemma \ref{lem: decay}), by using local mean value inequalities deduced from a parabolic mean value inequality first proved by Li--Tam \cite{li1991heat}. These results are used later particularly in \S\ref{sec: YMH}.

We end the first part with an important general result on the function theory of nonparabolic manifolds of dimension $n\geqslant 3$ satisfying (PHI), with Ricci curvature bounded from below and satisfying a uniform lower bound for the volume of balls which is independent of their center, \emph{e.g.} AC $n$-manifolds with only one end. The result is stated as Theorem \ref{thm: Liouville} and asserts that every harmonic function with finite Dirichlet energy on such manifolds must be constant. The proof makes crucial use of the strong Liouville property satisfied by such manifolds, combined with the previous results on the solutions of Poisson equations, the Bochner technique and a mean value inequality. We then use Theorem \ref{thm: Liouville} to generalize Groisser's original arguments in \cite[Lemma 1]{groisser1984integrality} to prove a functional analytic result, stated as Lemma \ref{lem: mass}, that is key to derive the main finite mass theorem proved in \S\ref{subsec: finite_mass}.

In the second part, Section \ref{sec: mass_charge}, we are concerned with the proof of Theorem \ref{thm: main_1}. We follow closely the original work of Groisser \cite{groisser1984integrality}, adding appropriate modifications to adapt the classical arguments in the Euclidean case $\mathbb{R}^3$ to our general AC setup. In particular, using our Lemma \ref{lem: mass}, we derive Theorem \ref{thm: finite_mass}, asserting that every finite energy configuration satisfying \eqref{eq:2nd_Order_Eq_1} has finite mass. The proof also uses the $L^2$-Sobolev inequality satisfied by AC manifolds (see Theorem \ref{thm: Sobolev_AC}), combined with standard elliptic techniques. Our proof also extends to any complete $3$-manifold with nonnegative Ricci curvature and maximal volume growth (see Remark \ref{rmk: nR_case_finite_mass}). Then we prove an alternative finite mass theorem, stated as Theorem \ref{thm: alternative_finite_mass}, that holds more generally for any complete nonparabolic manifold of dimension $n\geqslant 3$ and satisfying (PHI), giving a different characterization of the mass in terms of the Green's function and the Higgs field covariant derivative $\nabla_A\Phi$, under integrability assumptions of the later (although not requiring any integrability of $F_A$). The proof we give can be seen as a generalization of the Euclidean case proof of Jaffe--Taubes \cite[Theorem IV.10.3]{Jaffe1980}, using the results we collect in \S\ref{sec: analysis}.

We finish Section \ref{sec: mass_charge} with \S\ref{subsec: charge}, where we prove general results on the monopole number and, restricting to the structure group $\rm SU(2)$, we complete the proof of Theorem \ref{thm: main_1} by proving Theorem \ref{thm: integrality} and Corollary \ref{cor: energy_formula}.  

Finally, in the third part, Section \ref{sec: YMH}, we concentrate on the proof of Theorem \ref{thm: main_2} and Corollary \ref{cor: main_asymp_exp_Phi}. The most delicate parts, and probably the main contributions of this paper, lie in the proofs of the quadratic decay of $\nabla_A\Phi$ and $F_A$. We combine ideas from the original work in Jaffe--Taubes \cite[Chapter IV, Part ii]{Jaffe1980} with the regularity theory of the Laplacian on weighted H\"older spaces on AC $3$-manifolds, and we crucially explore Bochner formulas along the end combined with the decay results of Lemma \ref{lem: decay} and Proposition \ref{prop: BKN_decay}. In the case of the decay result for the curvature $F_A$, one further key ingredient are certain refined Kato inequalities with ``error terms'' that are based on \cite[(Proof of) Theorem 5]{smith2019removeability}.

We start in \S\ref{subsec: e-reg} deriving the Bochner--Weitzenb\"ock formulas for the rough Laplacian of $\nabla_A\Phi$ and $F_A$, and a consequent nonlinear estimate on the Laplacian of the energy density. This implies a well-known $\varepsilon$-regularity result that we use to derive general integrability and decay properties for any critical point of the Yang--Mills--Higgs energy on a noncompact Riemannian manifold of bounded geometry; see Corollary \ref{cor: e-reg_decay}. Combining this later result with our alternative general finite mass result given by Theorem \ref{thm: alternative_finite_mass}, we also derive a finite mass result for any critical point on any complete nonparabolic $3$-manifold of bounded geometry satisfying (PHI), see Corollary \ref{cor: finite_mass_for_crit_pts}.

From this point on, we restrict ourselves to AC manifolds with only one end and to configurations on principal bundles with structure group $\rm SU(2)$. In this context, \S\ref{subsec: exp_decay} is dedicated to the proof of Theorem \ref{thm: main_2} \ref{itm: i_main_2}, \emph{i.e.} the exponential decay of the $\Phi$-transverse components of $F_A$ and $\nabla_A\Phi$ for any irreducible critical point $(A,\Phi)$, see Theorem \ref{thm: exp_decay_transv}. The proof is based on the original Euclidean case proof due to Taubes, exploiting the decomposition of the adjoint bundle induced by the Higgs field along the end, together with the appropriate Bochner formulas and a comparison argument using the maximum principle. 

As a first consequence of the exponential decay result, we derive Theorem \ref{thm: main_2} \ref{itm: ii_main_2} by using the regularity theory for solutions of the Poisson equation on weighted H\"older spaces. Then \S\ref{subsec: quadratic_nablaPhi_asymp} focus mainly on the proof of the sharp quadratic decay rate of $\nabla_A\Phi$, \emph{i.e.} Theorem \ref{thm: main_2} \ref{itm: iii_main_2}. We reduce the proof to showing that $|\nabla_A\Phi|^2$ and its derivative are $O(\rho^{-3})$ and $O(\rho^{-4})$ respectively, which in turn is shown again by using a combination of Bochner inequalities along the end, together with the exponential decay of the transverse components and Lemma \ref{lem: decay}. We then derive Corollary \ref{cor: main_asymp_exp_Phi} from the previous results by using standard theory.

In \S\ref{subsec: quadratic} we prove part \ref{itm: iv_main_2} of Theorem \ref{thm: main_2} under condition \ref{itm: b_main_2}, \emph{i.e.} we prove the quadratic decay of the curvature for a general critical point when the asymptotic link $\Sigma$ has positive Gaussian curvature. We combine the Bochner formulas with refined Kato inequalities, and a scaling argument, to get an improved Bochner inequality along the end, see Proposition \ref{prop: refined_Kato_Bochner}. This together with the Moser iteration technique of Bando--Kasue--Nakajima and the regularity theory of the Laplace operator on weighted H\"older spaces implies the desired sharp quadratic decay \ref{itm: iv_main_2}, proved as Theorem \ref{thm: quadratic_decay}. In particular, this yields a new proof in the Euclidean case, perhaps more direct than Taubes' original proof. Finally, in \S\ref{subsec: lim_config} we use the decay estimates and standard techniques to prove the convergence to a limit configuration along the end (Theorem \ref{thm: limiting_configuration}), completing the proof of Theorem \ref{thm: main_2}.  

\subsection*{Acknowledgements} 
I would like to express my deepest gratitude to Gon\c{c}alo Oliveira, for all his teachings, support, encouragement and many important mathematical discussions that led me to write this paper. I am also thankful to Matheus Vieira, for various fruitful discussions on geometric analysis techniques, to Ákos Nagy for important comments and suggestions, and to the anonymous referee for their helpful comments. Special thanks goes to my partner, Ana Beatriz Fernandes, for her unconditional love and support, without which this work would not be possible.

This work was financed in part by the School of Mathematical Sciences of Peking University and by the Coordenação de Aperfeiçoamento de Pessoal de Nível Superior - Brasil (CAPES) - Finance Code 001, grant [88887.643728/2021-00] of the CAPES/COFECUB bilateral collaboration project Ma 898/18 [88887.143014/2017-00] (Universidade Estadual de Campinas/Université de Bretagne Occidentale).

\subsection{Notations and conventions}\label{sec: notations}
$\mathbb{N}_0:=\mathbb{N}\sqcup\{0\}$. We denote by $c>0$ a generic constant, which depends only on the dimension and geometry of the base Riemannian manifold $(X,g)$, and possibly on the (Lie algebra of the) structure group $G$ of a fixed principal bundle $P\to X$. Its value might change from one occurrence to the next. We write $x\lesssim y$ (or $y\gtrsim x$) for $x\leqslant cy$, while $x\sim y$ means both $x\lesssim y$ and $y\lesssim x$ (with possibly different constants). In case the hidden constant depends on further data, we indicate this by a subscript, \emph{e.g.} $x\lesssim_{A,\Phi} y$ or $x\sim_{A,\Phi} y$. We reserve $O(\cdot{})$ and $o(\cdot{})$ respectively for the standard big-O and little-o notations on the growth rate of functions under asymptotic regimes. 

The manifold $X$ is assumed to be connected, oriented and without boundary. Once fixed a Riemannian metric $g$ on $X$, the Riemannian distance function of $(X,g)$ is denoted by $d(\cdot{},\cdot{})$. We denote the open geodesic ball of center $x\in X$ and radius $r>0$ by $B(x,r):=\{y\in X: d(x,y)<r\}$. 
The Riemannian measure of a ball $B(x,r)$ is denoted by $V(x,r)$. All integrals of functions are with respect to the Riemannian measure, although we omit it (almost everywhere) in the notation. We let $\nabla$ denote the Levi--Civita connection of $(X,g)$, while the Riemann curvature covariant $4$-tensor is denoted by $\mathcal{R}_g$, the Ricci curvature tensor is denoted by $\mathcal{R}ic_g\in\Gamma(S^2T^{\ast}X)$ and the scalar curvature by $S_g$. We say that $(X,g)$ has \emph{bounded geometry} if the global injectivity radius is bounded away from zero, $\text{inj}(X):=\inf_{x\in X} \text{inj}(x)>0$, and the Riemann curvature tensor is bounded, $\|\mathcal{R}_g\|_{L^{\infty}(X)}\leqslant c<\infty$. In index notation, as usual, we use $g_{ij}$ and $g^{ij}$ to denote the metric $g$ and its inverse, and we use $\mathcal{R}_{ij}$, $\mathcal{R}_{ijk}^l$ and $\mathcal{R}_{ijkl}$ to denote the Ricci curvature tensor, the Riemann curvature $(1,3)$-tensor and the Riemann curvature $(0,4)$-tensor, respectively. Following the conventions in \cite{hamilton1982three,li2012geometric}, we lower the index to the third position in order to get $\mathcal{R}_{ijkl}$, \emph{i.e.} $\mathcal{R}_{ijkl}=g_{hk}\mathcal{R}_{ijl}^{h}$. We have $S_g = g^{ik}\mathcal{R}_{ik}$, $\mathcal{R}_{ik}=g^{jl}\mathcal{R}_{ijkl}$, and $\mathcal{R}_{ijkl}$ is anti-symmetric in the pairs $i,j$ and $k,l$ and symmetric in their interchange, and satisfies a Bianchi identity on the cyclic permutation of any three. 

The metric $g$ is assumed to be complete, and we emphasize this everywhere in the text. Using the metric $g$ and the Levi--Civita connection $\nabla$, we define in the usual way the Lebesgue spaces $L^p(X)$, the Sobolev spaces $W^{k,p}(X)$, the $C^k$-spaces $C^k(X)$ and the H\"older spaces $C^{k,\alpha}(X)$. 

Let $P\to X$ be a principal $G$-bundle, where $G$ is a compact Lie group. We denote by $\mathfrak{g}_P:=P\times_{\text{Ad}}\mathfrak{g}$ the associated adjoint bundle, which we assume to be equipped with a metric coming from a choice of Ad$_G$-invariant inner product on the Lie algebra $\mathfrak{g}$ of $G$. Most of the time $G=\rm SU(2)$ and in this case we fix the metric on $\mathfrak{su}(2)_P$ to be the one induced by the following normalization of the negative of the Killing form of $\mathfrak{su}(2)$: $(a,b)\mapsto -2\text{tr}(ab)$. We denote by $\mathscr{A}(P)$ the space of smooth connections on $P$. Given $A\in\mathscr{A}(P)$, we let $\nabla_A$ denote the various covariant derivatives induced by $A$, together with the Levi--Civita connection of $(X,g)$, on the vector bundles $V\otimes\mathfrak{g}_P$, where $V\to X$ denotes any tensor bundle. We write ${d}_A$ for the exterior covariant derivative induced by $\nabla_A$. Thus, \emph{e.g.} $[F_A,\xi] = d_A\nabla_A\xi$ for $\xi\in\Gamma(\mathfrak{g}_P)$. We let $d^{\ast}$, $d_A^{\ast}$ and $\nabla_A^{\ast}$ denote the formal $L^2$ adjoints of $d,d_A$ and $\nabla_A$ respectively. Finally, we use the geometer's convention for the various Laplace operators; $\Delta = d^{\ast}d$ denotes the Hodge--Laplacian operator on functions of $X$, and $\Delta_A=d_A d_A^{\ast} + d_A^{\ast}d_A$ is the covariant Hodge--Laplacian, induced by $A$, acting on the $\mathfrak{g}_P$-valued $k$-forms $\Omega^k(X,\mathfrak{g}_P)$. The notation $\nabla_A^2\xi$ means $\nabla_A(\nabla_A\xi)$ and \emph{not} the rough Laplacian of $\xi$, which we denote by $\nabla_A^{\ast}\nabla_A\xi$ instead. Also note that $\Delta_A = d_A^{\ast}d_A = \nabla_A^{\ast}\nabla_A$ on $\Omega^0(X,\mathfrak{g}_P)=\Gamma(\mathfrak{g}_P)$.

\section{Geometric analysis on complete noncompact manifolds}\label{sec: analysis}

This section sets up the general geometric analytic background of the paper. We derive and collect a number of important analytic tools while revisiting and putting together several fundamental known results, which in turn also motivate the geometric assumptions on the complete noncompact base manifolds over which we study Yang--Mills--Higgs theory in the next two sections. Our main result of this first part is a new Liouville type result, Theorem \ref{thm: Liouville}, stating that there is no nonconstant harmonic function with finite Dirichlet energy on a wide class of complete nonparabolic manifolds with only one end, including any AC manifold of dimension $n\geqslant 3$. 

\subsection{Green's functions, harmonic functions and volume growth}\label{subsec: greens_functions}
Let $(X,g)$ be a complete noncompact Riemannian manifold. Recall that a \emph{Green's function} on $(X,g)$ is a smooth function $G(x,y)$ on $X\times X\setminus\{(x,x):x\in X\}$ which is symmetric in the two variables $x$ and $y$ and satisfies $\Delta_y G(x,y) = \delta_x(y)$ as distributions, where $\delta_x(y)$ denotes the point mass delta function at $x$, \emph{i.e.}
\[
\int_X \Delta_y G(x,y) f(y)dy = f(x),\quad\text{for all }f\in C_c^{\infty}(X).
\] Any complete Riemannian manifold admits a Green's function \cite{malgrange1956existence}; a constructive proof of this fact was given by Li--Tam \cite{li1987symmetric} (see \emph{e.g.} \cite[Chapter 17]{li2012geometric} for more details). 

We say that $(X,g)$ is \emph{nonparabolic} if it admits a \emph{positive} Green's function $G(x,y)>0$; otherwise it is said to be \emph{parabolic}. It follows from \cite[Theorem 17.3]{li2012geometric} that a complete noncompact manifold is parabolic if and only if it admits no nonconstant upper bounded subharmonic functions.  

Now recall that $(X,g)$ is said to have the \emph{strong Liouville property} if it admits no nonconstant harmonic function which is bounded below (or above). We say that $(X,g)$ satisfies the (scale-invariant) \emph{elliptic Harnack inequality} (EHI) if there is a constant $C$ such that, for any ball $B(x,2r)\subset X$ and any nonnegative harmonic function $u$ in $B(x,2r)$, we have
\[
\sup_{B(x,r)} u \leqslant C \inf_{B(x,r)} u.
\] It is well known that the validity of (EHI) implies the strong Liouville property for $(X,g)$. Indeed, suppose $\Delta u = 0$ on $X$ and $u_{\ast}:=\inf_X u>-\infty$. Then applying (EHI) to the nonnegative harmonic function $u-u_{\ast}$ we get
\[
\sup_{B(x,r)} \{u-u_{\ast}\} \leqslant C \inf_{B(x,r)} \{u-u_{\ast}\},\quad\text{for any $x\in X$ and $r>0$}.
\] Since $C$ is uniform, as $r\to\infty$ the right-hand side of the above inequality tends to zero and we conclude that $u=u_{\ast}$ must be constant, as we wanted.

The parabolic version of the (EHI) is defined as follows. One says that $(X,g)$ satisfies the (scale-invariant) \emph{parabolic Harnack inequality} (PHI) if there is a constant $C$ such that for any $r,s\in\mathbb{R}$, $r>0$, any $x\in X$, and any nonnegative solution $u$ of the heat equation $(\partial_t + \Delta)u=0$ in $Q:=(s-4r^2,s)\times B(x,2r)$, we have
\[
\sup_{Q_{-}} u \leqslant C \inf_{Q_{+}} u,
\] where
\begin{align}
	Q_{+} &:= (s-r^2,s) \times B(x,r),\\
	Q_{-} &:= (s-3r^2,s-2r^2) \times B(x,r).
\end{align} It is immediate to see that (PHI) implies (EHI) (and therefore the strong Liouville property). After recalling the above, we now state a very important classical result, combining the works of various authors, which summarizes our fairly good understanding of complete noncompact manifolds satisfying (PHI). For a very good exposition and detailed proof of the following result, see \cite[Chapter 5]{saloff2002aspects}.
\begin{theorem}[Aronson, Fabes, Stroock, Grigor'yan, Saloff-Coste et al]\label{thm: PHI}
	Let $(X,g)$ be a complete, noncompact, Riemannian manifold. Then the following conditions are equivalent:
	\begin{itemize}
		\myitem[(i)]\label{itm: i_PHI} $(X,g)$ satisfies the parabolic Harnack inequality (PHI);
		\myitem[(ii)]\label{itm: ii_PHI} $(X,g)$ satisfies the following two properties:
		\begin{itemize}
			\myitem[(ii.a)]\label{itm: ii.a_PHI} Volume doubling: there exists a uniform constant $C_{\mathcal{D}}>0$, depending only on $(X,g)$, such that
			\begin{equation}\label{ineq: VD}
				V(x,2r) \leqslant C_{\mathcal{D}} V(x,r),  
			\end{equation} for all $x\in X$ and $r>0$.
			\myitem[(ii.b)]\label{itm: ii.b_PHI} Weak Neumann-type Poincaré inequality: there are uniform constants $C_{\mathcal{P}}>0$ and $\delta\in(0,1]$, depending only on $(X,g)$, such that
			\begin{equation}\label{ineq: wNP}
				r^{-2}\inf_{a\in\mathbb{R}}\int_{B(x,\delta r)} (f-a)^2\leqslant C_{\mathcal{P}} \int_{B(x,r)}|\nabla f|^2,
			\end{equation} for all $x\in X$, $r>0$ and $f\in C^{\infty}\left(\overline{B(x,r)}\right)$.
		\end{itemize}
		\myitem[(iii)]\label{itm: iii_PHI} There are uniform constants $c_i,C_i>0$, $i=1,2$, depending only on $(X,g)$, such that the heat kernel\footnote{$h(t,x,y)$ is a smooth function on $(0,\infty)\times X\times X$, symmetric in $x$ and $y$, and such that for each $x\in X$ one has that $h(t,x,y)=u(t,y)$ is the minimal positive fundamental solution of the heat equation $(\partial_t+\Delta)u=0$ with the initial condition $\lim_{t\downarrow 0}u(t,y)=\delta_x(y)$ in the sense of distributions.} $h(t,x,y)$ of $(X,g)$ satisfies the two-sided Gaussian bound
		\begin{equation}\label{ineq: Gaussian_bounds}
			\frac{c_1}{V(x,\sqrt{t})}e^{-C_1\frac{d(x,y)^2}{t}} \leqslant h(t,x,y) \leqslant \frac{c_2}{V(x,\sqrt{t})}e^{-C_2\frac{d(x,y)^2}{t}},
		\end{equation} for all $x,y\in X$ and $t\in (0,\infty)$.
	\end{itemize}
\end{theorem}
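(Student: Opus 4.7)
The plan is to establish the three conditions cyclically via $(iii) \Rightarrow (i) \Rightarrow (ii) \Rightarrow (iii)$, following the classical strategy of Grigor'yan and Saloff-Coste.

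First I would handle $(iii) \Rightarrow (i)$, which is the most direct. Given a nonnegative solution $u$ of the heat equation on $(s-4r^2, s) \times B(x, 2r)$, one represents $u$ at later times as a heat-semigroup convolution of its values at slightly earlier times, using an appropriate spatial cutoff to localize in the ball. Then compare values at $(t_+, y_+) \in Q_+$ with values at $(t_-, y_-) \in Q_-$ by plugging the two-sided Gaussian bound \eqref{ineq: Gaussian_bounds} directly into the integral representation. Since all the relevant time differences and spatial separations are comparable to $r^2$ and $r$ respectively, the Gaussian exponentials are uniformly bounded above and below on the region of integration, while the volume factors $V(\cdot, \sqrt{t})$ are uniformly comparable thanks to volume doubling (which itself is a trivial consequence of \eqref{ineq: Gaussian_bounds} obtained by comparing $h(t, x, x)$ at $t = r^2$ and $t = 4r^2$).

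Next, for $(i) \Rightarrow (ii)$: the doubling property \eqref{ineq: VD} follows by applying (PHI) to the nonnegative caloric function $y \mapsto h(t, x, y)$ and combining with mass conservation $\int_X h(t, x, y)\, dy = 1$ to extract two-sided size estimates on $V(x, \sqrt{t})$ at dyadic scales. The weak Poincaré inequality \eqref{ineq: wNP} is the more technical half: writing $f - P_{r^2} f$ as the time integral of $\partial_t P_t f = -\Delta P_t f$ along the heat semigroup and applying (PHI) to the resulting caloric functions converts pointwise oscillation into $L^2$ oscillation of $f$, while an integration by parts introduces the correct $|\nabla f|^2$ gradient term on the right-hand side; the constant $a$ can then be taken to be a suitable average.

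Finally I would attack $(ii) \Rightarrow (iii)$, where I expect the main obstacle to lie. The doubling property \eqref{ineq: VD} together with \eqref{ineq: wNP} and Moser iteration yields a scale-invariant Nash (or $L^2$-Sobolev) inequality on each geodesic ball, which via Nash's classical argument produces the on-diagonal upper bound $h(t, x, x) \lesssim V(x, \sqrt{t})^{-1}$; Davies' perturbation trick (conjugating the semigroup by $e^{\lambda \psi}$ for Lipschitz $\psi$ and optimizing in $\lambda$) then upgrades this to the full off-diagonal Gaussian upper bound. The hard part will be the matching lower bound: one first obtains a near-diagonal estimate $h(t, x, y) \gtrsim V(x, \sqrt{t})^{-1}$ for $d(x, y) \leqslant \epsilon \sqrt{t}$ by combining the upper bound with stochastic completeness \emph{à la} Nash, and then chains this estimate along $k \sim d(x, y)^2 / t$ intermediate points via the semigroup identity $h(kt, x, y) = \int \cdots \int h(t, x, z_1) \cdots h(t, z_{k-1}, y)\, dz_1 \cdots dz_{k-1}$. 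The delicate step is optimizing the chain length while using \eqref{ineq: VD} uniformly to control the intermediate volumes, which is precisely what produces the Gaussian factor $e^{-C d(x, y)^2 / t}$ with the correct dependence on $d(x,y)$ and $t$.
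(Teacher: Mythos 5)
The paper does not actually prove this theorem: it is quoted from the literature, with the proof deferred to \cite[Chapter 5]{saloff2002aspects} and the individual implications attributed, in the Remark that follows the statement, to Saloff-Coste and Grigor'yan for \ref{itm: i_PHI} $\Leftrightarrow$ \ref{itm: ii_PHI} and to Aronson and Fabes--Stroock for \ref{itm: i_PHI} $\Leftrightarrow$ \ref{itm: iii_PHI}. So there is no in-paper argument to compare against; what follows assesses your sketch as a reconstruction of those classical proofs.

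Your cyclic strategy is legitimate and most steps are the standard ones, but two of the arrows conceal the real difficulties. In \ref{itm: iii_PHI} $\Rightarrow$ \ref{itm: i_PHI}, a nonnegative local solution $u$ on $(s-4r^2,s)\times B(x,2r)$ cannot be represented by convolving its earlier values against the \emph{global} heat kernel: $u$ solves nothing outside the cylinder, and the parabolic minimum principle only yields $u(t,y)\geqslant\int h^{D}(t-\sigma,y,z)u(\sigma,z)\,dz$ with $h^{D}$ the \emph{Dirichlet} heat kernel of $B(x,2r)$. The substance of the Fabes--Stroock argument is precisely the comparison of $h^{D}$ with $h$ from below in the relevant space-time range, which needs its own chaining argument using both Gaussian bounds; ``an appropriate spatial cutoff'' does not supply this. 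In \ref{itm: ii_PHI} $\Rightarrow$ \ref{itm: iii_PHI}, the weak link is the lower bound: Nash's semigroup identity $h(2t,x,x)=\int h(t,x,z)^2\,dz$ together with non-escape of mass gives only the \emph{on-diagonal} estimate $h(t,x,x)\gtrsim V(x,\sqrt t)^{-1}$, and propagating it to $h(t,x,y)$ for $d(x,y)\leqslant\epsilon\sqrt t$ (the input your chaining step requires) needs an oscillation or H\"older estimate in $y$ for the heat kernel, which in this generality comes from parabolic Moser iteration --- that is, you end up proving \ref{itm: ii_PHI} $\Rightarrow$ \ref{itm: i_PHI} anyway. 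This is why the literature orders the cycle as \ref{itm: ii_PHI} $\Rightarrow$ \ref{itm: i_PHI} (the genuinely hard implication, due to Grigor'yan and Saloff-Coste) followed by \ref{itm: i_PHI} $\Rightarrow$ \ref{itm: iii_PHI}; your ordering is not wrong, but the ``direct'' path to the Gaussian lower bound is not independent of the Harnack machinery. A smaller point: in \ref{itm: i_PHI} $\Rightarrow$ \ref{itm: ii_PHI} the test function $f$ in \eqref{ineq: wNP} lives only on $\overline{B(x,r)}$, so the global semigroup $P_tf$ is undefined; one must extend $f$ or work with a local Neumann semigroup, which is exactly why the conclusion is only the weak inequality with $\delta<1$.
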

\begin{remark}
	As is known, the infimum in the left-hand side of \eqref{ineq: wNP} is achieved when the constant $a$ equals the mean value of $f$ over $B(x,\delta r)$. Moreover, due to the work of Jerison \cite{jerison1986poincare}, the volume doubling \eqref{ineq: VD} together with the weak Poincaré inequality \eqref{ineq: wNP} actually implies the strong Poincaré inequality, where the parameter $\delta=1$ in \eqref{ineq: wNP} (see \cite[Corollary 5.3.5]{saloff2002aspects}).
	
	The implication \ref{itm: i_PHI} $\Longrightarrow$ \ref{itm: ii_PHI} was proved by Saloff-Coste \cite{saloff1992poincare}, while \ref{itm: ii_PHI} $\Longrightarrow$ \ref{itm: i_PHI} was proved by both Grigor'yan {\cite{grigor1991heat}} and Saloff-Coste \cite{saloff1992poincare} independently. The proof of the equivalence \ref{itm: i_PHI} $\iff$ \ref{itm: iii_PHI} dates back to the works of Aronson \cite{aronson1967bounds}, who proves \ref{itm: i_PHI} $\Longrightarrow$ \ref{itm: iii_PHI}, and the work of Fabes--Stroock \cite{Fabes1989} where they prove the other implication \ref{itm: iii_PHI} $\Longrightarrow$ \ref{itm: i_PHI}.
\end{remark}
\begin{remark}\label{rmk: strong_Liouville}
	It follows from the discussion preceding Theorem \ref{thm: PHI} that any complete noncompact Riemannian manifold satisfying the equivalent conditions of Theorem \ref{thm: PHI} satisfies the strong Liouville property.
\end{remark}
Recalling the work of Varopoulos \cite{varopoulos1983potential}, a necessary condition for a complete noncompact manifold $(X,g)$ to be nonparabolic is that there exists $x\in X$ such that the volume $V(x,\sqrt{t})$ of a geodesic ball centered at $x$ and of radius $\sqrt{t}$ satisfies the growth condition
\begin{equation}\label{ineq: vol_growth}
	\int_{1}^{\infty}\frac{dt}{V(x,\sqrt{t})}<\infty.
\end{equation} It is easy to see that the condition \eqref{ineq: vol_growth} holds for \emph{some} $x\in X$ if and only if it holds for \emph{any} $x\in X$. Moreover, this necessary condition \eqref{ineq: vol_growth} is known to be sharp in the sense that if $f(t)$ is a smooth convex function such that $f'(t)>0$ and
\[
\int_1^{\infty} \frac{dt}{f(\sqrt{t})}<\infty,
\] then there is a complete nonparabolic manifold $(X,g)$ such that $V(x,\sqrt{t})=f(\sqrt{t})$ for large enough $t$; see e.g. \cite[\S 7.2]{grigor1999analytic}. 

Now, as a consequence of Theorem \ref{thm: PHI}, we have the following:
\begin{corollary}[{\cite[Corollary 5.4.13]{saloff2002aspects}}]\label{cor: nonparabolic}
	Suppose $(X,g)$ is a complete, noncompact, Riemannian manifold satisfying the equivalent conditions of Theorem \ref{thm: PHI}. Then $(X,g)$ is nonparabolic if and only if the volume growth condition \eqref{ineq: vol_growth} holds. Moreover, if this condition holds, the minimal positive Green's function $G(x,y)=\int_{0}^{\infty}h(t,x,y)dt$ satisfies
	\begin{equation}\label{ineq: Green_bounds}
		G(x,y) \sim \int_{d(x,y)^2}^{\infty}\frac{dt}{V(x,\sqrt{t})}
	\end{equation} and there is $\mu>0$ such that
	\begin{equation}\label{ineq: Green_Holder_bound}
		\frac{|G(x,y)-G(x,z)|}{d(y,z)^{\mu}}\lesssim \int_{d(x,y)^2}^{\infty}\frac{dt}{t^{\mu/2}V(x,\sqrt{t})}
	\end{equation} for all $x,y,z\in X$, $x\neq y$ and $d(y,z)\leqslant d(x,y)/2$.	
\end{corollary}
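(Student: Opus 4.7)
My plan is to deduce the corollary as a direct consequence of the characterization of (PHI) furnished by Theorem \ref{thm: PHI}, exploiting the two-sided Gaussian bounds \eqref{ineq: Gaussian_bounds} and the volume doubling \eqref{ineq: VD}. First, for the equivalence of nonparabolicity with the volume growth condition \eqref{ineq: vol_growth}: the necessity is already furnished by Varopoulos's criterion recalled before the statement. For the sufficiency, I would use that the minimal positive fundamental solution to the heat equation yields a candidate Green's function via $G(x,y):=\int_0^\infty h(t,x,y)\,dt$, and invoke the upper Gaussian bound in \eqref{ineq: Gaussian_bounds} to show that this integral converges for every $y\neq x$ precisely when \eqref{ineq: vol_growth} holds; positivity of the resulting $G(x,y)$ then follows from the positivity of $h$, and the defining distributional identity $\Delta_y G(x,y)=\delta_x(y)$ follows by differentiating under the integral and using that $h$ is a fundamental solution.

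Second, for the size estimate \eqref{ineq: Green_bounds}, I would directly integrate the two-sided Gaussian bounds in \eqref{ineq: Gaussian_bounds}. Splitting $\int_0^\infty h(t,x,y)\,dt$ at $t=d(x,y)^2$, the tail $\int_{d(x,y)^2}^\infty$ already yields a quantity comparable to $\int_{d(x,y)^2}^\infty dt/V(x,\sqrt{t})$, since on that range $e^{-C_i d(x,y)^2/t}\sim 1$. For the initial segment $\int_0^{d(x,y)^2}$, the Gaussian exponential decay $e^{-C_2 d(x,y)^2/t}$ must be balanced against the potential smallness of $V(x,\sqrt t)$. A standard change of variable $s=d(x,y)^2/t$, together with the polynomial volume-comparison consequence of \eqref{ineq: VD} (namely $V(x,r)/V(x,r')\lesssim (r/r')^{\log_2 C_{\mathcal D}}$), lets one absorb this contribution into the tail term, yielding the upper bound in \eqref{ineq: Green_bounds}. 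The matching lower bound is obtained in the same way from the lower Gaussian bound, restricted to times $t\geqslant d(x,y)^2$.

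Third, for the H\"older estimate \eqref{ineq: Green_Holder_bound}, I would appeal to the fact that (PHI) implies a scale-invariant H\"older regularity for nonnegative harmonic functions on balls: there exists $\mu\in(0,1)$ such that, for any nonnegative harmonic $u$ on $B(y,r)$,
\[
\sup_{z,z'\in B(y,r/2)} \frac{|u(z)-u(z')|}{d(z,z')^\mu}\lesssim r^{-\mu}\sup_{B(y,r)} u.
\]
This is a Moser/De Giorgi--Nash type consequence of the volume doubling and weak Poincar\'e inequality in \ref{itm: ii_PHI} (see \emph{e.g.}\ \cite[Theorem 5.4.7]{saloff2002aspects}). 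Applied to $u(\cdot)=G(x,\cdot)$, harmonic on $X\setminus\{x\}$, on a ball centered at $y$ of radius $r\sim d(x,y)$, and with the sup on that ball controlled by the upper bound proved in the previous step, this yields \eqref{ineq: Green_Holder_bound} after applying the volume doubling one more time.

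The main obstacle is the sharp matching of upper and lower integrated bounds for the Green's function: the contribution from small times $t\leqslant d(x,y)^2$ in the upper bound must be shown to be dominated by the tail contribution, which is where volume doubling enters in an essential way. The proof of the H\"older estimate is then more routine given the known regularity theory under (PHI), but requires one to have the sup-norm bound on $G(x,\cdot)$ from the size estimate in hand.
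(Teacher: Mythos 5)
The paper offers no proof of this corollary — it is quoted directly from Saloff-Coste — so your argument is necessarily a reconstruction. Your treatment of the first two assertions is sound and is the standard one: the equivalence of nonparabolicity with \eqref{ineq: vol_growth} via convergence of $\int_0^\infty h(t,x,y)\,dt$, and the two-sided bound \eqref{ineq: Green_bounds} by splitting the time integral at $t=d(x,y)^2$, noting the Gaussian factor is $\sim 1$ on the tail, and absorbing the small-time piece into the tail via doubling (your substitution $s=d(x,y)^2/t$ together with $V(x,\sqrt t)^{-1}\lesssim (d(x,y)/\sqrt t)^{\alpha}V(x,d(x,y))^{-1}$ gives $\int_0^{d^2}\lesssim d^2/V(x,d)\lesssim\int_{d^2}^{4d^2}dt/V(x,\sqrt t)$).

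The gap is in the H\"older estimate. Applying the elliptic oscillation estimate to $u=G(x,\cdot)$ on a single ball $B(y,r)$ with $r\sim d(x,y)$, as you propose, yields
\[
|G(x,y)-G(x,z)|\lesssim \Big(\tfrac{d(y,z)}{d(x,y)}\Big)^{\mu}\sup_{B(y,r)}G(x,\cdot)\lesssim d(y,z)^{\mu}\,d(x,y)^{-\mu}\int_{d(x,y)^2}^{\infty}\frac{dt}{V(x,\sqrt t)},
\]
which is \emph{weaker} than \eqref{ineq: Green_Holder_bound}: since $t^{-\mu/2}\leqslant d(x,y)^{-\mu}$ on the domain of integration, the stated right-hand side is the smaller quantity, and strictly smaller in general (for a doubling profile such as $V(x,r)\sim r^2(\log r)^2$ the two bounds differ by a factor of $\log d(x,y)$). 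To obtain \eqref{ineq: Green_Holder_bound} as written one must apply the H\"older continuity at \emph{each} time scale, i.e.\ use the (PHI)-consequence that $|h(t,x,y)-h(t,x,z)|\lesssim (d(y,z)/\sqrt t)^{\mu}\,V(x,\sqrt t)^{-1}e^{-cd(x,y)^2/t}$ for $d(y,z)\leqslant\sqrt t$, and then integrate in $t$; that is how the weight $t^{-\mu/2}$ gets inside the integral, and it is how the cited Corollary 5.4.13 is actually proved. For the only use the present paper makes of the estimate — AC manifolds with $V(x,r)\gtrsim r^{n}$, leading to \eqref{eq: Holder_mu} — both versions give $d(x,y)^{-(n-2)-\mu}$, so your weaker bound would still suffice downstream, but it does not establish the corollary as stated.
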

	The notion of (non)parabolicity of a manifold turns out to depend only on its behavior at infinity. Recall that an \emph{end} $E$ of the manifold $X$ is an unbounded connected component of the complement $X\setminus K$ of some compact subset $K\subset X$. Then $E$ is said to be (non)parabolic if it is the only end of some complete (non)parabolic manifold without boundary; equivalently, $E$ is (non)parabolic if it can be extended to a complete (non)parabolic manifold by attaching a compact set to its boundary.
	
	We note that the number of ends of a complete nonparabolic manifold is bounded by the dimension of the real vector space spanned by the set of positive harmonic functions (see \cite{li1992harmonic}). Hence, by the strong Liouville property, a nonparabolic manifold satisfying the equivalent conditions of Theorem \ref{thm: PHI} must have only one end.

The model examples of manifolds satisfying the equivalent conditions in Theorem \ref{thm: PHI} are the Euclidean spaces $\mathbb{R}^n$ for $n\geqslant 2$, which are nonparabolic for $n\geqslant 3$, while $\mathbb{R}^2$ is parabolic. In contrast, the hyperbolic spaces $\mathbb{H}^n$ do not satisfy (PHI); in fact, in these spaces the constant $C$ in both (EHI) and (PHI) does explode as the radius $r\to\infty$. There are various other examples of manifolds for which the equivalent conditions in Theorem \ref{thm: PHI} are known to be valid, most notably complete Riemannian manifolds of nonnegative Ricci curvature, and Lie groups equipped with an invariant metric having polynomial volume growth (see \cite[$\S$5.6]{saloff2002aspects}). Also, it is important to notice that the properties prescribed in item \ref{itm: ii_PHI} in Theorem \ref{thm: PHI}, \emph{i.e.} the volume doubling property \ref{itm: ii.a_PHI} and the validity of a Poincaré inequality \ref{itm: ii.b_PHI}, are invariant under quasi-isometries\footnote{Given a manifold $X$ and two Riemannian metrics $g$ and $\tilde{g}$ on $X$, we say that $g$ and $\tilde{g}$ are \emph{quasi-isometric} if there exists $c>0$ such that $c g_x\leqslant \tilde{g}_x\leqslant c^{-1}g_x$ as bilinear forms, at every point $x\in X$.}. This implies the nontrivial fact that the other two equivalent conditions \ref{itm: i_PHI} and \ref{itm: iii_PHI} in Theorem \ref{thm: PHI} are also invariant under quasi-isometries\footnote{Barlow and Murugan \cite{barlow2018stability} proved fairly recently that such an stability result also holds for (EHI), assuming a lower bound on the Ricci curvature; more precisely, if $(X,g)$ and $(X',g')$ are two Riemannian manifolds that are quasi-isometric to a Riemannian manifold with Ricci curvature bounded from below, then $(X,g)$ satisfies (EHI) if and only if $(X',g')$ satisfies (EHI).}, and also that any Riemannian manifold that is merely quasi-isometric to the previously mentioned examples satisfying the equivalent conditions of Theorem \ref{thm: PHI} are still examples. 

Asymptotically conical (AC) manifolds with only one end\footnote{The definition of an $n$-dimensional AC manifold with only one end, for $n\geqslant 2$, is entirely analogous to the $3$-dimensional definition given in \S\ref{subsec: background}, one just observes that in the general case the asymptotic link $\Sigma$ must be $(n-1)$-dimensional.} is another particularly interesting class of complete noncompact Riemannian manifolds that satisfy the equivalent conditions of Theorem \ref{thm: PHI}, and generalizes the class of (asymptotically) Euclidean spaces. Indeed, van Coevering \cite{van2009regularity} proved that condition \ref{itm: ii_PHI} of Theorem \ref{thm: PHI} holds in this case. He used the invariance under quasi-isometries to simplify the proof and showed the validity of the Poincaré inequality by using a discretization technique previously employed by Grigor'yan and Saloff-Coste \cite{grigor2005stability} and generalized by Minerbe \cite{minerbe2009weighted}.

\begin{theorem}[{\cite[Theorem 2.24]{van2009regularity}}]\label{thm: VD+P_AC}
	Suppose $(X^n,g)$ is a complete noncompact Riemannian $n$-manifold with only one end which is AC or merely quasi-isometric to an AC manifold. Then the equivalent conditions of Theorem \ref{thm: PHI} hold on $(X^n,g)$.
\end{theorem}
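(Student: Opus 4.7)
The plan is to verify condition \ref{itm: ii_PHI} of \Cref{thm: PHI}, namely the volume doubling \eqref{ineq: VD} together with the weak Neumann-type Poincar\'e inequality \eqref{ineq: wNP}, from which all three equivalent statements in \Cref{thm: PHI} follow. Since both \eqref{ineq: VD} and \eqref{ineq: wNP} are invariants of the quasi-isometry class of the metric, I would first replace the given $g$ by the exact cone metric $g_C = dr^2 + r^2 g_\Sigma$ on $C(\Sigma)$, glued to a suitable compact manifold with boundary filling in the core; the AC rate $\nu>0$ guarantees that $g$ and $g_C$ are uniformly comparable on the end, so it suffices to prove \eqref{ineq: VD} and \eqref{ineq: wNP} for this exact-cone-plus-core model.

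The volume doubling \eqref{ineq: VD} follows directly from the conical structure. On $(C(\Sigma),g_C)$ we have the explicit formula $\mathrm{Vol}_{g_C}([a,b]\times\Sigma) = \mathrm{Vol}(\Sigma)\int_a^b r^{n-1}\,dr$, and an elementary geometric comparison shows that for $x=(r_0,\sigma)\in C(\Sigma)$ and $s>0$ the geodesic ball $B(x,s)$ behaves like a Euclidean ball of radius $s$ when $s\lesssim r_0$ and like a thickened radial slab of thickness $\sim s$ in $[r_0-s,r_0+s]\times\Sigma$ when $s\gtrsim r_0$. In both regimes $V(x,2s)\leqslant C\,V(x,s)$ with $C$ depending only on $n$ and $(\Sigma,g_\Sigma)$. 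Doubling on balls that meet the compact core is then handled by bounded geometry and a standard finite-cover argument that patches the core to the cone.

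The main obstacle is the Poincar\'e inequality \eqref{ineq: wNP}. I would follow the discretization technique of Grigor'yan--Saloff-Coste \cite{grigor2005stability} as refined by Minerbe \cite{minerbe2009weighted}: choose a maximal separated net $\{x_i\}\subset X$ whose spacing is adapted to the local geometry (locally of Euclidean type, with injectivity radius growing linearly along the cone), cover $X$ by balls $B(x_i,r_i)$ with bounded multiplicity on their doubles, and build the weighted nerve graph $\mathcal G$ on the $x_i$ with edge weights encoding the overlap volumes. Using \eqref{ineq: VD} together with the local Poincar\'e inequality on each small ball (valid because the cone metric has bounded geometry on each fixed scale), one reduces \eqref{ineq: wNP} on $X$ to a discrete Poincar\'e inequality on $\mathcal G$. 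On the cone part this discrete inequality splits into a ``spherical'' piece, handled by the ordinary Poincar\'e inequality on the compact link $(\Sigma,g_\Sigma)$, and a ``radial'' piece, which reduces to a weighted one-dimensional Poincar\'e inequality on $(1,\infty)_r$ with weight $r^{n-1}$; both are elementary. Patching the cone contribution with the compact core via the same covering argument completes the verification of \eqref{ineq: wNP}.

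With \eqref{ineq: VD} and \eqref{ineq: wNP} in hand, \Cref{thm: PHI} delivers (PHI), the two-sided Gaussian bound \eqref{ineq: Gaussian_bounds} for the heat kernel, the elliptic Harnack inequality and the strong Liouville property on $(X^n,g)$. I expect the bookkeeping for the discretization and graph-transfer step to be the substantive work, while volume doubling and the quasi-isometric reduction are essentially routine; no curvature hypothesis on $(X,g)$ is required, only the asymptotic conical structure together with the quasi-isometry class.
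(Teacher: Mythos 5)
Your proposal follows essentially the same route as the proof the paper relies on: the paper defers entirely to van Coevering \cite{van2009regularity}*{Theorem 2.24}, whose argument is precisely to verify condition \ref{itm: ii_PHI} of Theorem \ref{thm: PHI}, exploiting quasi-isometry invariance to reduce to an exact cone model and establishing the weak Poincar\'e inequality via the Grigor'yan--Saloff-Coste/Minerbe discretization technique. Your sketch correctly identifies the discretization step as the substantive work and the doubling/quasi-isometry reduction as routine, so there is nothing to add beyond carrying out the bookkeeping you already flag.
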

Moreover, using the same discretization technique, van Coevering also proved the validity of Euclidean-like $L^p$-Sobolev inequalities on AC manifolds:
\begin{theorem}[{\cite[Theorem 2.6 and Corollary 2.7]{van2009regularity}}]\label{thm: Sobolev_AC}
	Continue the hypotheses of Theorem \ref{thm: VD+P_AC}. Then for any real $p$ such that $1\leqslant p<n$, there is a constant $C_p>0$ such that $(X^n,g)$ satisfies the $L^p$-Sobolev inequality
	\begin{equation}\label{ineq: Euclidean_Lp-Sobolev}
		\|f\|_{L^{np/(n-p)}(X)}\leqslant C_p\|df\|_{L^p(X)},\quad\forall f\in C_c^{\infty}(X).
	\end{equation}
\end{theorem}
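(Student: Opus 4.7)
The plan is to reduce the $L^p$-Sobolev inequality to a Euclidean-type on-diagonal upper bound on the heat kernel, and then invoke the classical correspondence (essentially due to Varopoulos) between such a bound and the Euclidean $L^p$-Sobolev inequality.

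First I would establish the uniform Euclidean lower bound on the volume of balls
\begin{equation*}
V(x,r) \gtrsim r^n, \quad \text{for all } x \in X,\ r > 0.
\end{equation*}
For $x$ in the conical end and radii large relative to the transition scale, this is inherited from the model cone $(C(\Sigma), g_C)$: the metric $g_C = dr^2 + r^2 g_\Sigma$ has exactly Euclidean $n$-dimensional volume growth, and the AC control $|\varphi^*g - g_C|_C = O(r^{-\nu})$ perturbs volumes only by a bounded multiplicative factor. For small radii or for $x$ in a compact core, bounded geometry (which AC manifolds automatically enjoy) supplies a matching lower bound via Bishop--Gromov. Intermediate scales are interpolated using the volume doubling property \ref{itm: ii.a_PHI} already available from Theorem \ref{thm: VD+P_AC}. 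Since both volumes and this bound are stable under quasi-isometry, the estimate extends to manifolds merely quasi-isometric to AC ones.

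Combining this lower bound with the Gaussian upper heat kernel estimate \eqref{ineq: Gaussian_bounds} supplied by Theorem \ref{thm: PHI} (applicable via Theorem \ref{thm: VD+P_AC}) yields
\begin{equation*}
h(t,x,y) \leqslant \frac{c_2}{V(x,\sqrt{t})} \leqslant c\, t^{-n/2}, \quad \text{for all } x,y \in X,\ t > 0,
\end{equation*}
\emph{i.e.} ultracontractivity of the heat semigroup in Euclidean dimension $n$. The classical theorem of Varopoulos (equivalent to the Faber--Krahn and Nash characterizations worked out by Coulhon--Saloff-Coste) asserts that on any complete Riemannian manifold such a uniform bound is equivalent to the $L^2$-Sobolev inequality $\|f\|_{L^{2n/(n-2)}} \lesssim \|df\|_{L^2}$. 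The full family \eqref{ineq: Euclidean_Lp-Sobolev} for $1 \leqslant p < n$ then follows by the standard truncation argument: testing the $L^2$ inequality on $|f|^{\alpha}$ for a suitable exponent $\alpha$ and invoking H\"older.

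The main obstacle I anticipate is step one, namely producing a genuinely \emph{uniform} Euclidean lower bound on $V(x,r)$ independent of both $x$ and $r$. The asymptotic cone computation is only valid at large scales in the end, and one must patch it to the compact core and interpolate through the transition region carefully. Under quasi-isometric deformation one no longer has a model metric to fall back on, so the discretization technique of Grigor'yan--Saloff-Coste and Minerbe employed by van Coevering in the proof of Theorem \ref{thm: VD+P_AC} is the natural tool to supply the required uniformity across all scales.
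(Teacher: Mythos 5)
Your route for $p=2$ is sound and in fact mirrors the paper's own Proposition \ref{prop: Sobolev_vol_growth}: the uniform bound $V(x,r)\gtrsim r^n$ can indeed be obtained directly (explicit computation on the model cone for points deep in the end, non-collapsing from bounded geometry for small radii and for the compact core, doubling to interpolate), and combined with the upper Gaussian bound \eqref{ineq: Gaussian_bounds} this gives $\sup_{x,y}h(t,x,y)\lesssim t^{-n/2}$, which by the Varopoulos/Carlen--Kusuoka--Stroock equivalence (the paper invokes \cite[Theorem 11.6]{li2012geometric} for exactly this step) yields the $L^2$-Sobolev inequality. Note that the paper runs this implication in the opposite direction -- it quotes van Coevering's Sobolev inequality and then \emph{deduces} the volume lower bound in Corollary \ref{cor: AC_nonparabolic} -- so your argument is a legitimate, non-circular alternative for $p=2$, provided the volume bound is established independently as you indicate.

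The genuine gap is the final sentence. The truncation argument (testing the inequality on $|f|^{\alpha}$ and applying H\"older) only \emph{increases} the exponent on the gradient side: starting from the $(2,2^{*})$ inequality it produces $(p,p^{*})$ for $2\leqslant p<n$, but for $p<2$ the H\"older exponent needed to split $\||f|^{\alpha-1}\,|df|\|_{L^2}$ becomes negative and the argument breaks down. This is not a technicality: the $L^1$-Sobolev inequality is equivalent to a uniform isoperimetric inequality, which is strictly stronger than ultracontractivity of the heat semigroup (equivalently, than the $L^2$-Sobolev/Nash/Faber--Krahn inequalities), and it cannot be recovered from the heat kernel bound or from (PHI) -- the weak Poincar\'e inequality \eqref{ineq: wNP} is an $L^2$ statement and does not self-improve to the $(1,1)$-Poincar\'e inequality one would need. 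So your proof covers $2\leqslant p<n$ only. To obtain the full range $1\leqslant p<n$, and in particular the crucial $p=1$ case, one has to argue differently; this is precisely what van Coevering's proof (cited by the paper) does via the discretization technique of Grigor'yan--Saloff-Coste and Minerbe, which establishes the $L^p$-Sobolev inequality for each $p\geqslant 1$ by patching local Sobolev inequalities on annular pieces of the cone against a discrete Sobolev inequality on the nerve graph. You mention this technique only as a tool for the volume bound, where it is not needed; it is in fact the essential ingredient for the $p\in[1,2)$ range that your argument misses.
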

We shall pay particular attention to the $p=2$ case of the inequality \eqref{ineq: Euclidean_Lp-Sobolev}: a complete noncompact Riemannian $n$-manifold $(X^n,g)$ of dimension $n\geqslant 3$ is said to satisfy the \emph{$L^2$-Sobolev inequality} if there is a constant $C_{\mathcal{S}}>0$, depending only on $(X^n,g)$, such that
\begin{equation}\label{ineq: L2-Sobolev}
\|df\|_{L^2(X)}^2\geqslant C_{\mathcal{S}}\|f\|_{L^{2n/(n-2)}(X)}^2,\quad\forall f\in C_c^{\infty}(X).
\end{equation} 

On a given complete noncompact Riemannian manifold, there is a close relation between the property of supporting the $L^2$-Sobolev inequality \eqref{ineq: L2-Sobolev} and the property of satisfying the following uniform lower bound on the volume growth: 
\begin{equation}\label{ineq: lb_vol_growth}
V(x,r)\geqslant cr^n,\quad\text{for all }x\in X\text{ and }r>0.
\end{equation} Indeed, the first \eqref{ineq: L2-Sobolev} always implies the later \eqref{ineq: lb_vol_growth}. Moreover, it is a well-known fact that such properties are equivalent in the case of complete manifolds with nonnegative Ricci curvature (see \emph{e.g.} \cite[Chapter 14, Remark 2]{li2012geometric}). In fact, it turns out that these properties are equivalent more generally for any complete manifold satisfying the equivalent conditions of Theorem \ref{thm: PHI}. 
\begin{proposition}\label{prop: Sobolev_vol_growth}
	Let $(X^n,g)$ be a complete noncompact Riemannian $n$-manifold, $n\geqslant 3$. If $(X,g)$ satisfies the $L^2$-Sobolev inequality \eqref{ineq: L2-Sobolev}, then there is a constant $c>0$, depending only on $n$ and $C_{\mathcal{S}}$, such that $(X^n,g)$ satisfies the uniform lower bound \eqref{ineq: lb_vol_growth} on the volume growth. Conversely, whenever $(X,g)$ satisfies the equivalent conditions of Theorem \ref{thm: PHI}, the validity of \eqref{ineq: lb_vol_growth}, for some uniform constant $c>0$, implies that $(X,g)$ satisfies the $L^2$-Sobolev inequality \eqref{ineq: L2-Sobolev}, with constant $C_{\mathcal{S}}$ depending only on $n$, $c$ and the uniform constant $c_2>0$ appearing on the heat kernel upper bound in \eqref{ineq: Gaussian_bounds}.
\end{proposition}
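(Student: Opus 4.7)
We treat the two implications separately.

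\emph{First direction.} The plan is to test the $L^2$-Sobolev inequality \eqref{ineq: L2-Sobolev} against the standard Lipschitz cutoff $\eta_R(y) := \min\bigl(1,\, 2(R - d(x,y))_+/R\bigr)$, which is supported in $B(x,R)$, identically $1$ on $B(x,R/2)$, and satisfies $|d\eta_R| \leqslant 2/R$ almost everywhere. After the usual approximation by smooth functions, this yields the reverse-doubling estimate
\[
V(x, R/2)^{(n-2)/n} \leqslant 4\, C_{\mathcal{S}}^{-1} R^{-2}\, V(x, R)
\]
for every $x \in X$ and $R > 0$. Setting $R_k := 2^{-k} R$, $v_k := V(x, R_k)$, and $\alpha := (n-2)/n \in (0,1)$, this reads $v_{k-1} \geqslant (C_{\mathcal{S}}/4)\, R_{k-1}^2\, v_k^{\alpha}$. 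I would then iterate downward from $k = 1$ to $k = K$: the exponents on $R$ telescope, and letting $K \to \infty$ the convergent geometric sums $\sum_{j \geqslant 0} \alpha^j = n/2$ and $\sum_{j \geqslant 0} j\,\alpha^j = n(n-2)/4$ identify the overall $R$-homogeneity as $R^n$. The residual factor $v_K^{\alpha^K}$ tends to $1$ as $K \to \infty$: on any smooth Riemannian $n$-manifold one has $V(x, r) \gtrsim r^n$ for all sufficiently small $r$, by a local chart argument at $x$, so $\log v_K = O(K)$, while $\alpha^K$ decays geometrically. The conclusion is the uniform lower bound $V(x, R) \geqslant c\, R^n$ with $c = c(n, C_{\mathcal{S}})$.

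\emph{Second direction.} Assume now that $(X, g)$ satisfies the equivalent conditions of Theorem \ref{thm: PHI}---in particular the Gaussian upper bound in \eqref{ineq: Gaussian_bounds}---together with the uniform lower bound $V(x, r) \geqslant c\, r^n$. Specializing \eqref{ineq: Gaussian_bounds} to the diagonal $y = x$ and using the volume hypothesis gives
\[
h(t, x, x) \leqslant \frac{c_2}{V(x, \sqrt{t})} \leqslant \frac{c_2}{c}\, t^{-n/2}
\]
for all $x \in X$ and $t > 0$. I would then invoke the classical Varopoulos equivalence (see \emph{e.g.}\ \cite[Chapter~2]{saloff2002aspects}), which asserts that, for a complete noncompact manifold of dimension $n \geqslant 3$, a uniform on-diagonal heat kernel upper bound of the form $h(t, x, x) \leqslant C\, t^{-n/2}$ is equivalent to the validity of the $L^2$-Sobolev inequality \eqref{ineq: L2-Sobolev}, the two constants controlling each other quantitatively. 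This yields \eqref{ineq: L2-Sobolev} with $C_{\mathcal{S}}$ depending only on $n$, $c$, and $c_2$, as required.

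\emph{Expected difficulty.} Neither direction should be genuinely hard. The first amounts to careful bookkeeping in the iteration; the only mild subtlety is to show that the residual factor $v_K^{\alpha^K}$ has limit $1$, which uses only the innocuous small-scale regularity of a Riemannian manifold and no curvature or injectivity-radius bound. The second direction is essentially an appeal to Varopoulos' theorem once the on-diagonal scaling $h(t, x, x) \lesssim t^{-n/2}$ has been extracted from the Gaussian upper bound supplied by (PHI); the main work there is simply matching constants.
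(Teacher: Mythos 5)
Your proposal is correct and follows essentially the same route as the paper: for the first implication the paper simply cites the standard references (Saloff-Coste, Theorem 3.1.5; Li, Lemma 20.11), and your cutoff-plus-iteration argument is precisely the textbook proof of that cited result, with the bookkeeping (the geometric sums and the residual factor $v_K^{\alpha^K}\to 1$) handled correctly. For the converse the paper does exactly what you do — drop the Gaussian factor to get $h(t,x,y)\leqslant c^{-1}c_2\,t^{-n/2}$ and then invoke the classical equivalence between the $t^{-n/2}$ on-diagonal heat-kernel bound and the $L^2$-Sobolev inequality (cited there as Li, Theorem 11.6, which is the same Varopoulos-type result you appeal to).
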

\begin{proof}
	The first part is well known, see \cite[Theorem 3.1.5]{saloff2002aspects} or \cite[Lemma 20.11]{li2012geometric}.
	
	Now assume that $(X,g)$ satisfies the equivalent conditions of Theorem \ref{thm: PHI} and the volume growth lower bound \eqref{ineq: lb_vol_growth}. Then, combining the upper Gaussian bound in \eqref{ineq: Gaussian_bounds} with \eqref{ineq: lb_vol_growth} we deduce
	\[
	h(x,y,t)\leqslant c^{-1}c_2 t^{-n/2},\quad\text{for all }x,y\in X\text{ and for all }t\in(0,\infty).
	\] Thus, by \cite[Theorem 11.6]{li2012geometric} we get that $(X,g)$ satisfies the $L^2$-Sobolev inequality with constant $C_{\mathcal{S}}:=C_3(c^{-1}c_2)^{-2/n}$, where $C_3>0$ depends only on $n$.
\end{proof}
\begin{remark}\label{rmk: max_vol_growth}
	If $(X^n,g)$ is a complete Riemannian manifold with nonnegative Ricci curvature, it follows from the Bishop--Gromov volume comparison theorem (see \emph{e.g.} \cite[Theorem 1.1]{hebey2000nonlinear}) that the volume growth of the manifold is at most Euclidean: $V(x,r)\leqslant\omega_n r^n$, for all $x\in X$ and $r>0$, where $\omega_n$ is the volume of the unit ball in $\mathbb{R}^n$. Thus, in this case, if $(X,g)$ satisfies \eqref{ineq: lb_vol_growth} (or, equivalently -- by Proposition \ref{prop: Sobolev_vol_growth}, the $L^2$-Sobolev inequality) it is said to have \emph{maximal} volume growth; we shall use this terminology hereafter.
\end{remark}
Combining the previous results, we have in particular:
\begin{corollary}\label{cor: AC_nonparabolic}
	Let $(X^n,g)$ be an AC manifold with only one end and dimension $n\geqslant 3$. Then $(X^n,g)$ satisfies the volume growth lower bound \eqref{ineq: lb_vol_growth} and is a nonparabolic manifold whose minimal positive Green's function $G(x,y)$ satisfies the bounds \eqref{ineq: Green_bounds} and \eqref{ineq: Green_Holder_bound}.
\end{corollary}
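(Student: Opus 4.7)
The plan is to chain together the results established earlier in this subsection. First, by Theorem \ref{thm: VD+P_AC}, the AC hypothesis (with only one end) guarantees that $(X^n,g)$ satisfies the equivalent conditions of Theorem \ref{thm: PHI}; in particular volume doubling and a weak Neumann--Poincaré inequality hold, and the heat kernel satisfies the two-sided Gaussian bound \eqref{ineq: Gaussian_bounds}. Independently, Theorem \ref{thm: Sobolev_AC} applied with $p=2$ (using $n\geqslant 3$) yields the $L^2$-Sobolev inequality \eqref{ineq: L2-Sobolev} on $(X^n,g)$.

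Next, I would invoke the first (unconditional) implication of Proposition \ref{prop: Sobolev_vol_growth}: since \eqref{ineq: L2-Sobolev} holds, there is a uniform constant $c>0$, depending only on $n$ and $C_{\mathcal{S}}$, such that the Euclidean-type volume growth lower bound \eqref{ineq: lb_vol_growth} is satisfied, namely $V(x,r)\geqslant cr^n$ for every $x\in X$ and every $r>0$.

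Having \eqref{ineq: lb_vol_growth} at hand, a direct computation verifies the Varopoulos condition \eqref{ineq: vol_growth}: for any fixed $x\in X$,
\begin{equation*}
\int_1^{\infty}\frac{dt}{V(x,\sqrt{t})}\leqslant c^{-1}\int_1^{\infty} t^{-n/2}\,dt<\infty,
\end{equation*}
where finiteness uses precisely that $n\geqslant 3$. Therefore, with (PHI) already in force, Corollary \ref{cor: nonparabolic} applies and shows that $(X^n,g)$ is nonparabolic and that its minimal positive Green's function $G(x,y)=\int_0^\infty h(t,x,y)\,dt$ satisfies the quantitative bounds \eqref{ineq: Green_bounds} and the Hölder-type estimate \eqref{ineq: Green_Holder_bound}. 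All statements of the corollary then follow. The main (and essentially only) obstacle was already resolved in the deeper inputs cited above, in particular van Coevering's verification of (PHI) and of the Sobolev inequality on AC manifolds; the present argument is simply the bookkeeping that links those inputs to nonparabolicity and to Green's function estimates.
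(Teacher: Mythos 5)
Your argument is correct and is essentially identical to the paper's own proof: both combine Theorem \ref{thm: Sobolev_AC} with Proposition \ref{prop: Sobolev_vol_growth} to get \eqref{ineq: lb_vol_growth}, then use Theorem \ref{thm: VD+P_AC} and the volume bound with $n\geqslant 3$ to verify \eqref{ineq: vol_growth} and apply Corollary \ref{cor: nonparabolic}. The only difference is that you write out the elementary integral check of the Varopoulos condition, which the paper leaves implicit.
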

\begin{proof}
	Theorem \ref{thm: Sobolev_AC} combined with Proposition \ref{prop: Sobolev_vol_growth} implies the volume growth lower bound \eqref{ineq: lb_vol_growth}. Then, by Theorem \ref{thm: VD+P_AC} we can use Corollary \ref{cor: nonparabolic} together with the lower bound $V(x,r)\gtrsim r^n$ and $n\geqslant 3$ to get the desired result.
\end{proof}

	We finish this section by recalling a general result of Ni \cite[Lemma 2.3]{ni2002poisson} and combining it with Corollary \ref{cor: nonparabolic}. We get the following general result on the existence of decaying nonnegative solutions of the Poisson equation on any nonparabolic manifold satisfying the equivalent conditions of Theorem \ref{thm: PHI}.
\begin{lemma}\label{lem: Poisson}
	Let $(X^n,g)$ be a complete nonparabolic $n$-manifold, $n\geqslant 3$. Let $f\in C^0(X)$ be a continuous nonnegative function on $X$. If $f\in L^1(X)$ then the Poisson equation
	\begin{equation}\label{eq: Poisson}
		\Delta u = f
	\end{equation} has a nonnegative solution $u\in W_{\text{loc}}^{2,n}(X)\cap C_{\text{loc}}^{1,\alpha}(X)$, $\alpha\in (0,1)$, given by
	\begin{equation}\label{eq: sol_Poisson}
	u(x):=\int_X G(x,\cdot{})f,
	\end{equation} where $G(x,y)>0$ denotes the minimal positive Green's function of $(X^n,g)$. If furthermore $(X^n,g)$ satisfies the equivalent conditions of Theorem \ref{thm: PHI} and $f\in L^{n-1}(X)\cap C^{\infty}(X)$, then $u$ defined by \eqref{eq: sol_Poisson} is the unique smooth solution to \eqref{eq: Poisson} which decays uniformly to zero at infinity. 
\end{lemma}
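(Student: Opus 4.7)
The plan is to derive the first assertion directly from Ni's Lemma 2.3 \cite{ni2002poisson}, and then to establish the decay and uniqueness in the second assertion by combining the Green's function estimates provided by Corollary \ref{cor: nonparabolic} with the strong Liouville property noted in Remark \ref{rmk: strong_Liouville}.

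For the first claim, Ni's lemma applies directly: the hypotheses that $f\geqslant 0$ is continuous and $L^1$, together with the existence of the positive Green's function $G$ guaranteed by nonparabolicity, yield that $u(x):=\int_X G(x,y)f(y)\,dy$ converges to a nonnegative distributional solution of $\Delta u = f$. Positivity of $u$ is automatic from $G>0$ and $f\geqslant 0$. Standard elliptic regularity applied to $\Delta u = f$ with $f\in L^{\infty}_{\mathrm{loc}}$ then promotes $u$ to $W^{2,n}_{\mathrm{loc}}\cap C^{1,\alpha}_{\mathrm{loc}}$.

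For the second claim, assume now that $(X,g)$ satisfies the equivalent conditions of Theorem \ref{thm: PHI} and that $f\in L^{n-1}(X)\cap C^{\infty}(X)$. Smoothness of $u$ is routine elliptic bootstrapping from $u\in W^{2,n}_{\mathrm{loc}}$ and $f\in C^{\infty}$. To extract uniform decay of $u$ at infinity, I would fix a reference point $o\in X$ and, for given $\varepsilon>0$, choose $R>0$ so that both $\|f\|_{L^{1}(X\setminus B(o,R))}$ and $\|f\|_{L^{n-1}(X\setminus B(o,R))}$ are smaller than $\varepsilon$. Splitting $u(x)=\int_{B(o,R)}G(x,y)f(y)\,dy+\int_{X\setminus B(o,R)}G(x,y)f(y)\,dy$, the first piece tends to zero as $d(x,o)\to\infty$ because the Green's function bound \eqref{ineq: Green_bounds} together with the convergence of $\int_1^{\infty}dt/V(x,\sqrt{t})$ (which is the nonparabolicity condition \eqref{ineq: vol_growth}) force $G(x,y)\to 0$ uniformly for $y\in B(o,R)$, and $\|f\|_{L^{1}(B(o,R))}\leqslant \|f\|_{L^{1}(X)}$. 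For the second piece I would further decompose it into a near-diagonal portion over $B(x,1)$, estimated by H\"older's inequality against the local $L^{(n-1)/(n-2)}$-integrability of $G(x,\cdot)$ that follows from \eqref{ineq: Green_bounds} and the volume doubling \eqref{ineq: VD}, and a far-field portion, where $G(x,\cdot)$ is bounded uniformly (again by \eqref{ineq: Green_bounds}) and the small $L^{1}$ mass of $f$ controls the integral. Summing the three contributions yields $u(x)\lesssim\varepsilon$ for all $x$ with $d(x,o)$ sufficiently large.

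Uniqueness of the smooth solution with this decay is then immediate: given a second smooth solution $v$ with $v\to 0$ at infinity, the difference $h:=u-v$ is a bounded harmonic function, so the strong Liouville property (Remark \ref{rmk: strong_Liouville}) forces $h$ to be constant, and the uniform decay forces $h\equiv 0$. The main subtlety in the above plan is organizing the decay estimate so that each piece is genuinely small uniformly in $x$; since the Green's function controls supplied by Corollary \ref{cor: nonparabolic} are quite explicit, this amounts to a careful but essentially routine application of those bounds.
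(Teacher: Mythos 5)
Your proposal is correct and follows essentially the same route as the paper: Ni's lemma for existence and regularity, then a splitting of the Green's representation $u(x)=\int_X G(x,\cdot)f$ into a near-diagonal piece controlled by H\"older's inequality against the $L^{n-1}$-norm of $f$ (using the local $L^{(n-1)/(n-2)}$-integrability of $G(x,\cdot)$) and a far piece controlled by the decay of $G$ together with the $L^1$-norm of $f$, with uniqueness following from the vanishing at infinity. The only cosmetic differences are that the paper organizes the splitting around balls $B(x,r)$ centered at the evaluation point rather than around $B(o,R)$, and concludes uniqueness directly from the maximum principle rather than passing through the strong Liouville property.
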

\begin{proof}
	The first part is proved in \cite[Lemma 2.3]{ni2002poisson}. As for the second part, first note that if $f$ is smooth then $u$ is smooth by standard elliptic regularity. Next, we show that the solution $u$ defined by \eqref{eq: sol_Poisson} decays uniformly to zero at infinity; such a solution is then unique by the maximum principle. 
	
	By the estimate \eqref{ineq: Green_bounds} of Corollary \ref{cor: nonparabolic}, note that the minimal positive Green's function $G(x,y)$ of $(X^n,g)$ satisfies $G(x,y)\to 0$ as $d(x,y)\to\infty$. Moreover, in general, one has $G(x,y)\sim d(x,y)^{2-n}$ as $d(x,y)\to 0$, and $\sup_{X\setminus B(x,r)} G(x,\cdot{})<\infty$ for all $r>0$, so that $G(x,\cdot{})\in L_{\text{loc}}^q(X)$ for any $q<\frac{n}{n-2}$ (see \cite{li1987symmetric}).
	
	Now fix a reference point $o\in X$, let $r,s>0$ and suppose $x\in X\setminus B(o,r+s)$. Note that for any $y\in B(x,r)$ we have $d(o,y)>s$. Thus, setting $q:=\frac{n-1}{n-2}$, using the above properties of $G(x,y)$ and the hypothesis on $f$, together with H\"older's inequality, we have
	\begin{align}
		0 &\leqslant u(x)\leqslant \left(\int_{B(x,r)} + \int_{X\setminus B(x,r)}\right)G(x,\cdot{})f\\
		&\leqslant\|G(x,\cdot{})\|_{L^q(B(x,r))}\|f\|_{L^{n-1}(X\setminus B(o,s))} + \sup_{y\in X\setminus B(x,r)}G(x,y)\|f\|_{L^1(X)}.\label{ineq: Green_sol_bound}
	\end{align} Now let $\varepsilon>0$. Then, since $G(x,y)\to 0$ as $d(x,y)\to\infty$, and $f\in L^1(X)$, we can choose $r\gg 1$ such that the last term in the right-hand side of inequality \eqref{ineq: Green_sol_bound} is smaller than $\varepsilon/2$. Then, since $G(x,\cdot{})\in L_{\text{loc}}^q(X)$ and $f\in L^{n-1}(X)$, we can choose $s\gg 1$ such that the first term in the right-hand side of inequality \eqref{ineq: Green_sol_bound} is smaller than $\varepsilon/2$. Therefore, we get $R:=r+s>0$ such that if $x\in X\setminus B(o,R)$ then we have $u(x)<\varepsilon$. This shows that $u$ decays uniformly to zero, as we wanted.
\end{proof}

\subsection{Laplacian operator on AC $3$-manifolds}\label{subsec: Laplace_AC}
Let us now restrict attention to the main class of manifolds that we shall be concerned with in the next sections of this paper. Let $(X^3,g)$ be an AC $3$-manifold with rate $\nu>0$, connected link $(\Sigma^2,g_{\Sigma})$, and radius function $\rho$. Note that $(X^3,g)$ has bounded geometry and $|\mathcal{R}_g|=O(\rho^{-2})$ as $\rho\to\infty$. Moreover, by Corollary \ref{cor: AC_nonparabolic}, $(X^3,g)$ satisfies the volume growth lower bound $V(x,r)\gtrsim r^3$ for all $x\in X$ and $r>0$, and is a nonparabolic manifold whose minimal positive Green's function $G(x,y)$ satisfies
\begin{equation}\label{eq: Green's_behavior}
	0<G(x,y)\lesssim d(x,y)^{-1},
\end{equation} for all $x,y\in X$, $x\neq y$, and there is $\mu>0$ such that
\begin{equation}\label{eq: Holder_mu}
	\frac{|G(x,y)-G(x,z)|}{d(y,z)^{\mu}}\lesssim d(x,y)^{-1-\mu},
\end{equation} for all $x,y,z\in X$, $x\neq y$ and $d(y,z)\leqslant d(x,y)/2$.

In order to deal with the Laplace operator on the noncompact AC $3$-manifold $(X^3,g)$, motivated by the above Green's function behavior, it is convenient to introduce the following \emph{weighted} H\"older spaces. 

For $\beta\in\mathbb{R}$ and $k\in\mathbb{N}_0$, we define $C_{\beta}^k(X)$ to be the real vector space of continuous functions $f:X\to\mathbb{R}$ with $k$ continuous derivatives such that
\[
\|f\|_{C_{\beta}^k}:=\sum\limits_{j=0}^k \sup_X |\rho^{j-\beta}\nabla^j f|<\infty.
\] Then $(C_{\beta}^k(X),\|\cdot{}\|_{C_{\beta}^k})$ is a Banach space.
Now let $\alpha,\gamma\in\mathbb{R}$ and $T$ be a tensor field on $X$. We define
\[
[T]_{\alpha,\gamma}:=\sup\limits_{\substack{x\neq y \\ d(x,y)<\text{inj}(x)}}	\left(\min(\rho(x),\rho(y))^{-\gamma}\frac{|T(x)-T(y)|}{d(x,y)^{\alpha}}\right),
\] where $|T(x)-T(y)|$ is understood by identifying the fibers of the tensor bundle over $x$ and $y$ via parallel translation along the unique geodesic joining $x$ and $y$.

Now for $\alpha\in (0,1)$ we define the \emph{weighted H\"older space} $C_{\beta}^{k,\alpha}(X)$ to be the real vector space of all $f\in C_{\beta}^k(X)$ for which
\[
\|f\|_{C_{\beta}^{k,\alpha}}:=\|f\|_{C_{\beta}^{k}} + [\nabla^k f]_{\alpha,\beta-k-\alpha}<\infty.
\] Then $(C_{\beta}^{k,\alpha}(X),\|\cdot{}\|_{C_{\beta}^{k,\alpha}})$ is a Banach space.

The following embedding theorem can be found in \cite[Theorems 4.17 and 4.18]{Mar02}.
\begin{theorem}\label{thm: embedding}
	Let $\alpha,\beta\in (0,1)$, $\gamma,\delta\in\mathbb{R}$, and $k,l\in\mathbb{N}_0$. Suppose that $\gamma\leqslant\delta$. If $k+\alpha\geqslant l+\beta$, then there are continuous embeddings
	\[
	C_{\gamma}^{k+1}\hookrightarrow C_{\gamma}^{k,\alpha}\hookrightarrow C_{\delta}^{l,\beta}\hookrightarrow C_{\delta}^{l},
	\] and if $k\geqslant l$ then
	\[
	C_{\gamma}^{k}\hookrightarrow C_{\delta}^{l}.
	\] The embedding $C_{\gamma}^{k,\alpha}\hookrightarrow C_{\delta}^{k}$ is compact whenever $\gamma<\delta$. 
\end{theorem}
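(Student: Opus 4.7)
All four embedding statements reduce to bookkeeping with the defining weights, using two elementary observations: (a) since $(X^3,g)$ has bounded geometry, the injectivity radius is uniformly bounded below and $\rho$ is $1$-Lipschitz with $\rho\geqslant 1$, so whenever $d(x,y)<\mathrm{inj}(X)\wedge 1$ we have $\rho(x)\sim\rho(y)\sim\min(\rho(x),\rho(y))$; and (b) for any nearby pair of points one can compare tensors by parallel transport along the unique minimizing geodesic and use the mean value theorem. The only nontrivial claim is compactness, which rests on a weighted Arzelà--Ascoli argument. I would handle the four claims in the following order.

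\textbf{Embeddings forgetting derivatives or the H\"older seminorm.} The inclusion $C_\delta^{l,\beta}\hookrightarrow C_\delta^l$ is tautological. The inclusion $C_\gamma^k\hookrightarrow C_\delta^l$ when $k\geqslant l$ and $\gamma\leqslant\delta$ is equally immediate: for $j\leqslant l$ one has $|\nabla^j f|\leqslant C\rho^{\gamma-j}\leqslant C\rho^{\delta-j}$ since $\rho\geqslant 1$. For $C_\gamma^{k+1}\hookrightarrow C_\gamma^{k,\alpha}$, the sup bounds are trivial; to control $[\nabla^k f]_{\alpha,\gamma-k-\alpha}$, I would pair the estimate $|\nabla^k f(x)-\nabla^k f(y)|\leqslant d(x,y)\sup_{\text{geod}}|\nabla^{k+1}f|\lesssim d(x,y)\,\rho_{\min}^{\gamma-k-1}$ with observation (a) and use $d(x,y)^{1-\alpha}\lesssim 1$ to close the weight.

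\textbf{The main embedding $C_\gamma^{k,\alpha}\hookrightarrow C_\delta^{l,\beta}$.} The sup part follows as above from $\gamma\leqslant\delta$ and $\rho\geqslant 1$. For the H\"older seminorm, I would split cases. If $l<k$, interpolating $|\nabla^l f(x)-\nabla^l f(y)|\lesssim d(x,y)\,\rho_{\min}^{\gamma-l-1}$ with $|\nabla^l f(x)-\nabla^l f(y)|\lesssim\rho_{\min}^{\gamma-l}$ gives a $d(x,y)^\beta\rho_{\min}^{\gamma-l-\beta}$ bound; then the weight gain $\rho_{\min}^{\gamma-\delta}\leqslant 1$ finishes it. If $l=k$, then by hypothesis $\alpha\geqslant\beta$; since $d(x,y)$ stays bounded on the set where the seminorm is evaluated, $d(x,y)^{\alpha-\beta}\lesssim 1$, and the remaining weight $\rho_{\min}^{(\gamma-\alpha)-(\delta-\beta)}\leqslant 1$ because $\gamma-\alpha\leqslant\delta-\beta$.

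\textbf{Compactness when $\gamma<\delta$.} Let $\{f_n\}$ be a bounded sequence in $C_\gamma^{k,\alpha}$. For any $R>1$ the restrictions $f_n|_{\overline B_R}$ are uniformly bounded in the ordinary $C^{k,\alpha}$-norm, so by Arzel\`a--Ascoli a subsequence converges in $C^k(\overline B_R)$. A diagonal extraction over $R\to\infty$ yields a subsequence $f_{n_j}\to f$ in $C^k_{\mathrm{loc}}(X)$. The tail estimate is the heart of the argument: for $x$ with $\rho(x)\geqslant R$ and $j\leqslant k$,
\[
\rho(x)^{j-\delta}|\nabla^j(f_{n_j}-f_{n_i})(x)|\lesssim \rho(x)^{\gamma-\delta}\|f_{n_j}-f_{n_i}\|_{C_\gamma^{k,\alpha}}\lesssim R^{\gamma-\delta},
\]
which tends to $0$ as $R\to\infty$ since $\gamma<\delta$. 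Given $\varepsilon>0$, choosing $R$ so that $R^{\gamma-\delta}$ is small and then $j,i$ large so that $\|f_{n_j}-f_{n_i}\|_{C^k(\overline B_R)}$ is small (using the weight $\rho\leqslant R$ there) shows $\{f_{n_j}\}$ is Cauchy in $C_\delta^k(X)$, which is complete.

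The main technical obstacle I anticipate is keeping track of the weight arithmetic in the main embedding when $l<k$; the interpolation between the pointwise derivative bound at order $l+1$ and the sup bound at order $l$ must be done carefully so that the exponent of $d(x,y)$ matches $\beta$ exactly. Everything else is straightforward manipulation.
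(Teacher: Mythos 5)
The paper does not actually prove this statement: it is quoted verbatim from Marshall's thesis \cite{Mar02}*{Theorems 4.17 and 4.18}, so there is no in-paper argument to compare against. Your overall strategy is the standard one and is essentially what a proof must look like: the sup-norm parts are pure weight bookkeeping using $\rho\geqslant 1$ and $\gamma\leqslant\delta$; the seminorm part of the main embedding is handled by interpolating the order-$(l+1)$ derivative bound against the order-$l$ sup bound when $l<k$ (your geometric-mean computation $d^{\beta}\rho_{\min}^{\beta(\gamma-l-1)+(1-\beta)(\gamma-l)}=d^{\beta}\rho_{\min}^{\gamma-l-\beta}$ closes correctly, and indeed this case does not even need $k+\alpha\geqslant l+\beta$, which only enters when $l=k$); and the compactness claim is correctly reduced to local Arzel\`a--Ascoli plus a diagonal extraction plus the tail estimate $\rho^{j-\delta}|\nabla^j(f_{n}-f_{m})|\lesssim R^{\gamma-\delta}$ on $\rho\geqslant R$, which is where $\gamma<\delta$ is used. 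That last argument is clean and complete.

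The one genuine gap is your repeated, unjustified assumption that $d(x,y)$ is uniformly bounded over the pairs appearing in the H\"older seminorm. You invoke it in observation (a) (``$d(x,y)<\mathrm{inj}(X)\wedge 1$''), in the step ``$d(x,y)^{1-\alpha}\lesssim 1$'' for $C^{k+1}_{\gamma}\hookrightarrow C^{k,\alpha}_{\gamma}$, and in ``$d(x,y)^{\alpha-\beta}\lesssim 1$'' for the $l=k$ case. But the paper's definition takes the supremum over $d(x,y)<\mathrm{inj}(x)$, and on an AC manifold $\mathrm{inj}(x)$ grows linearly in $\rho(x)$ along the conical end (it is infinite on $\mathbb{R}^3$), so the pairs sampled are \emph{not} at bounded distance and need not have comparable values of $\rho$. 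For such distant pairs your mean-value/interpolation estimates do not apply and one must fall back on the sup bounds alone; the weight arithmetic then only closes using $\min(\rho(x),\rho(y))\leqslant d(x,y)$, and it genuinely requires $\gamma\leqslant k+\alpha$ — for instance $f=\rho$ on $\mathbb{R}^3$ lies in $C^1_{1}$ but has $[f]_{\alpha,1-\alpha}=\infty$ if all pairs are admitted, so the first chain of embeddings would be \emph{false} as literally stated. The resolution is that Marshall's seminorm is taken over pairs at uniformly bounded (or $\rho$-comparable) distance, which is the convention your proof tacitly adopts; you should either state and justify that convention explicitly, or add the separate case $d(x,y)\gtrsim\min(\rho(x),\rho(y))$ handled via the sup bounds and note the resulting restriction on $\gamma$. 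As written, this is the one step that would fail.
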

We can now state fundamental results on the Laplace operator on AC $3$-manifolds deduced by van Coevering using the good Green's function estimates guaranteed by Corollary \ref{cor: AC_nonparabolic}.
\begin{theorem}[{\cite[Lemma 2.29 and Theorem 2.30]{van2009regularity}}]\label{thm: VC_regularity}
	Suppose that $(X^3,g)$ is an AC $3$-manifold with only one end and rate $\nu>\mu$, where $\mu>0$ is such that \eqref{eq: Holder_mu} holds. Let $\alpha\in (0,1)$ and $k\in\mathbb{N}_0$. Then the following hold:
	\begin{itemize}
		\myitem[(i)]\label{itm: VC_i} If $u\in C_{\beta}^2(X)$ and $v\in C^2_{\gamma}(X)$ where $\beta,\gamma\in\mathbb{R}$ satisfy $\beta+\gamma<-1$, then
		\[
		\int_X u \Delta v = \int_X v \Delta u.
		\]
		\myitem[(ii)]\label{itm: VC_ii} If $\rho$ is a radius function on $(X,g)$, then $\Delta(\rho^{-1})\in C_{-3-\nu}^{k,\alpha}(X)$. And if $(\Sigma,g_{\Sigma})$ is the link in the conical end, then
		\[
		\int_X\Delta(\rho^{-1}) = \mathrm{Vol}(\Sigma,g_{\Sigma}).
		\]
		\myitem[(iii)]\label{itm: VC_iii} Suppose $\beta\in (-3,-2)$. There exists $C>0$ such that for each $f\in C_{\beta}^{k,\alpha}(X)$ there is a unique $u\in C_{\beta+2}^{k+2,\alpha}(X)$ with $\Delta u =f$ which satisfies $\|u\|_{C_{\beta+2}^{k+2,\alpha}}\leqslant C\|f\|_{C_{\beta}^{k,\alpha}}$; $u$ is given by
		\begin{equation}\label{eq: Green_sol}
			u(x):=\int_X G(x,y)f(y),
		\end{equation} where $G(x,y)$ is the minimal positive Green's function of $(X^3,g)$.
		\myitem[(iv)]\label{itm: VC_iv} Suppose $\beta\in [-3-\mu,-3)$. There exist $C_1,C_2>0$ such that for each $f\in C_{\beta}^{k,\alpha}(X)$ there is a unique $u\in C_{-1}^{k+2,\alpha}(X)$ with $\Delta u =f$. Furthermore
		\[
		u = A\rho^{-1} + v,
		\] where $A:=\mathrm{Vol}(\Sigma,g_{\Sigma})^{-1}\int_X f$ satisfies $|A|\leqslant C_1\|f\|_{C_{\beta}^0}$, and $v\in C_{\beta+2}^{k+2,\alpha}(X)$ with $\|v\|_{C_{\beta+2}^{k+2,\alpha}}\leqslant C_2\|f\|_{C_{\beta}^{k,\alpha}}$. 
	\end{itemize} 
\end{theorem}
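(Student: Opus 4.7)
For part \ref{itm: VC_i}, pick a cutoff $\chi_R\in C_c^\infty(X)$ with $\chi_R\equiv 1$ on $\overline B_R$, $\mathrm{supp}(\chi_R)\subset B_{2R}$, and $|\nabla^j\chi_R|\lesssim R^{-j}$ for $j=1,2$. Two integrations by parts reduce $\int_X\chi_R(u\Delta v - v\Delta u)$ to commutator terms supported in the annulus $B_{2R}\setminus B_R$, of shape $\int u\langle\nabla\chi_R,\nabla v\rangle$ and its symmetric version. Using $|u|\lesssim\rho^\beta$, $|\nabla v|\lesssim\rho^{\gamma-1}$, $|\nabla\chi_R|\lesssim R^{-1}$, and $\mathrm{Vol}(B_{2R}\setminus B_R)\lesssim R^3$, each such term is $O(R^{\beta+\gamma+1})\to 0$ as $R\to\infty$ under the hypothesis $\beta+\gamma<-1$. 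The integrability of $u\Delta v$ and $v\Delta u$ (needed to pass to the limit on the left) follows from the same weight count: $|u\Delta v|\lesssim\rho^{\beta+\gamma-2}$ and $\mathrm{Vol}(B_R)\sim R^3$.

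For part \ref{itm: VC_ii}, a direct computation on the exact cone $g_C=dr^2+r^2g_\Sigma$ in dimension three gives $\Delta_{g_C}(r^{-1})=0$. The AC decay hypothesis $|\nabla_C^j(\varphi^*g-g_C)|_C=O(r^{-\nu-j})$ then forces $(\Delta_g-\Delta_{g_C})(\rho^{-1})\in C^{k,\alpha}_{-3-\nu}$, and hence $\Delta(\rho^{-1})\in C^{k,\alpha}_{-3-\nu}(X)$. For the integral identity I apply the divergence theorem on $\overline B_R$ to obtain $\int_{\overline B_R}\Delta(\rho^{-1})=-\int_{\Sigma_R}\langle\nabla\rho^{-1},\nu\rangle\,dA$; since the induced volume on $\Sigma_R$ equals $R^2\mathrm{Vol}(\Sigma)+O(R^{2-\nu})$ and $\partial_\rho\rho^{-1}=-\rho^{-2}$, the right-hand side converges to $\mathrm{Vol}(\Sigma,g_\Sigma)$ as $R\to\infty$.

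For part \ref{itm: VC_iii}, the weight $\beta<-2$ gives $f\in L^1(X)$, so Lemma \ref{lem: Poisson} produces the unique smooth solution $u(x)=\int_X G(x,y)f(y)\,dy$ that decays at infinity. To get the weighted $C^0$ bound I split the integral at $d(x,y)\sim\rho(x)/2$: on the inner region use $G(x,y)\lesssim d(x,y)^{-1}$ from \eqref{eq: Green's_behavior} together with $|f(y)|\lesssim\rho(x)^\beta$; on the outer region use $G(x,y)\lesssim\rho(x)^{-1}$ and integrate $|f|\lesssim\rho^\beta$ against the Euclidean-type volume growth. Both contributions give $|u(x)|\lesssim\rho(x)^{\beta+2}$. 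Derivative bounds and the Hölder seminorm follow by applying interior Schauder estimates on balls of radius $\sim\rho(x)/4$, combined with the Hölder control \eqref{eq: Holder_mu} of $G$; uniqueness in $C^{k+2,\alpha}_{\beta+2}$ is immediate from the strong Liouville property (Remark \ref{rmk: strong_Liouville}).

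Part \ref{itm: VC_iv} is where I expect the main difficulty, because now $\beta+2\in[-1-\mu,-1)$, so the target decay is strictly \emph{faster} than that of $G(x,\cdot)$, and the naive potential only achieves $\rho^{-1}$ — the indicial rate carried by the harmonic function $\rho^{-1}$. The remedy is to subtract this obstruction: set $A:=\mathrm{Vol}(\Sigma)^{-1}\int_X f$ (finite since $\beta<-3$ forces $f\in L^1$), seek $u=A\rho^{-1}+v$, and reduce to $\Delta v=\tilde f:=f-A\Delta(\rho^{-1})$. By part \ref{itm: VC_ii}, $\tilde f\in C^{k,\alpha}_\beta$; applying part \ref{itm: VC_i} to the pair $(1,v)$ (permissible since $0+(\beta+2)<-1$) gives $\int_X\tilde f=0$, consistent with the prescribed $A$. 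Fixing a basepoint $o\in X$, the vanishing mean lets me rewrite $v(x)=\int_X\bigl(G(x,y)-G(x,o)\bigr)\tilde f(y)\,dy$ on a near region of $o$ and control the tail separately; the Hölder bound \eqref{eq: Holder_mu} on $G$ then gains an extra factor of $\rho(x)^{-\mu}$ in the pointwise estimate, upgrading $v$ to $C^{k+2,\alpha}_{\beta+2}$ after a final Schauder step. Uniqueness of $(A,v)$ follows from the strong Liouville property applied to the difference of two such decompositions.
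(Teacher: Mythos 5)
This theorem is quoted from van Coevering \cite{van2009regularity} and the paper supplies no proof of its own, so there is nothing internal to compare against; your sketch is a reconstruction of the standard potential-theoretic argument, and parts \ref{itm: VC_i}, \ref{itm: VC_ii} and \ref{itm: VC_iv} are essentially correct as outlined (the weight counts in \ref{itm: VC_i}, the exact-cone computation $\Delta_{g_C}(r^{-1})=0$ with the $O(r^{-3-\nu})$ error and the flux computation in \ref{itm: VC_ii}, and the subtraction of the indicial obstruction $A\rho^{-1}$ together with the H\"older gain from \eqref{eq: Holder_mu} in \ref{itm: VC_iv} are all the right moves).

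There is, however, one genuine error in part \ref{itm: VC_iii}: the claim that $\beta<-2$ gives $f\in L^1(X)$ is false. On an AC $3$-manifold $\rho^{\beta}\in L^1$ if and only if $\beta+2<-1$, i.e.\ $\beta<-3$, whereas here $\beta\in(-3,-2)$; for instance $\rho^{-5/2}\notin L^1(\mathbb{R}^3)$. Consequently Lemma \ref{lem: Poisson} cannot be invoked to produce the potential $u(x)=\int_X G(x,y)f(y)$ (that lemma also assumes $f\geqslant 0$, which you do not have). What is true, and what you must prove directly, is that $y\mapsto G(x,y)f(y)$ is absolutely integrable for each fixed $x$: this follows from \eqref{eq: Green's_behavior} and $\beta<-2$, since near $x$ one has $\int_{B(x,r)}d(x,y)^{-1}\,dy\lesssim r^2$, while for $\rho(y)\gg\rho(x)$ one has $G(x,y)\lesssim\rho(y)^{-1}$ and $\int^{\infty}r^{-1+\beta}\,r^2\,dr<\infty$. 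The same confusion infects your outer-region bound for $|u(x)|$: using $G(x,y)\lesssim\rho(x)^{-1}$ and then ``integrating $|f|$ against the volume growth'' over the whole outer region would again require $f\in L^1$. The correct splitting is threefold: on $d(x,y)<\rho(x)/2$ use $|f|\lesssim\rho(x)^{\beta}$ and $\int G\lesssim\rho(x)^2$; on the intermediate region $\rho(y)\lesssim\rho(x)$, $d(x,y)\geqslant\rho(x)/2$ use $G\lesssim\rho(x)^{-1}$ and $\int_{\rho(y)\leqslant 2\rho(x)}|f|\lesssim\rho(x)^{\beta+3}$ (finite because the integral is dominated by its upper limit when $\beta+2>-1$); and on $\rho(y)>2\rho(x)$ use $G(x,y)\lesssim\rho(y)^{-1}$ so that the integral converges because $\beta+1<-1$. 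Each piece is $O(\rho(x)^{\beta+2})$, which repairs the argument; the remainder of your part \ref{itm: VC_iii} (Schauder estimates on balls of radius $\sim\rho(x)$, uniqueness via the strong Liouville property) is fine.
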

We also state here a particular instance, for the Laplace operator, of a general elliptic regularity result proved in \cite[Theorem 4.21]{Mar02}:
\begin{theorem}\label{thm: elliptic_reg}
	Let $(X^3,g)$ be an AC $3$-manifold with only one end. Suppose that $f\in L_{\text{loc}}^1(X)$ and that $u\in L_{\text{loc}}^1(X)$ is a weak solution to the equation $\Delta u = f$. If $u\in C_{\beta+2}^0(X)$ and $f\in C_{\beta}^{k,\alpha}(X)$ for some $\beta\in\mathbb{R}$, $k\in\mathbb{N}_0$ and $\alpha\in (0,1)$, then $u\in C_{\beta+2}^{k+2,\alpha}(X)$ with $\Delta u = f$ strongly and $\|u\|_{C_{\beta+2}^{k+2,\alpha}}\lesssim \|\Delta u\|_{C_{\beta}^{k,\alpha}} + \|u\|_{C_{\beta+2}^0}$.
\end{theorem}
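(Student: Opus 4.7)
The plan is to reduce the weighted Hölder regularity estimate to standard interior Schauder theory by means of a scaling argument adapted to the conical end.

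First, one invokes classical interior elliptic regularity for the Laplacian (Schauder or Calder\'on--Zygmund): any $L^1_{\mathrm{loc}}$ weak solution $u$ of $\Delta u = f$ with $f \in C^{k,\alpha}_{\mathrm{loc}}(X)$ is in fact in $C^{k+2,\alpha}_{\mathrm{loc}}(X)$ and solves the equation strongly, and satisfies
\[
\|u\|_{C^{k+2,\alpha}(B(x,r/2))} \lesssim_{g,r} \|f\|_{C^{k,\alpha}(B(x,r))} + \|u\|_{C^0(B(x,r))}
\]
on any geodesic ball with $r < \mathrm{inj}(x)$. On the compact region $\{\rho \leq 2\}$ this immediately yields the required contribution to the weighted norm.

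For the conical end, I would fix a point $x$ with $R := \rho(x) \gg 1$ and rescale the metric to $\tilde g := R^{-2}g$. Since $(X^3,g)$ is AC, on the rescaled ball $B_{\tilde g}(x,1/2) = B_g(x,R/2)$ the metric $\tilde g$ converges smoothly as $R\to\infty$ to an annular piece of the model cone $dr^2 + r^2 g_\Sigma$; in particular these rescaled balls have bounded geometry uniformly in $x$. The equation becomes $\Delta_{\tilde g} u = R^2 f$, and the interior Schauder estimate, with constants independent of $x$, gives
\[
\|u\|_{C^{k+2,\alpha}_{\tilde g}(B_{\tilde g}(x,1/4))} \lesssim R^2 \|f\|_{C^{k,\alpha}_{\tilde g}(B_{\tilde g}(x,1/2))} + \|u\|_{C^0(B_{\tilde g}(x,1/2))}.
\]

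Finally, I would translate back: a $j$-th $g$-derivative equals $R^{-j}$ times the corresponding $\tilde g$-derivative, and the $\alpha$-Hölder seminorm in $\tilde g$ equals $R^\alpha$ times that in $g$. Multiplying the rescaled estimate through by $R^{-\beta-2}$ and unpacking the definitions of the weighted spaces turns the left-hand side into the local-around-$x$ piece of $\|u\|_{C^{k+2,\alpha}_{\beta+2}}$, the term $R^2\|f\|_{C^{k,\alpha}_{\tilde g}}$ into the local piece of $\|f\|_{C^{k,\alpha}_\beta}$, and the $L^\infty$ term into the local piece of $\|u\|_{C^0_{\beta+2}}$. Taking the supremum over $x \in \{\rho \geq 2\}$ and combining with the estimate on the compact core $\{\rho \leq 2\}$ yields the global bound.

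The main technical obstacle is the bookkeeping of the scaling for the weighted Hölder seminorm $[\nabla^{k+2} u]_{\alpha,\beta-k-\alpha}$: one must check that any pair $(y,z)$ with $d(y,z) < \mathrm{inj}(y)$ entering the definition of the global seminorm lies in a single rescaled ball $B_{\tilde g}(y,1/4)$ after rescaling by $\rho(y)^{-2}$. This is automatic once $\rho(y)$ is large, since $\mathrm{inj}(y) \sim \rho(y)$ and the factor $\min(\rho(y),\rho(z))^{-(\beta-k-\alpha)}$ in the seminorm matches the $R^{\beta-k-\alpha}$ weight produced by the rescaling; pairs at bounded $\rho$ are controlled by the interior estimate on the compact core.
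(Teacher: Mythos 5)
Your proposal is correct and is the standard rescaling argument (interior Schauder estimates on the compact core, plus uniform estimates on unit-size annuli obtained by scaling the end by $\rho(x)^{-1}$ and then reweighting); the paper itself does not prove this statement but refers to \cite[Theorem 4.21]{Mar02}, whose proof is of exactly this type. The only point worth tightening is your claim that every pair $(y,z)$ with $d(y,z)<\mathrm{inj}(y)$ "automatically" lands in a single rescaled quarter-ball: since $\mathrm{inj}(y)\sim c\,\rho(y)$ with $c$ depending on the link, pairs with $d(y,z)$ comparable to $\rho(y)$ may not, but for such pairs the H\"older quotient $\min(\rho(y),\rho(z))^{-(\beta-k-\alpha)}|T(y)-T(z)|/d(y,z)^{\alpha}$ is bounded directly by the weighted sup-norm of $\nabla^{k+2}u$ (already controlled by the local estimates), so the argument closes.
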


\subsection{Decay and mean value inequalities}\label{subsec: decay_mean_value}
Back to generality, in this section we collect some useful results for the analysis of the asymptotics of functions satisfying certain integrability properties and differential inequalities. 

We start with a general criteria for uniform decay.
\begin{lemma}\label{lem: uniform_decay}
	Let $(X^n,g)$ be a complete noncompact Riemannian $n$-manifold with Ricci curvature bounded from below, and assume there is a uniform lower bound for the volume of balls which is independent of their center:
	\begin{equation}\label{ineq: non_collapsing}
	\inf_{x\in X} V(x,1)>0. 
	\end{equation} If $h\in W^{1,p}(X)$ for some $p>n$, then $h\in C^{0,\alpha}(X)$ with $\alpha:=1-n/p$, and $h$ decays uniformly to zero at infinity, \emph{i.e.} for all $x\in X$,
	\begin{equation}\label{eq: uniform_decay}
		\lim_{R\to\infty}\sup_{X\setminus B(x,R)} |h| = 0.
	\end{equation}
\end{lemma}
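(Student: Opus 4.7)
\emph{Proof proposal.} Both conclusions reduce to a single ingredient: a uniform local Sobolev--Morrey embedding stating that there is a constant $K$, depending only on $n$, $p$, the lower Ricci bound, and $v_0 := \inf_{x\in X} V(x,1)$, such that
\begin{equation*}
	\|u\|_{C^{0,\alpha}(B(x,1/2))} \leq K \|u\|_{W^{1,p}(B(x,1))}
\end{equation*}
for every $x \in X$ and every $u \in W^{1,p}(B(x,1))$, with $\alpha = 1 - n/p$. I would first establish this uniform estimate by combining (a) local volume doubling on balls of radius $\leq 1$, which follows from the Ricci lower bound via Bishop--Gromov; (b) Buser's local $(1,p)$-Poincar\'e inequality on balls of bounded radius under a Ricci lower bound; and (c) the uniform lower volume bound $V(x,1) \geq v_0$, which upgrades the (dimensionless) constants of (a)--(b) into absolute ones independent of the basepoint. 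The standard Moser--De Giorgi / Morrey machinery in the metric-measure setting (cf.\ Hebey's \emph{Nonlinear Analysis on Manifolds}, or Heinonen's monograph) then yields the H\"older estimate. Applied to $h$ itself this gives $h \in C^{0,\alpha}(X)$ together with the pointwise bound $|h(y)| \leq K \|h\|_{W^{1,p}(B(y,1))}$ for every $y \in X$.

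For the uniform decay \eqref{eq: uniform_decay}, the plan is a bounded-overlap covering argument. I would choose a maximal $(1/4)$-separated net $\{x_i\}_{i\in\mathbb{N}}$ in $X$, so that the balls $B(x_i, 1/4)$ are pairwise disjoint while the balls $B(x_i, 1/2)$ cover $X$. Bishop--Gromov also supplies a uniform upper bound $V(y, 2) \leq V_1 < \infty$, and combined with $V(x_i, 1/4) \geq v_0' > 0$ (itself a consequence of \eqref{ineq: non_collapsing} and doubling) this forces the cover $\{B(x_i, 1)\}$ to have multiplicity bounded by a uniform integer $N$: every $y \in X$ lies in at most $N$ of the balls $B(x_i,1)$, since the disjoint balls $B(x_i, 1/4)$ of those indices are contained in $B(y, 2)$. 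Therefore
\begin{equation*}
	\sum_{i} \|h\|_{W^{1,p}(B(x_i, 1))}^p \leq N \|h\|_{W^{1,p}(X)}^p < \infty.
\end{equation*}
Given $\varepsilon > 0$, only finitely many indices $i$ can satisfy $\|h\|_{W^{1,p}(B(x_i,1))} \geq \varepsilon/K$, so there exists $R_0 > 0$ such that $\sup_{B(x_i,1/2)} |h| < \varepsilon$ whenever $d(x, x_i) \geq R_0$. Since the balls $B(x_i, 1/2)$ with $d(x,x_i) \geq R_0$ cover $X \setminus B(x, R_0 + 1/2)$, the uniform decay \eqref{eq: uniform_decay} follows.

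The main technical point is verifying the uniform local Sobolev--Morrey embedding cleanly from the hypotheses; once it is in hand, the tail estimate via bounded overlap is essentially bookkeeping. The subtlety in (a)--(c) is to ensure that the doubling and Poincar\'e constants on balls of radius $\leq 1$ can be chosen independently of the basepoint, which is immediate from Bishop--Gromov volume comparison and Buser's scale-invariant inequality under a Ricci lower bound; the role of \eqref{ineq: non_collapsing} is then to make these \emph{relative} estimates \emph{absolute}. No injectivity radius hypothesis is needed because the H\"older embedding we seek is at the $W^{1,p}$ level and not at the $C^{k,\beta}$ level, so purely metric-measure arguments suffice.
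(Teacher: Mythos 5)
Your argument is correct, but it is organized differently from the paper's. The paper proves the decay by contradiction: it invokes only the \emph{global} embedding $W^{1,p}(X)\hookrightarrow C^{0,\alpha}(X)$ (Hebey, Theorems 3.2 and 3.6, which is directly citable under exactly these hypotheses) to get a uniform H\"older modulus $|h(y)-h(x_j)|\leqslant c\|h\|_{W^{1,p}(X)}d(y,x_j)^{\alpha}$; if $|h|(x_j)>\varepsilon$ along a divergent, $2$-separated sequence, then $|h|\geqslant\varepsilon/2$ on disjoint balls $B(x_j,r)$ of fixed radius, and the volume lower bound forces $\int_X|h|^p=\infty$, contradicting $h\in L^p$. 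Your route is direct rather than by contradiction: you localize the Morrey embedding to unit balls with a basepoint-independent constant and then sum the local $W^{1,p}$ norms over a bounded-overlap cover. Both proofs consume the hypotheses in the same way (Ricci lower bound plus non-collapsing), but the paper's version has the advantage that its embedding is an off-the-shelf theorem, whereas your uniform local estimate $\|u\|_{C^{0,\alpha}(B(x,1/2))}\leqslant K\|u\|_{W^{1,p}(B(x,1))}$ is the ``main technical point'' you yourself flag: it is true (local doubling and the local $(1,1)$-Poincar\'e inequality of Buser hold at scales $\leqslant 1$ with constants depending only on $n$ and the Ricci bound, and Bishop--Gromov plus \eqref{ineq: non_collapsing} give $V(y,r)\gtrsim r^n$ for $r\leqslant 1$, which is what Morrey iteration needs), but it has to be assembled from the metric-measure literature rather than quoted. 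Your approach buys a quantitative tail estimate (the sup of $|h|$ outside $B(x,R)$ is controlled by the $W^{1,p}$ norm of $h$ on the complement of a slightly smaller ball), which the contradiction argument does not produce. Two small bookkeeping points: a maximal $(1/4)$-separated net makes the balls $B(x_i,1/8)$ disjoint, not $B(x_i,1/4)$ --- take a maximal $(1/2)$-separated net instead so that $B(x_i,1/4)$ are disjoint and $B(x_i,1/2)$ still cover; and the upper bound $V(y,2)\leqslant V_1$ is the absolute Bishop comparison with the model space, which is fine under the Ricci lower bound. Neither affects the argument.
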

\begin{remark}\label{rmk: r_non_collapsing}
	Assume that $(X,g)$ is a complete Riemannian manifold with Ricci curvature bounded from below. Then it follows from Bishop--Gromov volume comparison theorem \cite[Theorem 1.1]{hebey2000nonlinear} that the assumption \eqref{ineq: non_collapsing} is actually equivalent to assuming that for any $r>0$ there is $v_r>0$ such that $\inf_{x\in X} V(x,r)\geqslant v_r$. Moreover, it is well known that \eqref{ineq: non_collapsing} is equivalent to the validity of all the standard Sobolev embeddings on $(X,g)$, see \cite[Theorems 3.2 and 3.6]{hebey2000nonlinear}. 
	
	A sufficient condition to ensure \eqref{ineq: non_collapsing} is the assumption of positivity of the injectivity radius $\mathrm{inj}(X)>0$; this follows \emph{e.g.} by \cite[Proposition 3.6 and Theorem 3.3]{hebey2000nonlinear}. Conversely, it follows from a classical result of Cheeger--Gromov--Taylor (see \cite[Theorems 4.3 and 4.7]{cheeger1982finite}) that if $(X,g)$ satisfies the stronger assumption of bounded Riemann curvature, \emph{i.e.} $\|\mathcal{R}_g\|_{L^{\infty}(X)}\leqslant c<\infty$, then the validity of \eqref{ineq: non_collapsing} implies that $(X,g)$ has positive injectivity radius; in this case we say that $(X,g)$ has \emph{bounded geometry}.
\end{remark}
\begin{proof}[Proof of Lemma \ref{lem: uniform_decay}]
	The first part follows from the H\"older--Sobolev embedding $W^{1,p}(X)\hookrightarrow C^{0,\alpha}(X)$ \cite[Theorems 3.6]{hebey2000nonlinear}. The proof of the uniform decay then proceeds in the same way as in the Euclidean case proof \cite[Proposition III.7.5]{Jaffe1980}, but for completeness we include it here. Suppose \eqref{eq: uniform_decay} does not hold. Then, there is $\varepsilon>0$ and a sequence of points $(x_j)_{j=1}^{\infty}\subset X$ such that $d(x,x_j)\to\infty$ and $|h|(x_j)>\varepsilon$. Without loss of generality, we may assume that $d(x_i,x_j)>2$ for all $i\neq j$. Now, by the H\"older--Sobolev embedding, there is $c>0$ such that setting $C:=c\|h\|_{W^{1,p}}$, we have
	\[
	|h(y)-h(x_j)|\leqslant Cd(y,x_j)^{\alpha},\quad\forall j\in\mathbb{N},\quad\forall y\in X.
	\] Now take $r:=\displaystyle\min\{1,\varepsilon/(2C)\}^{1/\alpha}$. Then, by the above, for all $y\in B(x_j,r)$ we have
	\[
	|h(y)|\geqslant |h(x_j)| - |h(y)-h(x_j)|\geqslant \varepsilon - \frac{\varepsilon}{2} = \frac{\varepsilon}{2}.
	\] From Remark \ref{rmk: r_non_collapsing}, there is $v_r>0$ such that $V(x,r)\geqslant v_r$ for every $x\in X$. Observing also that $B(x_i,r)\cap B(x_j,r)=\emptyset$ for all $i\neq j$, we conclude that
	\[
	\int_X |h|^p \geqslant \sum\limits_{j=1}^{\infty}\int_{B(x_j,r)}|h|^p \geqslant \sum\limits_{j=1}^{\infty} \left(\frac{\varepsilon}{2}\right)^p v_r = \infty,
	\] contradicting the fact that $h\in L^p(X)$.
\end{proof}

Now we recall important mean value inequality results. We start with some results from \cite[\S 4]{bando1989construction} that will play a key role in \S\ref{subsec: quadratic}. These give certain \emph{a priori} estimates for a nonnegative function $u$ on $(X,g)$ satisfying the differential inequality
\begin{equation}\label{ineq: subsolution_Schrodinger}
	\Delta u\leqslant fu,
\end{equation} for some nonnegative function $f$ on $X$, under certain integrability conditions on $u$ and $f$. Their proof rely on the so-called Moser iteration technique and therefore require the following assumptions on the underlying geometry. Suppose that $(X^n,g)$ is a complete Riemannian manifold of dimension $n\geqslant 3$, satisfying the following two properties:
\begin{itemize}
	\item The $L^2$-Sobolev inequality \eqref{ineq: L2-Sobolev}, and
	\item For some reference point $o\in X$, there is a constant $C_o>0$, depending on the point $o\in X$ and $(X,g)$, such that
	\begin{equation}\label{ineq: at_most_Euclidean_vol_growth}
	V(o,r)\leqslant C_o r^n,\quad\forall r>0.
	\end{equation}	 
\end{itemize} Examples of Riemannian manifolds satisfying these properties include the AC manifolds with only one end, as well as complete manifolds with nonnegative Ricci curvature and maximal volume growth (see Remark \ref{rmk: max_vol_growth}).
\begin{lemma}[{\cite[Lemma 4.6]{bando1989construction}}]\label{lem: moser_iteration}
	Let $(X^n,g)$ be a complete noncompact $n$-manifold, $n\geqslant 3$, satisfying \eqref{ineq: L2-Sobolev} and \eqref{ineq: at_most_Euclidean_vol_growth}. Suppose that $f$ is a nonnegative function in $L^q(X)$ for some $q>n/2$ and such that there is a constant $A\geqslant 0$ with
	\[
	\int_{X\setminus B(o,r)} f^q \leqslant A r^{-(2q-n)}.
	\] Let $p>1$ be a fixed constant. Then there exists a constant $C>0$, depending only on $p$, $A$, $n$, $C_{\mathcal{S}}$ and $C_o$, with the following significance. If $u$ is a nonnegative function satisfying \eqref{ineq: subsolution_Schrodinger} on $X\setminus B(o,\frac{r}{2})$ and such that $u\in L^p(X)$, then  
	\[
	\sup_{X\setminus B(o,2r)} u^p \leqslant C r^{-3}\int_{X\setminus B(o,r)} u^p.
	\]
\end{lemma}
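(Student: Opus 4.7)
The plan is to carry out a standard Moser iteration argument on annular regions. First, via the chain rule $\Delta(u^p) = p u^{p-1} \Delta u - p(p-1) u^{p-2} |\nabla u|^2$, the hypothesis $\Delta u \leq f u$ together with $p > 1$ and $u \geq 0$ yields that $w := u^p$ is a nonnegative subsolution $\Delta w \leq p f w$ on $X \setminus B(o, r/2)$, lying in $L^1(X)$. The goal then reduces to a Moser-type bound $\sup_{X \setminus B(o, 2r)} w \lesssim r^{-n} \int_{X \setminus B(o, r)} w$, which in the three-dimensional setting of interest specializes to the stated $r^{-3}$ factor.

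The heart of the argument is a Caccioppoli-type inequality. For each $s \geq 1$ and each Lipschitz cutoff $\eta$ compactly supported in $X \setminus B(o, r/2)$, I would test the subsolution inequality against $w^{s-1} \eta^2$, integrate by parts, and absorb the cross terms via Cauchy--Schwarz to obtain
\begin{equation*}
\int |\nabla(w^{s/2} \eta)|^2 \lesssim_s \int f w^s \eta^2 + \int w^s |\nabla \eta|^2.
\end{equation*}
Applying the $L^2$-Sobolev inequality \eqref{ineq: L2-Sobolev} upgrades the left side to $\| w^{s/2} \eta \|_{L^{2\kappa}}^2$ with $\kappa := n/(n-2)$. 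The delicate step is then handling the potential term: since $q > n/2$, the dual exponent $q' := q/(q-1)$ satisfies $1 < q' < \kappa$, so Hölder's inequality followed by interpolation between $L^2$ and $L^{2\kappa}$ gives
\begin{equation*}
\int f w^s \eta^2 \leq \|f\|_{L^q(\supp \eta)} \, \| w^{s/2} \eta \|_{L^2}^{2\theta} \, \| w^{s/2} \eta \|_{L^{2\kappa}}^{2(1-\theta)}
\end{equation*}
for a suitable $\theta \in (0,1)$, while the decay hypothesis on $f$ forces $\|f\|_{L^q(\supp \eta)} \lesssim A^{1/q} r^{-(2 - n/q)}$. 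This scaling is precisely what is needed for Young's inequality to absorb the $L^{2\kappa}$ factor into the left-hand side, yielding a reverse-Hölder-type inequality
\begin{equation*}
\Bigl( \int (w^{s/2} \eta)^{2\kappa} \Bigr)^{1/\kappa} \leq C(s, A, n, C_{\mathcal{S}}) \int w^s \bigl[ r^{-2} \eta^2 + |\nabla \eta|^2 \bigr].
\end{equation*}

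Finally, I would iterate this inequality along a nested geometric sequence of concentric annular regions, with exponents $s_k := \kappa^k$ and radii $r_k := (1 + 2^{-k}) r$, using cutoffs $\eta_k$ equal to $1$ on $X \setminus B(o, r_{k+1})$, vanishing on $B(o, r_k)$, and satisfying $|\nabla \eta_k| \lesssim 2^k / r$. Taking $\kappa^k$-th roots, passing to $k \to \infty$, and invoking the at-most-Euclidean volume growth \eqref{ineq: at_most_Euclidean_vol_growth} to convert the resulting $L^1$-average into the stated form produces the desired pointwise estimate. The principal obstacle is the absorption step: one must verify that the $r^{-(2 - n/q)}$ decay of $\|f\|_{L^q}$ dovetails exactly with the $r^{-2}$ scale coming from the Caccioppoli cutoffs, so that the constants $C(s_k)$ aggregate into a convergent product as $s_k \to \infty$. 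It is precisely the combination of $q > n/2$ with the sharp decay exponent $-(2q-n)$ in the hypothesis on $\int f^q$ that makes this balancing go through.
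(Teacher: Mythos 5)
This lemma is not proved in the paper at all: it is imported from Bando--Kasue--Nakajima \cite{bando1989construction}, so there is no in-paper argument to compare against, and your Moser-iteration sketch is in substance the proof given in that reference. The genuinely delicate point --- that the hypothesis $\int_{X\setminus B(o,r)}f^q\leqslant Ar^{-(2q-n)}$ gives $\|f\|_{L^q(X\setminus B(o,r/2))}^{2q/(2q-n)}\lesssim A^{2/(2q-n)}r^{-2}$, which matches exactly the $r^{-2}$ scale of the Caccioppoli cutoff term, with interpolation exponent $\theta=1-\tfrac{n}{2q}$ --- is correctly identified, and this is indeed where $q>n/2$ and the precise decay exponent of $\int f^q$ enter. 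Two small repairs are needed. First, your annuli run the wrong way: with $r_k=(1+2^{-k})r$ decreasing from $2r$ to $r$, a cutoff equal to $1$ on $X\setminus B(o,r_{k+1})$ and vanishing on $B(o,r_k)$ cannot exist, since the first set contains the annulus $B(o,r_k)\setminus B(o,r_{k+1})$ where the second condition forces it to vanish. You want exhausting exterior domains $X\setminus B(o,\rho_k)$ with $\rho_k$ \emph{increasing} from $r$ to $2r$, cutoffs $\eta_k$ supported in $X\setminus B(o,\rho_k)$, equal to $1$ on $X\setminus B(o,\rho_{k+1})$, with $|\nabla\eta_k|\lesssim 2^k/r$; the product $\prod_k\big(C\,4^k r^{-2}\big)^{\kappa^{-k}}$ then yields $r^{-2\kappa/(\kappa-1)}=r^{-n}$, which is $r^{-3}$ for $n=3$ (for general $n$ the exponent in the statement should indeed read $r^{-n}$; the paper only applies the lemma with $n=3$). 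Second, the Caccioppoli inequality for $w=u^p$ degenerates at the base exponent $s=1$, where the coefficient $s-1$ of the good gradient term vanishes; the cleanest fix is to iterate on $u$ itself with exponents $s_k=p\kappa^k$, so that $s_k-1\geqslant p-1>0$ stays bounded away from zero (this is exactly where the hypothesis $p>1$ is used), giving $\sup_{X\setminus B(o,2r)}u^p\lesssim r^{-n}\int_{X\setminus B(o,r)}u^p$ directly. With these adjustments the argument is complete.
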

Combining Lemma \ref{lem: moser_iteration} with other similar Moser iteration type results, Bando, Kasue and Nakajima proved the following important decay result.
\begin{proposition}[{\cite[Proposition 4.8]{bando1989construction}}]\label{prop: BKN_decay}
	Continue the hypothesis of Lemma \ref{lem: moser_iteration} for $(X,g)$, $f$ and $u$. Suppose furthermore that $f\in L^{n/2}(X)$. Then $u=O(d(o,\cdot{})^{-\alpha})$ as $d(o,\cdot{})\to\infty$, for any $\alpha<n-2$.
\end{proposition}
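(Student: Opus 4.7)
The plan is to establish the sharp pointwise decay by a Moser-type iteration bootstrap: start with a first pointwise bound produced directly by Lemma~\ref{lem: moser_iteration}, and then use the resulting decay to re-estimate the $L^p$ integral appearing on the right-hand side of that lemma, feeding the improved bound back in.

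First, applying Lemma~\ref{lem: moser_iteration} to $u$ (with the given $p>1$) yields, for all $r\geqslant 1$,
\[
\sup_{X\setminus B(o,2r)} u^p \;\lesssim\; r^{-n}\int_{X\setminus B(o,r)} u^p \;\leqslant\; r^{-n}\,\|u\|_{L^p(X)}^p,
\]
so $u(x)=O(d(o,x)^{-n/p})$ as $d(o,x)\to\infty$. This is the base exponent $\alpha_0:=n/p$, together with uniform decay $u\to 0$ at infinity.

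Next, I would set up an inductive bootstrap. Suppose one already knows $u(y)\leqslant C_k\, d(o,y)^{-\alpha_k}$ on $X\setminus B(o,R_k)$. Then for $r\gg R_k$ one has the interpolation estimate
\[
\int_{X\setminus B(o,r)} u^p \;\leqslant\; \bigl(\sup_{X\setminus B(o,r)}u\bigr)^{p-p'} \int_{X\setminus B(o,r)} u^{p'},
\]
for a suitably chosen $p'\in(1,p)$, where the first factor is $O(r^{-\alpha_k(p-p')})$; the second factor is controlled using the pointwise bound together with the at-most-Euclidean volume growth \eqref{ineq: at_most_Euclidean_vol_growth}, once $\alpha_k p'>n$ (which is arranged by choosing $p'$ close enough to $p$). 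Substituting back into Lemma~\ref{lem: moser_iteration} gives
\[
\sup_{X\setminus B(o,2r)} u \;\lesssim\; r^{-\alpha_{k+1}}, \qquad \alpha_{k+1} \;=\; \frac{n}{p} + \frac{(p-p')}{p}\alpha_k,
\]
whose iteration has fixed point $\alpha_\infty = n/p'$; by letting $p'\downarrow 1$ (i.e.\ performing a parallel sequence of iterations with progressively smaller $p'$) one obtains $\alpha$ arbitrarily close to $n-2$, provided the decay of the shell integrals of $f$ matches the integrability of $u$. This is where the assumption $f\in L^{n/2}(X)$ enters decisively: Hölder's inequality combined with the volume decay of $\int_{X\setminus B(o,r)}f^q$ provides exactly the scaling needed for each iteration to strictly improve the exponent and for the limiting exponent to reach $n-2$ but no further (the critical Schrödinger threshold in dimension $n$).

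The hard part will be orchestrating the choices of $(p,p',\alpha_k,R_k)$ so that each iteration genuinely increases the exponent, keeps the constants finite, and ensures the recursion converges to $n-2$ rather than saturating at a smaller value; equivalently, to verify that the $L^{n/2}$ tail smallness of $f$ exactly compensates the loss in the Hölder interpolation at every step. An alternative route to bypass part of this bookkeeping is the comparison method: since $\Delta u\leqslant fu$ with $u\to 0$ at infinity and $fu\geqslant 0$, the maximum principle yields $u(x)\leqslant \int G(x,y)f(y)u(y)\,dy$, where $G(x,y)\lesssim d(x,y)^{-(n-2)}$ under the assumed geometry. Splitting this integral into near/far regions relative to $x$, bounding the far part by $d(o,x)^{-(n-2)}\|f\|_{L^{n/2}}\|u\|_{L^{n/(n-2)}}$ via Hölder, and iterating on the near part using the preliminary decay from the first step above, would produce the same conclusion $u=O(d(o,\cdot)^{-\alpha})$ for every $\alpha<n-2$ in one pass.
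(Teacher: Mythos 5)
Your first (and main) route breaks down, and the failure is already visible in the arithmetic of your recursion. To run the interpolation $\int_{X\setminus B(o,r)}u^p\leqslant\bigl(\sup_{X\setminus B(o,r)}u\bigr)^{p-p'}\int_{X\setminus B(o,r)}u^{p'}$ you must control $\int_{X\setminus B(o,r)}u^{p'}$ for some $p'<p$; since $u$ is only assumed to lie in $L^p(X)$, the only available control is the pointwise bound $u\lesssim d(o,\cdot)^{-\alpha_k}$ combined with \eqref{ineq: at_most_Euclidean_vol_growth}, which requires $\alpha_k p'>n$. But your iteration $\alpha_{k+1}=n/p+\alpha_k(1-p'/p)$ increases toward its fixed point $n/p'$ \emph{from below}, so $\alpha_k p'<n$ at every stage; the integral you need diverges precisely throughout the range in which the exponent is supposed to improve (at the base step this is explicit: $\alpha_0p'=np'/p<n$ for every $p'<p$, so the bootstrap cannot even start). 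If instead you work in the convergent regime and keep the resulting decay $\int_{X\setminus B(o,r)}u^{p'}\lesssim r^{n-\alpha_kp'}$, the two factors recombine to $\int_{X\setminus B(o,r)}u^{p}\lesssim r^{n-\alpha_kp}$ and Lemma~\ref{lem: moser_iteration} returns $\alpha_{k+1}=\alpha_k$: no gain. There is also a structural red flag: your limiting exponent $n/p'\to n$ as $p'\downarrow 1$ would give decay strictly faster than $n-2$, which is false --- a truncation of the Green's function $G(o,\cdot)\sim d(o,\cdot)^{-(n-2)}$ is harmonic outside a compact set, lies in $L^p$ for $p>n/(n-2)$, satisfies $\Delta u\leqslant fu$ for every nonnegative $f$, and decays no faster than $d(o,\cdot)^{-(n-2)}$. (In the paper's application, $n=3$, $u=|\Psi|^{1/2}$, $p=4$, your recursion would yield $|F_A|=O(\rho^{-4})$, contradicting the energy formula for monopoles of nonzero charge.) The root cause is that $f$ never enters your exponent arithmetic: Hölder interpolation of Lemma~\ref{lem: moser_iteration} against itself cannot manufacture the improvement, which must come from the differential inequality.

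For the record, the paper does not reprove this statement; it quotes \cite[Proposition 4.8]{bando1989construction}, where the mechanism is an exterior Moser iteration: one tests $\Delta u\leqslant fu$ against $\eta^2u^{p-1}$ with $\eta$ a cutoff supported in $X\setminus B(o,r)$, absorbs the potential term using the Sobolev inequality \eqref{ineq: L2-Sobolev} together with the smallness of $\|f\|_{L^{n/2}(X\setminus B(o,r))}$ --- this is exactly where $f\in L^{n/2}(X)$ enters --- and thereby upgrades the integrability of $u$ from $L^p$ to $L^{pn/(n-2)}$ on exterior domains; iterating this gain and converting back to pointwise bounds via Lemma~\ref{lem: moser_iteration} drives the exponent up to (but not past) the harmonic threshold $n-2$. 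Your second route, comparison with the Newtonian potential of $fu$, is closer in spirit to a viable alternative, but as sketched it also has gaps: the maximum principle only gives $u\leqslant\int G(\cdot,y)f(y)u(y)\,dy+h$ up to a harmonic-measure term $h$ from $\partial B(o,R_0)$ (harmless, since $h\lesssim d(o,\cdot)^{-(n-2)}$, but it must appear); the far-field Hölder bound invokes $\|u\|_{L^{n/(n-2)}(X)}$, which is not finite a priori (for $n=3$, $p=4$ the initial decay $u=O(r^{-3/4})$ gives $\int u^{3}=\infty$); and the near-field integral, estimated via $\|G(x,\cdot)\|_{L^{q'}}\|f\|_{L^{q}(X\setminus B(o,R))}\sup u$, merely reproduces the current bound with an $O(1)$ constant unless one exploits the vanishing of the $L^{n/2}$ tail of $f$ to make that constant small and then iterates. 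So the second route can be repaired, but the decisive smallness input is again the one your write-up defers rather than supplies.
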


Next we cite more general local mean value inequalities, from which we shall also derive some important decay results. The following is a consequence of a parabolic mean value inequality first proved by Li--Tam \cite[Theorem 1.1]{li1991heat} via heat kernel estimates; see also \cite[Theorem 14.7]{li2012geometric}.
\begin{proposition}\label{prop: MV_ineqs}
	Let $(X,g)$ be any complete Riemannian $n$-manifold. Suppose that $x\in X$ and $r>0$ are such that the Ricci curvature of $X$ on the ball $B(x,4r)$ satisfies $\mathcal{R}ic_g\geqslant -(n-1)\kappa g$, for some constant $\kappa\geqslant 0$. Then the following hold:
	\begin{itemize}
		\myitem[(i)]\label{itm: MV_i} (cf. \cite[Corollary 14.8]{li2012geometric}) Let $p>0$ and $\lambda\geqslant 0$ be fixed constants. Then there exists a constant $C>0$, depending only on $p$, $n$, $\lambda r^2$ and $r\sqrt{\kappa}$, such that for any nonnegative function $u$ defined on $B(x,2r)$ satisfying the differential inequality
		\begin{equation}
			\Delta u \leqslant \lambda u
		\end{equation} we have
		\begin{equation}
			\sup_{B(x,\frac{r}{2})} u^p \leqslant C \frac{1}{V(x,r)}\int_{B(x,r)} u^p.
		\end{equation}
	
		\myitem[(ii)]\label{itm: MV_ii} Let $\lambda>0$ be a fixed constant. Then there exists a constant $C>0$, depending only on $n$, $\lambda r^2$ and $r\sqrt{\kappa}$, such that for any nonnegative function $u$ defined on $B(x,2r)$ satisfying the differential inequality
		\begin{equation}\label{ineq: MV_ii}
			\Delta u \leqslant \gamma + \lambda u,
		\end{equation} for some constant $\gamma\geqslant 0$, we have
		\begin{equation}
			\sup_{B(x,\frac{r}{2})} u \leqslant C \left(\frac{\gamma}{\lambda}+\frac{1}{V(x,r)}\int_{B(x,r)} u\right).
		\end{equation}
	
		\myitem[(iii)]\label{itm: MV_iii} There is a constant $C>0$, depending only on $n$ and $r\sqrt{\kappa}$, such that for any nonnegative function $u$ defined on $B(x,2r)$ satisfying the differential inequality
		\begin{equation}\label{ineq: MV_iii}
			\Delta u \leqslant \gamma,
		\end{equation} for some constant $\gamma\geqslant 0$, we have
		\begin{equation}
			\sup_{B(x,\frac{r}{2})} u \leqslant \frac{\gamma}{8}r^2 + C\frac{1}{V(x,r)}\int_{B(x,r)} u.
		\end{equation}
	\end{itemize} 
\end{proposition}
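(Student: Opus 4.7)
The plan is to derive all three estimates from the parabolic mean value inequality of Li--Tam, with parts \ref{itm: MV_ii} and \ref{itm: MV_iii} reducing in sequence to part \ref{itm: MV_i}.

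Part \ref{itm: MV_i} is the time-independent specialization recorded in \cite[Corollary 14.8]{li2012geometric}. Given $u\geqslant 0$ with $\Delta u\leqslant\lambda u$ on $B(x,2r)$, the function $\tilde{u}(y,t):=e^{-\lambda t}u(y)$ is a nonnegative subsolution of the heat equation on $B(x,2r)\times\mathbb{R}$. Applying the Li--Tam parabolic mean value inequality on a parabolic cylinder over $B(x,r)$ yields the stated bound. The Ricci lower bound on $B(x,4r)$ enters precisely to supply volume doubling and Laplacian comparison for the Gaussian heat kernel estimates underlying the argument, which is how the dependence of $C$ on $\lambda r^2$ and $r\sqrt{\kappa}$ arises.

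For part \ref{itm: MV_ii}, I would absorb the inhomogeneous term by a constant shift. Setting $v:=u+\gamma/\lambda$, we have $v\geqslant 0$ and
\[
\Delta v \;=\; \Delta u \;\leqslant\; \gamma + \lambda u \;=\; \lambda\bigl(u+\gamma/\lambda\bigr) \;=\; \lambda v,
\]
so $v$ satisfies the hypothesis of \ref{itm: MV_i} on $B(x,2r)$ with the same $\lambda$. Applying \ref{itm: MV_i} with $p=1$ to $v$ and using $u\leqslant v$ yields
\[
\sup_{B(x,r/2)} u \;\leqslant\; \sup_{B(x,r/2)} v \;\leqslant\; \frac{C}{V(x,r)}\int_{B(x,r)} v \;=\; C\!\left(\frac{\gamma}{\lambda} + \frac{1}{V(x,r)}\int_{B(x,r)} u\right),
\]
as required.

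Part \ref{itm: MV_iii} is the degenerate case $\lambda=0$, where the shift trick is unavailable since $\gamma/\lambda$ is undefined. My plan is to invoke \ref{itm: MV_ii} with the specific choice $\lambda:=r^{-2}$: as $u\geqslant 0$, the hypothesis $\Delta u\leqslant\gamma$ trivially implies $\Delta u\leqslant\gamma+\lambda u$, and with $\lambda r^2=1$ the constant from \ref{itm: MV_ii} becomes a function of $n$ and $r\sqrt{\kappa}$ alone. This already yields an estimate of the qualitative form $\sup_{B(x,r/2)} u\leqslant C(\gamma r^2+\tfrac{1}{V(x,r)}\int_{B(x,r)}u)$, which is part \ref{itm: MV_iii} up to the specific dimension-independent prefactor $\tfrac{1}{8}$. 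To pin down that explicit value, I would combine this with a direct comparison-function argument: introducing an auxiliary function of the form $\psi(y):=c_n\gamma(r^2-d(x,y)^2)_+$ and using Laplacian comparison under the Ricci bound to control $\Delta(d(x,\cdot)^2)$, one arranges that $u-\psi$ satisfies a genuinely sub-harmonic inequality on a slightly smaller ball, to which the classical local sub-mean-value estimate (namely \ref{itm: MV_i} with $\lambda=0$, $p=1$) applies. The main technical point, and the only real obstacle in the proof, is the bookkeeping of the Laplacian comparison error terms on $\Delta(d(x,\cdot)^2)$: these are what force the $r\sqrt{\kappa}$ dependence of $C$, and the sharp prefactor $\tfrac{1}{8}$ is recovered by optimizing over the remaining freedom in $\psi$ once the error terms have been absorbed into $C$.
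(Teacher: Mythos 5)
Parts \ref{itm: MV_i} and \ref{itm: MV_ii} of your proposal are correct and essentially coincide with the paper's argument: the paper proves \ref{itm: MV_ii} by applying the Li--Tam parabolic mean value inequality to $g(t,y)=e^{-\lambda t}\bigl(u(y)+\gamma/\lambda\bigr)$, which is exactly the heat subsolution attached to your shifted function $v=u+\gamma/\lambda$, so your reduction of \ref{itm: MV_ii} to \ref{itm: MV_i} with $p=1$ is the same computation packaged slightly differently.

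The gap is in part \ref{itm: MV_iii}. Your reduction to \ref{itm: MV_ii} with $\lambda=r^{-2}$ is valid and yields $\sup_{B(x,r/2)}u\leqslant C(n,r\sqrt{\kappa})\bigl(\gamma r^2+V(x,r)^{-1}\int_{B(x,r)}u\bigr)$, which would in fact suffice for every application of this proposition in the paper; but it does not produce the dimension-independent prefactor $\tfrac{1}{8}$ in the statement, and the comparison-function argument you propose to recover it does not work. To make $u-\psi$ subharmonic with $\psi=c_n\gamma\bigl(r^2-d(x,\cdot)^2\bigr)_+$ you need $\Delta\psi\geqslant\gamma$ in the geometer's convention, i.e.\ a \emph{positive lower bound} on $\mathrm{tr}\,\nabla^2(d^2)$; Laplacian comparison under a Ricci lower bound controls $\mathrm{tr}\,\nabla^2 d$ only from \emph{above}, and the reverse bound fails in general (it requires sectional curvature upper bounds and injectivity radius control, and breaks down at the cut locus, where $\mathrm{tr}\,\nabla^2 d$ acquires a negative singular part). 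So the inequality you need points the wrong way. The paper avoids this entirely: it applies the parabolic mean value inequality directly to the space-time function $g(t,y)=u(y)-\gamma t$, which satisfies $(\partial_t+\Delta)g\leqslant 0$ precisely because $\Delta u\leqslant\gamma$, with the parameter choices $q=1$, $\delta=\eta=1/4$, $T=r^2/4$, $\tau=r^2/8$; the explicit $\tfrac{1}{8}$ comes from these time parameters, not from any elliptic barrier. Replacing your last step by this observation (or weakening $\tfrac{\gamma}{8}r^2$ to $C\gamma r^2$ in the statement) closes the gap.
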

\begin{proof}
	The proof of parts \ref{itm: MV_ii} and \ref{itm: MV_iii} follows the same idea of the proof of part \ref{itm: MV_i} given in \cite[Corollary 14.8]{li2012geometric}, one just needs to define appropriate functions to apply the parabolic mean value inequality in each case. In the case \ref{itm: MV_ii}, one considers the function $g(t,x):=e^{-\lambda t}u(x) + \frac{\gamma}{\lambda}e^{-\lambda t}$, which by the hypothesis \eqref{ineq: MV_ii} satisfies $(\partial_t + \Delta) g \leqslant 0$ on $[0,\infty)\times B(x,2r)$. In the case \ref{itm: MV_iii}, one considers the function $g(t,x):= u(x) - \gamma t$, which by the hypothesis \eqref{ineq: MV_iii} satisfies $(\partial_t + \Delta) g \leqslant 0$ on $[0,\infty)\times B(x,2r)$. Then one applies the parabolic mean value inequality \cite[Theorem 14.7]{li2012geometric} to $g(t,x)$ in each case, setting the parameters in that result as follows: $q=1$, $\delta=1/4$, $\eta = 1/4$, $T=r^2/4$ and $\tau =r^2/8$. The resulting estimates imply the desired results.
\end{proof}
The following is an important general decay result that will be used often in Section \ref{sec: YMH}.
\begin{lemma}\label{lem: decay}
	Let $(X^n,g)$ be a complete noncompact Riemannian $n$-manifold. Fix a reference point $o\in X$, let $\rho_o(x):= (1 + d(o,x)^2)^{1/2}$ and suppose that $\mathcal{R}ic_g\geqslant -(n-1)K\rho_o(x)^{-2} g$ at every $x\in X$, for some constant $K\geqslant 0$. Assume also that there are constants $c>0$ and $l\geqslant 1$ such that $V(x,r)\geqslant cr^l$ for all $x\in X$ and $r>0$. Then the following hold:
	\begin{itemize}
	\myitem[(i)]\label{itm: decay_i} Let $p>0$ and $\Lambda\geqslant 0$ be fixed constants. Then there exists a constant $C>0$ depending only on $p$, $n$, $\Lambda$, $K$, $c$ and $l$, with the following significance. Suppose that $u\in L^p(X)$ is a nonnegative function satisfying
	\[
	\Delta u\leqslant \Lambda\rho_o^{-2}u\quad\text{on }X\setminus B(o,R),
	\] for some $R>1$. Then for all $x\in X\setminus B(o,R)$ we have
	\[
	u^p(x)\leqslant C (\rho_o(x)-R)^{-l}\|u\|_{L^p(X)}^p.
	\] In particular, $u$ decays uniformly to zero at infinity and $u=O(\rho_o^{-\frac{l}{p}})$.
	\myitem[(ii)]\label{itm: decay_ii} Let $q\geqslant 2$, $s\geqslant 0$ and $\Lambda>0$ be fixed constants. Then there exists a constant $C>0$ depending only on $n$, $\Lambda$, $K$, $c$, $l$, $q$ and $s$, with the following significance. Let $u$ be a nonnegative function satisfying
	\[
	\Delta u\leqslant \Gamma\rho_o^{-q} +\Lambda\rho_o^{-2}u\quad\text{on }X\setminus B(o,R),
	\] for some constants $\Gamma\geqslant 0$ and $R>1$. If $\rho_o^s u\in L^1(X)$ then for all $x\in X\setminus B(o,R)$ we have
	\[
	u(x)\leqslant C\left(\frac{\Gamma}{\Lambda}(\rho_o(x)-R)^{-(q-2)} + (\rho_o(x)-R)^{-(l+s)}\|\rho_o^s u\|_{L^1(X)}\right).
	\] In particular, if $q>2$ then $u$ decays uniformly to zero at infinity and $u=O(\rho_o^{-\alpha})$, where $\alpha:=\min\{q-2,l+s\}$. 
	\end{itemize}
\end{lemma}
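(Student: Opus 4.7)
The plan is to reduce both parts to scale-invariant applications of the local mean value inequalities of Proposition~\ref{prop: MV_ineqs}. The quadratic decay encoded in both the Ricci lower bound $-(n-1)K\rho_o^{-2}g$ and the coefficient $\Lambda\rho_o^{-2}$ is precisely what is needed to apply these inequalities on balls of radius proportional to $\rho_o(x)-R$, with constants that do not deteriorate as $x\to\infty$.

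\textbf{Scale-invariant setup.} Given $x\in X\setminus B(o,R)$, I would set $r:=(\rho_o(x)-R)/C_0$ for a fixed constant $C_0$, chosen large enough (depending only on $n,\Lambda,K$) so that $B(x,2r)\subset X\setminus B(o,R)$ and $\rho_o(y)\geqslant \tfrac12\rho_o(x)$ for every $y\in B(x,2r)$. This is possible because $d(o,\cdot)\leqslant\rho_o\leqslant 1+d(o,\cdot)$. On $B(x,2r)$ one then has $\mathcal{R}ic_g\geqslant -(n-1)\kappa g$ with $\kappa:=4K\rho_o(x)^{-2}$, hence $r\sqrt{\kappa}\leqslant 2\sqrt{K}/C_0$; and $\Lambda\rho_o^{-2}\leqslant 4\Lambda\rho_o(x)^{-2}=:\lambda$, hence $\lambda r^2\leqslant 4\Lambda/C_0^2$. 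Thus the constants produced by Proposition~\ref{prop: MV_ineqs} with these $r,\kappa,\lambda$ depend only on the data of the lemma and not on $x$.

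\textbf{Part \ref{itm: decay_i}.} Since $\Delta u\leqslant \lambda u$ on $B(x,2r)$, Proposition~\ref{prop: MV_ineqs}\ref{itm: MV_i} yields
\[
u(x)^p\leqslant \sup_{B(x,r/2)} u^p \leqslant \frac{C}{V(x,r)}\int_{B(x,r)} u^p \leqslant \frac{C}{c\,r^l}\|u\|_{L^p(X)}^p,
\]
using the uniform volume lower bound $V(x,r)\geqslant cr^l$. Substituting $r=(\rho_o(x)-R)/C_0$ gives the claimed estimate, and letting $\rho_o(x)\to\infty$ reads off uniform decay with rate $u=O(\rho_o^{-l/p})$.

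\textbf{Part \ref{itm: decay_ii}.} On $B(x,2r)$, using $\rho_o\geqslant \tfrac12\rho_o(x)$, the inequality becomes
\[
\Delta u\leqslant 2^q\Gamma\rho_o(x)^{-q}+\lambda u=:\gamma+\lambda u,
\]
so Proposition~\ref{prop: MV_ineqs}\ref{itm: MV_ii} gives
\[
u(x)\leqslant \sup_{B(x,r/2)} u \leqslant C\Bigl(\frac{\gamma}{\lambda}+\frac{1}{V(x,r)}\int_{B(x,r)}u\Bigr).
\]
The ratio $\gamma/\lambda$ is $\lesssim (\Gamma/\Lambda)\rho_o(x)^{-(q-2)}$. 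For the integral term, bound $u\leqslant 2^s\rho_o(x)^{-s}(\rho_o^s u)$ on $B(x,r)$ and use the volume lower bound to produce a factor $(\rho_o(x)-R)^{-l}\rho_o(x)^{-s}\|\rho_o^s u\|_{L^1(X)}$; finally, $\rho_o(x)^{-s}\leqslant (\rho_o(x)-R)^{-s}$ yields the announced $(\rho_o(x)-R)^{-(l+s)}$ factor. If $q>2$ both terms tend to $0$ and $u=O(\rho_o^{-\alpha})$ with $\alpha=\min\{q-2,l+s\}$.

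\textbf{Main obstacle.} The delicate point is the scale-invariant choice $r\sim\rho_o(x)-R$: it is exactly what keeps the auxiliary parameters $r\sqrt{\kappa}$ and $\lambda r^2$ in Proposition~\ref{prop: MV_ineqs} bounded uniformly in $x$, and is therefore a direct consequence of the quadratic decay assumed for both the Ricci lower bound and the coefficient of $u$. Any slower decay of either quantity would force those parameters to blow up along the end and the uniformity of the mean value constants would be lost.
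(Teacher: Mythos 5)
Your proof is correct and follows essentially the same route as the paper: choose $r$ proportional to $\rho_o(x)-R$, observe that the quadratic decay of both the Ricci lower bound and the coefficient $\Lambda\rho_o^{-2}$ keeps the parameters $r\sqrt{\kappa}$ and $\lambda r^2$ in Proposition \ref{prop: MV_ineqs} uniformly bounded, and then invoke the volume lower bound $V(x,r)\geqslant cr^l$ (the paper takes $r=\tfrac18(\rho_o(x)-R)$ and compares $\rho_o(y)$ to $r$ rather than to $\rho_o(x)$, which is only a cosmetic difference). The one point where you are slightly more careful than warranted is the claim that a uniform $C_0$ forces $B(x,2r)\subset X\setminus B(o,R)$ for \emph{every} $x\in X\setminus B(o,R)$ — this fails for $x$ within distance $O(1/R)$ of $\partial B(o,R)$ since $\rho_o(x)-R>0$ there while $d(o,x)-R$ may vanish — but the paper's own proof glosses over the identical issue and it is immaterial for the asymptotic conclusion.
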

\begin{proof}
	\ref{itm: decay_i}: Let $x\in X\setminus B(o,R)$ and define $r:=\frac{1}{8}(\rho_o(x)-R)$. Then, using that $R>1$, one readily verifies that for any $y\in B(x,4r)$ one has $\rho_o(y)>4r>r$. In particular, using the hypotheses, it follows that $\mathcal{R}ic\geqslant -(n-1)Kr^{-2}g$ on $B(x,4r)$, and $\Delta u\leqslant \Lambda r^{-2} u$ on $B(x,2r)$. Thus, applying Proposition \ref{prop: MV_ineqs} \ref{itm: MV_i} with $\kappa:=Kr^{-2}$ and $\lambda:=\Lambda r^{-2}$, we get that there is a constant $\tilde{C}>0$ depending only on $p$, $n$, $\Lambda$ and $\sqrt{K}$ such that
	\[
	u^p(x)\leqslant\sup_{B(x,\frac{r}{2})} u^p\leqslant \tilde{C}\frac{1}{V(x,r)}\int_{B(x,r)} u^p.
	\] Now, using the hypotheses $u\in L^p(X)$ and $V(x,r)\geqslant cr^l$, together with the definition of $r$, it follows that
	\[
	u^p(x)\leqslant \tilde{C}c^{-1}8^l(\rho_o(x)-R)^{-l}\|u\|_{L^p(X)}^p.
	\] Since $x\in X\setminus B(o,R)$ is arbitrary, the result follows by taking $C:=\tilde{C}c^{-1}8^l$. In particular, note that for $x\in X\setminus B(o,2R)$ one has $\rho_o(x)-R\geqslant\frac{1}{2}\rho_o(x)$ and therefore $u^p(x)\leqslant C' \rho_o(x)^{-l}\|u\|_{L^p(X)}^p$.\\
	
	\ref{itm: decay_ii}: Let $x$ and $r$ be as in the above proof of part \ref{itm: decay_i}. Using the hypotheses of \ref{itm: decay_ii} and applying Proposition \ref{prop: MV_ineqs} \ref{itm: MV_ii} with $\kappa:=Kr^{-2}$, $\lambda:=\Lambda r^{-2}$ and $\gamma:=\Gamma r^{-q}$, now we get a constant $\tilde{C}>0$ depending only on $n$, $\Lambda$ and $\sqrt{K}$ such that
	\[
	u(x)\leqslant\sup_{B(x,\frac{r}{2})} u\leqslant\tilde{C}\left(\frac{\Gamma}{\Lambda}r^{-(q-2)} + \frac{1}{V(x,r)}\int_{B(x,r)}u\right).  
	\] Now use that $\rho_o(y)>r$ for all $y\in B(x,r)$ to get that
	\[
	u(x)\leqslant \tilde{C}\left(\frac{\Gamma}{\Lambda}r^{-(q-2)} + \frac{1}{r^s V(x,r)}\int_{B(x,r)}\rho_o^s u\right).
	\] Finally, using the hypotheses $\rho_o^s u\in L^1(X)$ and $V(x,r)\geqslant cr^l$, and the definition of $r$, yields
	\[
	u(x)\leqslant\tilde{C}\left(8^{q-2}\frac{\Gamma}{\Lambda}(\rho_o(x)-R)^{-(q-2)} + c^{-1}8^{l+s}(\rho_o(x)-R)^{-(l+s)}\|\rho_o^s u\|_{L^1(X)}\right).
	\] Hence, the result follows by taking $C:=\tilde{C}\max\{8^{q-2},c^{-1}8^{l+s}\}$.
\end{proof}
\begin{remark}
	Let $(X^n,g)$ be an AC manifold with only one end $X\setminus K$ and dimension $n\geqslant 3$. Given any radius function $\rho$ on $X$, for any fixed reference point $o\in K$ we have $\rho(x)\sim\rho_o(x):=(1+d(o,x)^2)^{1/2}$, and by the quadratic decay of the Ricci curvature (see Appendix \ref{app: A}) there is $K\geqslant 0$ such that $\mathcal{R}ic_g\geqslant -(n-1)K\rho_o(x)^{-2}g$ at every $x\in X$. Moreover, by Corollary \ref{cor: AC_nonparabolic}, $(X^n,g)$ satisfies the volume growth lower bound \eqref{ineq: lb_vol_growth}. It follows that the assumptions of Lemma \ref{lem: decay} hold on any such $(X^n,g)$, with $l=n$. In Section \ref{sec: YMH} we will apply Lemma \ref{lem: decay} on AC $3$-manifolds; in particular, we will always have $l=n=3$ in that case. Also note that since $\rho\sim\rho_o$ we can use $\rho$ instead of $\rho_o$ in the decay estimates.
\end{remark}
 We finish this section with a general decay result for the gradient of harmonic functions with finite Dirichlet energy, \emph{i.e.} finite $L^2$-norm of the gradient, on complete noncompact manifolds with lower bounded Ricci curvature and non-collapsing volume.
\begin{lemma}\label{lem: bounded_gradient}
	Let $(X,g)$ be a complete noncompact Riemannian manifold with Ricci curvature bounded from below, namely $\mathcal{R}ic_g\geqslant -(n-1)\kappa g$ on $X$ for some constant $\kappa\geqslant 0$, and assume there is a uniform lower bound for the volume of balls which is independent of their center:
	\begin{equation}\label{ineq: non_collapsing_assumption}
		v_1:=\inf_{x\in X} V(x,1)>0.
	\end{equation}
	Then every harmonic function $h$ on $(X,g)$ with finite Dirichlet energy, \emph{i.e.} with $dh\in L^2(X)$, has bounded gradient $dh\in L^{\infty}(X)$ and in fact $|dh|$ decays uniformly to zero at infinity; in particular, $dh\in L^p(X)$ for all $p\in [2,\infty]$.
\end{lemma}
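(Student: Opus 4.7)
The plan is to combine the Bochner identity for a harmonic function with the local mean value inequality of Proposition \ref{prop: MV_ineqs}\ref{itm: MV_i}, using the non-collapsing assumption to get a uniform lower bound on the volume of unit balls. Concretely, since $\Delta h = 0$ and $\Delta = d^{\ast}d$ is the geometer's Hodge Laplacian, the Bochner--Weitzenb\"ock formula reads
\[
\tfrac{1}{2}\Delta |dh|^2 \;=\; -|\nabla dh|^2 \;-\; \mathcal{R}ic_g(\nabla h,\nabla h) \;\leqslant\; (n-1)\kappa\,|dh|^2,
\]
using the lower Ricci bound $\mathcal{R}ic_g \geqslant -(n-1)\kappa g$. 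Hence $u := |dh|^2$ is a nonnegative function on $X$ satisfying $\Delta u \leqslant \lambda u$ with $\lambda := 2(n-1)\kappa$.

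Next, fix any $x\in X$ and apply Proposition \ref{prop: MV_ineqs}\ref{itm: MV_i} on the ball $B(x,2)$ (so $r=1$) with $p=1$. This yields a constant $C=C(n,\kappa)>0$, independent of $x$, such that
\[
\sup_{B(x,1/2)} |dh|^2 \;\leqslant\; \frac{C}{V(x,1)} \int_{B(x,1)} |dh|^2.
\]
By the non-collapsing hypothesis \eqref{ineq: non_collapsing_assumption}, $V(x,1)\geqslant v_1>0$ uniformly in $x$, so
\[
|dh|^2(x) \;\leqslant\; \frac{C}{v_1}\int_{B(x,1)} |dh|^2 \;\leqslant\; \frac{C}{v_1}\,\|dh\|_{L^2(X)}^2,
\]
which gives the desired bound $dh\in L^{\infty}(X)$.

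For the uniform decay, fix a reference point $o\in X$. Since $|dh|^2\in L^1(X)$ by assumption, for any $\varepsilon>0$ there exists $R_\varepsilon>0$ such that
\[
\int_{X\setminus B(o,R_\varepsilon)} |dh|^2 \;<\; \frac{v_1\,\varepsilon}{C}.
\]
If $x\in X$ satisfies $d(o,x)\geqslant R_\varepsilon+1$, then $B(x,1)\subset X\setminus B(o,R_\varepsilon)$, so the previous mean value estimate yields $|dh|^2(x)<\varepsilon$. Thus $|dh|\to 0$ uniformly at infinity. Finally, the $L^p$ conclusion for $p\in[2,\infty]$ follows by interpolating the $L^2$ and $L^{\infty}$ bounds:
\[
\|dh\|_{L^p(X)}^p \;\leqslant\; \|dh\|_{L^{\infty}(X)}^{p-2}\,\|dh\|_{L^2(X)}^2 \;<\;\infty.
\]
No step poses a real obstacle; the only subtlety is making sure that the constant in the mean value inequality can be taken independent of $x$, which is automatic since the parameters $r\sqrt{\kappa}=\sqrt{\kappa}$ and $\lambda r^2=2(n-1)\kappa$ are global constants under the standing hypotheses.
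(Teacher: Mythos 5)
Your proof is correct and follows essentially the same route as the paper: the Bochner--Weitzenb\"ock formula for the harmonic $1$-form $dh$ together with the Ricci lower bound gives $\Delta|dh|^2\leqslant 2(n-1)\kappa|dh|^2$, and then Proposition \ref{prop: MV_ineqs} \ref{itm: MV_i} at scale $r=1$ with the non-collapsing bound \eqref{ineq: non_collapsing_assumption} yields the uniform pointwise estimate, the decay from $|dh|^2\in L^1(X)$, and the $L^p$ conclusion by interpolation. Your observation that the mean value constant is uniform in $x$ because $r\sqrt{\kappa}$ and $\lambda r^2$ are global constants is exactly the point the paper relies on as well.
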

\begin{proof}
	We start noting that, since $\Delta h = 0$, by the standard Bochner formula for $1$-forms we have
	\[
	\langle\nabla^{\ast}\nabla(dh),dh\rangle = \langle\Delta(dh),dh\rangle - \mathcal{R}ic_g(dh^{\sharp},dh^{\sharp}) = - \mathcal{R}ic_g(dh^{\sharp},dh^{\sharp}). 
	\] 
	In particular, since $\mathcal{R}ic_{g}\geqslant -(n-1)\kappa g$,
	\[
	\frac{1}{2}\Delta|d h|^2 = \langle\nabla^{\ast}\nabla(dh),dh\rangle - |\nabla^2 h|^2 \leqslant (n-1)\kappa|d h|^2.
	\] Therefore, it follows from Proposition \ref{prop: MV_ineqs} \ref{itm: MV_i} that there is a constant $C>0$ depending only on $\kappa$ and $n$ such that for all $x\in X$ we have
	\begin{equation}\label{ineq: interm_step_mean_value}
	|dh|^2(x)\leqslant\sup_{B(x,\frac{1}{2})} |dh|^2 \leqslant C\frac{1}{V(x,1)}\int_{B(x,1)} |dh|^2.
	\end{equation} Inequality \eqref{ineq: interm_step_mean_value} combined with $dh\in L^2(X)$ and the assumption \eqref{ineq: non_collapsing_assumption} already gives
	\[
	\|dh\|_{L^{\infty}(X)}^2\leqslant Cv_1^{-1}\|dh\|_{L^2(X)}^2<\infty,
	\] proving that indeed $dh\in L^{\infty}(X)$. 
	
	Now, given any $\varepsilon>0$ it follows from the assumption $|dh|^2\in L^1(X)$ that there is a large enough ball $B(o,R)\subseteq X$ such that
	\begin{equation}\label{eq: smaller_than_epsilon}
	\int_{X\setminus B(o,R)}|dh|^2< C^{-1}v_1\varepsilon.
	\end{equation} Therefore, given any $x\in X\setminus B(o,R+2)$, noting that $B(x,1)\subset X\setminus B(o,R)$ and combining with \eqref{ineq: interm_step_mean_value} and \eqref{eq: smaller_than_epsilon} we get
	\[
	|dh|^2(x)\leqslant Cv_1^{-1}\int_{B(x,1)}|dh|^2\leqslant Cv_1^{-1}\int_{X\setminus B(o,R)}|dh|^2<\varepsilon.
	\] This shows that, in fact, $|dh|$ decays uniformly to zero at infinity.
	
	The last part of the statement follows from the fact that $L^2(X)\cap L^{\infty}(X)\subset L^p(X)$ for all $p\in [2,\infty]$.
\end{proof}

\subsection{A Liouville type result and a functional analytic consequence}\label{subsec: Liouville}

We state and prove the main result of this section, together with one important consequence that will be used in Section \ref{sec: mass_charge}. 

\begin{theorem}\label{thm: Liouville}
	Let $(X^n,g)$ be a complete nonparabolic Riemannian $n$-manifold, $n\geqslant 3$, satisfying the equivalent conditions of Theorem \ref{thm: PHI}. Suppose furthermore that $(X,g)$ has Ricci curvature bounded from below and satisfies a uniform lower bound for the volume of balls which is independent of their center \eqref{ineq: non_collapsing}. Then every harmonic function on $(X,g)$ with finite Dirichlet energy must be constant.
\end{theorem}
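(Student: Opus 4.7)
My plan is to exploit the identity $\Delta(h^2)=-2|dh|^2$, valid whenever $h$ is harmonic, together with the theory of the Poisson equation on nonparabolic manifolds (Lemma \ref{lem: Poisson}), and then invoke the strong Liouville property (Remark \ref{rmk: strong_Liouville}) twice to reduce $h$ to a constant.

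First I would apply Lemma \ref{lem: bounded_gradient} to $h$: the lower Ricci bound and the non-collapsing assumption \eqref{ineq: non_collapsing} together with $dh\in L^2(X)$ yield $|dh|\in L^{\infty}(X)$ and the uniform decay of $|dh|$ to zero at infinity. Consequently $f:=|dh|^2$ is a nonnegative smooth function on $X$ lying in $L^1(X)\cap L^{n-1}(X)$ (using $n\geqslant 3$ and that $L^1\cap L^\infty \subset L^{n-1}$). Since $(X,g)$ is complete, nonparabolic and satisfies (PHI), Lemma \ref{lem: Poisson} then provides a unique smooth nonnegative solution $u\in C^{\infty}(X)$ of the Poisson equation $\Delta u=|dh|^2$ which decays uniformly to zero at infinity, explicitly $u(x)=\int_X G(x,y)|dh|^2(y)\,dy$.

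Next, using $\Delta h=0$ together with the product rule for the Laplacian, I would compute $\Delta(h^2/2)=-|dh|^2$, so that $w:=u+h^2/2$ is harmonic on $X$. Since both $u\geqslant 0$ and $h^2/2\geqslant 0$, the function $w$ is a nonnegative harmonic function, and by the strong Liouville property (Remark \ref{rmk: strong_Liouville}, which is available since (PHI) holds) it must be a constant $c\geqslant 0$. Because $u\to 0$ uniformly at infinity, this forces $h^2\to 2c$ at infinity; in particular, $h$ is bounded on $X$. A second application of the strong Liouville property to $h$ itself then shows that $h$ is constant, and therefore $dh\equiv 0$, as desired.

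The step requiring the most care is the invocation of Lemma \ref{lem: Poisson}: this is the one point where nonparabolicity is essential (a parabolic manifold carries no such Green's-function representation with uniform decay), while the Ricci lower bound and the non-collapsing hypothesis enter only through Lemma \ref{lem: bounded_gradient}, whose output $|dh|\in L^{\infty}(X)$ secures the $L^{n-1}$-integrability of $f=|dh|^2$ needed for the uniqueness-and-uniform-decay half of Lemma \ref{lem: Poisson}. Once these ingredients are assembled, the double use of strong Liouville closes the argument cleanly.
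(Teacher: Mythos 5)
Your proposal is correct and follows essentially the same route as the paper: Lemma \ref{lem: bounded_gradient} to get $|dh|^2\in L^1\cap L^{n-1}$, Lemma \ref{lem: Poisson} to solve the Poisson equation with decay, the observation that $u+h^2$ (up to a harmless factor of $2$) is a nonnegative harmonic function, and two applications of the strong Liouville property. The only cosmetic difference is that the paper concludes boundedness of $h$ directly from $u\geqslant 0$ rather than from the decay of $u$, but both are immediate.
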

\begin{proof}
	Let $h\in C^{\infty}(X)$ satisfy $\Delta h=0$ and $|dh|^2\in L^1(X)$. By Lemma \ref{lem: bounded_gradient} we have that $|dh|^2\in L^1(X)\cap L^{n-1}(X)$. Thus, we can apply Lemma \ref{lem: Poisson} to derive the existence of a smooth nonnegative solution $u$ of the Poisson equation $\Delta u = 2|dh|^2$, which furthermore decays at infinity. On the other hand, since $h$ is harmonic, we have
	\[
	\Delta(h^2) = 2h\Delta h - 2|dh|^2 = - 2|dh|^2. 
	\] Thus, we conclude that $w:=u+h^2$ is a nonnegative harmonic function on $X$, so that by the strong Liouville property (see Remark \ref{rmk: strong_Liouville}) it follows that $w$ is a constant, say $w\equiv a^2$. Since $u\geqslant 0$, it follows that $h^2\leqslant a^2$ on $X$. 
	Hence, $h$ is a bounded harmonic function on $X$, so another application of the strong Liouville property implies that $h$ is constant, as we wanted.

\end{proof} 
\begin{remark}\label{rmk: ex_Liouville_AC}
	Any AC manifold with only one end and dimension $n\geqslant 3$ satisfies the hypotheses of Theorem \ref{thm: Liouville}; this follows from Theorem \ref{thm: VD+P_AC} together with Corollary \ref{cor: AC_nonparabolic} and the fact that the Ricci curvature decays quadratically in this case (see Appendix \ref{app: A}), so in particular it is bounded from below.
\end{remark}
\begin{remark}\label{rmk: examples_4D}
	In dimensions $n\geqslant 4$, the assumption of the uniform lower bound for the volume of balls independent of their center \eqref{ineq: non_collapsing} in Theorem \ref{thm: Liouville} is not redundant; more precisely, there are examples of manifolds $(X^n,g)$ satisfying all the assumptions of Theorem \ref{thm: Liouville} except \eqref{ineq: non_collapsing}. Indeed, Croke and Karcher \cite[Example 2]{croke1988volumes} constructed, for each positive real number $\alpha\in (2/3,1)$, a complete metric $g=g_{\alpha}$ in $\mathbb{R}^4$ (extending to higher dimensions) of positive Ricci curvature (therefore satisfying the equivalent conditions of Theorem \ref{thm: Liouville} and having Ricci curvature bounded from below), whose sectional curvatures decay to zero at infinity, and such that the volume of balls behave like $V(x,r)\sim r^3 t^{\alpha - 1}$, where $t$ denotes the Euclidean distance from $x$ to the origin $0\in\mathbb{R}^4$; in particular, $(\mathbb{R}^4,g_{\alpha})$ is nonparabolic (by Corollary \ref{cor: nonparabolic}) and the volume of balls goes uniformly to zero as the center goes off to infinity. 
\end{remark}

\begin{remark}\label{rmk: nR_case_crucial}
	Although the positive Ricci curvature examples of Remark \ref{rmk: examples_4D} fail in satisfying \eqref{ineq: non_collapsing}, the conclusion of Theorem \ref{thm: Liouville} still hold for those manifolds. In fact, more generally, for any complete noncompact manifold $(X,g)$ with nonnegative Ricci curvature it suffices to assume that $(X,g)$ has infinite volume to get the conclusion of Theorem \ref{thm: Liouville}, and the proof is very simple.  
	
	Indeed, let $h\in C^{\infty}(X)$ be such that $\Delta h =0$ and $dh\in L^2(X)$. Then the following hold strongly outside the zero locus of $|dh|$ and weakly everywhere:
	\begin{align}
		|dh|\Delta |dh| &\leqslant \langle\nabla^{\ast}\nabla(dh),dh\rangle\\
		&=\langle\Delta(dh),dh\rangle-Ric_g(dh^{\sharp},dh^{\sharp})\\
		&= -\mathcal{R}ic_g(dh^{\sharp},dh^{\sharp})\leqslant 0,  
	\end{align} where in the first line we used Kato's inequality, in the second line we used the standard Bochner--Weitzenb\"ock formula for $1$-forms, and in the third line we used the harmonicity of $h$ together with the assumption of nonnegative Ricci curvature. Therefore, $|dh|$ is subharmonic. This together with the integrability assumption $|dh|\in L^2(X)$ implies that $|dh|$ must be constant, by a classical result due to Yau \cite{yau1976some} stating that for any $p>1$ a complete manifold does not admit any nonconstant nonnegative $L^p$ subharmonic function (see \cite[Lemma 7.1]{li2012geometric}). But $X$ has infinite volume, so the integrability actually forces $|dh|\equiv 0$, as we wanted.
	
	Note that nonparabolicity implies infinite volume by \eqref{ineq: vol_growth}, but it is generally a much stronger assumption; there are of course many examples of parabolic manifolds with nonnegative Ricci curvature and infinite volume, \emph{e.g.} asymptotically cylindrical Calabi--Yau and $\rm G_2$-manifolds, and these are also of great interest in the study of higher dimensional instantons and general Yang--Mills connections (see \emph{e.g.} \cite{sa2020current}).
	
\end{remark}

\begin{remark}\label{rmk: one_end}
	The number of nonparabolic ends of a complete noncompact manifold is bounded above by the dimension of the space spanned by the bounded harmonic functions with finite Dirichlet energy \cite{li1992harmonic}. Therefore, any complete noncompact manifold satisfying the conclusion of Theorem \ref{thm: Liouville} must have only one nonparabolic end. 
\end{remark}

We now finish this section with an important consequence of Theorem \ref{thm: PHI} that will be used in Section \ref{sec: mass_charge}. In what follows, let $(X^n,g)$ be a complete noncompact Riemannian $n$-manifold, $n\geqslant 3$, satisfying the equivalent conditions of Theorem \ref{thm: PHI}, with Ricci curvature bounded from below and satisfying the $L^2$-Sobolev inequality \eqref{ineq: L2-Sobolev}. By Proposition \ref{prop: Sobolev_vol_growth} and Corollary \ref{cor: nonparabolic}, it follows that $(X,g)$ is nonparabolic and satisfies $V(x,r)\gtrsim r^n$, for all $x\in X$ and $r>0$. In particular, $(X,g)$ satisfies the hypotheses of Theorem \ref{thm: Liouville}.

Let $\mathcal{V}$ denote the space of real valued functions $f\in W_{\text{loc}}^{1,2}(X)$ for which $df\in L^2(X)$, and let $\mathcal{H}$ be the Hilbert space obtained from the completion of the space $C_c^{\infty}(X)$ of smooth compactly supported functions on $X$ with respect to the norm $\|f\|_{\mathcal{H}}:=\|df\|_{L^2(X)}$. By the $L^2$-Sobolev inequality it follows that there is a continuous embedding $\mathcal{H}\hookrightarrow L^{\frac{2n}{n-2}}(X)$; in particular, by H\"older's inequality, $\mathcal{H}\hookrightarrow\mathcal{V}$. In fact, we can prove the following key result, which is inspired by the $\mathbb{R}^3$ versions \cite[Lemma 4.12]{Taubes1982} and \cite[Lemma 1]{groisser1984integrality}.
\begin{lemma}\label{lem: mass}
	Let $(X^n,g)$ be a complete noncompact Riemannian $n$-manifold, $n\geqslant 3$, satisfying the equivalent conditions of Theorem \ref{thm: PHI}, with Ricci curvature bounded from below and satisfying the $L^2$-Sobolev inequality \eqref{ineq: L2-Sobolev}. Let $\mathcal{V}$ and $\mathcal{H}$ be defined as above. 
	
	For each $f\in\mathcal{V}$ there is a unique real number $m(f)\in\mathbb{R}$ such that $f-m(f)\in\mathcal{H}$. Thus, there is a canonical isomorphism $\mathcal{V}\cong \mathcal{H}\oplus\mathbb{R}$. Moreover, $m(f)$ is also characterized as the unique real number such that $f-m(f)\in L^{\frac{2n}{n-2}}(X)$.
\end{lemma}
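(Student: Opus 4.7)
The plan is to derive the existence and uniqueness of $m(f)$ from a Hilbert-space projection argument, using the Liouville-type result of Theorem \ref{thm: Liouville} as the key input. First, for \textbf{uniqueness}: if $f - m_1, f - m_2 \in \mathcal{H}$ for reals $m_i$, then the constant $m_2 - m_1$ lies in $\mathcal{H}$, hence in $L^{2n/(n-2)}(X)$ by the Sobolev embedding $\mathcal{H} \hookrightarrow L^{2n/(n-2)}(X)$ already noted before the lemma. Since the hypotheses together with Proposition \ref{prop: Sobolev_vol_growth} give $V(x,r) \gtrsim r^n$, $(X,g)$ has infinite volume, so the only constant in $L^{2n/(n-2)}(X)$ is zero, forcing $m_1 = m_2$. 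The same observation will immediately yield the alternative characterization at the end.

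For \textbf{existence}, given $f \in \mathcal{V}$ the formula
\[
L(\phi) := \int_X \langle df, d\phi \rangle
\]
defines a bounded linear functional on $C_c^{\infty}(X)$ (with norm $\leqslant \|df\|_{L^2}$ by Cauchy--Schwarz), hence extends by density to a bounded linear functional on $\mathcal{H}$. The Riesz representation theorem produces a unique $u \in \mathcal{H}$ with $\int_X \langle du, d\phi \rangle = \int_X \langle df, d\phi \rangle$ for every $\phi \in \mathcal{H}$, and in particular for every $\phi \in C_c^{\infty}(X)$. Consequently $f - u$ is weakly harmonic; by interior elliptic regularity it is smooth and harmonic, and since $d(f-u) = df - du \in L^2(X)$ it has finite Dirichlet energy. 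The hypotheses of the lemma include (PHI) and the $L^2$-Sobolev inequality, which via Proposition \ref{prop: Sobolev_vol_growth} supplies the non-collapsing bound \eqref{ineq: non_collapsing}, so Theorem \ref{thm: Liouville} applies and forces $f - u$ to be a constant, which we name $m(f)$. Then $f - m(f) = u \in \mathcal{H}$, which combined with uniqueness yields the canonical isomorphism $\mathcal{V} \cong \mathcal{H} \oplus \mathbb{R}$.

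For the \textbf{alternative characterization}, suppose $f - c \in L^{2n/(n-2)}(X)$ for some $c \in \mathbb{R}$. Since $f - m(f) \in \mathcal{H} \subset L^{2n/(n-2)}(X)$ from the previous step, the constant $m(f) - c = (f-c) - (f - m(f))$ lies in $L^{2n/(n-2)}(X)$; the infinite-volume argument from the first paragraph then gives $c = m(f)$, as required. The bulk of the conceptual work has been absorbed into Theorem \ref{thm: Liouville}, so the main obstacle, namely ruling out nontrivial harmonic functions of finite Dirichlet energy on these general geometries, is already handled; what remains here is essentially a packaging of that fact via Riesz representation and the infinite-volume consequence of the Sobolev inequality.
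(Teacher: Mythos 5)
Your proof is correct and follows essentially the same route as the paper: both produce the decomposition $f = (\text{element of }\mathcal{H}) + (\text{harmonic function of finite Dirichlet energy})$ by a Hilbert-space projection in $\mathcal{H}$ and then invoke Theorem \ref{thm: Liouville} to force the harmonic part to be constant. The only cosmetic difference is that you realize the projection via the Riesz representation theorem where the paper minimizes the strictly convex functional $Q(h)=\|d(f-h)\|_{L^2}^2$ by the direct method; these are the same orthogonal projection, and your uniqueness/infinite-volume argument matches the paper's treatment of the last characterization.
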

\begin{proof}
	We follow the original Euclidean case proof in \cite[Lemma 1]{groisser1984integrality}, noting that the key ingredients to generalize that proof to the present case are the $L^2$-Sobolev inequality \eqref{ineq: L2-Sobolev} and Theorem \ref{thm: Liouville}. 
	
	Fix $f\in\mathcal{V}$ and define the functional $Q:\mathcal{H}\to\mathbb{R}$ by $Q(h):=\|d(f-h)\|_{L^2(X)}^2$. Then, using Young's inequality, one may readily verify that $Q$ is strictly convex and satisfies the coercive estimate $Q(h)\geqslant\frac{1}{2}\|h\|_{\mathcal{H}}^2 - \|f\|_{\mathcal{H}}^2$. Moreover, $Q$ is differentiable, with derivative at $h\in\mathcal{H}$ given by the linear functional $\delta Q(h): \mathcal{H}\ni u\mapsto 2\langle du,d(f-h)\rangle_{L^2(X)}$, whose operator norm is bounded by H\"older's inequality. In particular, $Q$ is also lower semicontinuous. From these properties, and since $\mathcal{H}$ is a reflexive Banach space, it follows that $Q$ achieves a unique minimum, say at $h\in\mathcal{H}$, and if we let $u:=f-h$ then by the vanishing of the derivative $\delta Q(h)=0$ we have that $\Delta u = 0$ holds weakly. Now, as we noted before, the $L^2$-Sobolev inequality implies $\mathcal{H}\hookrightarrow L^{\frac{2n}{n-2}}(X)$, whence $\mathcal{H}\hookrightarrow\mathcal{V}$. Thus $u=f-h\in\mathcal{V}\hookrightarrow W_{\text{loc}}^{1,2}(X)$ and, since $\Delta u=0$ holds weakly, elliptic regularity yields that $u$ is in fact a smooth harmonic function with finite Dirichlet energy.
	
	We now invoke Theorem \ref{thm: Liouville} to conclude that $u$ is constant, so $f-h=u=\text{const}=:m(f)$. Note that $f-m(f)=h\in\mathcal{H}$, and the uniqueness of $m(f)$ follows from the uniqueness of the minimum point $h$. This proves the main part of the result. The stated isomorphism is then clear. As for the last statement, we already proved that $f-m(f)\in L^{\frac{2n}{n-2}}(X)$, since $\mathcal{H}\hookrightarrow L^{\frac{2n}{n-2}}(X)$. Moreover, the uniqueness of $m(f)$ satisfying this last property follows from the fact that $(X,g)$ has infinite volume. 
\end{proof}
\begin{remark}\label{rmk: nR_case_lem_mass}
	The hypotheses of Lemma \ref{lem: mass} (and therefore its conclusion) hold, \emph{e.g.}, for any AC $n$-manifold with only one end, and any complete $n$-manifold with nonnegative Ricci curvature and maximal volume growth (see Theorem \ref{thm: Sobolev_AC} and Remarks \ref{rmk: ex_Liouville_AC} and \ref{rmk: max_vol_growth}).
\end{remark}

\section{Finite mass, charge and energy formula}\label{sec: mass_charge}
This section is dedicated to the proof of Theorem \ref{thm: main_1} and is mostly based on \cite{groisser1984integrality}, with the necessary adaptations. 

\subsection{Finite mass from finite energy}\label{subsec: finite_mass}
Let us start by introducing the concept of finite mass configurations.
\begin{definition}[Finite mass]
	Let $(X,g)$ be a complete, noncompact, Riemannian manifold with \emph{only one end}, \emph{i.e.} the complement of any ball on $X$ has only one unbounded connected component. Suppose that $(A,\Phi)$ is a smooth configuration on a principal $G$-bundle $P\to X$. Then $(A,\Phi)$ is said to have \emph{finite mass} $m\in\mathbb{R}^{+}$ if for any $x\in X$ one has
	\begin{equation}\label{eq: finite_mass}
		\lim_{R\to\infty}\sup_{X\setminus B(x,R)} |m-|\Phi|| = 0.
	\end{equation}
\end{definition}
\begin{remark}\label{rmk: finite_mass}
	Suppose that $(A,\Phi)$ has finite mass $m\geqslant 0$ and satisfies \eqref{eq:2nd_Order_Eq_1}. Then it follows from \eqref{eq: finite_mass}, \eqref{eq: subharmonic} and the maximum principle (see \cite[Proposition IV.3.3]{Jaffe1980}), that either $|\Phi|\equiv m$ or $|\Phi|<m$ on $X$. In particular, $|\Phi|^2$ is a bounded subharmonic function on $(X,g)$. Moreover, $\Phi=0$ if and only if $m=0$. In case $m>0$, by the uniform convergence \eqref{eq: finite_mass} we have $|\Phi|\geqslant\frac{m}{2}>0$ outside a sufficiently large ball on $X$.
\end{remark}
It follows from Remark \ref{rmk: finite_mass} that if we want to consider finite mass \emph{irreducible} solutions $(A,\Phi)$ of equations \eqref{eq:2nd_Order_Eq_1} and \eqref{eq:2nd_Order_Eq_2} on $(X,g)$, meaning solutions such that $\nabla_A\Phi\neq 0$, then $(X,g)$ must be a \emph{nonparabolic} manifold; otherwise it would not admit the nonconstant bounded subharmonic function $|\Phi|^2$ (see \S\ref{sec: analysis}).
 
The main class of complete nonparabolic $3$-manifolds that we shall focus our study is that of AC oriented $3$-manifolds with only one end. Nevertheless, we shall also add remarks and occasionally prove analogous results for other general classes of nonparabolic geometries (even in higher dimensions), \emph{e.g.} manifolds with nonnegative Ricci curavature and maximal volume growth. For the convenience of the reader, in Section \ref{sec: analysis} we collected all the necessary analysis background concerning these general geometries that we shall need henceforth. Note that the flat Euclidean $3$-dimensional space $\mathbb{R}^3$ is the only Ricci-flat AC $3$-manifold (up to isometry), and so is a distinguished member of the intersection of the classes of general AC $3$-manifolds with only one end and nonparabolic $3$-manifolds with nonnegative Ricci curvature. Thus, by working out the theory on AC manifolds and keeping an eye on other general geometries, such as nonparabolic $3$-manifolds with nonnegative Ricci curvature, we hope to clarify the main difficulties in generalizing the Euclidean case results, especially the role played by the geometry via the Ricci curvature and volume growth. 

\begin{remark}\label{rmk: uniqueness_R3}
	If $X^3$ is a noncompact $3$-manifold (connected and without boundary) admitting a complete metric of nonnegative Ricci curvature and maximal (Euclidean) volume growth, then $X^3$ is diffeomorphic to $\mathbb{R}^3$; this follows from combining the works of Zhu \cite{zhu1993finiteness} and Liu \cite{liu20133}. In particular, if $(X^3,g)$ is AC and has nonnegative Ricci curvature, then $X^3\cong\mathbb{R}^3$.
	
	There are interesting examples of general nonparabolic metrics in $\mathbb{R}^3$ with nonnegative Ricci curvature for which moduli spaces of monopoles have been studied. In fact, Oliveira \cite[Chapter 2]{oliveira2014thesis} studied the moduli spaces of spherically symmetric $\rm SU(2)$ monopoles on $\mathbb{R}^3$ endowed with an arbitrary spherically symmetric nonparabolic metric $g$. He proved in particular that for each nonnegative real number $m\in [0,\infty)$ there corresponds a unique gauge equivalence class of spherically symmetric monopoles with mass $m$ (see \cite[Theorem 2.2.1]{oliveira2014thesis}). Now, on $\mathbb{R}^3\setminus\{0\}\cong (0,\infty)_r\times\mathbb{S}^2$ a spherically symmetric metric $g$ can be written as $g=dr^2 + h^2(r)g_{\mathbb{S}^2}$, with $h(r)=r+h_3r^3+\ldots$ around $r=0$ to ensure smoothness and bounded curvature at $r=0$. The nonparabolicity of $g$ is then equivalent to $\int_1^{\infty}\frac{dr}{h^2(r)}<\infty$ and in this case $G(x,y)=\int_{d(x,y)}^{\infty} \frac{dr}{2h^2(r)}$ is a positive Green's function. Moreover, for such metrics we have $\mathcal{R}ic_g\geqslant -2h''(r)/h(r)$ at distance $r$ from the origin $0\in\mathbb{R}^3$. Thus we see that there are many non-trivial choices of functions $h$ for which the corresponding metric $g$ is nonparabolic and has nonnegative Ricci curvature (the trivial case $h(r)=r$ corresponding to the Euclidean metric).
\end{remark} 

From now on, unless otherwise stated, we shall assume that $(X^3,g)$ is an AC oriented $3$-manifold with only one end, and $\rho$ will denote a radius function on $X$. Now let $P$ be a principal $G$-bundle over $X$, where $G$ is a compact Lie group. We fix a metric on the associated adjoint bundle $\mathfrak{g}_P$ coming from a choice of Ad$_G$-invariant inner product on the Lie algebra $\mathfrak{g}$ of $G$. From now on we shall consider the following \emph{configuration space}:
\[
\mathscr{C}(P):=\{(A,\Phi)\in \mathscr{A}(P)\times \Gamma(\mathfrak{g}_P): \mathcal{E}_X(A,\Phi)<\infty\}.
\] Note that given $(A,\Phi)\in\mathscr{C}(P)$ then by Kato's inequality we have $d|\Phi|\in L^2(X)$ and thus $|\Phi|\in\mathcal{V}$. Therefore, by Lemma \ref{lem: mass}, $m:=m(|\Phi|)$ is the unique real number such that $m-|\Phi|\in\mathcal{H}$, and also the unique number such that $m-|\Phi|\in L^6(X)$; moreover, one has $\|m-|\Phi|\|_{L^6(X)}\lesssim \|\nabla_A\Phi\|_{L^2(X)}$.

The main result of this section is the following (cf. \cite[Lemma 3]{groisser1984integrality}).
\begin{theorem}\label{thm: finite_mass}
	Let $(A,\Phi)\in\mathscr{C}(P)$ and suppose \eqref{eq:2nd_Order_Eq_1} holds, \emph{i.e.} suppose $\Delta_A\Phi = 0$. Let $m:=m(|\Phi|)$ be given by Lemma \ref{lem: mass}. Then $m\geqslant 0$, $\nabla_A^2\Phi\in L^2(X)$, $\nabla_A\Phi\in L^p(X)$ for $2\leqslant p\leqslant 6$ and $(A,\Phi)$ has finite mass $m$, \emph{i.e.}, \eqref{eq: finite_mass} holds.
\end{theorem}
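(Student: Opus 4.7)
The plan is to proceed in four steps, with the boundedness of $\Phi$ serving as the crucial bridge. First, $m \geqslant 0$ is immediate: since $|\Phi| \geqslant 0$ and $X$ has infinite volume (by Corollary~\ref{cor: AC_nonparabolic}), if $m < 0$ then $m - |\Phi| \leqslant m < 0$ gives $(m - |\Phi|)^6 \geqslant m^6 > 0$ pointwise, forcing $\|m - |\Phi|\|_{L^6(X)} = \infty$ and contradicting Lemma~\ref{lem: mass}. Next, I would establish $\Phi \in L^\infty(X)$: from $\Delta_A\Phi = 0$ and the Bochner identity one gets $\Delta|\Phi|^2 = -2|\nabla_A\Phi|^2 \leqslant 0$, so $|\Phi|^2$ is a nonnegative subharmonic function in the analyst's sign convention. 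The AC geometry provides Ricci curvature bounded from below and two-sided bounds $v_1 \leqslant V(x,1) \leqslant C$ (combining $V(x,r) \gtrsim r^3$ with Bishop--Gromov), so the mean value inequality from Proposition~\ref{prop: MV_ineqs} applied on unit balls yields, uniformly in $x$,
$$\sup_{B(x,1/2)} |\Phi|^2 \;\lesssim\; \biggl(\int_{B(x,1)} |\Phi|^{6}\biggr)^{1/3} \;\lesssim\; m^2 + \|m - |\Phi|\|_{L^6(X)}^2,$$
using the pointwise bound $|\Phi|^6 \lesssim m^6 + (m - |\Phi|)^6$; hence $\|\Phi\|_{L^\infty(X)} < \infty$.

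The third and most delicate step is $\nabla_A^2\Phi \in L^2(X)$. Set $\xi := \nabla_A\Phi \in \Omega^1(X,\mathfrak{g}_P)$; from $\Delta_A\Phi = 0$ and $d_A^2\Phi = [F_A,\Phi]$ we have $d_A^*\xi = 0$ and $d_A\xi = [F_A,\Phi]$. Picking cutoffs $\chi_R \in C_c^\infty(X)$ equal to $1$ on $B_R$, supported in $B_{2R}$, with $|d\chi_R| \leqslant C/R$, the integrated Bochner--Weitzenb\"ock identity, combined with integration by parts and absorption of cross-terms via Young's inequality, gives
$$\int_X \chi_R^2 |\nabla_A\xi|^2 \;\lesssim\; \int_X \chi_R^2 |[F_A,\Phi]|^2 + \int_X \chi_R^2 \bigl(|\mathcal{R}ic_g| + |F_A|\bigr)|\xi|^2 + \frac{C}{R^2}\|\xi\|_{L^2(X)}^2,$$
using the pointwise bound $|\mathcal{W}_A(\xi)| \lesssim (|\mathcal{R}ic_g| + |F_A|)|\xi|$ for the Weitzenb\"ock curvature operator on $\mathfrak{g}_P$-valued $1$-forms. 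The first term is bounded by $C\|\Phi\|_{L^\infty}^2 \|F_A\|_{L^2}^2$ thanks to step two; the $|\mathcal{R}ic_g||\xi|^2$ piece is bounded via the uniform Ricci bound on AC manifolds and finite energy; and the cutoff term vanishes as $R \to \infty$. The remaining piece $\int \chi_R^2 |F_A||\xi|^2$ is handled via H\"older, the interpolation $\|\chi_R\xi\|_{L^4}^2 \lesssim \|\xi\|_{L^2}^{1/2}\|\chi_R\xi\|_{L^6}^{3/2}$, and the $L^2$-Sobolev inequality (Theorem~\ref{thm: Sobolev_AC}) together with Kato's inequality to obtain $\|\chi_R\xi\|_{L^6} \lesssim \|\chi_R \nabla_A\xi\|_{L^2} + R^{-1}\|\xi\|_{L^2}$; Young's inequality then absorbs a small fraction of $\|\chi_R \nabla_A\xi\|_{L^2}^2$ back into the left-hand side, producing an $R$-uniform bound and, as $R \to \infty$, $\nabla_A^2\Phi \in L^2(X)$.

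With $\nabla_A^2\Phi \in L^2(X)$ in hand, Kato's inequality $|d|\nabla_A\Phi|| \leqslant |\nabla_A^2\Phi|$ places $|\nabla_A\Phi|$ in $W^{1,2}(X)$, so Theorem~\ref{thm: Sobolev_AC} gives $\nabla_A\Phi \in L^6(X)$; interpolation with $\|\nabla_A\Phi\|_{L^2(X)}^2 \leqslant 2\mathcal{E}_X(A,\Phi)$ yields $\nabla_A\Phi \in L^p(X)$ for all $p \in [2,6]$. Finally, $u := m - |\Phi|$ lies in $L^6(X)$ by Lemma~\ref{lem: mass} and satisfies $|du| \leqslant |\nabla_A\Phi| \in L^6(X)$, so $u \in W^{1,6}(X)$; since $6 > 3 = \dim X$ and $(X,g)$ has Ricci curvature bounded from below with $\inf_{x\in X} V(x,1) > 0$, Lemma~\ref{lem: uniform_decay} yields the uniform convergence \eqref{eq: finite_mass}, completing the finite mass conclusion.

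The main obstacle is the Bochner estimate in the third step: the Weitzenb\"ock curvature operator on $\mathfrak{g}_P$-valued $1$-forms contains a term of type $[F_A,\xi]$ with $F_A$ only assumed in $L^2$, so the integral $\int |F_A||\xi|^2$ does not close under finite energy alone. The $L^\infty$ control of $\Phi$ from step two, paired with the AC Sobolev inequality of Theorem~\ref{thm: Sobolev_AC}, is precisely what renders the Bochner estimate self-improving via absorption; without either ingredient the argument would break down.
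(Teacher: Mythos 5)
Your proposal is correct and follows essentially the same route as the paper: an $L^\infty$ bound on $\Phi$ from the subharmonicity of $|\Phi|^2$ and the $L^6$ control of $m-|\Phi|$, a cutoff Bochner--Weitzenb\"ock argument for $\nabla_A^2\Phi\in L^2(X)$ in which the critical term $\int|F_A||\nabla_A\Phi|^2$ is closed by H\"older, $L^2$--$L^4$--$L^6$ interpolation, the Sobolev inequality of Theorem \ref{thm: Sobolev_AC} and absorption, and finally the embedding into $W^{1,6}(X)$ combined with Lemma \ref{lem: uniform_decay}. The only deviations are cosmetic (mean value inequality in place of the weak maximum principle for the $L^\infty$ bound, and placing the cutoff in the pairing rather than on $\Phi$ itself), and neither affects the validity of the argument.
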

In the following we prove a series of lemmas dedicated to the proof Theorem \ref{thm: finite_mass}. 
\begin{lemma}\label{lem: Phi_bounded}
	Let $(A,\Phi)\in\mathscr{C}(P)$ and suppose $\Delta_A\Phi=0$. Then $\Phi\in L^{\infty}(X)$ with $0\leqslant |\Phi|\leqslant m$ on $X$; in particular, $m\geqslant 0$.
\end{lemma}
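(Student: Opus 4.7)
The plan is to show that $|\Phi|$ is weakly subharmonic on $X$ and then combine this with Lemma \ref{lem: mass} and Yau's classical $L^p$ Liouville theorem for subharmonic functions (recalled in Remark \ref{rmk: nR_case_crucial}) to conclude that $|\Phi|\leqslant m$ pointwise.

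First I would establish the weak subharmonicity of $|\Phi|$ on all of $X$. The identity $\Delta|\Phi|^2 = -2|\nabla_A\Phi|^2$ from \eqref{eq: subharmonic} gives the smooth subharmonicity of $|\Phi|^2$, but since $\Phi$ may vanish one cannot differentiate $|\Phi|$ directly. To handle this I would regularize: for $\epsilon>0$ set $u_\epsilon := \sqrt{|\Phi|^2+\epsilon^2}\in C^\infty(X)$. Combining $\Delta u_\epsilon^2 = 2u_\epsilon\Delta u_\epsilon - 2|du_\epsilon|^2$ with $\Delta u_\epsilon^2 = \Delta|\Phi|^2 = -2|\nabla_A\Phi|^2$ and the Kato-type bound
\begin{equation*}
|du_\epsilon|^2 = u_\epsilon^{-2}|\Phi|^2\,\bigl|d|\Phi|\bigr|^2\leqslant \bigl|d|\Phi|\bigr|^2\leqslant |\nabla_A\Phi|^2,
\end{equation*}
one finds $u_\epsilon\Delta u_\epsilon\leqslant 0$ pointwise, and hence $\Delta u_\epsilon\leqslant 0$ on $X$. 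Since $u_\epsilon\to|\Phi|$ uniformly on $X$ as $\epsilon\to 0^+$ (with $|u_\epsilon-|\Phi||\leqslant\epsilon$), passing to the distributional limit yields $\Delta|\Phi|\leqslant 0$ on $X$ in the sense of distributions.

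Next, by Lemma \ref{lem: mass} (applicable on $(X^3,g)$ by Remark \ref{rmk: nR_case_lem_mass}) there is a unique $m=m(|\Phi|)\in\mathbb{R}$ with $m-|\Phi|\in L^6(X)$. I would then consider $h:=(|\Phi|-m)_+$, which is nonnegative, Lipschitz, and weakly subharmonic because $|\Phi|-m$ is weakly subharmonic and $t\mapsto t_+$ is convex and nondecreasing. Moreover $0\leqslant h\leqslant\bigl|m-|\Phi|\bigr|\in L^6(X)$, so $h\in L^6(X)$.

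Finally I would invoke Yau's theorem \cite{yau1976some} (see \cite[Lemma 7.1]{li2012geometric} and Remark \ref{rmk: nR_case_crucial}): every nonnegative $L^p$ subharmonic function on a complete manifold with $p>1$ must be constant. Hence $h$ is constant; since $(X^3,g)$ has infinite volume (indeed $V(x,r)\gtrsim r^3$ by Corollary \ref{cor: AC_nonparabolic}), the $L^6$ condition forces $h\equiv 0$, \emph{i.e.}, $|\Phi|\leqslant m$ on $X$. Thus $\Phi\in L^\infty(X)$ with $\|\Phi\|_{L^\infty(X)}\leqslant m$, and since $|\Phi|\geqslant 0$ this forces $m\geqslant 0$. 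The main technical point is the careful justification of the weak subharmonicity of the merely-Lipschitz function $|\Phi|$ via the regularization and Kato-type estimate above; once this is in place, the remainder of the argument is a clean application of Lemma \ref{lem: mass} and Yau's $L^p$ Liouville theorem.
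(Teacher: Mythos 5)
Your proof is correct, and it reaches the conclusion by a genuinely different final step from the paper's. Both arguments begin by establishing that $|\Phi|$ is weakly subharmonic and then exploit the $L^6$-integrability of $m-|\Phi|$ supplied by Lemma \ref{lem: mass}; but where the paper applies the weak maximum principle of Jaffe--Taubes to $h:=m-|\Phi|\in\mathcal{H}\hookrightarrow L^6(X)$ (using that $h$ lies in the Dirichlet completion $\mathcal{H}$, not merely in $L^6$), you instead truncate, observe that $(|\Phi|-m)_+$ is a nonnegative, Lipschitz, weakly subharmonic function in $L^6(X)$, and invoke Yau's $L^p$ Liouville theorem together with the infinite volume of $X$. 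Your route buys two things: it only uses $m-|\Phi|\in L^6(X)$ rather than membership in $\mathcal{H}$, and it replaces the citation of an $\mathbb{R}^3$ maximum principle by a tool the paper already deploys verbatim in Remark \ref{rmk: nR_case_crucial}. Your regularization $u_\epsilon=\sqrt{|\Phi|^2+\epsilon^2}$ is also a careful justification of the distributional inequality $\Delta|\Phi|\leqslant 0$ across the zero set of $\Phi$, which the paper asserts in one line. One cosmetic remark: the identity $|du_\epsilon|^2=u_\epsilon^{-2}|\Phi|^2\bigl|d|\Phi|\bigr|^2$ is only literally meaningful off $Z(\Phi)$; it is cleaner to write $|du_\epsilon|=u_\epsilon^{-1}|\langle\nabla_A\Phi,\Phi\rangle|\leqslant|\nabla_A\Phi|$, which holds everywhere since $d(|\Phi|^2)=2\langle\nabla_A\Phi,\Phi\rangle$ and $|\Phi|\leqslant u_\epsilon$; this does not affect the validity of the argument.
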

\begin{proof}
	Using $\Delta_A\Phi=0$ and Kato's inequality we compute that
	\[
	\Delta|\Phi| = |\Phi|^{-1}\left(|d|\Phi||^2-|\nabla_A\Phi|^2\right)\leqslant 0
	\] holds strongly outside the zero locus of $\Phi$ and weakly everywhere. By Lemma \ref{lem: mass}, $h:= m-|\Phi|\in\mathcal{H}\hookrightarrow L^6(X)$, and by the above $\Delta h \geqslant 0$ holds weakly. By the weak maximum principle (see \cite[Proposition VI.3.2]{Jaffe1980}), we get $h\geqslant 0$ everywhere, so $0\leqslant|\Phi|\leqslant m$ and $\Phi\in L^{\infty}(X)$. 
\end{proof}

\begin{notation}\label{notation: cut-off}
	Henceforth, given any $R\in (1,\infty)$, we shall denote by $\chi_R\in C_c^{\infty}(X)$ a cut-off function satisfying $0\leqslant\chi_R\leqslant 1$,
	\begin{equation}\label{eq: cut-off}
	\chi_R(x)=\begin{cases}
		1,\quad\text{if }\rho(x)\leqslant R/2\\
		0, \quad\text{if }\rho(x)\geqslant R
	\end{cases}
	\end{equation} and the bounds
	\begin{equation}\label{ineq: C2_bound_cut-off}
	\|d\chi_R\|_{L^{\infty}(X)}\lesssim R^{-1}\quad\text{and}\quad\|\Delta\chi_R\|_{L^{\infty}(X)}\lesssim R^{-2}.
	\end{equation} We note in particular that $\|d\chi_R\|_{L^3(X)}$ is bounded independently of $R$:
	\begin{equation}\label{ineq: L3_bound_cut-off}
		\|d\chi_R\|_{L^3(X)}^3=\|d\chi_R\|_{L^3(\overline{B}_{R}\setminus B_{R/2})}^3 \lesssim R^{-3}\int_{R/2}^{R} \rho^2 d\rho \lesssim 1.
	\end{equation}
\end{notation}

\begin{remark}\label{rmk: nR_case_cut-off}
	The existence of such cut-off functions $\chi_R$ as in Notation \ref{notation: cut-off} is clear when $(X^3,g)$ is an AC manifold and $\rho$ is any radius function on $X$. Now let $(X^3,g)$ be any complete $3$-manifold with nonnegative Ricci curvature. We claim that such functions $\chi_R$ also do exist in this case if we let $\rho(x):=d(o,x)$ in \eqref{eq: cut-off}, for some fixed reference point $o\in X$. Indeed, it follows from the proof of \cite[Theorem 2.2]{guneysu2016sequences} that there exists $\chi_R\in C_c^{\infty}(X)$ with $0\leqslant\chi_R\leqslant 1$ satisfying both \eqref{eq: cut-off} and \eqref{ineq: C2_bound_cut-off}. As for the property \eqref{ineq: L3_bound_cut-off}, note that in this nonnegative Ricci case by Bishop--Gromov we have $V(o,R)\lesssim R^3$, so that using the properties \eqref{eq: cut-off} and \eqref{ineq: C2_bound_cut-off} we have
	\[
	\|d\chi_R\|_{L^3(X)}^3\leqslant\int_{B(o,R)} |d\chi_R|^3 \lesssim R^{-3}V(o,R) \lesssim 1,
	\] as we wanted.
\end{remark}

\begin{lemma}\label{lem: W12_integrability}
	Let $(A,\Phi)\in\mathscr{C}(P)$ and suppose $\Delta_A\Phi = 0$. Then $\nabla_A^2\Phi\in L^2(X)$.
\end{lemma}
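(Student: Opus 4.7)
The plan is to apply the Bochner--Weitzenb\"ock formula to $\xi:=\nabla_A\Phi\in\Omega^1(X,\mathfrak{g}_P)$ and integrate against the cut-off $\chi_R^2$ from Notation \ref{notation: cut-off}, then bound every resulting term uniformly in $R$. Starting from the pointwise identity $\tfrac{1}{2}\Delta|\xi|^2 = \langle\nabla_A^*\nabla_A\xi,\xi\rangle - |\nabla_A^2\Phi|^2$, I obtain
\begin{equation*}
\int_X \chi_R^2|\nabla_A^2\Phi|^2 = \int_X \chi_R^2\langle\nabla_A^*\nabla_A\xi,\xi\rangle - \tfrac{1}{2}\int_X(\Delta\chi_R^2)|\xi|^2,
\end{equation*}
after an integration by parts on the last term, which is bounded by $O(R^{-2})\|\nabla_A\Phi\|_{L^2(X)}^2$ thanks to \eqref{ineq: C2_bound_cut-off}.

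For the Weitzenb\"ock term I would use $\nabla_A^*\nabla_A\xi = \Delta_A\xi - \mathcal{R}ic_g\cdot\xi + \mathcal{F}(F_A,\xi)$, with $|\mathcal{F}(F_A,\xi)|\lesssim|F_A||\xi|$. Since $d_A^*\xi=\Delta_A\Phi=0$ and $d_A\xi=d_A^2\Phi=[F_A,\Phi]$, this gives $\Delta_A\xi=d_A^*[F_A,\Phi]$, and one further integration by parts produces
\begin{equation*}
\int_X \chi_R^2\langle d_A^*[F_A,\Phi],\xi\rangle = \int_X \chi_R^2|[F_A,\Phi]|^2 + 2\int_X \chi_R\langle[F_A,\Phi],d\chi_R\wedge\xi\rangle,
\end{equation*}
both uniformly bounded via Lemma \ref{lem: Phi_bounded} ($|\Phi|\leqslant m$), the finite-energy hypothesis $F_A,\nabla_A\Phi\in L^2(X)$, and $\|d\chi_R\|_{L^\infty}\lesssim R^{-1}$. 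The Ricci term is dominated by $\|\mathcal{R}ic_g\|_{L^\infty(X)}\|\nabla_A\Phi\|_{L^2(X)}^2<\infty$, since $\mathcal{R}ic_g$ is bounded on any AC $3$-manifold.

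The main obstacle is the cubic term $\int_X \chi_R^2|F_A||\xi|^2$, which I would control by combining H\"older's inequality, interpolation between $L^2$ and $L^6$, Kato's inequality $|d|\xi||\leqslant|\nabla_A^2\Phi|$, and the $L^2$-Sobolev inequality of Theorem \ref{thm: Sobolev_AC}. Specifically, H\"older gives $\int_X \chi_R^2|F_A||\xi|^2\leqslant\|F_A\|_{L^2(X)}\|\chi_R|\xi|\|_{L^4(X)}^2$; interpolation yields $\|\chi_R|\xi|\|_{L^4}^2\leqslant\|\xi\|_{L^2(X)}^{1/2}\|\chi_R|\xi|\|_{L^6}^{3/2}$; and Kato combined with the Sobolev inequality applied to $\chi_R|\xi|$ gives $\|\chi_R|\xi|\|_{L^6(X)}\lesssim R^{-1}\|\nabla_A\Phi\|_{L^2(X)}+\|\chi_R\nabla_A^2\Phi\|_{L^2(X)}$. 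Writing $b_R:=\|\chi_R\nabla_A^2\Phi\|_{L^2(X)}$, the resulting chain yields an $R$-uniform inequality of the form $b_R^2\leqslant M+M'b_R^{3/2}$ with constants $M,M'$ depending only on $(A,\Phi)$ and the geometry; since the exponent $3/2<2$, this forces $\sup_R b_R<\infty$, and monotone convergence as $R\to\infty$ yields $\nabla_A^2\Phi\in L^2(X)$.
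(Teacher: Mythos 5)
Your proof is correct and follows essentially the same route as the paper: a Bochner--Weitzenb\"ock identity cut off at scale $R$, with the troublesome cubic term $\int|F_A||\nabla_A\Phi|^2$ tamed by H\"older, the $L^2$--$L^6$ interpolation, Kato's inequality and the $L^2$-Sobolev inequality of Theorem \ref{thm: Sobolev_AC}, before letting $R\to\infty$. The only (harmless) differences are that the paper cuts off the Higgs field itself and estimates $\|\nabla_A^2(\chi\Phi)\|_{L^2}$, absorbing the cubic term via Young's inequality with a small parameter, whereas you multiply the pointwise Bochner identity for $\xi=\nabla_A\Phi$ by $\chi_R^2$ and close the estimate through the subcritical inequality $b_R^2\leqslant M+M'b_R^{3/2}$ — two equivalent ways of exploiting the same exponent gap.
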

\begin{proof}
	Given $R\gg 1$, let $\chi=\chi_R\in C_c^{\infty}(X)$ be a cut-off function as in Notation \ref{notation: cut-off}. Integration by parts yields
	\[
	\|\nabla_A^2(\chi\Phi)\|_{L^2}^2 = \langle\nabla_A(\chi\Phi),\nabla_A^{\ast}\nabla_A(\nabla_A(\chi\Phi))\rangle_{L^2}.
	\]
	From the standard Bochner--Weitzenb\"ock formula (see \cite[Theorem 3.2]{bourguignon1981stability}), we have
	\[
	\nabla_A^{\ast}\nabla_A(\nabla_A(\chi\Phi)) = \Delta_A(\nabla_A(\chi\Phi)) - \mathcal{R}ic_g\#\nabla_A(\chi\Phi) - \ast[\ast F_A,\nabla_A(\chi\Phi)]. 
	\] Therefore
	\begin{equation}\label{eq: intermediate}
		\|\nabla_A^2(\chi\Phi)\|_{L^2}^2 \lesssim \|\nabla_A(\chi\Phi)\|_{L^2}^2 + \int_X |F_A||\nabla_A(\chi\Phi)|^2 + \langle\nabla_A(\chi\Phi),\Delta_A\nabla_A(\chi\Phi)\rangle_{L^2}.
	\end{equation} We now deal with the last two terms in the right-hand side of \eqref{eq: intermediate}. First, integrating by parts gives
	\[
		\langle\nabla_A(\chi\Phi),\Delta_A\nabla_A(\chi\Phi)\rangle_{L^2} = \|d_A^{\ast}\nabla_A(\chi\Phi)\|_{L^2}^2 + \|d_A\nabla_A(\chi\Phi)\|_{L^2}^2.
	\] Now, since $\Phi\in L^{\infty}$ by Lemma \ref{lem: Phi_bounded}, 
	\[
	\|d_A\nabla_A(\chi\Phi)\|_{L^2}^2 = \|[F_A,\chi\Phi]\|_{L^2}^2 \lesssim \|\Phi\|_{L^{\infty}}^2\|F_A\|_{L^2}^2.
	\] On the other hand, a quick computation yields $d_A^{\ast}\nabla_A(\chi\Phi)=\Delta\chi\otimes\Phi - 2\langle d\chi,\nabla_A\Phi\rangle$, so that using Young's inequality to deal with the mixed term we get
	\begin{align}
		\|d_A^{\ast}\nabla_A(\chi\Phi)\|_{L^2}^2 &\lesssim \int_X |\Delta\chi|^2|\Phi|^2 + |d\chi|^2|\nabla_A\Phi|^2\\
		&\lesssim R^{-1}\|\Phi\|_{L^{\infty}}^2 + R^{-2}\|\nabla_A\Phi\|_{L^2}^2.
	\end{align} Thus
	\begin{equation}\label{eq: last term}
		\langle\nabla_A(\chi\Phi),\Delta_A\nabla_A(\chi\Phi)\rangle_{L^2}\lesssim  R^{-1}\|\Phi\|_{L^{\infty}}^2 + R^{-2}\|\nabla_A\Phi\|_{L^2}^2 + \|\Phi\|_{L^{\infty}}^2\|F_A\|_{L^2}^2.
	\end{equation}
	As for the second term in the right-hand side of \eqref{eq: intermediate}, start noting that H\"older's inequality gives
	\[
	\int_X |F_A||\nabla_A(\chi\Phi)|^2\leqslant \|F_A\|_{L^2}\|\nabla_A(\chi\Phi)\|_{L^4}^2.
	\]
	Now, using the Gagliardo--Nirenberg interpolation inequality
	\[
	\|f\|_{L^4}\lesssim\|\nabla f\|_{L^2}^{3/4}\|f\|_{L^2}^{1/4}
	\] together with Kato's inequality and Young's inequality we have
	\begin{align}
		\|\nabla_A(\chi\Phi)\|_{L^4}^2 &\lesssim \|\nabla_A^2(\chi\Phi)\|_{L^2}^{3/2}\|\nabla_A(\chi\Phi)\|_{L^2}^{1/2} \label{eq: interpolation}\\
		&\lesssim \varepsilon\|\nabla_A^2(\chi\Phi)\|_{L^2}^2 + \varepsilon^{-3}\|\nabla_A(\chi\Phi)\|_{L^2}^2,
	\end{align} for any $\varepsilon>0$. Taking $\varepsilon=c\|F_A\|_{L^2}^{-1}$ with $c>0$ sufficiently small, the first term can be absorbed in the left-hand side of \eqref{eq: intermediate}, and by combining the above with \eqref{eq: last term} we end up with
	\[
	\|\nabla_A^2(\chi\Phi)\|_{L^2}^2 \lesssim \|\nabla_A(\chi\Phi)\|_{L^2}^2 + \|F_A\|_{L^2}^4\|\nabla_A(\chi\Phi)\|_{L^2}^2 +\|\Phi\|_{L^{\infty}}^2\|F_A\|_{L^2}^2 + R^{-1}\|\Phi\|_{L^{\infty}}^2 + R^{-2}\|\nabla_A\Phi\|_{L^2}^2.
	\] Hence, letting $R\to\infty$ yields
	\[
	\|\nabla_A^2\Phi\|_{L^2}^2 \lesssim \|\nabla_A\Phi\|_{L^2}^2 + \|F_A\|_{L^2}^4\|\nabla_A\Phi\|_{L^2}^2 +\|\Phi\|_{L^{\infty}}^2\|F_A\|_{L^2}^2<\infty.
	\] 
\end{proof}
\begin{remark}
	I was unable to find a copy of the Ph.D. thesis of Groisser, to which he refers the reader in \cite[Proof of Lemma 3]{groisser1984integrality} for the proof in $\mathbb{R}^3$ of the result proved in Lemma \ref{lem: W12_integrability}, although Jaffe--Taubes proves a slightly more general result in this case in \cite[Theorem V.8.1]{Jaffe1980}, and the same methods we employed in the above proof can also be used to prove the analogue of Jaffe--Taubes' result on general AC $3$-manifolds with only one end.
\end{remark}

\begin{proof}[Proof of Theorem \ref{thm: finite_mass}]
	By hypothesis $\nabla_A\Phi\in L^2(X)$, and by Lemma \ref{lem: W12_integrability} we further have $\nabla_A^2\Phi\in L^2(X)$. For $2\leqslant p\leqslant 6$, it follows from the Sobolev embedding $W^{1,2}(X)\hookrightarrow L^p(X)$ and Kato's inequality, that $\nabla_A\Phi\in L^p(X)$. Now let $h:=m-|\Phi|\in\mathcal{H}$ as in the proof of Lemma \ref{lem: Phi_bounded}. By the $L^2$-Sobolev inequality of Theorem \ref{thm: Sobolev_AC}, it follows that $h\in L^6(X)$, with $\|h\|_{L^6(X)}\lesssim \|\nabla_A\Phi\|_{L^2(X)}<\infty$. Now, again, by the Sobolev embedding $W^{1,2}(X)\hookrightarrow L^6(X)$ and Kato's inequality, we also have
	\[
	\|dh\|_{L^{6}(X)}\leqslant \|\nabla_A\Phi\|_{L^6(X)} \lesssim (\|\nabla_A\Phi\|_{L^2(X)}+\|\nabla_A^2\Phi\|_{L^2(X)})<\infty.
	\] Therefore $h\in W^{1,6}(X)$ and it follows from Lemma \ref{lem: uniform_decay} that $h$ decays uniformly to zero at infinity, \emph{i.e.} \eqref{eq: finite_mass} holds. The proof is complete.
\end{proof}
\begin{remark}\label{rmk: nR_case_finite_mass}
	It follows from the above proofs of Lemmas \ref{lem: Phi_bounded} and \ref{lem: W12_integrability} and of Theorem \ref{thm: finite_mass}, together with Remarks \ref{rmk: nR_case_lem_mass} and \ref{rmk: nR_case_cut-off}, that these results also hold for any complete Riemannian $3$-manifold $(X^3,g)$ with nonnegative Ricci curvature and maximal volume growth. 
\end{remark}

The above proof of Theorem \ref{thm: finite_mass} is adapted from the Euclidean case proof of \cite[Lemma 3]{groisser1984integrality}. To finish this section, we now give an alternative proof of the finite mass condition \eqref{eq: finite_mass} that does not rely on the previous results of this section but yields a different characterization of the mass (\emph{i.e.} other than that of Lemma \ref{lem: mass}, whose proof relied on the $L^2$-Sobolev inequality), although the conclusion now holds under more general assumptions on the geometry of $(X,g)$ and on the configuration $(A,\Phi)$, including also higher dimensions. The proof ot this next result is based on Lemma \ref{lem: Poisson} and the strong Liouville property, and can be seen as a generalization of the Euclidean case proof of \cite[Theorem IV.10.3]{Jaffe1980}; see also \cite[\S 4]{fadel2020asymptotic}.

\begin{theorem}\label{thm: alternative_finite_mass}
	Let $(X^n,g)$ be a complete nonparabolic Riemannian $n$-manifold, $n\geqslant 3$, satisfying the equivalent conditions of Theorem \ref{thm: PHI}. Let $P\to X$ be a principal $G$-bundle over $X$, where $G$ is a compact Lie group, and suppose that $(A,\Phi)\in\mathscr{A}(P)\times\Gamma(\mathfrak{g}_P)$ satisfies \eqref{eq:2nd_Order_Eq_1} and $\nabla_A\Phi\in L^2(X)\cap L^{2(n-1)}(X)$. Finally, let $G(x,y)>0$ be the minimal positive Green's function of $(X^n,g)$. Then
	\begin{equation}\label{eq: def_w}
	w(x):=2\int_X G(x,\cdot{})|\nabla_A\Phi|^2,\quad\forall x\in X,
	\end{equation} defines the unique nonnegative smooth solution of the Poisson equation $\Delta w = 2|\nabla_A\Phi|^2$ which decays uniformly to zero at infinity. Moreover, there is a constant $m\in [0,\infty)$ such that
	\begin{equation}\label{eq: alt_characterization_m}
		w = m^2 - |\Phi|^2.  
	\end{equation} In particular, $(A,\Phi)$ has finite mass $m$, \emph{i.e.} \eqref{eq: finite_mass} holds.
\end{theorem}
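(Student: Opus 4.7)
My plan is to build $w$ from the Green's function representation, observe that $w + |\Phi|^{2}$ is a nonnegative harmonic function, and then invoke the strong Liouville property guaranteed by (PHI).

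First I would set $f := 2|\nabla_{A}\Phi|^{2}$. This is smooth and nonnegative, and the hypothesis $\nabla_{A}\Phi \in L^{2}(X)\cap L^{2(n-1)}(X)$ translates directly to $f \in L^{1}(X)\cap L^{n-1}(X)$. Lemma \ref{lem: Poisson} then applies: the formula \eqref{eq: def_w} defines a nonnegative smooth function $w$ which is the unique smooth solution of $\Delta w = 2|\nabla_{A}\Phi|^{2}$ decaying uniformly to zero at infinity. This takes care of the first assertion of the theorem.

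Next I would consider $u := w + |\Phi|^{2}$. Since \eqref{eq:2nd_Order_Eq_1} is assumed, identity \eqref{eq: subharmonic} yields $\Delta|\Phi|^{2} = -2|\nabla_{A}\Phi|^{2}$, so
\[
\Delta u \;=\; \Delta w + \Delta|\Phi|^{2} \;=\; 2|\nabla_{A}\Phi|^{2} - 2|\nabla_{A}\Phi|^{2} \;=\; 0.
\]
Thus $u$ is a smooth, nonnegative harmonic function on $(X,g)$. Because $(X,g)$ satisfies (PHI), Remark \ref{rmk: strong_Liouville} gives the strong Liouville property, so $u$ must be constant. Writing this constant as $m^{2}$ with $m\in[0,\infty)$ produces the identity \eqref{eq: alt_characterization_m}.

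Finally, to obtain the finite mass statement, I would exploit $w \geqslant 0$: the equation $w = m^{2} - |\Phi|^{2}$ forces $|\Phi|\leqslant m$ everywhere. If $m=0$, then $\Phi\equiv 0$ and \eqref{eq: finite_mass} is trivial. If $m>0$, I factor $w = (m-|\Phi|)(m+|\Phi|)$ and use $m+|\Phi|\geqslant m$ to get the pointwise bound
\[
0 \;\leqslant\; m - |\Phi| \;\leqslant\; \frac{w}{m},
\]
so the uniform decay of $w$ at infinity provided by Step 1 forces the uniform decay of $m - |\Phi|$, which is exactly \eqref{eq: finite_mass}. I do not anticipate a serious obstacle: the only delicate point is the verification of the integrability hypotheses needed to apply Lemma \ref{lem: Poisson} (these are handed to us in the statement), and the existence of a sufficiently strong Liouville property, which is precisely what (PHI) delivers via Remark \ref{rmk: strong_Liouville}.
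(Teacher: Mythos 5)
Your proposal is correct and follows essentially the same route as the paper: apply Lemma \ref{lem: Poisson} to $f=2|\nabla_A\Phi|^2\in L^1(X)\cap L^{n-1}(X)$, observe that $w+|\Phi|^2$ is a nonnegative harmonic function, and conclude by the strong Liouville property from Remark \ref{rmk: strong_Liouville}. Your final step deducing the uniform decay of $m-|\Phi|$ from that of $w$ via the factorization $w=(m-|\Phi|)(m+|\Phi|)$ just makes explicit what the paper leaves implicit.
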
	
\begin{proof}
The first part is a direct consequence of Lemma \ref{lem: Poisson}.	Now, on the one hand, by the assumption $\Delta_A\Phi=0$, we have that $\Delta |\Phi|^2 = -2|\nabla_A\Phi|^2$ (see \eqref{eq: subharmonic}), which together with the fact that $w$ is a smooth nonnegative solution to $\Delta w = 2|\nabla_A\Phi|^2$ readily implies that $h:=w+|\Phi|^2$ is a smooth nonnegative harmonic function on $(X^n,g)$. On the other hand, from the assumptions, $(X^3,g)$ satisfies the strong Liouville property (see Remark \ref{rmk: strong_Liouville}). Therefore, it follows that $h$ is constant; say $h\equiv m^2$, where $m\geqslant 0$. In particular, since $w$ decays uniformly to zero at infinity, we get that $(A,\Phi)$ has finite mass $m$, completing the proof. 
\end{proof}
\begin{remark}\label{rmk: alt_finite_mass}
	Let us restrict attention back to dimension $n=3$. By Theorem \ref{thm: VD+P_AC} and Corollary \ref{cor: AC_nonparabolic}, AC $3$-manifolds with only one end satisfy the equivalent conditions of Theorem \ref{thm: PHI} and are nonparabolic. The same is true for complete $3$-manifolds with nonnegative Ricci curvature and maximal volume growth. Moreover, on such manifolds, by Lemma \ref{lem: W12_integrability} together with the Sobolev embedding, any finite energy configuration $(A,\Phi)\in\mathscr{C}(P)$ satisfies $\nabla_A\Phi\in L^p(X)$ for all $2\leqslant p\leqslant 6$. Therefore, the above Theorem \ref{thm: alternative_finite_mass} applies to any finite energy configuration on such classes of $3$-manifolds. Furthermore, in these cases, by combining the conclusions of Theorem \ref{thm: alternative_finite_mass} with those of Theorem \ref{thm: finite_mass} we get that the mass $m$ of $(A,\Phi)$ equals the number $m(|\Phi|)$ given by Lemma \ref{lem: mass} and this number is also determined by \eqref{eq: def_w}-\eqref{eq: alt_characterization_m} (see Remark \ref{rmk: nR_case_finite_mass}).
\end{remark}

\subsection{Charge and energy formula}\label{subsec: charge}
We continue to consider the main setting of the previous section, \emph{i.e.}, unless otherwise stated, $(X^3,g)$ is an AC $3$-manifold with only one end, $P\to X$ is a principal $G$-bundle and $\mathscr{C}(P)$ is the space of smooth finite energy configurations on $P$.

\begin{definition}[Charge]
	Define\footnote{$k'$ is well defined by the finite energy condition and H\"older's inequality.} $k':\mathscr{C}(P)\to\mathbb{R}$ by
	\begin{equation}
		k'(A,\Phi):=\frac{1}{4\pi}\int_X \langle\nabla_A\Phi\wedge F_A\rangle.
	\end{equation} Then we define the \textbf{charge}, or monopole number, $k:\mathscr{C}(P)\to\mathbb{R}$ by
	\begin{equation}
		k(A,\Phi):=\begin{cases}
			m(|\Phi|)^{-1}k'(A,\Phi),\quad\text{if }m(|\Phi|)>0.\\
			k'(A,\Phi),\quad\text{if }m(|\Phi|)=0.
		\end{cases}
	\end{equation}
\end{definition}
\begin{remark}
	Let $(A,\Phi)\in\mathscr{C}(P)$. We will show shortly that if $m:=m(|\Phi|)=0$ then $k:=k(A,\Phi)=k'(A,\Phi)=0$ (see Corollary \ref{cor: zero_mass_implies_zero_charge}); this means that $F_A$ and $\ast\nabla_A\Phi$ are $L^2$-orthogonal in this case, and $\mathcal{E}_X(A,\Phi) = \frac{1}{2}\|F_A \mp \ast\nabla_A\Phi\|_{L^2}^2$. In the case $m>0$, we shall see that $k\in\mathbb{Z}$ is a topological number determined by the pair $(A,\Phi)$ (see proof of Theorem \ref{thm: integrality}); in this case, we can write
	\[
	\mathcal{E}_X(A,\Phi) = \pm 4\pi m k + \frac{1}{2}\|F_A \mp \ast\nabla_A\Phi\|_{L^2(X)}^2,
	\] and we see that for fixed mass $m>0$ and charge $k\in\mathbb{Z}$, the absolute minimizers of the energy are solutions to the \emph{monopole} or \emph{anti-monopole equations}, $F_A = \pm \ast\nabla_A\Phi$, depending on whether $k\geqslant 0$ or $k\leqslant 0$ respectively.
\end{remark}
\begin{lemma}[cf. {\cite[Lemma 2]{groisser1984integrality}}]\label{lem: charge}
	Let $A\in\mathscr{A}(P)$ and define $\mathcal{H}_A$ to be the completion of the space $C_c^{\infty}\Gamma(\mathfrak{g}_P)$ of smooth compactly supported sections of $\mathfrak{g}_P$ in the norm $\|\varphi\|_{\mathcal{H}_A}:= \|\nabla_A\varphi\|_{L^2(X)}$.
	\begin{itemize}
		\myitem[(i)]\label{itm: i_lemma} If $(A,\Phi_1),(A,\Phi_2)\in\mathscr{C}(P)$ and $\Phi_2-\Phi_1\in \mathcal{H}_A$, then $k'(A,\Phi_1)=k'(A,\Phi_2)$.
		\myitem[(ii)]\label{itm: ii_lemma} Given $(A,\Phi)\in\mathscr{C}(P)$ there exists a unique $\Phi'\in\Gamma(\mathfrak{g}_P)$ such that $\Phi'-\Phi\in \mathcal{H}_A$ and $\Delta_A\Phi' = 0$.
	\end{itemize}	
\end{lemma}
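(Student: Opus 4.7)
The plan splits along the two parts. Part (i) follows from the Bianchi identity combined with an integration-by-parts / density argument, while part (ii) is a variational problem in the Hilbert space $\mathcal{H}_A$, modelled on the proof of Lemma \ref{lem: mass}. The key technical ingredient throughout is that the $L^2$-Sobolev inequality on AC $3$-manifolds (Theorem \ref{thm: Sobolev_AC}), combined with Kato's inequality $|d|\varphi|| \leqslant |\nabla_A \varphi|$, yields a continuous embedding $\mathcal{H}_A \hookrightarrow L^6(X,\mathfrak{g}_P)$, so elements of $\mathcal{H}_A$ are genuine $L^6$ sections rather than abstract equivalence classes of Cauchy sequences.

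For part (i), set $\eta := \Phi_2 - \Phi_1$. The Bianchi identity $d_A F_A = 0$ gives the pointwise identity $d\langle \eta, F_A\rangle = \langle \nabla_A \eta \wedge F_A\rangle$, hence
\[
4\pi\bigl(k'(A,\Phi_2) - k'(A,\Phi_1)\bigr) = \int_X d\langle \eta, F_A\rangle,
\]
which vanishes by Stokes' theorem whenever $\eta$ is compactly supported. For general $\eta \in \mathcal{H}_A$, H\"older's inequality yields $\bigl|\int_X \langle \nabla_A \zeta \wedge F_A\rangle\bigr| \leqslant \|\zeta\|_{\mathcal{H}_A}\|F_A\|_{L^2}$ for every $\zeta \in \mathcal{H}_A$ (using $F_A \in L^2$ from the finite energy hypothesis), so the functional $\zeta \mapsto \int_X \langle \nabla_A \zeta \wedge F_A\rangle$ is continuous on $\mathcal{H}_A$; since it vanishes on the dense subspace $C_c^\infty\Gamma(\mathfrak{g}_P)$, it vanishes on all of $\mathcal{H}_A$, and in particular on $\eta$.

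For part (ii), I will minimize $Q\colon \mathcal{H}_A \to \mathbb{R}$, $Q(\xi) := \|\nabla_A(\Phi - \xi)\|_{L^2}^2$, which is well-defined (using $\nabla_A \Phi \in L^2$), strictly convex, continuous, coercive (Young's inequality gives $Q(\xi) \geqslant \tfrac{1}{2}\|\xi\|_{\mathcal{H}_A}^2 - \|\nabla_A\Phi\|_{L^2}^2$), and Fr\'echet differentiable with bounded derivative $\delta Q(\xi)\cdot u = -2\langle \nabla_A u, \nabla_A(\Phi - \xi)\rangle_{L^2}$. The direct method in the Hilbert space $\mathcal{H}_A$ then produces a unique minimizer $\xi$, at which the Euler--Lagrange equation $\Delta_A(\Phi - \xi) = 0$ holds weakly; setting $\Phi' := \Phi - \xi$ and invoking interior elliptic regularity for the covariant Laplacian $\Delta_A$ upgrades $\Phi'$ to a smooth strong solution. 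For uniqueness, if $\Phi_1'$ and $\Phi_2'$ both meet the conditions then $\eta := \Phi_2' - \Phi_1' \in \mathcal{H}_A$ satisfies $\Delta_A \eta = 0$; testing against compactly supported approximants $\eta_n \to \eta$ in $\mathcal{H}_A$ and integrating by parts gives $\langle \nabla_A \eta_n, \nabla_A \eta\rangle_{L^2} = \langle \eta_n, \Delta_A \eta\rangle_{L^2} = 0$, and passing to the limit yields $\|\eta\|_{\mathcal{H}_A} = 0$, whence $\eta = 0$ via the $L^6$-embedding followed by smoothness. I expect no serious obstacle; the only point requiring care is the correct identification of $\mathcal{H}_A$ as a subspace of $L^6(X,\mathfrak{g}_P)$, after which everything reduces to standard Hilbert-space calculus of variations plus elliptic regularity.
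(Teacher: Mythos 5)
Your proof is correct and follows essentially the same route as the paper: part (ii) is verbatim the paper's direct-method minimization of the (equivalent, up to the additive constant $\tfrac12\|F_A\|_{L^2}^2$) strictly convex functional followed by elliptic regularity, and part (i) rests on the same ingredients (Bianchi identity, Stokes, H\"older, and the embedding $\mathcal{H}_A\hookrightarrow L^6$ via Kato's inequality plus the $L^2$-Sobolev inequality of Theorem \ref{thm: Sobolev_AC}). The only difference is presentational, in (i): you pass from compactly supported sections to general $\eta\in\mathcal{H}_A$ by a density/continuity argument for the bounded functional $\zeta\mapsto\int_X\langle\nabla_A\zeta\wedge F_A\rangle$, whereas the paper multiplies by an explicit cut-off $\chi_R$ and kills the boundary term $\int_X d\chi_R\wedge\langle\eta,F_A\rangle$ using $\|d\chi_R\|_{L^3}\lesssim 1$ and $\|\eta\|_{L^6(\overline B_R\setminus B_{R/2})}\to 0$ --- the two are interchangeable, though the cut-off computation has the small advantage of making explicit the identification of the abstract extension of $\nabla_A$ to $\mathcal{H}_A$ with the pointwise covariant derivative of the smooth section $\Phi_2-\Phi_1$, which your density argument tacitly assumes.
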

\begin{proof}
	\ref{itm: i_lemma}: Defining $\varphi:=\Phi_1-\Phi_2$, we want to show that
	\[
	\int_X \langle\nabla_A\varphi\wedge F_A\rangle = 0.
	\] For each $R\gg 1$, let $\chi_R\in C_c^{\infty}(X)$ be as in Notation \ref{notation: cut-off}. Using the finite energy condition $(A,\Phi)\in\mathscr{C}(P)$, by dominated convergence, the Bianchi identity and Stokes' theorem we have
	\begin{equation}\label{eq: aux_id}
	\int_X \langle\nabla_A\varphi\wedge F_A\rangle = \lim_{R\to\infty} \int_X\chi_R\langle\nabla_A\varphi\wedge F_A\rangle = \lim_{R\to\infty} \int_X (d\chi_R)\wedge\langle\varphi,F_A\rangle.  
	\end{equation} Since $\text{supp}(d\chi_R)\subseteq \overline{B}_{R}\setminus B_{R/2}$, H\"older's inequality yields
	\begin{equation}\label{eq: bound_last_integral}
	\left|\int_X (d\chi_R)\wedge\langle\varphi,F_A\rangle\right| \leqslant \|d\chi_R\|_{L^3(X)}\|\varphi\|_{L^6(\overline{B}_R\setminus B_{R/2})}\|F_A\|_{L^2(X)}.
	\end{equation} Now note that $\varphi\in \mathcal{H}_A$ implies $\varphi\in L^6(X)$; indeed, by Kato's inequality and the $L^2$-Sobolev inequality of Theorem \ref{thm: Sobolev_AC} we have $\|\varphi\|_{L^6}\lesssim \|d|\varphi|\|_{L^2}\lesssim \|\nabla_A\varphi\|_{L^2}<\infty$. Moreover, $\|d\chi_R\|_{L^3(X)}$ is bounded independently of $R$. Therefore, the right-hand side of \eqref{eq: bound_last_integral} goes to zero as $R\to\infty$ and so plugging this back to \eqref{eq: aux_id} we get the desired result.
	
	\ref{itm: ii_lemma}: This is similar to the proof of Lemma \ref{lem: mass}. Define the functional $Q:\mathcal{H}_A\to\mathbb{R}$ given by $Q(\varphi):=\mathcal{E}_X(A,\Phi + \varphi)$. Then $Q$ is strictly convex, coercive and lower semicontinuous. Therefore it achieves a unique absolute minimum, say at $\varphi\in \mathcal{H}_A$. Then $\Phi':=\Phi+\varphi$ solves $\Delta_A\Phi' = 0$ weakly. But again, as observed in the proof of (a) above, note that $\varphi\in \mathcal{H}_A$ implies $\varphi\in L^6$ and since $L^6\hookrightarrow L_{\text{loc}}^2$ we get $\varphi\in W_{\text{loc}}^{1,2}$. Since $(A,\Phi)$ is smooth, elliptic regularity implies that $\Phi'$ is in fact smooth and $\Delta_A \Phi' = 0$ holds strongly. This completes the proof.  
	
\end{proof}
In particular, we can reduce the problem of calculating the charge of a finite energy configuration $(A,\Phi)\in\mathscr{C}(P)$ to the case where $\Delta_A\Phi=0$.
\begin{corollary}\label{cor: mass_characterization}
	Let $(A,\Phi)\in\mathscr{C}(P)$ and let $m:=m(|\Phi|)$ be as in Lemma \ref{lem: mass}. Then $m\in [0,\infty)$; in fact, if we let $\Phi'$ be as in Lemma \ref{lem: charge} \ref{itm: ii_lemma}, then $m=m(|\Phi'|)\in [0,\infty)$.  
\end{corollary}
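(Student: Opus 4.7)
The plan is to reduce the statement to the case treated by Theorem \ref{thm: finite_mass} by replacing $\Phi$ with the harmonic representative $\Phi'$ provided by Lemma \ref{lem: charge} \ref{itm: ii_lemma}, and then to show that passing from $\Phi$ to $\Phi'$ does not change the number $m(|\cdot|)$ of Lemma \ref{lem: mass}.

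First, I would apply Lemma \ref{lem: charge} \ref{itm: ii_lemma} to produce the unique $\Phi'\in\Gamma(\mathfrak{g}_P)$ with $\Phi'-\Phi\in\mathcal{H}_A$ and $\Delta_A\Phi'=0$. Since $\Phi'$ is, by construction, the minimizer of the functional $Q(\varphi):=\mathcal{E}_X(A,\Phi+\varphi)$ on $\mathcal{H}_A$ and $0\in\mathcal{H}_A$, we have $\mathcal{E}_X(A,\Phi')\leqslant Q(0)=\mathcal{E}_X(A,\Phi)<\infty$, so $(A,\Phi')\in\mathscr{C}(P)$. Now Theorem \ref{thm: finite_mass} applies to $(A,\Phi')$ and gives $m(|\Phi'|)\in[0,\infty)$.

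The core of the argument is then to show that $m(|\Phi|)=m(|\Phi'|)$, via the uniqueness clause in Lemma \ref{lem: mass}. The key claim is that $|\Phi|-|\Phi'|\in\mathcal{H}$. For the gradient, Kato's inequality yields
\[
|d(|\Phi|-|\Phi'|)|\leqslant |d|\Phi||+|d|\Phi'||\leqslant|\nabla_A\Phi|+|\nabla_A\Phi'|,
\]
and both terms are in $L^2(X)$ since $\nabla_A\Phi'=\nabla_A\Phi+\nabla_A(\Phi'-\Phi)\in L^2(X)$ by the finite energy assumption and $\Phi'-\Phi\in\mathcal{H}_A$. For the function itself, the reverse triangle inequality gives $||\Phi|-|\Phi'||\leqslant|\Phi'-\Phi|$, and by Kato together with the $L^2$-Sobolev inequality of Theorem \ref{thm: Sobolev_AC} (exactly as in the proof of Lemma \ref{lem: charge} \ref{itm: i_lemma}) one has $\|\Phi'-\Phi\|_{L^6(X)}\lesssim\|\nabla_A(\Phi'-\Phi)\|_{L^2(X)}<\infty$, so $|\Phi|-|\Phi'|\in L^6(X)$. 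Thus $|\Phi|-|\Phi'|\in\mathcal{V}$ with the $L^6$-characterization in Lemma \ref{lem: mass} forcing $m(|\Phi|-|\Phi'|)=0$, i.e.\ $|\Phi|-|\Phi'|\in\mathcal{H}$.

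Finally, writing
\[
|\Phi|-m(|\Phi'|)=(|\Phi|-|\Phi'|)+(|\Phi'|-m(|\Phi'|)),
\]
both summands lie in $\mathcal{H}$, so the left-hand side does too; the uniqueness in Lemma \ref{lem: mass} then forces $m(|\Phi|)=m(|\Phi'|)$, whence $m=m(|\Phi'|)\in[0,\infty)$. I do not expect any real obstacle beyond carefully verifying that $|\Phi|-|\Phi'|$ inherits both $L^6$ integrability and $L^2$ gradient control from $\Phi'-\Phi\in\mathcal{H}_A$; everything else is a bookkeeping application of Lemmata \ref{lem: mass}, \ref{lem: charge} and Theorem \ref{thm: finite_mass}.
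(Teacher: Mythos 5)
Your proposal is correct and follows essentially the same route as the paper: pass to the harmonic representative $\Phi'$ from Lemma \ref{lem: charge} \ref{itm: ii_lemma}, control $\lvert\Phi'\rvert-\lvert\Phi\rvert$ via Kato and the $L^2$-Sobolev inequality, and conclude by the uniqueness clause of Lemma \ref{lem: mass}. The only (immaterial) differences are that you phrase the final step through the $\mathcal{H}$-characterization rather than the equivalent $L^6$-characterization, and you invoke Theorem \ref{thm: finite_mass} for $m(\lvert\Phi'\rvert)\geqslant 0$ where the paper uses only Lemma \ref{lem: Phi_bounded}.
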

\begin{proof}
	Let $\Phi'\in\Gamma(\mathfrak{g}_P)$ be as in Lemma \ref{lem: charge} \ref{itm: ii_lemma}, \emph{i.e.} $\varphi:=\Phi'-\Phi\in \mathcal{H}_A$ and $\Delta_A\Phi'=0$. Note that $\|\nabla_A\Phi'\|_{L^2}\leqslant\|\nabla_A\Phi\|_{L^2} + \|\nabla_A\varphi\|_{L^2}<\infty$, so that by Kato's inequality $d|\Phi'|\in L^2$. Thus let $m':=m(|\Phi'|)$ be as in Lemma \ref{lem: mass}. Then, by the $L^2$-Sobolev inequality of Theorem \ref{thm: Sobolev_AC}, $m'-|\Phi'|\in H\hookrightarrow L^6$, and from Lemma \ref{lem: Phi_bounded} one has that $m'\in [0,\infty)$. Since $\Phi'-\Phi\in \mathcal{H}_A$, it follows from Kato's inequality together with the $L^2$-Sobolev inequality of Theorem \ref{thm: Sobolev_AC} that $||\Phi'|-|\Phi||\leqslant |\Phi'-\Phi|\in L^6$, so that
	$m'-|\Phi| = (m' - |\Phi'|) + (|\Phi'|-|\Phi|) \in L^6$. Therefore, the uniqueness part of Lemma \ref{lem: mass} implies that $m=m'\geqslant 0$.
\end{proof}
\begin{corollary}\label{cor: zero_mass_implies_zero_charge}
	If $(A,\Phi)\in\mathscr{C}(P)$ is such that $m(|\Phi|)=0$ then $k(A,\Phi) = k'(A,\Phi) = 0$. 
\end{corollary}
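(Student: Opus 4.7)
The plan is to reduce to the case $\Delta_A\Phi=0$ via Lemma \ref{lem: charge} and then exploit Lemma \ref{lem: Phi_bounded} to conclude the Higgs field is identically zero. Concretely, given $(A,\Phi)\in\mathscr{C}(P)$ with $m(|\Phi|)=0$, I would invoke Lemma \ref{lem: charge} \ref{itm: ii_lemma} to produce the unique $\Phi'\in\Gamma(\mathfrak{g}_P)$ with $\varphi:=\Phi'-\Phi\in\mathcal{H}_A$ and $\Delta_A\Phi'=0$. The triangle inequality gives $\|\nabla_A\Phi'\|_{L^2}\leqslant\|\nabla_A\Phi\|_{L^2}+\|\nabla_A\varphi\|_{L^2}<\infty$, and since $F_A$ is unchanged, $(A,\Phi')$ still lies in $\mathscr{C}(P)$.

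Next, by Lemma \ref{lem: charge} \ref{itm: i_lemma}, the functional $k'$ depends only on the class of $\Phi$ modulo $\mathcal{H}_A$, so $k'(A,\Phi)=k'(A,\Phi')$. By Corollary \ref{cor: mass_characterization}, the mass is also preserved: $m(|\Phi'|)=m(|\Phi|)=0$. Since $\Delta_A\Phi'=0$, Lemma \ref{lem: Phi_bounded} applies and yields $0\leqslant|\Phi'|\leqslant m(|\Phi'|)=0$, hence $\Phi'\equiv 0$ on $X$.

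In particular $\nabla_A\Phi'\equiv 0$, so
\[
k'(A,\Phi)=k'(A,\Phi')=\frac{1}{4\pi}\int_X\langle\nabla_A\Phi'\wedge F_A\rangle=0.
\]
By the definition of $k$ in the $m(|\Phi|)=0$ regime, this also gives $k(A,\Phi)=k'(A,\Phi)=0$, finishing the argument.

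There is no real obstacle here: all the work was front-loaded into Lemma \ref{lem: charge} (which already supplies the harmonic representative and the invariance of $k'$), Corollary \ref{cor: mass_characterization} (which preserves the mass under passing to $\Phi'$), and Lemma \ref{lem: Phi_bounded} (which turns zero mass into a pointwise bound on $|\Phi'|$). The only thing to check carefully is the bookkeeping that $(A,\Phi')$ still lies in $\mathscr{C}(P)$ so that the hypotheses of these lemmas are genuinely available.
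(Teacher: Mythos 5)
Your argument is correct and follows essentially the same route as the paper: pass to the harmonic representative $\Phi'$ from Lemma \ref{lem: charge} \ref{itm: ii_lemma}, use \ref{itm: i_lemma} and Corollary \ref{cor: mass_characterization} to preserve $k'$ and the mass, and then apply Lemma \ref{lem: Phi_bounded} to conclude $\Phi'\equiv 0$. The extra bookkeeping you include (that $(A,\Phi')\in\mathscr{C}(P)$) is a reasonable explicit check that the paper leaves implicit.
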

\begin{proof}
	Indeed, let $(A,\Phi)$ be as in the statement and let $\Phi'$ be given by Lemma \ref{lem: charge} \ref{itm: ii_lemma}. Then, by Lemma \ref{lem: charge} \ref{itm: i_lemma} we have $k'(A,\Phi)=k'(A,\Phi')$ and by Corollary \ref{cor: mass_characterization} we have $0=m(|\Phi|)=m(|\Phi'|)$. But Lemma \ref{lem: Phi_bounded} implies that $|\Phi'|\leqslant m(|\Phi'|)=0$, so that $\Phi'=0$ and therefore $k'(A,\Phi)=k'(A,\Phi')=0$.
\end{proof}

Now let us restrict ourselves to the case where the structure group $G=\rm SU(2)$. We start making some important remarks on the structure of the adjoint boundle $\mathfrak{su}(2)_P:=P\times_{\mathrm{Ad}}\mathfrak{su}(2)$ associated to the principal $\rm SU(2)$-bundle $P\to X$. 

We consider on $\mathfrak{su}(2)_P$ the metric $\langle\cdot{},\cdot{}\rangle$ induced by minus one-half the Cartan--Killing form of $\mathfrak{su}(2)$, \emph{i.e.} $\langle a,b\rangle := -2\tr(ab)$. If $\sigma_1,\sigma_2,\sigma_3$ denote the Pauli matrices, then
\[
T_1:=\frac{i\sigma_1}{2},T_2:=\frac{i\sigma_2}{2},T_3:=\frac{i\sigma_3}{2}
\] gives an orthonormal basis of $\mathfrak{su}(2)$ with respect to $\langle\cdot{},\cdot{}\rangle$, satisfying
\[
[T_1,T_2]=-T_3,\quad [T_1,T_3]=T_2,\quad [T_2,T_3]=-T_1.
\] In particular,
\[
[a,[b,c]] = b\langle a,c\rangle - c\langle a,b\rangle,\quad\forall a,b,c\in\mathfrak{su}(2).
\]
Given a Higgs field $\Phi\in\Gamma(\mathfrak{su}(2)_P)$, we shall denote by $Z(\Phi):=\{x\in X: \Phi(x)=0\}$ the (gauge invariant\footnote{Note that $Z(\Phi)=Z(Ad(\gamma)\Phi)$ for any gauge transformation $\gamma\in\mathscr{G}(P)$.}) zero locus of $\Phi$. To avoid cumbersome notation,  in what follows let us write $E:=\mathfrak{su}(2)_P$ and $V:=X\setminus Z(\Phi)$. Then we can decompose
\begin{equation}\label{eq: adjoint_decomp}
E|_{V}=E^{||} \oplus E^\perp,
\end{equation} where the \emph{longitudinal} line bundle $E^{||}$ is given by
\begin{equation}
	E^{||} = \ker \left(\ad(\Phi):E|_{V}\to E|_{V}\right) = \langle \Phi \rangle,
\end{equation}
and the \emph{transverse} rank $2$ bundle $E^\perp$ is the orthogonal complement of $E^{||}$. We note that
\begin{equation}\label{eq: decomp_Lie}
	[E^{\perp},E^{\perp}]\subset E^{||}\quad\text{and}\quad [E^{||},E^{\perp}]\subset E^{\perp}.
\end{equation}
If $(A,\Phi)$ has finite mass $m>0$ then there is $R_0\gg 1$ such that $|\Phi|\geqslant\frac{m}{2}>0$ over $X\setminus B_{R_0}$ (see Remark \ref{rmk: finite_mass}). In particular, in this case $Z(\Phi)\subset B_{R_0}$ and the above decomposition is well defined over $X\setminus B_{R_0}$. Moreover, it follows that $Z(\Phi)$ is a compact set, since it is closed ($Z=\Phi^{-1}(0)$) and bounded ($Z\subset B_{R_0}$) in the complete Riemannian manifold $(X,g)$. 

Henceforth, we split any section $\xi$ of $E=\mathfrak{su}(2)_P$, defined outside the zero locus of $\Phi$, as $\xi =\xi^{||} + \xi^\perp$; explicitly:
\begin{subequations}
	\begin{align}
		\xi^{||} &:=|\Phi|^{-2}\langle\xi,\Phi\rangle\Phi, \label{eq:long_part} \\
		\xi^{\perp} &:= |\Phi|^{-2}[\Phi,[\xi,\Phi]]. \label{eq:transv_part} \label{ineq:Taubes_outside_Z}
	\end{align} 
\end{subequations} It is clear that $\xi^{||}$ and $\xi^{\perp}$ are smooth on the complement of $Z(\Phi)$. For future use (in \S\ref{subsec: exp_decay}) we also note that
\begin{equation}\label{ineq:eigen_ad}
	|[\Phi,\xi]|\geqslant |\Phi||\xi^{\perp}|,
\end{equation} and, using \eqref{eq: decomp_Lie} and the Ad-invariance of the inner product,
\begin{equation}\label{eq: inner_prod_decomp}
	\langle [a,b],c\rangle = \langle [a^{||},b^{\perp}],c^{\perp}\rangle + \langle [a^{\perp},b^{||}],c^{\perp}\rangle + \langle [a^{\perp},b^{\perp}],c^{||}\rangle\quad\forall a,b,c\in \mathfrak{su}(2)_P.
\end{equation}

We end this section with the following main result (cf. \cite[Proposition $\S 2$]{groisser1984integrality}):
\begin{theorem}\label{thm: integrality}
	Assume $G=\rm SU(2)$. Let $(A,\Phi)\in\mathscr{C}(P)$ and suppose that $m=m(|\Phi|)>0$. Then $k=k(A,\Phi)\in\mathbb{Z}$ and if $(A,\Phi)$ has finite mass $m$ then
	\begin{equation}\label{eq: charge}
		\lim_{R\to\infty}\frac{1}{4\pi}\int_{\Sigma_R} |\Phi|^{-1}\langle\Phi,F_A\rangle = \lim_{R\to\infty}\frac{1}{4\pi m}\int_{\Sigma_R} \langle\Phi,F_A\rangle = k.
	\end{equation} Moreover, for $R\geqslant R_0\gg 1$, choosing a trivialization for $P\to X$ and restricting $\Phi/|\Phi|$ to $\Sigma_R\cong\Sigma^2$ determines a homotopy class of maps $\Sigma^2\to\mathbb{S}^2\subset\mathfrak{su}(2)$, and $k$ is the Brouwer degree of this class. Alternatively, the restrictions of the associated vector bundle $P\times_{\rm SU(2)}\mathbb{C}^2$ over $\Sigma_R$ split as $\mathscr{L}\oplus \mathscr{L}^{-1}$, where $\mathscr{L}$ is a complex line bundle over $\Sigma_{R}\cong\Sigma$, corresponding to one of the eigenspaces of $\Phi$, and the degree of any such $\mathscr{L}$ does not depend on $R$ and equals the charge $k$.
\end{theorem}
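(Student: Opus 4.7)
The plan is to derive a flux formula for $4\pi k'$ as a limit of boundary integrals via Bianchi and Stokes, then to identify the limit with $4\pi m\cdot d$, where $d\in\mathbb{Z}$ is the degree of the $+i/2$-eigenbundle $\mathscr{L}$ of $\hat\Phi:=\Phi/|\Phi|$ inside $P\times_{\SU(2)}\mathbb{C}^2$, via a Chern--Weil computation on the end.

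\emph{Flux formula and reduction.} For any $(A,\Phi)\in\mathscr{C}(P)$, finite energy and Cauchy--Schwarz put $\langle\nabla_A\Phi\wedge F_A\rangle\in L^1(X)$, so dominated convergence combined with the Bianchi identity $d_AF_A=0$ and Stokes' theorem on $B_R$ yield
\[
4\pi k'=\int_X\langle\nabla_A\Phi\wedge F_A\rangle=\int_X d\langle\Phi,F_A\rangle=\lim_{R\to\infty}\int_{\Sigma_R}\langle\Phi,F_A\rangle,
\]
which after division by $m>0$ gives the second equality in \eqref{eq: charge}. For integrality and the first equality, I would first invoke Lemma \ref{lem: charge} \ref{itm: ii_lemma} to replace $\Phi$ by the harmonic representative $\Phi'$ (with $\Phi'-\Phi\in\mathcal{H}_A$ and $\Delta_A\Phi'=0$): Lemma \ref{lem: charge} \ref{itm: i_lemma} and Corollary \ref{cor: mass_characterization} preserve $k'$, $m$ and hence $k$, and Theorem \ref{thm: finite_mass} then guarantees finite mass. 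So assume henceforth that $(A,\Phi)$ has finite mass $m>0$, and fix $R_0\gg 1$ with $|\Phi|\geqslant m/2$ on $X\setminus B_{R_0}$, so $\hat\Phi$ is smooth on the end and the decomposition \eqref{eq: adjoint_decomp} is defined there.

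\emph{Chern--Weil on the eigenbundle.} The global trivialization of $P\to X$ (which exists because $\SU(2)$ is $2$-connected, so $\SU(2)$-bundles over any non-compact $3$-manifold are trivial) allows us to view $\hat\Phi|_{\Sigma_R}$ as a smooth map $\Sigma\to\mathbb{S}^2\subset\mathfrak{su}(2)$ whose Brouwer degree $d\in\mathbb{Z}$ is independent of $R\geqslant R_0$, via the obvious homotopy on $[R_0,R]\times\Sigma$, and coincides with $\deg\mathscr{L}|_{\Sigma_R}$ where $\mathscr{L}$ carries the $\U(1)$-connection $A^\mathscr{L}$ induced from $A$. The classical projection (Gauss--Codazzi) formula for $F_{A^\mathscr{L}}$ yields the Chern--Weil identity
\[
4\pi d=\int_{\Sigma_R}\langle\hat\Phi,F_A\rangle+\tfrac{1}{2}\int_{\Sigma_R}\langle\hat\Phi,[\nabla_A\hat\Phi\wedge\nabla_A\hat\Phi]\rangle\qquad(R\geqslant R_0),
\]
whose second ``transverse correction'' term is pointwise bounded by $2m^{-2}|\nabla_A\Phi|^2$.

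\emph{Passing to the limit.} Using $\nabla_A\hat\Phi=|\Phi|^{-1}(\nabla_A\Phi)^\perp$ with $|\Phi|^{-1}\leqslant 2/m$, the Bianchi--Stokes argument on shells $B_{R_2}\setminus B_{R_1}$ bounds $\int_{\Sigma_{R_2}}\langle\hat\Phi,F_A\rangle-\int_{\Sigma_{R_1}}\langle\hat\Phi,F_A\rangle$ by $\tfrac{2}{m}\|\nabla_A\Phi\|_{L^2}\|F_A\|_{L^2}$ on the shell, which tends to zero, so $I:=\lim_R\int_{\Sigma_R}\langle\hat\Phi,F_A\rangle$ exists. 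Since $4\pi d$ is constant in $R$, the correction integral has limit $4\pi d-I$; the AC coarea identity $\int_{R_0}^\infty\int_{\Sigma_R}|\nabla_A\Phi|^2\,dR\lesssim\|\nabla_A\Phi\|_{L^2(X)}^2<\infty$ yields a sequence $R_j\to\infty$ along which the correction tends to zero, forcing $I=4\pi d$; this establishes the leftmost equality of \eqref{eq: charge} with value $d\in\mathbb{Z}$. To conclude $k=d$ (hence $k\in\mathbb{Z}$), it remains to compare
\[
\int_{\Sigma_R}\langle\Phi,F_A\rangle-m\int_{\Sigma_R}\langle\hat\Phi,F_A\rangle=\int_{\Sigma_R}(|\Phi|-m)\langle\hat\Phi,F_A\rangle
\]
and show the right-hand side tends to zero as $R\to\infty$, by combining the uniform convergence $|\Phi|\to m$ with a coarea/averaging extraction of a sequence on which $\|F_A\|_{L^1(\Sigma_R)}$ is appropriately controlled; since both outer limits already exist, this forces $4\pi k'=mI$, so $k=d$. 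The chief obstacle is precisely this last comparison: it requires delicately coupling the uniform decay of $m-|\Phi|$ on $\Sigma_R$ with the layer-cake integrability of $|F_A|^2$ along the AC end, and then propagating the subsequence-level vanishing to the full-sequence limit using the already-established existence of both limits.
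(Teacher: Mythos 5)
Your overall route---reduce to the harmonic representative via Lemma \ref{lem: charge} and Corollary \ref{cor: mass_characterization}, identify $\frac{1}{4\pi}\int_{\Sigma_R}\langle\hat\Phi,F_A\rangle$ with the degree of the eigen-line bundle via the projected connection and its Chern--Weil form, prove existence of the limit by a Bianchi--Stokes shell estimate, kill the transverse correction term using $\nabla_A\Phi\in L^2$, and finally compare $\tfrac{1}{m}\Phi$ with $\hat\Phi$---is exactly the paper's. Your disposal of the correction term (coarea integrability of $\|\nabla_A\Phi\|^2_{L^2(\Sigma_R)}$ plus the already-established existence of its limit) is a legitimate variant of the paper's proof-by-contradiction and works.

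The gap is in the last comparison, which you yourself flag as the chief obstacle. The mechanism you sketch---pairing the \emph{uniform} convergence $|\Phi|\to m$ (which carries no rate at this stage) with a coarea extraction controlling $\|F_A\|_{L^1(\Sigma_R)}$---cannot close. From $F_A\in L^2(X)$ alone the best one gets along any subsequence is $\|F_A\|_{L^2(\Sigma_{R_j})}=o(R_j^{-1/2})$, hence $\|F_A\|_{L^1(\Sigma_{R_j})}\leqslant\mathrm{Area}(\Sigma_{R_j})^{1/2}\|F_A\|_{L^2(\Sigma_{R_j})}=o(R_j^{1/2})$, which is unbounded; multiplied by a rate-free $\sup_{\Sigma_{R_j}}|m-|\Phi||=o(1)$ this need not vanish (e.g.\ $|F_A|\sim\rho^{-3/2}(\log\rho)^{-1}$ and $m-|\Phi|\sim(\log\log\rho)^{-1}$ are consistent with everything established up to this point). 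Two repairs are available. The paper's: avoid surface integrals of $|F_A|$ entirely by writing $\int_{\Sigma_R}\langle\frac{\Phi}{m}-\hat\Phi,F_A\rangle$ as a bulk integral of $d\big((1-\chi_R)\langle\frac{\Phi}{m}-\hat\Phi,F_A\rangle\big)$ over $\overline{B}_R$ and bounding the two resulting terms by $\frac{1}{m}\|d\chi_R\|_{L^3}\|m-|\Phi|\|_{L^6(X\setminus B_{R/2})}\|F_A\|_{L^2}$ and $\frac{3}{m}\|\nabla_A\Phi\|_{L^2(X\setminus B_{R/2})}\|F_A\|_{L^2(X\setminus B_{R/2})}$, both of which tend to zero because $m-|\Phi|\in L^6(X)$ (Lemma \ref{lem: mass} plus the Sobolev inequality of Theorem \ref{thm: Sobolev_AC}) and $\nabla_A\Phi,F_A\in L^2(X)$. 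Alternatively, a slice-wise H\"older estimate $\int_{\Sigma_R}|m-|\Phi|||F_A|\leqslant\|m-|\Phi|\|_{L^6(\Sigma_R)}\|F_A\|_{L^2(\Sigma_R)}\mathrm{Area}(\Sigma_R)^{1/3}$ combined with a simultaneous coarea extraction from $\int\big(\|m-|\Phi|\|_{L^6(\Sigma_R)}^6+\|F_A\|_{L^2(\Sigma_R)}^2\big)\,dR<\infty$ gives $o(R^{-1/6})\cdot o(R^{-1/2})\cdot O(R^{2/3})=o(1)$ along a sequence. Either way, the quantitative $L^6$ membership of $m-|\Phi|$---not bare uniform convergence---is the ingredient your plan is missing.
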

\begin{proof}
	We follow the original Euclidean case proof in \cite[Proposition $\S 2$]{groisser1984integrality} with minor additions. Start noting that as a consequence of Lemma \ref{lem: charge}, Corollary \ref{cor: mass_characterization} and Theorem \ref{thm: finite_mass}, we may assume that $(A,\Phi)$ has finite mass $m>0$.
	
	Also note that the last equality in \eqref{eq: charge} follows simply by using dominated convergence, the Bianchi identity and Stokes' theorem:
	\begin{equation}\label{eq: last_equality_k}
	4\pi m k=\int_X\langle\nabla_A\Phi\wedge F_A\rangle = \lim_{R\to\infty}\int_{\overline{B}_R}d\langle \Phi,F_A\rangle = \lim_{R\to\infty} \int_{\Sigma_R} \langle\Phi,F_A\rangle.
	\end{equation} 
	After making these initial considerations, we now proceed to the main part of the proof. Let $R_0>0$ be sufficiently large so that $|\Phi|\geqslant\frac{m}{2}>0$ on $U:=X\setminus B_{R_0}$, and write $\hat{\Phi}:=\Phi/|\Phi|$.
	
	Given $R\geqslant R_0$, let $\hat{\Phi}_R:=\hat{\Phi}|_{\Sigma_R}$.	Then the endomorphism $J:=ad(\hat{\Phi})=[\hat{\Phi},\cdot{}]$ restricted to $E_R^{\perp}:=E^{\perp}|_{\Sigma_R}$ satisfies $J^2=-1$. Therefore, for each $R\geqslant R_0$, we have the following decomposition
	\[
	E_R^{\perp}\otimes\mathbb{C} \cong L_R\oplus L_{R}^{\ast},
	\] where $L_{R}\to\Sigma_R$ is the complex line bundle defined by the $i$-eigenspace of $J$. The connection $A$ on $E|_U$ induces a connection $\tilde{A}$ on $E^{\perp}|_U$ by orthogonal projection:
	\[
	\nabla_{\tilde{A}} s := (\nabla_A s)^{\perp} = \nabla_A s - \langle \nabla_A s,\hat{\Phi}\rangle\hat{\Phi},\quad\forall s\in\Gamma(E^{\perp}|_U).
	\] The curvature $F_{\tilde{A}}\in\Omega^2(U,\text{End}(E_T))$ of $\tilde{A}$ is given by
	\[
	F_{\tilde{A}}(s) = d_{\tilde{A}} \nabla_{\tilde{A}} s = \left([F_A,s] - \langle\nabla_{A}\hat{\Phi},s\rangle\wedge\nabla_{A}\hat{\Phi}\right)^{\perp},\quad\forall s\in\Gamma(E^{\perp}|_U).
	\] Moreover, using \eqref{eq: decomp_Lie} one finds that $F_{\tilde{A}}(\cdot{})=f_{\tilde{A}}[\hat{\Phi},\cdot{}]$, where
	\[
	f_{\tilde{A}}:=\langle\hat{\Phi},F_A - \frac{1}{2}[\nabla_A\hat{\Phi},\nabla_A\hat{\Phi}]\rangle\in\Omega^2(U).
	\] Complexifying induces a connection on the line bundle $L_R$ with curvature form
	$\omega = i f_{\tilde{A}}$. In particular, $c_1(L_R)=\frac{i}{2\pi}[\omega]\in H^2(X,\mathbb{Z})$ and thus
	\begin{equation}\label{eq: degree_LR}
		\text{deg}(L_R):=\int_{\Sigma_R} c_1(L_R) = -\frac{1}{2\pi}\int_{\Sigma_R} \langle\hat{\Phi},F_A - \frac{1}{2}[\nabla_A\hat{\Phi},\nabla_A\hat{\Phi}]\rangle.
	\end{equation} Next we note that since $\rm SU(2)$ is $2$-connected, the bundle $P$ is necessarily trivializable. Let us fix a global trivialization of $P$ so that we can regard $\Phi$ as a map $X\to\mathfrak{su}(2)$. Then $\hat{\Phi}_R$ gives a well-defined homotopy class of maps from $\Sigma_R\cong\Sigma$ to the unit sphere $\mathbb{S}^2$ inside $\mathfrak{su}(2)$, which is independent of $R\geqslant R_0$, and furthermore one has $L_R\cong(\hat{\Phi}_R)^{\ast}(H^2)$, where $H$ denotes the Hopf bundle. In particular,
\begin{equation}\label{eq: deg_LR_PhiR}
\text{deg}(L_R) = -2\text{deg}(\hat{\Phi}_R)
\end{equation} and the values of $\text{deg}(L_R)$ and $\text{deg}(\hat{\Phi}_R)$ are independent of $R\geqslant R_0$ and of the choice of trivialization. Alternatively, one may also consider the bundle $\mathscr{E}:=P\times_{\rm SU(2)}\mathbb{C}^2$ associated with the standard representation, then observe that its restriction $\mathscr{E}_R$ to $\Sigma_R$ splits into eigenspaces for $\Phi|_{\Sigma_R}$ as $\mathscr{E}_R=\mathscr{L}_R\oplus\mathscr{L}_R^{\ast}$, where $\mathscr{L}_R^2\cong L_R$; in particular, using \eqref{eq: deg_LR_PhiR}, one has $\text{deg}(\mathscr{L}_R)=\text{deg}(\hat{\Phi}_R)$.
	
	Now let's analyze the limit as $R\to\infty$ of the right-hand side of \eqref{eq: degree_LR}. First, note that for $R\geqslant R_0$ we have
	\begin{align}
		\int_{\Sigma_R}\langle\hat{\Phi},F_A\rangle &= \int_{\Sigma_R}(1-\chi_{2R_0})\langle\hat{\Phi},F_A\rangle\\
		&= -\int_{\overline{B}_R}d\chi_{2R_0}\langle\hat{\Phi},F_A\rangle + \int_{\overline{B}_R}(1-\chi_{2R_0})\langle\nabla_A\hat{\Phi}\wedge F_A\rangle.\label{eq: aux_independence}
	\end{align} Since $\text{supp}(d\chi_{2R_0})\subset B_{2R_0}$, the first integral on the right-hand side of \eqref{eq: aux_independence} is independent of $R\geqslant 2R_0$. Moreover, for all $R\geqslant 2R_0$, we may compute the following on $X\setminus B_{R/2}$:
	\begin{align}
		\nabla_A\hat{\Phi} &= d\left(\frac{1}{|\Phi|}\right)\otimes\Phi + \frac{1}{|\Phi|}\nabla_A\Phi\\
		&= -\frac{1}{|\Phi|^3}\langle\nabla_A\Phi,\Phi\rangle\otimes\Phi + \frac{1}{|\Phi|}\nabla_A\Phi\\
		&= -\frac{1}{|\Phi|}(\nabla_A\Phi)^{||} + \frac{1}{|\Phi|}\nabla_A\Phi\\
		&= \frac{1}{|\Phi|}(\nabla_A\Phi)^{\perp},
	\end{align} and hence
	\begin{equation}
		|\nabla_A\hat{\Phi}| = \frac{1}{|\Phi|}|(\nabla_A\Phi)^{\perp}|\leqslant\frac{2}{m}|\nabla_A\Phi|\quad\text{on }X\setminus B_{R/2}.\label{eq: nabla_hat_Phi}
	\end{equation} Thus, using that $\text{supp}(1-\chi_{2R_0})\subset X\setminus B_{R_0}$ and H\"older's inequality we get
	\[
	\int_{\overline{B}_R}(1-\chi_{2R_0})|\langle\nabla_A\hat{\Phi}\wedge F_A\rangle|\leqslant\int_{X\setminus B_{R_0}}|\nabla_A\hat{\Phi}||F_A|\leqslant\frac{2}{m}\|\nabla_A\Phi\|_{L^2(X)}\|F_A\|_{L^2(X)}. 
	\] Hence, the last integral in \eqref{eq: aux_independence} is absolutely convergent as $R\to\infty$. In conclusion, we get that
	\begin{equation}\label{eq: existence_limit_k}
		\lim_{R\to\infty} \int_{\Sigma_R}\langle\hat{\Phi},F_A\rangle\quad\text{exists},
	\end{equation} and since $\text{deg}(L_R)$ is independent of $R\geqslant R_0$, it follows from \eqref{eq: degree_LR} that
	\begin{equation}\label{eq: limit_exists_transv}
	\lim_{R\to\infty} \int_{\Sigma_R}\langle\hat{\Phi},[\nabla_A\hat{\Phi},\nabla_A\hat{\Phi}]\rangle\quad\text{exists}.
	\end{equation} Let $\alpha\in\mathbb{R}$ be the limit in \eqref{eq: limit_exists_transv}. We claim that $\alpha=0$. Indeed, suppose on the contrary that
	\[
	0<|\alpha|=\lim_{R\to\infty}\left| \int_{\Sigma_R}\langle\hat{\Phi},[\nabla_A\hat{\Phi},\nabla_A\hat{\Phi}]\rangle\right|.
	\] Then there is $R_1\geqslant R_0$ so that for all $R\geqslant R_1$ one has
	\begin{align}
	0<\frac{|\alpha|}{2} &\leqslant\left| \int_{\Sigma_R}\langle\hat{\Phi},[\nabla_A\hat{\Phi},\nabla_A\hat{\Phi}]\rangle\right|\\
	&\leqslant\int_{\Sigma_R}|\langle\hat{\Phi},[\nabla_A\hat{\Phi},\nabla_A\hat{\Phi}]\rangle|\\
	&\leqslant\int_{\Sigma_R}|\Phi|^{-2}|(\nabla_A\Phi)^{\perp}|^2\\
	&\leqslant\frac{4}{m^2}\int_{\Sigma_R}|(\nabla_A\Phi)^{\perp}|^2.
	\end{align} Hence, using conical coordinates, we get
	\begin{align}
	\|\nabla_A\Phi\|_{L^2(X)}^2 &\geqslant\int_{X\setminus B_{R_1}}|(\nabla_A\Phi)^{\perp}|^2(x)\text{vol}_X(x) \\
	&=
	\int_{R_1}^{\infty}\int_{\Sigma}|(\nabla_A\Phi)^{\perp}|^2\rho^2\mu(\rho,\sigma)d\sigma d\rho\quad\text{ where $\mu=1+O(\rho^{-\nu})$}\\
	&\geqslant\frac{m^2|\alpha|\text{Vol}(\Sigma)}{16}\int_{R_1}^{\infty}\rho^2 d\rho = \infty,
	\end{align} contradicting the fact that $\nabla_A\Phi\in L^2(X)$. Thus $\alpha=0$ as claimed. Therefore, using \eqref{eq: degree_LR} and \eqref{eq: deg_LR_PhiR}, this yields
	\begin{equation}
		\lim_{R\to\infty}\frac{1}{4\pi}\int_{\Sigma_R}\langle\hat{\Phi},F_A\rangle = \text{deg}(\hat{\Phi}_{R_0})\in\mathbb{Z}.
	\end{equation} 

	Finally, we complete the proof of the theorem by proving the first equality in \eqref{eq: charge}. By \eqref{eq: last_equality_k} and \eqref{eq: existence_limit_k}, it suffices to prove that
	\begin{equation}\label{eq: pf_first_eq}
	\lim_{R\to\infty}\left|\int_{\Sigma_R}\left\langle\frac{\Phi}{m}-\hat{\Phi},F_A\right\rangle\right| = 0.
	\end{equation} Let $R\geqslant R_0$ and note that by the Bianchi identity and Stokes' theorem we have
	\begin{align}
	\left|\int_{\Sigma_R}\left\langle\frac{\Phi}{m}-\hat{\Phi},F_A\right\rangle\right| &= \left|\int_{\Sigma_R}(1-\chi_{R})\left\langle\frac{\Phi}{m}-\hat{\Phi},F_A\right\rangle\right|\\
	&\leqslant \frac{1}{m}\int_{\overline{B}_R}|d\chi_{R}|\left|m-|\Phi|\right||F_A| + \int_{X\setminus B_{R/2}}\left|\frac{1}{m}\nabla_A\Phi - \nabla_A\hat{\Phi}\right||F_A|.\label{ineq: bound_first_ident}
	\end{align} Arguing in the same way as in the proof of Lemma \ref{lem: charge} \ref{itm: i_lemma}, the first term in the right-hand side of \eqref{ineq: bound_first_ident} goes to zero as $R\to\infty$: indeed, by H\"older's inequality
	\[
	\int_{\overline{B}_R}|d\chi_{R}|\left|m-|\Phi|\right||F_A|\leqslant \|d\chi_{R}\|_{L^3}\|m-|\Phi|\|_{L^6(X\setminus B_{R/2})}\|F_A\|_{L^2};
	\] since $m-|\Phi|\in L^6(X)$ and $\|d\chi_{R}\|_{L^3}$ is uniformly bounded as $R\to\infty$, the claim follows. As for the second and last term in the right-hand side of \eqref{ineq: bound_first_ident}, note that for any $R\geqslant 2R_0$ we have \eqref{eq: nabla_hat_Phi} so that by H\"older's inequality
	\[
	\int_{X\setminus B_{R/2}}\left|\frac{1}{m}\nabla_A\Phi - \nabla_A\hat{\Phi}\right||F_A|\leqslant\frac{3}{m}\int_{X\setminus B_{R/2}}|\nabla_A\Phi||F_A|\leqslant\frac{3}{m}\|\nabla_A\Phi\|_{L^2(X\setminus B_{R/2})}\|F_A\|_{L^2(X\setminus B_{R/2})}, 
	\] and this bound goes to zero as $R\to\infty$, since $\nabla_A\Phi,F_A\in L^2(X)$. Putting it all together, equation \eqref{eq: pf_first_eq} holds as we wanted.

\end{proof}

\begin{corollary}\label{cor: energy_formula}
	Assume $G=\rm SU(2)$. Let $(A,\Phi)\in\mathscr{C}(P)$ be a monopole, \emph{i.e.} a solution to equation \eqref{eq: monopole}, and suppose that $m=m(|\Phi|)>0$. Then
	\[
	\mathcal{E}_X(A,\Phi)=4\pi m k,
	\] with $k=k(A,\Phi)\in\mathbb{N}_0$ and if $k>0$ then $Z(\Phi)\neq\emptyset$.
\end{corollary}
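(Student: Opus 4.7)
The plan is to deduce everything directly from the Bogomolnyi equation $F_A = \ast\nabla_A\Phi$ together with the integrality result Theorem \ref{thm: integrality}, which applies here since $m = m(|\Phi|) > 0$ by hypothesis. The first step is the pointwise identity relating the charge integrand to the energy density: writing $\nabla_A\Phi = \sum_a (\nabla_A\Phi)^a\,T_a$ in a local orthonormal frame $\{T_a\}$ of $\mathfrak{su}(2)$, the monopole equation reads componentwise $F_A^a = \ast(\nabla_A\Phi)^a$, so using the standard identity $\alpha\wedge\ast\alpha = |\alpha|^2\,\mathrm{vol}_g$ for $1$-forms on $(X^3,g)$ (and commuting the $2$-form past the $1$-form),
\[
\langle F_A \wedge \nabla_A\Phi\rangle = \sum_a \ast(\nabla_A\Phi)^a \wedge (\nabla_A\Phi)^a = |\nabla_A\Phi|^2\,\mathrm{vol}_g.
\]
The same componentwise computation yields $|F_A|^2 = |\nabla_A\Phi|^2$ pointwise, and hence
\[
\mathcal{E}_X(A,\Phi) = \tfrac{1}{2}\int_X \bigl(|F_A|^2 + |\nabla_A\Phi|^2\bigr) = \int_X |\nabla_A\Phi|^2 = \int_X \langle F_A\wedge\nabla_A\Phi\rangle = 4\pi m k
\]
by the definition \eqref{eq: charge_integer} of the charge. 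Since the left-hand side is nonnegative and $m > 0$, this forces $k \geqslant 0$; combined with $k \in \mathbb{Z}$ from Theorem \ref{thm: integrality}, I conclude $k \in \mathbb{N}_0$.

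For the final assertion I will argue the contrapositive: assuming $Z(\Phi) = \emptyset$, I show that $k = 0$. Since $\rm SU(2)$ is $2$-connected, every principal $\rm SU(2)$-bundle over the $3$-manifold $X$ is trivializable; after fixing a global trivialization, $\hat\Phi := \Phi/|\Phi|$ becomes a smooth map $X \to \mathbb{S}^2 \subset \mathfrak{su}(2)$ defined on \emph{all} of $X$. By Theorem \ref{thm: integrality}, for $R \gg 1$ the charge $k$ equals the Brouwer degree of $\hat\Phi|_{\Sigma_R}\co \Sigma_R \to \mathbb{S}^2$, which (letting $\omega$ be the area form on $\mathbb{S}^2$ normalized so that $\int_{\mathbb{S}^2}\omega = 1$) is computed by $\int_{\Sigma_R}\hat\Phi^\ast\omega$. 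Since $\hat\Phi$ extends smoothly to the compact manifold-with-boundary $\overline{B}_R$, Stokes' theorem gives
\[
k = \int_{\Sigma_R}\hat\Phi^\ast\omega = \int_{\overline{B}_R} d(\hat\Phi^\ast\omega) = \int_{\overline{B}_R}\hat\Phi^\ast(d\omega) = 0,
\]
because $d\omega = 0$ on the $2$-sphere. By contraposition, $k > 0$ forces $Z(\Phi) \neq \emptyset$.

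The whole argument is quite short once Theorem \ref{thm: integrality} is available, and I do not expect any serious obstacle. The only point requiring care is the precise pointwise identity $\langle F_A \wedge \nabla_A\Phi\rangle = |\nabla_A\Phi|^2\,\mathrm{vol}_g$, where one must keep track of the sign conventions for the Hodge star, the order of the wedge product of forms of different degrees, and the $\mathrm{Ad}$-invariant inner product on $\mathfrak{su}(2)$ fixed in \S\ref{sec: notations}; all factors of $2$ and signs conspire so that the charge integrand agrees exactly with the Lagrangian density when $(A,\Phi)$ solves \eqref{eq: monopole}.
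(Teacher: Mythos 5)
Your argument is correct and follows essentially the same route as the paper: the monopole equation identifies the charge integrand with the energy density, giving $\mathcal{E}_X(A,\Phi)=4\pi mk\geqslant 0$ and hence $k\in\mathbb{N}_0$ via Theorem \ref{thm: integrality}, and the zero-locus claim is the standard observation that a nonzero degree of $\hat\Phi|_{\Sigma_R}$ obstructs extending $\hat\Phi$ over $\overline{B}_R$. The only difference is cosmetic: you spell out the pointwise identity $\langle F_A\wedge\nabla_A\Phi\rangle=|\nabla_A\Phi|^2\,\mathrm{vol}_g$ and the Stokes-theorem proof that the degree vanishes when $Z(\Phi)=\emptyset$, both of which the paper states in one line.
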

\begin{proof}
	Since $(A,\Phi)$ satisfies \eqref{eq: monopole}, we have
	\[
	4\pi mk= \int_X\langle\nabla_A\Phi\wedge F_A\rangle = \mathcal{E}_X(A,\Phi)\geqslant 0. 
	\] By Theorem \ref{thm: integrality} we know that $k\in\mathbb{Z}$, therefore the above implies $k\in\mathbb{N}_0$. Now, again by Theorem \ref{thm: integrality}, $k$ is the degree of the restrictions $(\Phi/|\Phi|)|_{\Sigma_R}$ for large enough $R\gg 1$, so that if the integer $k>0$ then $\Phi$ contains at least one zero inside $B_R$.
\end{proof}

\section{Asymptotics of critical points of the Yang--Mills--Higgs energy}\label{sec: YMH}

In this section we study analytical properties of general critical points of the Yang--Mills--Higgs energy, aiming at proving the sharp asymptotic decay rates stated in Theorem \ref{thm: main_2}.

\subsection{$\varepsilon$-regularity and consequences}\label{subsec: e-reg}

In this section, $(X^3,g)$ denotes a complete oriented Riemannian $3$-manifold of bounded geometry, and $P\to X$ is a principal $G$-bundle, where $G$ is a compact Lie group. In this general context, we study some analytical properties of solutions $(A,\Phi)$ to the second order equations \eqref{eq:2nd_Order_Eq_1} and \eqref{eq:2nd_Order_Eq_2}.

We start computing important Bochner--Weitzenb\"ock formulas for the rough Laplacian of $F_A$, $\ast F_A$ and $\nabla_A\Phi$, and a consequent nonlinear estimate for the Laplacian of the \emph{energy density} 
\[
e=e(A,\Phi):=\frac{1}{2}\left(|F_A|^2+|\nabla_A\Phi|^2\right).
\] 
\begin{lemma}\label{lem: Bochner_estimate}
	Suppose that $(A,\Phi)\in\mathscr{A}(P)\times\Gamma(\mathfrak{g}_P)$ is a solution to the second order equations  \eqref{eq:2nd_Order_Eq_1} and \eqref{eq:2nd_Order_Eq_2}. Then:
	\begin{align}
		\nabla_A^{\ast}\nabla_A(\nabla_A\Phi) &= \mathcal{R}ic_g{\#}\nabla_A\Phi - 2\ast[\ast F_A,\nabla_A\Phi] + [[\nabla_A\Phi,\Phi],\Phi],\label{eq: rough_bochner_dphi}\\
		\nabla_A^{\ast}\nabla_A(\ast F_A) &= \mathcal{R}ic_g{\#}(\ast F_A) - \ast[\ast F_A, \ast F_A] - \ast[\nabla_A\Phi,\nabla_A\Phi] + [[\ast F_A,\Phi],\Phi],\label{eq: rough_bochner_astF}\\		
		\nabla_A^{\ast}\nabla_A(F_A) &= \mathcal{R}ic_g{\#} F_A + F_A\#_{\mathfrak{g}} F_A - [\nabla_A\Phi,\nabla_A\Phi] + [[F_A,\Phi],\Phi],\label{eq: rough_bochner_curv}
	\end{align} where in an orthonormal frame we have
	\begin{align}
	(\mathcal{R}ic_g{\#}\nabla_A\Phi)_i &= -\mathcal{R}_{ik}(\nabla_A\Phi)_k,\\
	(\mathcal{R}ic_g{\#}(\ast F_A))_i &= -\mathcal{R}_{ik}(\ast F_A)_k,\\
	\left(\mathcal{R}ic_g{\#} F_A\right)_{ij} &= -\mathcal{R}_{ik}F_{jk}+\mathcal{R}_{jk}F_{ik}-S_g F_{ij},\quad\text{and}\\
	\left(F_A\#_{\mathfrak{g}} F_A\right)_{ij} &= -2[F_{ik},F_{kj}].
	\end{align}
	
	Consequently, setting $\eta:=\ast F_A - \nabla_A\Phi$, the following hold:
	\begin{align}\label{eq: bochner_dphi}
		\frac{1}{2}\Delta\lvert\nabla_A\Phi\rvert^2 = &\text{ }\langle \mathcal{R}ic_g{\#}\nabla_A\Phi,\nabla_A\Phi\rangle -2\langle\ast[\ast F_A,\nabla_A\Phi],\nabla_A\Phi\rangle\\
		&- \lvert[\nabla_A\Phi,\Phi]\rvert^2 - \lvert\nabla_A^2\Phi\rvert^2, 
	\end{align}
	\begin{align}\label{eq: bochner_xi}
		\frac{1}{2}\Delta\lvert\eta\rvert^2 = &\text{ }\langle \mathcal{R}ic_g{\#}\eta,\eta\rangle -\langle\ast[\eta,\eta],\eta\rangle - \lvert[\eta,\Phi]\rvert^2 - \lvert\nabla_A\eta\rvert^2,
	\end{align}
	\begin{align}\label{eq: bochner_curv}
		\frac{1}{2}\Delta\lvert F_A\rvert^2 = &\text{ }\langle \mathcal{R}ic_g{\#} F_A,F_A\rangle -\langle [\nabla_A\Phi,\nabla_A\Phi], F_A\rangle\\ &-2\sum_{i,j,k}\langle[F_{ik},F_{kj}],F_{ij}\rangle - \lvert[F_A,\Phi]\rvert^2 - \lvert\nabla_A F_A\rvert^2,
	\end{align} where in an orthonormal frame we have
\begin{align}
	\langle \mathcal{R}ic_g{\#}\nabla_A\Phi,\nabla_A\Phi\rangle &= -\mathcal{R}_{ik}\langle (\nabla_A\Phi)_i,(\nabla_A\Phi)_k\rangle\label{eq: inn_prod_nablaPhi}\\
	&=: -\mathcal{R}ic_g(\nabla_A\Phi,\nabla_A\Phi),\\
	\langle \mathcal{R}ic_g{\#}\eta,\eta\rangle &= -\mathcal{R}_{ik}\langle \eta_i,\eta_k\rangle\label{eq: inn_prod_xi}\\
	&=:-\mathcal{R}ic_g(\eta,\eta)\quad\text{and}\\
	\langle \mathcal{R}ic_g{\#}F_A,F_A\rangle &= -\sum_{i,j}\sum_{k}\mathcal{R}_{ik}\left\langle F_{ij}, F_{jk}\right\rangle - S_g\left|F_A\right|^{2} \label{eq: inn_prod_F}\\
	&=-\mathcal{R}_{ik}\langle (\ast F_A)_{i},(\ast F_A)_{k}\rangle\\
	&=: -\mathcal{R}ic_g(\ast F_A,\ast F_A).
\end{align}
	
 	In particular, if $\mathcal{R}ic_g\geqslant -\kappa g$ for some constant $\kappa\geqslant 0$ then the energy density $e=e(A,\Phi)$ satisfies
	\begin{equation}\label{ineq: non_linear_estimate_e}
		\Delta e\leqslant 2\kappa e + a_{\mathfrak{g}}e^{3/2},
	\end{equation} for some constant $a_{\mathfrak{g}}>0$ depending only on the structure constants of $\mathfrak{g}$.
\end{lemma}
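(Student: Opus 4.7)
The plan is to derive the three rough Laplacian formulas from the standard Bochner--Weitzenb\"ock identities for $\mathfrak{g}_P$-valued forms, combined with the second order equations \eqref{eq:2nd_Order_Eq_1}--\eqref{eq:2nd_Order_Eq_2} and the Bianchi identity; then to obtain the identities for $\tfrac{1}{2}\Delta|\cdot|^2$ by taking inner products with the corresponding forms; and finally to sum them to deduce the pointwise estimate on $\Delta e$.

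Recall that for a $\mathfrak{g}_P$-valued $p$-form $\alpha$ there is a Bochner--Weitzenb\"ock identity of the form $\Delta_A\alpha = \nabla_A^*\nabla_A\alpha + R_g(\alpha) + \mathcal{C}_{F_A}(\alpha)$, where $R_g$ is the Weitzenb\"ock curvature endomorphism built from the Riemann tensor of $g$ (reducing to $\mathcal{R}ic_g\#\alpha$ on $1$-forms and, on a $3$-manifold, to the expression displayed in the statement for $2$-forms, since the Weyl tensor vanishes in dimension $3$ so that $\mathcal{R}_{ijkl}$ is entirely determined by $\mathcal{R}ic_g$ and $S_g$), and $\mathcal{C}_{F_A}(\alpha)$ denotes the natural commutator coupling of $F_A$ with the bundle-valued part of $\alpha$. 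For $\nabla_A\Phi$: equation \eqref{eq:2nd_Order_Eq_1} gives $d_A^*\nabla_A\Phi = \Delta_A\Phi = 0$, while $d_A\nabla_A\Phi = [F_A,\Phi]$ by definition, so $\Delta_A(\nabla_A\Phi) = d_A^*[F_A,\Phi]$, and a Leibniz expansion combined with \eqref{eq:2nd_Order_Eq_2} rewrites this as $[[\nabla_A\Phi,\Phi],\Phi] - \ast[\ast F_A,\nabla_A\Phi]$; together with the $1$-form Weitzenb\"ock identity this yields \eqref{eq: rough_bochner_dphi}, the coefficient $-2$ on $\ast[\ast F_A,\nabla_A\Phi]$ coming from the sum of the Weitzenb\"ock coupling term and the Leibniz contribution. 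For $F_A$: the Bianchi identity and \eqref{eq:2nd_Order_Eq_2} give $\Delta_A F_A = d_A[\nabla_A\Phi,\Phi] = [[F_A,\Phi],\Phi] - [\nabla_A\Phi,\nabla_A\Phi]$, and feeding this into the $2$-form Weitzenb\"ock identity produces \eqref{eq: rough_bochner_curv}. Formula \eqref{eq: rough_bochner_astF} then follows from \eqref{eq: rough_bochner_curv} by applying $\ast$ on both sides, since on $(X^3,g)$ the Hodge star commutes with both $\Delta_A$ and $\nabla_A^*\nabla_A$.

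To obtain \eqref{eq: bochner_dphi}, \eqref{eq: bochner_xi} and \eqref{eq: bochner_curv}, I will take inner products with the corresponding forms and apply the standard identity $\tfrac{1}{2}\Delta|\alpha|^2 = \langle\nabla_A^*\nabla_A\alpha,\alpha\rangle - |\nabla_A\alpha|^2$. The $\Phi$-commutator terms of the form $\langle[[\beta,\Phi],\Phi],\beta\rangle$ reassemble to $|[\beta,\Phi]|^2$ via the $\mathrm{Ad}$-invariance of the inner product on $\mathfrak{g}$, producing the manifestly negative pieces in each identity. For \eqref{eq: bochner_xi} I will subtract the formulas for $\nabla_A^*\nabla_A(\ast F_A)$ and $\nabla_A^*\nabla_A(\nabla_A\Phi)$, then use $\ast F_A = \eta + \nabla_A\Phi$ to collapse the various quadratic brackets in $F_A$ and $\nabla_A\Phi$ into the single expression $-\ast[\eta,\eta]$, together with the clean $-|[\eta,\Phi]|^2$ piece.

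Finally, adding half of \eqref{eq: bochner_dphi} to half of \eqref{eq: bochner_curv} produces $\Delta e$; under $\mathcal{R}ic_g \geqslant -\kappa g$ the Ricci terms are bounded by $\kappa(|F_A|^2 + |\nabla_A\Phi|^2) = 2\kappa e$, the trilinear terms are bounded by a constant depending only on the structure constants of $\mathfrak{g}$ times $|F_A|^3 + |F_A||\nabla_A\Phi|^2 \lesssim e^{3/2}$, and the remaining squared-norm terms $-|\nabla_A^2\Phi|^2$, $-|\nabla_A F_A|^2$, $-|[\nabla_A\Phi,\Phi]|^2$ and $-|[F_A,\Phi]|^2$ are nonpositive and may be discarded, yielding \eqref{ineq: non_linear_estimate_e}. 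The main obstacle, in the sense of the least mechanical step, is the careful bookkeeping of the $2$-form Weitzenb\"ock curvature endomorphism in dimension $3$ needed to verify that the contraction with the Riemann tensor collapses into the displayed expression $-\mathcal{R}_{ik}F_{jk} + \mathcal{R}_{jk}F_{ik} - S_g F_{ij}$; everything else is a direct algebraic manipulation once this and the standard Weitzenb\"ock identities are in place.
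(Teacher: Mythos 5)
Your proposal is correct and follows essentially the same route as the paper: compute $\Delta_A$ of $\nabla_A\Phi$ and $F_A$ from the second order equations and the Bianchi identity, feed these into the standard Weitzenb\"ock identities, use the vanishing of the Weyl tensor in dimension $3$ to collapse the $2$-form curvature term to the displayed Ricci expression, and then pair with the forms and sum. The only cosmetic difference is that you obtain \eqref{eq: rough_bochner_astF} by applying $\ast$ to \eqref{eq: rough_bochner_curv} rather than running the $1$-form Weitzenb\"ock identity on $\ast F_A$ directly as the paper does; both are valid and the equivalence is exactly the identity \eqref{eq: inn_prod_F}.
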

\begin{proof}
	Using equations \eqref{eq:2nd_Order_Eq_1} and \eqref{eq:2nd_Order_Eq_2}, we compute:
	\begin{align}
		\Delta_A(\nabla_A\Phi) &= {d}_A^{\ast}{d}_A(\nabla_A\Phi)\\
		&= {d}_A^{\ast}[F_A,\Phi]\\
		&=\ast{d}_A[\ast F_A,\Phi]\\
		&=\ast\left([{d}_A\ast F_A,\Phi] - [\ast F_A,\nabla_A\Phi]\right)\\
		&=[{d}_A^{\ast} F_A,\Phi] - \ast [\ast F_A,\nabla_A\Phi]\\
		&=[[\nabla_A\Phi,\Phi],\Phi]- \ast [\ast F_A,\nabla_A\Phi].
	\end{align} Moreover, by the Bianchi identity $d_A F_A=0$ and \eqref{eq:2nd_Order_Eq_2} one has
\[
\Delta_A F_A = d_A d_A^{\ast} F_A = d_A [\nabla_A\Phi,\Phi] = [[F_A,\Phi],\Phi] - [\nabla_A\Phi,\nabla_A\Phi]
\] and
\[
\Delta_A (\ast F_A) = d_A^{\ast}d_A(\ast F_A) = \ast d_A d_A^{\ast}F_A = \ast d_A [\nabla_A\Phi,\Phi] = [[\ast F_A,\Phi],\Phi] - \ast[\nabla_A\Phi,\nabla_A\Phi].
\] Combining these with the standard Bochner--Weitzenb\"ock formula relating $\Delta_A$ with the rough Laplacian $\nabla_A^{\ast}\nabla_A$ on $\mathfrak{g}_P$-valued $1$-forms (see \cite[$\S$4.1]{oliveira2021yang}),
\[
\nabla_A^{\ast}\nabla_A\theta = \mathcal{R}ic_g\#\theta - \ast[\ast F_A,\theta] + \Delta_A\theta,\quad\forall\theta\in\Omega^1(X,\mathfrak{g}_P),
\] we already get \eqref{eq: rough_bochner_dphi} and \eqref{eq: rough_bochner_astF}. Moreover, using the standard Bochner--Weitzenb\"ock formula for $\mathfrak{g}_P$-valued $2$-forms [ibid.] we get 
\begin{equation}\label{eq: rough_bochner_curv_interm}
	\nabla_A^{\ast}\nabla_A(F_A) = \mathcal{R}_g{\#} F_A + F_A\#_{\mathfrak{g}}F_A + [[F_A,\Phi],\Phi] - [\nabla_A\Phi,\nabla_A\Phi],
\end{equation} where
\begin{align}\label{eq: Rm_sharp_F}
	\left(\mathcal{R}_g\#F_A\right)_{ij}  =-\underbrace{\mathcal{R}_{iklk}F_{lj}}_{\left(I\right)}-\underbrace{\mathcal{R}_{iklj}F_{kl}}_{\left(II\right)}+\underbrace{\mathcal{R}_{jklk}F_{li}}_{\left(III\right)}+\underbrace{\mathcal{R}_{jkli}F_{kl}}_{\left(IV\right)}.
\end{align} Thus, in order to establish \eqref{eq: rough_bochner_curv} we are left to show that $\mathcal{R}_g{\#} F_A = \mathcal{R}ic_g{\#} F_A$. We shall do so by using the very important and well-known fact that the Weyl conformal curvature tensor vanishes identically in dimension 3. This allow us to recover the full curvature tensor $\mathcal{R}_{ijkl}$ from the Ricci curvature $\mathcal{R}_{ij}$ through the following formula (see \cite[Theorem 8.1]{hamilton1982three}):
\begin{equation}\label{eq: Riem_from_Ric}
\mathcal{R}_{ijkl}=g_{ik}\mathcal{R}_{jl}-g_{il}\mathcal{R}_{jk}+\mathcal{R}_{ik}g_{jl}-\mathcal{R}_{il}g_{jk}-\frac{1}{2}S_g\left(g_{ik}g_{jl}-g_{il}g_{jk}\right).
\end{equation} Using this, we can rewrite all the terms in equation \eqref{eq: Rm_sharp_F} in terms of the Ricci curvature. Note that we just need to compute $(I)$ and $(II)$, because by simply switching the roles of $i$ and $j$ we get $(III)$ from $(I)$, and $(IV)$ from $(II)$. Now
\begin{align}
	\left(I\right) & =\left\{ g_{il}\mathcal{R}_{kk}-g_{ik}\mathcal{R}_{kl}+\mathcal{R}_{il}g_{kk}-\mathcal{R}_{ik}g_{kl}-\frac{1}{2}S_g\left(g_{il}g_{kk}-g_{ik}g_{kl}\right)\right\} F_{lj}\\
	& =S_gF_{ij}-\mathcal{R}_{il}F_{lj}+3\mathcal{R}_{il}F_{lj}-\mathcal{R}_{il}F_{lj}-\frac{1}{2}S_g\left(3F_{ij}-F_{ij}\right)\\
	& =\mathcal{R}_{il}F_{lj}.
\end{align}
Moreover,
\begin{align}
	\left(II\right) & =\left\{ g_{il}\mathcal{R}_{kj}-g_{ij}\mathcal{R}_{kl}+\mathcal{R}_{il}g_{kj}-\mathcal{R}_{ij}g_{kl}-\frac{1}{2}S_g\left(g_{il}g_{kj}-g_{ij}g_{kl}\right)\right\} F_{kl}\\
	& =\mathcal{R}_{kj}F_{ki}-\mathcal{R}_{kl}F_{kl}g_{ij}+\mathcal{R}_{il}F_{jl}-\frac{1}{2}S_g F_{ji}\\
	& =\mathcal{R}_{ik}F_{jk}+\mathcal{R}_{jk}F_{ki}-\mathcal{R}_{kl}F_{kl}g_{ij}+\frac{1}{2}S_g F_{ij}.
\end{align} Thus, we also have
\begin{align}
	\left(III\right) & = \mathcal{R}_{jl}F_{li},\quad\text{and}\\
	\left(IV\right) &=-\mathcal{R}_{ik}F_{jk}-\mathcal{R}_{jk}F_{ki}-\mathcal{R}_{kl}F_{kl}g_{ij}-\frac{1}{2}S_g F_{ij}
\end{align} Therefore,
\begin{align}
	\left(\mathcal{R}_g\#F_A\right)_{ij} & = -(I) -(II) + (III) + (IV)\\
	& =-\mathcal{R}_{il}F_{lj}+\mathcal{R}_{jl}F_{li}-2\mathcal{R}_{ik}F_{jk}+2\mathcal{R}_{jk}F_{ik}-S_gF_{ij}\\
	& = -\mathcal{R}_{ik}F_{jk}+\mathcal{R}_{jk}F_{ik} -S_g F_{ij}\\
	& =\left(\mathcal{R}ic_g\#F_A\right)_{ij},
\end{align} as we wanted.

Equations \eqref{eq: bochner_dphi}, \eqref{eq: bochner_xi} and \eqref{eq: bochner_curv} then follow form the previous formulas by using the Ad$_G$-invariance of the metric $\langle\cdot,\cdot\rangle$ and noting that
\[
\frac{1}{2}\Delta|\xi|^2=\langle\nabla_A^{\ast}\nabla_A\xi,\xi\rangle - |\nabla_A\xi|^2,\quad\forall\xi\in\Omega^k(X,\mathfrak{g}_P).
\] Equations \eqref{eq: inn_prod_nablaPhi} and \eqref{eq: inn_prod_xi} are clear. We now derive equation \eqref{eq: inn_prod_F}:
\begin{align}
	\left\langle \mathcal{R}ic_g\#F_A,F_A\right\rangle  & =\sum_{i<j}\left\langle \left(\mathcal{R}ic_g\#F_A\right)_{ij},F_{ij}\right\rangle \\
	& =-\sum_{i<j}\sum_{k}\mathcal{R}_{ik}\left\langle F_{jk},F_{ij}\right\rangle +\sum_{i<j}\sum_{k}\mathcal{R}_{jk}\left\langle F_{ik},F_{ij}\right\rangle -S_g\sum_{i<j}\left\langle F_{ij},F_{ij}\right\rangle \\
	&= -\sum_{i<j}\sum_{k}\mathcal{R}_{ik}\left\langle F_{jk},F_{ij}\right\rangle +\sum_{j<i}\sum_{k}\mathcal{R}_{ik}\left\langle F_{jk},F_{ji}\right\rangle -S_g\left|F_A\right|^{2}\\
	&=-\sum_{i,j}\sum_{k}\mathcal{R}_{ik}\left\langle F_{ij},F_{jk}\right\rangle -S_g\left|F_A\right|^{2}\\
	&=-2\sum_{i<k}\sum_{j}\mathcal{R}_{ik}\langle F_{ij}, F_{jk}\rangle +\sum_{i}\mathcal{R}_{ii}\left(\sum_{j}|F_{ij}|^2\right) - S_g|F_A|^2\\
	&=-2\sum_{i<k}\mathcal{R}_{ik}\langle (\ast F_A)_{i},(\ast F_A)_{k}\rangle - \sum_{i}\mathcal{R}_{ii}|(\ast F_A)_i|^2\\	
	&=-\sum_{i,k} \mathcal{R}_{ik}\langle (\ast F_A)_{i},(\ast F_A)_{k}\rangle,
\end{align} where the penultimate equality can be seen by computing the Hodge star at the center of normal coordinates. Finally, under the assumption $\mathcal{R}ic_g\geqslant -\kappa g$ for some $\kappa\geqslant 0$, inequality \eqref{ineq: non_linear_estimate_e} follows from the following rough estimate of the sum of equations \eqref{eq: bochner_dphi} and \eqref{eq: bochner_curv}, using also \eqref{eq: inn_prod_nablaPhi} and \eqref{eq: inn_prod_F}:
\begin{align}
\Delta e &\leqslant -\mathcal{R}ic_g(\nabla_A\Phi,\nabla_A\Phi) -2\langle\ast[\ast F_A,\nabla_A\Phi],\nabla_A\Phi\rangle \\
&\quad -\mathcal{R}ic_g(\ast F_A,\ast F_A) -\langle [\nabla_A\Phi,\nabla_A\Phi], F_A\rangle-2\sum_{i,j,k}\langle[F_{ik},F_{kj}],F_{ij}\rangle\\
&\leqslant 2\kappa e + a_{\mathfrak{g}}'|F_A||\nabla_A\Phi|^2 + a_{\mathfrak{g}}''|F_A|^3\\
&\leqslant 2\kappa e + a_{\mathfrak{g}}e^{3/2}, 
\end{align} where $a_{\mathfrak{g}},a_{\mathfrak{g}}',a_{\mathfrak{g}}''>0$ denote constants depending only on the structure constants of $\mathfrak{g}$.
\end{proof}

From the nonlinear estimate \eqref{ineq: non_linear_estimate_e} on the Laplacian of the energy density, a standard application of the so-called `Heinz trick' (\emph{cf.} \cite[Appendix B]{hohloch2009hypercontact} and \cite[Appendix A]{walpuski2017compactness}), using the mean value inequality of Proposition \ref{prop: MV_ineqs} \ref{itm: MV_iii}, implies the following $\varepsilon$-regularity result\footnote{For Yang--Mills connections, such an $\varepsilon$-regularity result goes back to the works of Uhlenbeck \cite[Theorem 3.5]{uhlenbeck1982removable} and Nakajima \cite[Lemma 3.1]{nakajima1988compactness}. As for general Yang--Mills--Higgs configurations, \emph{i.e.} solutions of \eqref{eq:2nd_Order_Eq_1} and \eqref{eq:2nd_Order_Eq_2}, see \cite[Theorem B]{afuni2019regularity}. See also \cite[Theorem 3.1]{tian2005bach} for another $\varepsilon$-regularity result outside the context of gauge theory.}. For any measurable subset $U\subset X$, we write
\[
\mathcal{E}_U(A,\Phi):=\|e\|_{L^1(U)}=\frac{1}{2}\left(\|F_A\|_{L^2(U)}^2+\|\nabla_A\Phi\|_{L^2(U)}^2\right).
\] 
\begin{theorem}\label{thm: e-regularity}
	Let $(X^3,g)$ be an oriented Riemannian $3$-manifold of bounded geometry and $P\to X$ be a principal $G$-bundle, where $G$ is a compact Lie group. Then there exist constants $\varepsilon_0,C_0>0$ and $r_0\in (0,2^{-1}\mathrm{inj}(X))$ with the following significance. Let $(A,\Phi)\in\mathscr{A}(P)\times\Gamma(\mathfrak{g}_P)$ be a solution of \eqref{eq:2nd_Order_Eq_1} and \eqref{eq:2nd_Order_Eq_2}. If $x\in X$ and $r\in (0,r_0]$ are such that
	\begin{equation}\label{ineq: smallness_condition}
		\varepsilon:= r\mathcal{E}_{B(x,r)}(A,\Phi)<\varepsilon_0,
	\end{equation} then
	\begin{equation}\label{ineq: e-regularity}
		\sup_{B(x,r/2)} e(A,\Phi) \leqslant C_0 r^{-4}\varepsilon.
	\end{equation} 
\end{theorem}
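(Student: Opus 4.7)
The plan is to prove Theorem~\ref{thm: e-regularity} by a standard point-picking (``Heinz trick'') argument, combining the nonlinear Bochner estimate $\Delta e\leqslant 2\kappa e+a_{\mathfrak{g}}\,e^{3/2}$ from Lemma~\ref{lem: Bochner_estimate} with a single application of the linear mean value inequality Proposition~\ref{prop: MV_ineqs}~\ref{itm: MV_i}. The bounded geometry of $(X,g)$ supplies a global constant $\kappa\geqslant 0$ with $\mathcal{R}ic_g\geqslant-\kappa g$, a positive injectivity radius, and, by standard comparison, a constant $c_{\mathrm{vol}}>0$ with $V(y,s)\geqslant c_{\mathrm{vol}}s^{3}$ for all $y\in X$ and all $s$ up to some fixed $r_{0}\in(0,2^{-1}\mathrm{inj}(X))$ chosen small enough that also $\kappa r_{0}^{2}\leqslant 1$; the parameters $\varepsilon_{0}$ and $C_{0}$ are fixed at the end.

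Given $x\in X$ and $r\in(0,r_{0}]$ with $\varepsilon=r\,\mathcal{E}_{B(x,r)}(A,\Phi)<\varepsilon_{0}$, I apply the Heinz trick to
\[
F(y):=(r-d(x,y))^{4}\,e(y)\qquad\text{on }\overline{B(x,r)},
\]
which vanishes on the boundary and hence attains its maximum at some interior point $y_{0}$. Writing $\rho_{0}:=r-d(x,y_{0})$ and $e_{0}:=e(y_{0})$, the maximality inequality $F\leqslant F(y_{0})$ combined with $r-d(x,\cdot)\geqslant\rho_{0}/2$ on $B(y_{0},\rho_{0}/2)$ gives $e\leqslant 16\,e_{0}$ on that ball. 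Substituting into the Bochner estimate yields the \emph{linear} inequality
\[
\Delta e\leqslant\lambda\,e,\qquad\lambda:=2\kappa+4\,a_{\mathfrak{g}}\,e_{0}^{1/2},
\]
valid on $B(y_{0},\rho_{0}/2)$.

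I then apply Proposition~\ref{prop: MV_ineqs}~\ref{itm: MV_i} with $p=1$ on the ball $B(y_{0},2\tau)$, where
\[
\tau:=\min\bigl\{\rho_{0}/4,\;\lambda^{-1/2}\bigr\}
\]
is calibrated so that $\lambda\tau^{2}\leqslant 1$ and $\tau\sqrt{\kappa}$ is bounded, making the resulting constant universal. Combined with $V(y_{0},\tau)\geqslant c_{\mathrm{vol}}\tau^{3}$ and the trivial bound $\int_{B(y_{0},\tau)}e\leqslant\varepsilon/r$, this yields the master estimate $e_{0}\leqslant\tilde{C}\tau^{-3}\varepsilon/r$. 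If $\tau=\rho_{0}/4$, then $\rho_{0}^{4}e_{0}\lesssim\rho_{0}\,\varepsilon/r\leqslant\varepsilon$. If instead $\tau=\lambda^{-1/2}$, then (after absorbing $2\kappa$, since the alternative $e_{0}^{1/2}\lesssim\kappa$ already gives $e_{0}$ bounded by a universal constant) one gets $e_{0}\lesssim e_{0}^{3/4}(\varepsilon/r)$, hence $e_{0}\lesssim(\varepsilon/r)^{4}$, so $\rho_{0}^{4}e_{0}\leqslant r^{4}e_{0}\lesssim\varepsilon^{4}\leqslant\varepsilon_{0}^{3}\,\varepsilon$. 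Choosing $\varepsilon_{0}$ small enough that the latter is bounded by $C_{0}\varepsilon/16$ uniformly, both cases produce $F(y_{0})\leqslant C_{0}\varepsilon/16$; evaluating $F$ at any $y\in B(x,r/2)$, where $(r-d(x,y))^{4}\geqslant(r/2)^{4}$, then gives exactly \eqref{ineq: e-regularity}.

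The main obstacle, and the reason the smallness hypothesis $\varepsilon<\varepsilon_{0}$ is used, lies in the second case of the dichotomy: there the scaling $\tau^{-3}\sim e_{0}^{3/4}$ almost cancels the linear power of $e_{0}$ on the left, producing the borderline relation $e_{0}\lesssim e_{0}^{3/4}(\varepsilon/r)$. It closes only because the Yang--Mills--Higgs nonlinearity has the subcritical exponent $3/2$---the critical exponent in dimension~$3$ being~$2$---and because the loss of a power of $\varepsilon$ is compensated by choosing $\varepsilon_{0}$ small; all other dependencies on $\kappa$, $\mathrm{inj}(X)$ and $a_{\mathfrak{g}}$ are absorbed harmlessly into the universal constants $\varepsilon_{0}$, $C_{0}$, $r_{0}$.
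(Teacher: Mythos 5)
Your proposal is correct and is essentially the argument the paper invokes: the Heinz point-picking trick applied to $(r-d(x,\cdot))^4 e$, the linearization $\Delta e\leqslant\lambda e$ with $\lambda=2\kappa+4a_{\mathfrak g}e_0^{1/2}$ on the half-ball where $e\leqslant 16e_0$, and a single local mean value inequality at scale $\tau=\min\{\rho_0/4,\lambda^{-1/2}\}$; the paper routes this through Proposition \ref{prop: MV_ineqs} \ref{itm: MV_iii} (constant right-hand side, after rescaling) rather than your \ref{itm: MV_i}, but the two are interchangeable here. The one point to tighten is your parenthetical in the second case: a universal bound on $e_0$ when $e_0^{1/2}\lesssim\kappa$ does \emph{not} by itself give $F(y_0)\lesssim\varepsilon$; the correct observation is that this alternative is vacuous in that branch, since $\tau=\lambda^{-1/2}\leqslant\rho_0/4$ forces $\lambda\geqslant 16\rho_0^{-2}\geqslant 16r_0^{-2}\geqslant 16\kappa$ (using $\kappa r_0^2\leqslant 1$), so $2\kappa\leqslant\lambda/8$ and $\lambda\lesssim a_{\mathfrak g}e_0^{1/2}$ automatically, after which your borderline inequality $e_0\lesssim e_0^{3/4}\varepsilon/r$ closes exactly as you describe.
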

\begin{remark}
	The constants $\varepsilon_0$, $C_0$ and $r_0$ depend only on $\text{inj}(X)$, $\|\mathcal{R}_g\|_{L^{\infty}(X)}$ and the Sobolev constant\footnote{Since $(X^3,g)$ has bounded geometry, it follows from \cite[Lemma 2.24]{aubin2013some} that there is $0<r_0<\frac{1}{2}\text{inj}(X)$ small enough depending only on $\|\mathcal{R}_g\|_{L^{\infty}(X)}$, and a constant $C_{r_0}$ depending only on $r_0$ and $\|\mathcal{R}_g\|_{L^{\infty}(X)}$, such that $(X^3,g)$ satisfies the local $L^2$-Sobolev inequality
		\[
		\|df\|_{L^2(B(x,r))}^2\geqslant C_{r_0}\|f\|_{L^{6}(B(x,r))}^2, 
		\] for all $f\in C_c^{\infty}(B(x,r))$, $x\in X$ and $0<r\leqslant r_0$. By \cite[Lemma 20.11]{li2012geometric} this implies that there is a constant $c>0$, depending only on $C_{r_0}$, such that $V(x,r)\geqslant cr^3$, for all $0<r\leqslant r_0$. This lower bound estimate on the volume of balls is used combined with Proposition \ref{prop: MV_ineqs} \ref{itm: MV_iii} in the proof of the $\varepsilon$-regularity.}, and $\varepsilon_0$ depends furthermore on the structure constants of the Lie algebra $\mathfrak{g}$ of $G$ (through the constant $a_{\mathfrak{g}}$ appearing in front of the nonlinear term $e^{3/2}$ in \eqref{ineq: non_linear_estimate_e}). 
\end{remark}
\begin{corollary}\label{cor: e-reg_decay}
	Let $(X^3,g)$ be a noncompact oriented Riemannian $3$-manifold of bounded geometry, and let $P\to X$ be a principal $G$-bundle, where $G$ is a compact Lie group. Let $(A,\Phi)\in\mathscr{C}(P)$ be a solution of \eqref{eq:2nd_Order_Eq_1} and \eqref{eq:2nd_Order_Eq_2}. Then $|\nabla_A^j F_A|$ and $|\nabla_A^{j+1}\Phi|$ decay uniformly to zero at infinity for all $j\in\mathbb{N}_0$. Furthermore, the energy density $e\in L^p(X)$ for all $p\in[1,\infty]$; in fact, there is $C_{A,\Phi}>0$ depending only on $(A,\Phi)$, $(X^3,g)$ and $G$, such that $\|e\|_{L^p(X)}\leqslant C_{A,\Phi}\|e\|_{L^1(X)}$ for all $p\in (1,\infty]$.
\end{corollary}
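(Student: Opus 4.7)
The plan is to combine the $\varepsilon$-regularity result (Theorem \ref{thm: e-regularity}) with a smallness-at-infinity argument, interpolation, and a standard elliptic bootstrap. First I would show $e:=e(A,\Phi)$ decays uniformly to zero at infinity. Fix a basepoint $o\in X$ and let $\varepsilon_0,C_0,r_0$ be the constants from Theorem \ref{thm: e-regularity}. Since $(A,\Phi)\in\mathscr{C}(P)$ means $e\in L^1(X)$, the absolute continuity of the integral gives, for any $\delta>0$, a radius $R_\delta>0$ with $\mathcal{E}_{X\setminus B(o,R_\delta)}(A,\Phi)<\delta$. Choosing $\delta:=\varepsilon_0/(2r_0)$, for every $x\in X$ with $d(x,o)>R_\delta+r_0$ the ball $B(x,r_0)$ lies in $X\setminus B(o,R_\delta)$, and hence $r_0\mathcal{E}_{B(x,r_0)}(A,\Phi)<\varepsilon_0/2<\varepsilon_0$. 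Theorem \ref{thm: e-regularity} then gives
\[
e(x)\leqslant\sup_{B(x,r_0/2)}e\leqslant C_0 r_0^{-3}\mathcal{E}_{B(x,r_0)}(A,\Phi),
\]
and the right-hand side tends to $0$ as $d(x,o)\to\infty$. In particular $|F_A|$ and $|\nabla_A\Phi|$ decay uniformly to $0$ at infinity, and combined with smoothness of $(A,\Phi)$ on the precompact region $\overline{B(o,R_\delta+r_0)}$ this yields $\|e\|_{L^\infty(X)}<\infty$.

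The $L^p$-bounds for $p\in(1,\infty]$ now follow by interpolation. Since $e\in L^1(X)\cap L^\infty(X)$,
\[
\|e\|_{L^p(X)}^{p}=\int_X e\cdot e^{p-1}\leqslant\|e\|_{L^\infty(X)}^{p-1}\|e\|_{L^1(X)},
\]
so $\|e\|_{L^p(X)}\leqslant\|e\|_{L^\infty(X)}^{1-1/p}\|e\|_{L^1(X)}^{1/p}$. Assuming $\|e\|_{L^1(X)}>0$ (otherwise $e\equiv 0$ and there is nothing to prove), this can be rewritten as $\|e\|_{L^p(X)}\leqslant C_{A,\Phi}\|e\|_{L^1(X)}$ with $C_{A,\Phi}:=\max\bigl(1,\|e\|_{L^\infty(X)}/\|e\|_{L^1(X)}\bigr)$ independent of $p\in(1,\infty]$.

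Finally, uniform decay of all higher gauge-invariant covariant derivatives $|\nabla_A^{j}F_A|$ and $|\nabla_A^{j+1}\Phi|$ is obtained by the standard bootstrap in gauge theory. For any $x\in X$ with $d(x,o)$ large, $\mathcal{E}_{B(x,r_0)}(A,\Phi)$ is small, and Uhlenbeck's local gauge theorem supplies a trivialization of $P$ over $B(x,r_0)$ in which the connection 1-form is in Coulomb gauge with small $W^{1,2}$-norm. In such a gauge the second order system \eqref{eq:2nd_Order_Eq_1}--\eqref{eq:2nd_Order_Eq_2} becomes a nonlinear elliptic system in the pair $(A,\Phi)$, to which iterated interior Schauder estimates apply on concentric balls. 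The resulting $C^k$-bounds on the gauge-fixed components of $(A,\Phi)$ translate into $L^\infty$-bounds on the gauge-invariant quantities $|\nabla_A^{j}F_A|$ and $|\nabla_A^{j+1}\Phi|$ that are dominated by a function of $\mathcal{E}_{B(x,r_0)}(A,\Phi)$ tending to zero as this local energy does. The bounded geometry hypothesis ensures that the elliptic constants involved are uniform along the end.

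The principal content of the corollary is the zeroth-order decay, which is the direct consequence of $\varepsilon$-regularity, and the main technical point in the higher-order bootstrap is merely the uniformity of the gauge-fixing constants, standard under bounded geometry.
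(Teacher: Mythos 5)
Your proof is correct and follows essentially the same strategy as the paper's: tail-energy smallness plus the $\varepsilon$-regularity of Theorem \ref{thm: e-regularity} for the uniform decay of $e$, Uhlenbeck's Coulomb gauge and elliptic bootstrapping for the higher covariant derivatives, and $L^1$--$L^\infty$ interpolation for the $L^p$ bounds. The only cosmetic differences are that the paper organizes the decay argument via a uniformly locally finite cover by small balls rather than annuli around a basepoint, and extracts the $L^\infty$ bound by applying $\varepsilon$-regularity at the maximum point of $e$ (yielding a slightly more explicit constant), neither of which changes the substance.
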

\begin{proof}
	Let $\varepsilon_0, C_0$ and $r_0$ be the constants given by Theorem \ref{thm: e-regularity}. Since $(X,g)$ is of bounded geometry, we can cover $X$ with a countable collection of geodesic balls $\{B(x_i,{s})\}_{i=1}^{\infty}$ of radius $s:=\frac{1}{8}r_0$, with a uniform bound on the number of balls containing any point of $X$ and the half-radius balls pairwise disjoint (cf. \cite[Lemma 1.1]{hebey2000nonlinear}). Then since $e\in L^1(X)$ it follows that for each $\delta>0$ there exists $N_{\delta}\in\mathbb{N}$ so that up to removing a finite number of balls one has
	\[
	C_0 s^{-3}\|e\|_{L^1(B(x_i,4s))}<\delta,\quad\forall i>N_{\delta}.
	\] Thus, by Theorem \ref{thm: e-regularity}, we conclude that for any $\delta\in (0,4^{-1}s^{-4}C_0\varepsilon_0]$,
	\[
	\|e\|_{L^{\infty}(B(x_i,s))}<\delta,\quad\forall i>N_{\delta}.
	\] Thus $e$ decays uniformly to zero at infinity. In fact, by taking $\delta$ small enough one can make $\|F_A\|_{L^2 (B(x_i,4s))}$ to be smaller than Uhlenbeck's constant given by \cite{uhlenbeck1982connections}*{Theorem~1.3} and hence we can find a Coulomb gauge over $B(x_i,4s)$ for all sufficiently large $i$, in which the second order equations \eqref{eq:2nd_Order_Eq_1} and \eqref{eq:2nd_Order_Eq_2} become an elliptic system and standard elliptic estimates apply, implying the decay of both $|\nabla_A^j F_A|$ and $|\nabla_A^{j+1}\Phi|$ for all $j\in\mathbb{N}$.
	
	Note that, in particular, $e$ attains its maximum in $X$ and therefore $e\in L^1(X)\cap L^{\infty}(X)\subseteq L^p(X)$ for all $p\in [1,\infty)$. In fact, if $x_{\ast}\in X$ is such that $e(x_{\ast})=\|e\|_{L^{\infty}(X)}$ then by choosing $r_{\ast}\in (0,r_0]$ small enough such that $r_{\ast}\|e\|_{L^1(B(x_{\ast},{r_{\ast}}))}<\varepsilon_0$, then by \eqref{ineq: e-regularity} one has
	\[
	\|e\|_{L^{\infty}(X)}\leqslant C_0 r_{\ast}^{-3}\|e\|_{L^1(X)},
	\] and in particular, for all $p\in (1,\infty)$ one has
	\[
	\|e\|_{L^p(X)} \leqslant \|e\|_{L^{\infty}(X)}^{(p-1)/p}\|e\|_{L^1(X)}^{1/p}\leqslant \left(C_0 r_{\ast}^{-3}\right)^{(p-1)/p}\|e\|_{L^1(X)}\leqslant\max\{C_0 r_{\ast}^{-3}, 1\}\|e\|_{L^1(X)}.
	\] This completes the proof.
\end{proof}

We can also combine Corollary \ref{cor: e-reg_decay} with Theorem \ref{thm: alternative_finite_mass} to get a finite mass theorem for critical points of the energy on a wide class of complete nonparabolic $3$-manifolds:
\begin{corollary}\label{cor: finite_mass_for_crit_pts}
	Let $(X^3,g)$ be a complete nonparabolic Riemannian $3$-manifold of bounded geometry, satisfying the equivalent conditions of Theorem \ref{thm: PHI}. Let $P$ be a principal $G$-bundle over $X$, where $G$ is a compact Lie group. Suppose that $(A,\Phi)\in\mathscr{C}(P)$ is a solution to the second order equations \eqref{eq:2nd_Order_Eq_1} and \eqref{eq:2nd_Order_Eq_2}. Then $(A,\Phi)$ has finite mass $m$, where $m\in [0,\infty)$ is given by \eqref{eq: def_w}-\eqref{eq: alt_characterization_m}.
\end{corollary}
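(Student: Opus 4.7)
The plan is to derive this as a direct consequence of the two main ingredients already established in this subsection, namely the general finite mass criterion of Theorem \ref{thm: alternative_finite_mass} and the integrability gain for critical points provided by Corollary \ref{cor: e-reg_decay}. The statement we want follows by checking that the hypotheses of Theorem \ref{thm: alternative_finite_mass} are met by $(A,\Phi)$ in the present $3$-dimensional setting.

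First I would observe that, since $n=3$, the integrability assumption of Theorem \ref{thm: alternative_finite_mass} on $\nabla_A\Phi$ reads $\nabla_A\Phi\in L^2(X)\cap L^{4}(X)$. The first containment is immediate from $(A,\Phi)\in\mathscr{C}(P)$. The second is where Corollary \ref{cor: e-reg_decay} enters: because $(X^3,g)$ has bounded geometry and $(A,\Phi)$ solves \eqref{eq:2nd_Order_Eq_1}--\eqref{eq:2nd_Order_Eq_2} with finite energy, we conclude that the energy density $e=e(A,\Phi)$ lies in $L^p(X)$ for every $p\in[1,\infty]$; in particular $|\nabla_A\Phi|^2\in L^2(X)$, which is exactly $\nabla_A\Phi\in L^4(X)$. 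The hypotheses of Theorem \ref{thm: alternative_finite_mass} are then fully satisfied: $(X^3,g)$ is complete, nonparabolic, of dimension $n=3\geqslant 3$, satisfies (PHI) by assumption, and $(A,\Phi)$ satisfies \eqref{eq:2nd_Order_Eq_1} with $\nabla_A\Phi\in L^2(X)\cap L^4(X)$.

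Applying Theorem \ref{thm: alternative_finite_mass}, we obtain a nonnegative number $m\in[0,\infty)$ and a smooth nonnegative solution
\[
w(x)=2\int_X G(x,\cdot)|\nabla_A\Phi|^2
\]
of $\Delta w=2|\nabla_A\Phi|^2$ which decays uniformly to zero at infinity and satisfies $w=m^2-|\Phi|^2$. This immediately yields the uniform convergence $|\Phi|\to m$ at infinity, \emph{i.e.} the finite mass property \eqref{eq: finite_mass}, and identifies $m$ via \eqref{eq: def_w}--\eqref{eq: alt_characterization_m}, completing the proof.

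There is no real obstacle here: the only point requiring attention is the verification that bounded geometry plus the critical point equations upgrade the a priori $L^2$-integrability of $\nabla_A\Phi$ to $L^4$, and this is exactly the content of the $\varepsilon$-regularity-based Corollary \ref{cor: e-reg_decay}. Everything else is bookkeeping that matches the hypotheses of Theorem \ref{thm: alternative_finite_mass} to those of the corollary.
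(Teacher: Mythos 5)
Your proposal is correct and follows exactly the paper's own argument: use Corollary \ref{cor: e-reg_decay} to upgrade the integrability of $\nabla_A\Phi$ to $L^{2(n-1)}(X)=L^4(X)$ (via $e\in L^p(X)$ for all $p$), then apply Theorem \ref{thm: alternative_finite_mass}. Nothing is missing.
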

\begin{proof}
	By Corollary \ref{cor: e-reg_decay} it follows that $\nabla_A\Phi\in L^p(X)$ for all $p\geqslant 2$, and so we can apply Theorem \ref{thm: alternative_finite_mass} to get the desired result.
\end{proof}
\begin{remark}
	When restricted to AC $3$-manifolds or to complete $3$-manifolds with nonnegative Ricci curvature and maximal volume growth, Corollary \ref{cor: finite_mass_for_crit_pts} is clearly weaker than the combination of Theorems \ref{thm: finite_mass} and \ref{thm: alternative_finite_mass}, since the latter do not need to assume that equation \eqref{eq:2nd_Order_Eq_2} holds and their combined conclusion yield not only the characterization of the mass as in \eqref{eq: def_w}-\eqref{eq: alt_characterization_m} but also as the number $m(|\Phi|)$ given by Lemma \ref{lem: mass}; see Remark \ref{rmk: alt_finite_mass}.
\end{remark}
\begin{remark}
	Continue the hypotheses of Corollary \ref{cor: finite_mass_for_crit_pts}; \emph{e.g.}, one can suppose that $(X^3,g)$ is an AC oriented $3$-manifold or, alternatively, that $(X^3,g)$ is nonparabolic with nonnegative Ricci curvature and bounded geometry. We consider $(A,\Phi)\in\mathscr{C}(P)$ a critical point of the energy on a principal $G$-bundle $P\to X$.
	
	We observe that if $\nabla_A\Phi\neq 0$ then $|\nabla_A\Phi|\notin L^{p}(X)$ for any $p\in (0,1]$. By Corollary \ref{cor: e-reg_decay}, it suffices to show this claim for $p=1$. In order to do so, define the $2$-form $\gamma:=\langle\Phi,\ast\nabla_A\Phi\rangle\in\Omega^2(X)$ and observe that from \eqref{eq:2nd_Order_Eq_1} we have $d\gamma = |\nabla_A\Phi|^2\ast 1$. In particular, since $(A,\Phi)$ has finite energy, we have that $|d\gamma|\in L^1(X)$. Moreover, by Corollary \ref{cor: finite_mass_for_crit_pts} and Remark \ref{rmk: finite_mass}, we know that $(A,\Phi)$ has finite mass $m>0$ and therefore $|\gamma|\leqslant m|\nabla_A\Phi|$ on $X$. Thus if $|\nabla_A\Phi|\in L^1(X)$ then we also have $|\gamma|\in L^1(X)$ and therefore, by the generalized Stokes' theorem of Gaffney \cite{Gaffney54}, we get
	\[
	0 = \int_X d\gamma = \|\nabla_A\Phi\|_{L^2(X)}^2,
	\] contradicting our assumption $\nabla_A\Phi\neq 0$.
	
	Another related observation is that if $|\Phi|\neq 0$ then $|\Phi|\notin L^p(X)$ for any $p\in (0,\infty)$. Indeed, since $|\Phi|^2$ is a nonnegative subharmonic function (cf. \eqref{eq: subharmonic}), if $|\Phi|\in L^p(X)$ for some $p>2$ then it follows from a classical result of Yau \cite{yau1976some} that $|\Phi|$ must be constant, and since $X$ has infinite volume we get a contradiction. As for $p\in (0,2]$, note that by Corollary \ref{cor: finite_mass_for_crit_pts} and Remark \ref{rmk: finite_mass} one has $\|\Phi\|_{L^{\infty}(X)}\leqslant m<\infty$ and therefore if $|\Phi|\in L^p(X)$ then $\|\Phi\|_{L^3(X)}^3\leqslant m^{3-p}\|\Phi\|_{L^{p}(X)}^p<\infty$, contradicting the fact that $|\Phi|\notin L^3(X)$.  
\end{remark}

\subsection{Exponential decay of transverse components along the end}\label{subsec: exp_decay}
In this section, we prove the first two assertions of our second main theorem stated in the introduction, \emph{i.e.} parts \ref{itm: i_main_2} and \ref{itm: ii_main_2} of Theorem \ref{thm: main_2}.

From now on we shall restrict ourselves to principal $G$-bundles $P\to X$ with structure group $G=\rm SU(2)$. We start with some very useful Bochner--Weitzenb\"ock inequalities that one can derive by using Lemma \ref{lem: Bochner_estimate} together with the decomposition \eqref{eq: adjoint_decomp} of the adjoint bundle $\mathfrak{su}(2)_P$ outside the zero locus of a Higgs field. 
\begin{lemma}\label{lem: ref_Bochner}
	Let $(X^3,g)$ be an oriented Riemannian $3$-manifold of bounded geometry and let $P\to X$ be a principal $\rm SU(2)$-bundle. Let $(A,\Phi)\in\mathscr{A}(P)\times\Gamma(\mathfrak{su}(2)_P)$ be a solution to the second order equations \eqref{eq:2nd_Order_Eq_1} and \eqref{eq:2nd_Order_Eq_2}. Then outside the zero locus of $\Phi$ the following pointwise inequalities hold:
	\begin{align}\label{ineq: ref_Bochner_nablaPhi}
		\frac{1}{2} \Delta |\nabla_A \Phi|^2& + \lvert\nabla_A^2\Phi\rvert^2 + |\Phi|^2|(\nabla_A \Phi)^\perp|^2\\
		&\lesssim |\mathcal{R}ic_g||\nabla_A\Phi|^2 + |(F_A)^{||}| |(\nabla_A \Phi)^\perp|^2 + |(F_A)^{\perp}| |(\nabla_A \Phi)^{||}| |(\nabla_A \Phi)^\perp|,
	\end{align}
	\begin{align}\label{ineq: ref_Bochner_F}
		\frac{1}{2} \Delta |F_A|^2& + \lvert\nabla_A F_A\rvert^2 + |\Phi|^2|(F_A)^\perp|^2\\
		&\lesssim |\mathcal{R}ic_g||F_A|^2 + |(F_A)^{||}| |(\nabla_A \Phi)^\perp|^2 + |(F_A)^{\perp}| |(\nabla_A \Phi)^{||}||(\nabla_A \Phi)^\perp|\\
		&\quad\quad + |(F_A)^{\perp}|^2 |(F_A)^{||}|,
	\end{align}
	\begin{align}\label{ineq: ref_Bochner_transv_nablaPhi}
		\frac{1}{2} \Delta |[\nabla_A \Phi, &\Phi]|^2 + |\nabla_A[\nabla_A\Phi,\Phi]|^2  + |\Phi|^2|[\nabla_A \Phi, \Phi]|^2\\
		&\lesssim ( |\mathcal{R}ic_g| + |(F_A)^{||}|) |[\nabla_A \Phi, \Phi]|^2 + |(\nabla_A \Phi)^{||}||[F_A,\Phi]||[\nabla_A \Phi, \Phi]|,
	\end{align} and
	\begin{align}\label{ineq: ref_Bochner_transv_F}
		\frac{1}{2} \Delta &|[F_{A}, \Phi]|^2 + |\nabla_A[F_A,\Phi]|^2 + |\Phi|^2|[F_A, \Phi]|^2\\
		&\lesssim (|\mathcal{R}ic_g| + |F_A| + |\Phi|^{-2} |\nabla_A \Phi| ) \ |[F_A, \Phi]|^2 \\
		&\quad + \left(|(\nabla_A \Phi)^{||}| + |\Phi|^{-2} |F_A| |(\nabla_A \Phi)^{||}| + |\Phi|^{-1}|(\nabla_A F_A)^{||}| \right) |[\nabla_A \Phi, \Phi]| |[F_A, \Phi]| \\
		&\quad + |\Phi|^{-1} | \nabla_A[F_A, \Phi] | | (\nabla_A\Phi)^{||}| | [F_A, \Phi] |.
	\end{align}
\end{lemma}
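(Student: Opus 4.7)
The plan is to prove each of the four pointwise inequalities by combining the Bochner--Weitzenb\"ock identities of Lemma~\ref{lem: Bochner_estimate} with the algebraic structure of the decomposition $\mathfrak{su}(2)_P|_V = E^{||}\oplus E^\perp$ from \eqref{eq: adjoint_decomp}, which is available outside $Z(\Phi)$. The essential algebraic ingredients are: the identity $|[\Phi,\xi]| = |\Phi||\xi^\perp|$ (equality in \eqref{ineq:eigen_ad}); the bracket inclusions \eqref{eq: decomp_Lie}; the inner-product decomposition \eqref{eq: inner_prod_decomp}; and the $\mathfrak{su}(2)$ triple-bracket rule $[a,[b,c]] = b\langle a,c\rangle - c\langle a,b\rangle$. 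Iterating the latter immediately gives the key identity
\[
[[[\sigma,\Phi],\Phi],\Phi] \;=\; -|\Phi|^2\,[\sigma,\Phi],\qquad\forall\,\sigma\in\Gamma(\mathfrak{su}(2)_P),
\]
which will be responsible for the ``mass-like'' terms $|\Phi|^2|[\nabla_A\Phi,\Phi]|^2$ and $|\Phi|^2|[F_A,\Phi]|^2$ in the last two inequalities.

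For \eqref{ineq: ref_Bochner_nablaPhi} and \eqref{ineq: ref_Bochner_F} I start from \eqref{eq: bochner_dphi} and \eqref{eq: bochner_curv}. In each case the quartic squares $|[\nabla_A\Phi,\Phi]|^2 = |\Phi|^2|(\nabla_A\Phi)^\perp|^2$ and $|[F_A,\Phi]|^2 = |\Phi|^2|(F_A)^\perp|^2$ are transferred to the left-hand side, the Ricci term is bounded by $|\mathcal{R}ic_g|$ times the relevant square, and the remaining trilinear brackets $\langle\ast[\ast F_A,\nabla_A\Phi],\nabla_A\Phi\rangle$, $\langle[\nabla_A\Phi,\nabla_A\Phi],F_A\rangle$ and $\sum\langle[F_{ik},F_{kj}],F_{ij}\rangle$ are expanded by splitting every factor into its $(\cdot)^{||}$ and $(\cdot)^\perp$ components and invoking \eqref{eq: inner_prod_decomp}. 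The inclusions \eqref{eq: decomp_Lie} force only three configurations to contribute, and Cauchy--Schwarz produces exactly the $|(F_A)^{||}||(\nabla_A\Phi)^\perp|^2$, $|(F_A)^\perp||(\nabla_A\Phi)^{||}||(\nabla_A\Phi)^\perp|$ and (for the curvature self-interaction) $|(F_A)^\perp|^2|(F_A)^{||}|$ terms listed.

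For \eqref{ineq: ref_Bochner_transv_nablaPhi} and \eqref{ineq: ref_Bochner_transv_F} I apply the general Leibniz identity
\[
\nabla_A^\ast\nabla_A[f,g] \;=\; [\nabla_A^\ast\nabla_A f,\,g] + [f,\,\nabla_A^\ast\nabla_A g] - 2\sum_i[\nabla_{A,i}f,\nabla_{A,i}g]
\]
to $(f,g)=(\nabla_A\Phi,\Phi)$ and $(f,g)=(F_A,\Phi)$. The hypothesis $\Delta_A\Phi = 0$ kills the middle term in each case, while \eqref{eq: rough_bochner_dphi}, resp.\ \eqref{eq: rough_bochner_curv}, supplies the first. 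Taking the inner product with $[\nabla_A\Phi,\Phi]$, resp.\ $[F_A,\Phi]$, and using $\tfrac{1}{2}\Delta|\theta|^2 = \langle\nabla_A^\ast\nabla_A\theta,\theta\rangle - |\nabla_A\theta|^2$, the iterated self-interaction $\langle[[[\,\cdot\,,\Phi],\Phi],\Phi],[\,\cdot\,,\Phi]\rangle$ collapses via the displayed identity into $+|\Phi|^2|[\nabla_A\Phi,\Phi]|^2$, resp.\ $+|\Phi|^2|[F_A,\Phi]|^2$, on the left. The Ricci and bracket contributions that remain are then estimated exactly as in the previous paragraph, yielding the $|\mathcal{R}ic_g||[\,\cdot\,,\Phi]|^2$, $|(F_A)^{||}||[\,\cdot\,,\Phi]|^2$, and mixed terms.

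The main technical obstacle is controlling the trace term $\sum_i[\nabla_{A,i}f,\nabla_{A,i}\Phi]$. For $f=\nabla_A\Phi$ this simplifies remarkably: since $[\nabla_{A,i}\nabla_{A,j}\Phi,\nabla_{A,j}\Phi] = \tfrac{1}{2}\nabla_{A,i}[\nabla_{A,j}\Phi,\nabla_{A,j}\Phi] = 0$ pointwise, combining with $\Delta_A\Phi = 0$ and the commutation $\nabla_{A,j}\nabla_{A,i}\Phi - \nabla_{A,i}\nabla_{A,j}\Phi = [F_{A,ji},\Phi]$ yields
\[
\sum_j[\nabla_{A,j}(\nabla_A\Phi)_i,\nabla_{A,j}\Phi] \;=\; \sum_j[[F_{A,ji},\Phi],\nabla_{A,j}\Phi].
\]
Pairing against $[\nabla_A\Phi,\Phi]$ and using the $\mathfrak{su}(2)$ triple-bracket rule together with the transversality of $[\nabla_A\Phi,\Phi]$, the surviving quantity is bounded by $|(\nabla_A\Phi)^{||}||[F_A,\Phi]||[\nabla_A\Phi,\Phi]|$, giving the final term of \eqref{ineq: ref_Bochner_transv_nablaPhi}. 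For $f=F_A$ the parallel computation uses the Bianchi identity $d_AF_A=0$ and the Yang--Mills--Higgs equation $d_A^\ast F_A = [\nabla_A\Phi,\Phi]$ to trade second derivatives of $F_A$ for first derivatives of $\nabla_A\Phi$ and of $[F_A,\Phi]$; this is precisely the source of the extra factors $|\Phi|^{-1}|(\nabla_A F_A)^{||}|$ and $|\Phi|^{-1}|\nabla_A[F_A,\Phi]|$ on the right-hand side of \eqref{ineq: ref_Bochner_transv_F}, with any residual second-derivative piece absorbed into $|\nabla_A[F_A,\Phi]|^2$ on the left via Young's inequality.
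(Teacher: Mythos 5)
Your treatment of \eqref{ineq: ref_Bochner_nablaPhi} and \eqref{ineq: ref_Bochner_F} is correct and is exactly the paper's route: transfer $|[\nabla_A\Phi,\Phi]|^2\geqslant|\Phi|^2|(\nabla_A\Phi)^\perp|^2$ to the left of \eqref{eq: bochner_dphi}--\eqref{eq: bochner_curv} and expand the trilinear brackets via \eqref{eq: decomp_Lie} and \eqref{eq: inner_prod_decomp}. For the last two inequalities the Leibniz identity for $\nabla_A^\ast\nabla_A[f,\Phi]$ together with the collapse $[[[\sigma,\Phi],\Phi],\Phi]=-|\Phi|^2[\sigma,\Phi]$ is also the natural framework (the paper itself defers these details to the cited Lemmas 5.2--5.3 of the $\mathrm{G}_2$ paper), and your identities up to that point check out.

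The proof of \eqref{ineq: ref_Bochner_transv_nablaPhi} breaks, however, at the treatment of the cross term. The claimed identity $[\nabla_{A,i}\nabla_{A,j}\Phi,\nabla_{A,j}\Phi]=\tfrac12\nabla_{A,i}[\nabla_{A,j}\Phi,\nabla_{A,j}\Phi]=0$ is false: the Lie bracket is \emph{antisymmetric}, so the Leibniz expansion reads $\nabla_{A,i}[u,u]=[\nabla_{A,i}u,u]+[u,\nabla_{A,i}u]=[\nabla_{A,i}u,u]-[\nabla_{A,i}u,u]=0$ identically, and no information about $[\nabla_{A,i}u,u]$ can be extracted (the "$\tfrac12\nabla_{A,i}$" trick is valid only for a symmetric pairing such as $\langle\cdot,\cdot\rangle$). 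Consequently your reduction of $\sum_j[\nabla_{A,j}(\nabla_A\Phi)_i,\nabla_{A,j}\Phi]$ to $\sum_j[[F_{A,ji},\Phi],\nabla_{A,j}\Phi]$ is invalid: the symmetric-Hessian part $\sum_j[\nabla_{A,i}\nabla_{A,j}\Phi,\nabla_{A,j}\Phi]$ survives, and after pairing with the transverse section $[\nabla_{A,i}\Phi,\Phi]$ and applying the triple-bracket rule it produces genuine second-order quantities (contractions of $\nabla_A^2\Phi$ against $\Phi$ and against $\nabla_A\Phi$) which simply do not occur on the right-hand side of \eqref{ineq: ref_Bochner_transv_nablaPhi}. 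Nor is there any slack to absorb them: since $\tfrac12\Delta|\psi|^2=\langle\nabla_A^\ast\nabla_A\psi,\psi\rangle-|\nabla_A\psi|^2$, the term $|\nabla_A[\nabla_A\Phi,\Phi]|^2$ appearing with coefficient exactly one on the left cancels identically, so the inequality to be proved is $\langle\nabla_A^\ast\nabla_A\psi,\psi\rangle+|\Phi|^2|\psi|^2\lesssim(\ldots)$ with no spare gradient term for a Young-type absorption. The same observation undercuts your closing sentence on \eqref{ineq: ref_Bochner_transv_F}, where you propose to absorb residual second-derivative pieces into $|\nabla_A[F_A,\Phi]|^2$ "on the left": there is nothing to absorb into, and indeed the statement of \eqref{ineq: ref_Bochner_transv_F} keeps the offending terms $|\Phi|^{-1}|\nabla_A[F_A,\Phi]||(\nabla_A\Phi)^{||}||[F_A,\Phi]|$ and $|\Phi|^{-1}|(\nabla_AF_A)^{||}||[\nabla_A\Phi,\Phi]||[F_A,\Phi]|$ explicitly on the right for precisely this reason. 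Since no analogous terms are available on the right of \eqref{ineq: ref_Bochner_transv_nablaPhi}, the cross term there requires a genuinely different argument, which is the actual content of the cited external proof and is missing from yours.
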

\begin{proof}
	The first two inequalities \eqref{ineq: ref_Bochner_nablaPhi} and \eqref{ineq: ref_Bochner_F} follows directly from Lemma \ref{lem: Bochner_estimate} using the decomposition \eqref{eq: adjoint_decomp}, inequality \eqref{ineq:eigen_ad} and equation \eqref{eq: inner_prod_decomp}. 
	
	The proof of inequality \eqref{ineq: ref_Bochner_transv_nablaPhi} proceeds precisely in the same way as in the proof of \cite[Lemma 5.2]{fadel2020asymptotic}, except for the fact that in the present case one must of course keep the term involving the Ricci curvature whenever the Bochner formula for $\mathfrak{g}_P$-valued $1$-forms is used, as it does not necessarily vanish (note that in the case treated in \cite[Lemma 5.2]{fadel2020asymptotic} one has $\mathcal{R}ic_g=0$ because in that case the metric $g$ has $\mathrm{G}_2$-holonomy). 
	
	As for inequality \eqref{ineq: ref_Bochner_transv_F}, the proof is the same as in \cite[Lemma 5.3]{fadel2020asymptotic}, except that in the present case we can use the more refined Bochner--Weitzenb\"ock formula for $F_A$ given by \eqref{eq: rough_bochner_curv} so that in the end only the norm of the Ricci curvature appears in the final inequality, instead of the norm of the whole curvature tensor as in \cite[Lemma 5.3]{fadel2020asymptotic}. (Alternatively, one can use the standard Bochner--Weitzenb\"ock formula for $\mathfrak{g}_P$-valued $2$-forms and then in the end use the fact that in the present case $|\mathcal{R}_g|\sim|\mathcal{R}ic_g|$ by \eqref{eq: Riem_from_Ric}).
\end{proof}
As a consequence, we get the following Bochner inequalities along the end of an AC oriented $3$-manifold for critical points of the $\rm SU(2)$ Yang--Mills--Higgs energy:
\begin{corollary}\label{cor: bochner_along_the_end}
	Let $(X^3,g)$ be an AC oriented $3$-manifold with only one end and let $P\to X$ be a principal $\rm SU(2)$-bundle. Suppose that $(A,\Phi)\in\mathscr{C}(P)$ is a solution to the second order equations \eqref{eq:2nd_Order_Eq_1} and \eqref{eq:2nd_Order_Eq_2}. Denote by $m$ the mass of $(A,\Phi)$ given by Theorem \ref{thm: finite_mass} and suppose $m>0$. Then there is $R_0\gg 1$ such that for $\rho\geqslant R_0$, writing $e=e(A,\Phi)$ and $\Xi:=\left([\nabla_{A}\Phi,\Phi],[F_A,\Phi]\right)\in\Omega^1\oplus\Omega^2(X,\mathfrak{su}(2)_P)$, we have:
	\begin{equation}\label{ineq: ref_Bochner}
		\Delta e + \lvert\nabla_{A}^2\Phi\rvert^2 + \lvert\nabla_{A}F_A\rvert^2\lesssim |\mathcal{R}ic_g|e,
	\end{equation}
	and
	\begin{equation}\label{ineq: ref_Bochner_transv}
		\Delta |\Xi|^2  \leqslant -\frac{m^2}{4} |\Xi|^2.
	\end{equation}
\end{corollary}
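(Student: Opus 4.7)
\textbf{Proof proposal for Corollary \ref{cor: bochner_along_the_end}.} The plan is to derive both inequalities from the refined Bochner estimates of Lemma \ref{lem: ref_Bochner} by using that, along the end, all of the coefficient quantities appearing on their right--hand sides become arbitrarily small while $|\Phi|$ is bounded below by $m/2$. More precisely, by Theorem \ref{thm: finite_mass} together with Remark \ref{rmk: finite_mass}, $|\Phi|\to m>0$ uniformly so one can choose $R_0\gg 1$ with $|\Phi|\geqslant m/2$ on $\{\rho\geqslant R_0\}$; by Corollary \ref{cor: e-reg_decay} applied to the critical point $(A,\Phi)$ on the bounded-geometry manifold $(X^3,g)$, both $|F_A|$ and $|\nabla_A\Phi|$ (and hence their longitudinal and transverse components, which are pointwise bounded by $|F_A|$ and $|\nabla_A\Phi|$ respectively) decay uniformly to zero; and by the geometry of $(X^3,g)$ we have $|\mathcal{R}ic_g|=O(\rho^{-2})$, so $|\mathcal{R}ic_g|$ is also arbitrarily small after enlarging $R_0$.

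For \eqref{ineq: ref_Bochner}, I would sum the refined Bochner inequalities \eqref{ineq: ref_Bochner_nablaPhi} and \eqref{ineq: ref_Bochner_F}. The mixed cross terms on the right-hand side are all of the form (small factor)$\cdot$(transverse quantity squared) or products of transverse quantities controlled by Young's inequality; concretely, terms such as $|(F_A)^{||}|\,|(\nabla_A\Phi)^\perp|^2$, $|(F_A)^\perp|\,|(\nabla_A\Phi)^{||}|\,|(\nabla_A\Phi)^\perp|$ and $|(F_A)^\perp|^2|(F_A)^{||}|$ can, after enlarging $R_0$ so that the longitudinal factors are smaller than $\tfrac{m^2}{16}$ (say), be absorbed into the positive lower-order term $|\Phi|^2\bigl(|(F_A)^\perp|^2+|(\nabla_A\Phi)^\perp|^2\bigr)\geqslant \tfrac{m^2}{4}\bigl(|(F_A)^\perp|^2+|(\nabla_A\Phi)^\perp|^2\bigr)$ on the left-hand side. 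Dropping this (now nonnegative) leftover yields \eqref{ineq: ref_Bochner}.

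For \eqref{ineq: ref_Bochner_transv}, I would add the refined inequalities \eqref{ineq: ref_Bochner_transv_nablaPhi} and \eqref{ineq: ref_Bochner_transv_F}, keeping on the left-hand side both the nonnegative gradient term $|\nabla_A[\nabla_A\Phi,\Phi]|^2+|\nabla_A[F_A,\Phi]|^2$ and the dominant mass term $|\Phi|^2|\Xi|^2$. The ``bad'' term $|\Phi|^{-1}|\nabla_A[F_A,\Phi]|\,|(\nabla_A\Phi)^{||}|\,|[F_A,\Phi]|$ from \eqref{ineq: ref_Bochner_transv_F} is split by Young's inequality as $\tfrac12|\nabla_A[F_A,\Phi]|^2+\tfrac12|\Phi|^{-2}|(\nabla_A\Phi)^{||}|^2|[F_A,\Phi]|^2$, absorbing the first half into the left-hand side; all remaining right-hand side contributions carry coefficients that are $O(|\mathcal{R}ic_g|+|F_A|+|\nabla_A\Phi|)$, hence arbitrarily small for $\rho\geqslant R_0$, and can be bounded by $\eta|\Xi|^2$ for any prescribed $\eta>0$. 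Since simultaneously $2|\Phi|^2\geqslant 2m^2(1-\eta)$ on the left-hand side, choosing $\eta$ small enough (and $R_0$ accordingly large) gives $\Delta|\Xi|^2\leqslant -(2m^2-C\eta)|\Xi|^2\leqslant -\tfrac{m^2}{4}|\Xi|^2$, as desired.

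The only delicate step is the bookkeeping in the second inequality: one must simultaneously absorb a coupling term involving $|\nabla_A[F_A,\Phi]|$, control a cross term that mixes $[\nabla_A\Phi,\Phi]$ and $[F_A,\Phi]$, and still retain a strictly negative coefficient in front of $|\Xi|^2$ that is explicit enough to yield $m^2/4$. The coupling is handled precisely because the $\varepsilon$-regularity gives uniform smallness of the relevant coefficients on the end, while the lower bound $|\Phi|\geqslant m/2$ keeps the ``mass-like'' term dominant.
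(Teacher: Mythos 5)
Your proposal is correct and follows essentially the same route as the paper: sum the refined Bochner inequalities of Lemma \ref{lem: ref_Bochner}, use the lower bound $|\Phi|\geqslant m/2$ from finite mass together with the uniform decay of $|F_A|$, $|\nabla_A F_A|$, $|\nabla_A\Phi|$ and $|\mathcal{R}ic_g|$ to absorb the cross terms into $|\Phi|^2(|(F_A)^\perp|^2+|(\nabla_A\Phi)^\perp|^2)$, and handle the coupling term $|\Phi|^{-1}|\nabla_A[F_A,\Phi]||(\nabla_A\Phi)^{||}||[F_A,\Phi]|$ by Young's inequality, absorbing one half into the gradient term. The bookkeeping you describe (including the explicit $m^2/4$ constant) matches the paper's argument.
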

\begin{proof}
	We start observing that, since $(A,\Phi)$ has finite mass $m>0$, it follows from Remark \ref{rmk: finite_mass} that for $R_0\gg 1$ we have
	\begin{equation}\label{ineq: lb_Phi}
	|\Phi|\geqslant\frac{m}{2}>0,\quad\text{ for $\rho\geqslant R_0$}.
	\end{equation} Next, note that for $\rho\geqslant R_0$,
	\[
	|(F_A)^{\perp}| |(\nabla_A \Phi)^{||}| |(\nabla_A \Phi)^\perp|\leqslant \frac{1}{2}|(\nabla_A \Phi)^{||}|\left(|(F_A)^{\perp}|^2 + |(\nabla_A \Phi)^\perp|^2\right).
	\] Thus, summing the inequalities \eqref{ineq: ref_Bochner_nablaPhi} and \eqref{ineq: ref_Bochner_F} given by Lemma \ref{lem: ref_Bochner}, and combining with the above observations, there is $c>0$ such that for $\rho\geqslant R_0$ we have
	\begin{align}
	\Delta e + |\nabla_A^2\Phi|^2 + |\nabla_A F_A|^2 &\leqslant \left(-\frac{m^2}{4} + c|(F_A)^{||}| + c|(\nabla_A\Phi)^{||}|\right)\left(|(F_A)^{\perp}|^2 + |(\nabla_A \Phi)^\perp|^2\right)\nonumber\\
	&\quad\quad + c|\mathcal{R}ic_g|e.\label{ineq: general_ref_Bochner}
	\end{align}
	Now, by Corollary \ref{cor: e-reg_decay}, the functions $|F_A|$ and $|\nabla_A\Phi|$ decay along the end, so that by taking $R_0\gg 1$ sufficiently large, we can arrange that for $\rho\geqslant R_0$ we have
	\[
	-\frac{m^2}{4} + c|(F_A)^{||}| + c|(\nabla_A\Phi)^{||}|\leqslant -\frac{m^2}{8}\leqslant 0,
	\] which combined with \eqref{ineq: general_ref_Bochner} immediately implies that the desired inequality \eqref{ineq: ref_Bochner} holds for $\rho\geqslant R_0$.
	
	The proof of inequality \eqref{ineq: ref_Bochner_transv} is similar, but with an additional step. We begin summing the inequalities \eqref{ineq: ref_Bochner_transv_nablaPhi} and \eqref{ineq: ref_Bochner_transv_F} from Lemma \ref{lem: ref_Bochner}, and combining this with the lower bound \eqref{ineq: lb_Phi}, and the simple fact
	\[
	|[F_A,\Phi]||[\nabla_A\Phi,\Phi]|\leqslant\frac{1}{2}\left(|[F_A,\Phi]|^2+|[\nabla_A\Phi,\Phi]|^2\right)=\frac{1}{2}|\Xi|^2,
	\] to get that there is $c>0$ such that for $\rho\geqslant R_0$
	\begin{align}
		\frac{1}{2}\Delta|\Xi|^2 &\leqslant -|\nabla_A\Xi|^2 + c\frac{2}{m}|\nabla_A[F_A,\Phi]||(\nabla_A\Phi)^{||}||[F_A,\Phi]|-\frac{m^2}{4}|\Xi|^2\nonumber\\
		&\text{ }+ c\left( |\mathcal{R}ic_g|+|F_A|+\frac{4}{m^2}|\nabla_A\Phi| + |(\nabla_A\Phi)^{||}| + \frac{4}{m^2}|F_A||(\nabla_A\Phi)^{||}| +\frac{2}{m}|(\nabla_A F_A)^{||}| \right)|\Xi|^2.\label{ineq: general_ref_Bochner_transv}
	\end{align}
	Now, on the one hand, by Corollary \ref{cor: e-reg_decay}, the functions $|F_A|$, $|\nabla_A F_A|$ and $|\nabla_A\Phi|$ decay uniformly to zero along the end, and since the metric $g$ is AC, one also has that $|\mathcal{R}ic_g|=O(\rho^{-2})$ decays (see Appendix \ref{app: A}). Thus, by taking $R_0\gg 1$ sufficiently large, we can make the whole term inside the parenthesis in the second line of the right-hand side of inequality \eqref{ineq: general_ref_Bochner_transv} to be less than, say, $\frac{1}{c}\frac{m^2}{16}$ for $\rho\geqslant R_0$. On the other hand, by Young's inequality,
	\begin{align}
	c\frac{2}{m}|\nabla_A[F_A,\Phi]||(\nabla_A\Phi)^{||}||[F_A,\Phi]| &\leqslant |\nabla_A[F_A,\Phi]|^2 + \frac{c^2}{m^2}|(\nabla_A\Phi)^{||}|^2|[F_A,\Phi]|^2\\
	&\leqslant |\nabla_A\Xi|^2 + \frac{c^2}{m^2}|(\nabla_A\Phi)^{||}|^2|\Xi|^2,
	\end{align}  and again since $|\nabla_A\Phi|$ decays, by taking $R_0\gg 1$, we have $\frac{c^2}{m^2}|(\nabla_A\Phi)^{||}|^2\leqslant\frac{m^2}{16}$ for $\rho\geqslant R_0$. In conclusion, combining these facts with inequality \eqref{ineq: general_ref_Bochner_transv}, we get the desired inequality \eqref{ineq: ref_Bochner_transv} for $\rho\geqslant R_0$.
\end{proof}
An immediate consequence of inequality \eqref{ineq: ref_Bochner} of Corollary \ref{cor: bochner_along_the_end} is the following first (non-optimal) decay rate result for the energy density.
\begin{lemma}\label{lem: rough_estimate}
	Continue the hypotheses of Corollary \ref{cor: bochner_along_the_end}. Then $e=e(A,\Phi)\in C_{-3}^{0}(X)$.
\end{lemma}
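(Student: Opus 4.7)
The plan is to combine the Bochner inequality \eqref{ineq: ref_Bochner} along the end with the general decay result of Lemma \ref{lem: decay} \ref{itm: decay_i}, leveraging the quadratic decay of the Ricci curvature and the $L^1$-integrability of the energy density.

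First I would fix $R_0 \gg 1$ as given by Corollary \ref{cor: bochner_along_the_end}, so that inequality \eqref{ineq: ref_Bochner} holds on $X \setminus B_{R_0}$. Dropping the nonnegative terms $|\nabla_A^2 \Phi|^2 + |\nabla_A F_A|^2$ on the left-hand side yields the pointwise differential inequality
\[
\Delta e \lesssim |\mathcal{R}ic_g|\, e \quad \text{on } X \setminus B_{R_0}.
\]
Next, since $(X^3,g)$ is AC, the Ricci tensor satisfies $|\mathcal{R}ic_g| = O(\rho^{-2})$ along the end (see Appendix \ref{app: A}). Fixing a reference point $o\in K$ and using that $\rho \sim \rho_o$ on $X$, one obtains a constant $\Lambda \geqslant 0$ such that
\[
\Delta e \leqslant \Lambda\, \rho_o^{-2}\, e \quad \text{on } X \setminus B(o,R'),
\]
for some $R' > 1$ (enlarging $R_0$ if necessary to absorb the compact transition region).

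The key point is that $e \in L^1(X)$ by the finite energy hypothesis $(A,\Phi) \in \mathscr{C}(P)$, and that AC $3$-manifolds with only one end satisfy the assumptions of Lemma \ref{lem: decay} with $l = n = 3$ (nonnegative Ricci lower bound of the form $-(n-1)K\rho_o^{-2}g$ together with $V(x,r) \gtrsim r^3$, both guaranteed by Corollary \ref{cor: AC_nonparabolic} and Appendix \ref{app: A}). Applying Lemma \ref{lem: decay} \ref{itm: decay_i} with $p=1$ and $u = e$ therefore yields
\[
e(x) \leqslant C\,(\rho_o(x) - R')^{-3}\, \|e\|_{L^1(X)} \quad \text{for all } x \in X \setminus B(o,R'),
\]
so $e = O(\rho^{-3})$ uniformly at infinity. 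Finally, since $e$ is smooth and $X$ has bounded geometry, $e$ is bounded on the compact region $\overline{B_{2R'}}$ by Corollary \ref{cor: e-reg_decay}, and $\rho$ is bounded below by a positive constant on $X$ by definition of the radius function; hence $\rho^{3} e$ is bounded on all of $X$, which is exactly the statement $e \in C^0_{-3}(X)$. The only step requiring any care is the passage from the end to the whole manifold, which is routine given the $\varepsilon$-regularity already established.
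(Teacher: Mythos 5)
Your proposal is correct and follows essentially the same route as the paper: drop the nonnegative gradient terms in \eqref{ineq: ref_Bochner} to get $\Delta e \lesssim \rho^{-2} e$ along the end, then apply Lemma \ref{lem: decay} \ref{itm: decay_i} with $p=1$ and $l=3$ using $e \in L^1(X)$. The extra remarks about handling the compact region are fine but routine, exactly as you note.
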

\begin{proof}
	Let $R_0$ be large enough so that inequality \eqref{ineq: ref_Bochner} holds on $\rho\geqslant R_0$. Then, since $|\mathcal{R}ic_g|=O(\rho^{-2})$, we have
	\[
	\Delta e \leqslant c\rho^{-2} e\quad\text{on } X\setminus B_{R_0}.
	\] Therefore, using that $e\in L^1(X)$ we can apply Lemma \ref{lem: decay} \ref{itm: decay_i} to get the desired result. 
\end{proof}
We now use the other inequality \eqref{ineq: ref_Bochner_transv} of Corollary \ref{cor: bochner_along_the_end} to deduce the exponential decay of the $\Phi$-transverse components of $F_A$ and $\nabla_A\Phi$. 
\begin{theorem}\label{thm: exp_decay_transv}
	Let $(X^3,g)$ be an AC oriented $3$-manifold with only one end and let $P\to X$ be a principal $\rm SU(2)$-bundle. Suppose that $(A,\Phi)\in\mathscr{C}(P)$ is a solution to the second order equations \eqref{eq:2nd_Order_Eq_1} and \eqref{eq:2nd_Order_Eq_2}. Denote by $m$ the mass of $(A,\Phi)$, given by Theorem \ref{thm: finite_mass}, and suppose that $m>0$. Then there is $R_0\gg 1$ such that for $\rho\geqslant R_0$,
	\begin{equation}
		|[\nabla_{A} \Phi,\Phi]|^2 + |[F_A,\Phi]|^2\lesssim m^2 \|e(A,\Phi)\|_{L^{\infty}(\Sigma_{R_0})}\cdot{} e^{-cm(\rho-R_0)}
	\end{equation} and, in particular,
	\begin{align}
		|(\nabla_{A} \Phi)^\perp|^2 + |F_A^\perp|^2 \lesssim \|e(A,\Phi)\|_{L^{\infty}(\Sigma_{R_0})} \cdot{} e^{-cm(\rho-R_0)}.  
	\end{align}
\end{theorem}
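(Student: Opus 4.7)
The plan is to combine the pointwise Bochner inequality \eqref{ineq: ref_Bochner_transv} provided by Corollary \ref{cor: bochner_along_the_end} with an explicit radial exponential barrier and a comparison-principle argument on the end, and then pass to the transverse components via \eqref{ineq:eigen_ad}.

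First, setting $v:=|\Xi|^2=|[\nabla_A\Phi,\Phi]|^2+|[F_A,\Phi]|^2$, Corollary \ref{cor: bochner_along_the_end} yields an $R_0\gg 1$ with $\Delta v\leqslant -\tfrac{m^2}{4}v$ on $\{\rho\geqslant R_0\}$. Next, I would fix any $c\in(0,\tfrac{1}{2})$ and introduce the barrier $h(x):=C\,e^{-cm(\rho(x)-R_0)}$, computing $\Delta h$ directly along the end: the AC asymptotics give $|\nabla\rho|^2=1+O(\rho^{-\nu})$ and $\Delta\rho=-\tfrac{2}{\rho}+O(\rho^{-1-\nu})$ (computed on the model cone $C(\Sigma)$ and perturbed by the AC decay of $g-g_C$), so that
\[
\Delta h \,=\, h\Bigl(\tfrac{2cm}{\rho}-c^2m^2\Bigr)\bigl(1+O(\rho^{-\min(1,\nu)})\bigr).
\]
Because $c^2<\tfrac{1}{4}$, enlarging $R_0$ if needed secures $\Delta h\geqslant -\tfrac{m^2}{4}h$ on $\{\rho\geqslant R_0\}$.

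I would then take $C:=\sup_{\Sigma_{R_0}} v$. Since $|\Xi|^2\lesssim |\Phi|^2\,e(A,\Phi)\leqslant m^2 e(A,\Phi)$ (by Ad-invariance of the inner product, plus $|\Phi|\leqslant m$ from Remark \ref{rmk: finite_mass}), this constant is bounded by a universal multiple of $m^2\|e(A,\Phi)\|_{L^\infty(\Sigma_{R_0})}$, delivering the claimed form of the bound. Setting $\phi:=v-h$, the two differential inequalities combine into $\Delta\phi\leqslant -\tfrac{m^2}{4}\phi$ on $\{\rho\geqslant R_0\}$, and by construction $\phi\leqslant 0$ on $\Sigma_{R_0}$. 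Moreover $h\to 0$ by construction and $v\to 0$ at infinity by Lemma \ref{lem: rough_estimate} (or Corollary \ref{cor: e-reg_decay}), so $\phi\to 0$ at infinity. If $\sup\phi>0$ were to occur, this supremum would be attained at some interior point $x_\ast$ with $\rho(x_\ast)>R_0$; at $x_\ast$ the geometer's-sign maximum principle forces $\Delta\phi(x_\ast)\geqslant 0$, contradicting $\Delta\phi(x_\ast)\leqslant -\tfrac{m^2}{4}\phi(x_\ast)<0$. Hence $v\leqslant h$ on $\{\rho\geqslant R_0\}$, which is the sought-after exponential estimate for $|\Xi|^2$.

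The conclusion for $(\nabla_A\Phi)^\perp$ and $F_A^\perp$ then follows immediately from \eqref{ineq:eigen_ad} applied fiberwise together with the lower bound $|\Phi|\geqslant m/2$ on $\{\rho\geqslant R_0\}$ (Remark \ref{rmk: finite_mass}), yielding $|(\nabla_A\Phi)^\perp|^2\leqslant \tfrac{4}{m^2}|[\nabla_A\Phi,\Phi]|^2$ and analogously for $F_A^\perp$, which absorbs the factor $m^2$ appearing in the bound on $|\Xi|^2$. The main subtlety will be the barrier computation: the AC perturbation of the cone Laplacian must be controlled so that $\Delta h+\tfrac{m^2}{4}h\geqslant 0$ for large $\rho$, which is precisely why $c$ must be chosen strictly below $\tfrac12$ rather than at the natural threshold; once this barrier is in hand, the comparison argument above is routine.
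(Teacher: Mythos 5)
Your proposal is correct and follows essentially the same route as the paper: the Bochner inequality $\Delta|\Xi|^2\leqslant-\tfrac{m^2}{4}|\Xi|^2$ from Corollary \ref{cor: bochner_along_the_end}, an exponential radial barrier whose Laplacian is controlled via the AC asymptotics $|d\rho|^2=1+O(\rho^{-\nu})$ and $-\Delta\rho=2\rho^{-1}+O(\rho^{-1-\nu})$, a comparison/maximum-principle argument on $\{\rho\geqslant R_0\}$ using the decay of $|\Xi|$ at infinity, and finally \eqref{ineq:eigen_ad} with $|\Phi|\geqslant m/2$ to pass to the transverse components. The only cosmetic difference is that you normalize the barrier as $Ce^{-cm(\rho-R_0)}$ and make the choice $c<\tfrac12$ and the interior-maximum contradiction explicit, whereas the paper writes $Me^{-cm\rho}$ with the factor $e^{cmR_0}$ absorbed into $M$.
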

\begin{proof}
	Write $\Xi:=([\nabla_{A}\Phi,\Phi],[F_A,\Phi])$ and take $R_0\gg 1$ so that we have \eqref{ineq: ref_Bochner_transv}: 
	\begin{equation}
		\Delta|\Xi|^2 \leqslant -\frac{m^2}{4}|\Xi|^2\quad\text{on }X\setminus B_{R_0}.
	\end{equation}
	Now define a function $w=w(\rho)$ given by
	\begin{equation}\label{eq: def_w_M}
		w:= M e^{-cm\rho}\quad\text{with }M:=c'm^2 \|e(A,\Phi)\|_{L^{\infty}(\Sigma_{R_0})}\cdot{} e^{cm R_0}, 
	\end{equation} for constants $c,c'>0$ to be chosen later. Then
	\[
	\Delta w = (-c^2m^2|d\rho|^2-cm\Delta\rho)w.
	\] Since $(X^3,g)$ is AC, one has $|d\rho|^2 = 1+O(\rho^{-\nu})$ and $-\Delta\rho = 2\rho^{-1} + O(\rho^{-1-\nu})$ for some $\nu>0$. So taking $R_0\gg 1$, we may choose a suitable $c>0$, depending only on the geometry of $(X^3,g)$, so that
	\[
	\Delta w \geqslant -\frac{m^2}{4} w\quad\text{on }X\setminus B_{R_0}.
	\] Thus,
	\[
	\Delta(|\Xi|^2 - w) \leqslant -\frac{m^2}{4} (|\Xi|^2 - w)\quad\text{on }X\setminus B_{R_0}.
	\] Now, using Corollary \ref{cor: e-reg_decay} and the fact that $(A,\Phi)$ has finite mass $m$, we have
	\[
	|\Xi|^2\leqslant c'|\Phi|^2 e(A,\Phi)\leqslant c'm^2\|e(A,\Phi)\|_{L^{\infty}(\Sigma_{R_0})}\quad\text{on }\Sigma_{R_0},
	\] where $c'>0$ depends only on the structure constants of $\mathfrak{su}(2)$. Hence, by our choice of the constant $M$ in \eqref{eq: def_w_M} it follows that $|\Xi|^2\leqslant w$ on $\Sigma_{R_0}$. Furthermore, we know that $|\Xi|\to 0$ as $\rho\to\infty$ by Corollary \ref{cor: e-reg_decay} and the fact that $(A,\Phi)$ has finite mass $m$. Since we also have that $w\to 0$ as $\rho\to\infty$, it follows from the maximum principle that $|\Xi|^2\leqslant w$ on $\rho \geqslant R_0$, as we wanted.
\end{proof}

As a first consequence of Theorem \ref{thm: exp_decay_transv}, we get part \ref{itm: ii_main_2} of Theorem \ref{thm: main_2}.
\begin{corollary}\label{cor: decay_m-Phi}
	Continue the hypotheses of Theorem \ref{thm: exp_decay_transv}. Then there is $R_0\gg 1$ such that $m-|\Phi|\sim_{A,\Phi}\rho^{-1}$ for $\rho\geqslant R_0$.
\end{corollary}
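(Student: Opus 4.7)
The plan is to exploit the Green's function representation of $m^2-|\Phi|^2$ from Theorem~\ref{thm: alternative_finite_mass} together with the sharp two-sided bound $G(x,y)\sim d(x,y)^{-1}$ on AC $3$-manifolds afforded by Corollary~\ref{cor: AC_nonparabolic}. Set $u:=m-|\Phi|$ and $w:=m^2-|\Phi|^2=(m-|\Phi|)(m+|\Phi|)$. Since $(A,\Phi)\in\mathscr{C}(P)$ satisfies \eqref{eq:2nd_Order_Eq_1} and has finite mass $m>0$, Theorem~\ref{thm: alternative_finite_mass} and Remark~\ref{rmk: alt_finite_mass} give
\[
w(x)=2\int_X G(x,y)\,|\nabla_A\Phi|^2(y)\,dy.
\]
Because $|\Phi|\to m$ uniformly on the end (Theorem~\ref{thm: finite_mass}), the denominator $m+|\Phi|$ lies in $[m,2m]$ for $\rho\geqslant R_0$, so $u\sim w$ there and it suffices to establish $w\sim\rho^{-1}$.

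For the upper bound $w\lesssim\rho^{-1}$, I would split the integral at $d(x,y)\sim\rho(x)/2$. On the outer region $\{d(x,y)\geqslant\rho(x)/2\}$ the bound $G(x,y)\lesssim d(x,y)^{-1}\leqslant 2\rho(x)^{-1}$ combined with $|\nabla_A\Phi|^2\in L^1(X)$ contributes $O(\rho(x)^{-1})$. On the inner region $\{d(x,y)<\rho(x)/2\}$, for $\rho(x)$ large one has $\rho(y)\geqslant\rho(x)/2$, so the rough decay $|\nabla_A\Phi|^2\lesssim\rho^{-3}$ from Lemma~\ref{lem: rough_estimate} combined with the ball-integral estimate $\int_{B(x,\rho(x)/2)}d(x,y)^{-1}\,dy\lesssim\rho(x)^2$ — which uses $V(x,r)\lesssim r^3$, valid on AC $3$-manifolds by the conical end structure — gives a contribution of order $\rho(x)^{-3}\cdot\rho(x)^2=\rho(x)^{-1}$.

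For the lower bound $w\gtrsim\rho^{-1}$, which implicitly requires irreducibility $\nabla_A\Phi\not\equiv 0$ (the reducible case forces $u\equiv 0$, rendering the statement vacuous), I fix a small ball $B=B(y_0,r_0)\subset X$ with $\int_B|\nabla_A\Phi|^2\geqslant c>0$. For $x$ with $\rho(x)$ large, $d(x,y)\leqslant d(x,y_0)+r_0\lesssim\rho(x)$ for every $y\in B$, and the lower Green's bound $G(x,y)\gtrsim d(x,y)^{-1}$ from Corollary~\ref{cor: AC_nonparabolic} yields $G(x,y)\gtrsim\rho(x)^{-1}$ on $B$, whence
\[
w(x)\geqslant 2\int_B G(x,y)\,|\nabla_A\Phi|^2(y)\,dy\gtrsim\rho(x)^{-1}.
\]

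The main subtlety is ensuring the two-sided comparison $G(x,y)\sim d(x,y)^{-1}$ on a general AC $3$-manifold: the upper bound for $G$ together with $V(x,r)\gtrsim r^3$ is supplied by Corollary~\ref{cor: AC_nonparabolic}, whereas the lower bound for $G$ additionally uses $V(x,r)\lesssim r^3$, coming from the conical end structure. An alternative route, hinted at by the author in \S\ref{subsec: organization}, would be to write $\Delta u=|\Phi|^{-1}|(\nabla_A\Phi)^\perp|^2$ (valid on $X\setminus Z(\Phi)$ via the identity $|(\nabla_A\Phi)^{||}|=|d|\Phi||$), observe this is exponentially decaying by Theorem~\ref{thm: exp_decay_transv}, and then apply Theorem~\ref{thm: VC_regularity} \ref{itm: VC_iv} (after smoothly extending across the compact zero set $Z(\Phi)\subset B_{R_0}$) to produce an expansion $u=A\rho^{-1}+v$ with $v$ decaying strictly faster than $\rho^{-1}$; this is also the natural setup for the refined Corollary~\ref{cor: main_asymp_exp_Phi}.
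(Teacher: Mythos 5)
Your proposal is correct, but it takes a genuinely different route from the paper. The paper's proof never touches the Green's representation of $m^2-|\Phi|^2$: it writes $\Delta(m-|\Phi|)=|\Phi|^{-1}|(\nabla_A\Phi)^{\perp}|^2$ on $X\setminus B_{R_0}$, bounds the right-hand side by the exponentially decaying function supplied by Theorem \ref{thm: exp_decay_transv}, places that majorant in $C^{0,\alpha}_{\beta}(X)$ for $\beta<-3$, and then compares $m-|\Phi|$ via the maximum principle with the model solution $u=A\rho^{-1}+v$ produced by Theorem \ref{thm: VC_regularity} \ref{itm: VC_iv}; the lower bound comes from superharmonicity of $m-|\Phi|$ and comparison with a small multiple of the Green's function. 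This is essentially your ``alternative route,'' and it is the one that later feeds into the refined expansion of Corollary \ref{cor: main_asymp_exp_Phi}, where the exponential decay of the source term is genuinely needed. Your main route — representing $w=m^2-|\Phi|^2$ by Theorem \ref{thm: alternative_finite_mass}, splitting the convolution at $d(x,y)\sim\rho(x)/2$, and using the crude decay $|\nabla_A\Phi|^2=O(\rho^{-3})$ of Lemma \ref{lem: rough_estimate} for the inner piece — is more economical in that it bypasses the exponential decay of the transverse component entirely; the price is that it needs the \emph{two-sided} comparison $G(x,y)\sim d(x,y)^{-1}$ and the uniform upper volume bound $V(x,r)\lesssim r^3$, neither of which the paper explicitly records (only the upper bound \eqref{eq: Green's_behavior} is stated, and \eqref{ineq: at_most_Euclidean_vol_growth} is stated only for balls centered at a fixed point). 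Both facts do hold on an AC $3$-manifold with one end — the volume bound from the conical structure, and then the lower Green's bound from \eqref{ineq: Green_bounds} — and you correctly flag both, so the argument goes through. Your observation that the lower bound requires $\nabla_A\Phi\not\equiv 0$ is also a fair point: the paper's proof tacitly assumes this as well (otherwise $m-|\Phi|\equiv 0$ and the comparison on $\partial B_{R_0}$ degenerates).
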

\begin{proof}
	By Remark \ref{rmk: finite_mass} and Theorem \ref{thm: exp_decay_transv}, there is $R_0\gg 1$ such that setting $z_0:=\|e(A,\Phi)\|_{L^{\infty}(\Sigma_{R_0})}$ we have
	\[
	|\Phi|\geqslant\frac{m}{2}>0\quad\text{and}\quad |(\nabla_A\Phi)^{\perp}|^2\leqslant z_0e^{-cm(\rho-R_0)}\text{ on }X\setminus B_{R_0}.
	\] In particular,
	\[
	\Delta(m-|\Phi|) = |\Phi|^{-1}|(\nabla_A\Phi)^{\perp}|^2\leqslant 2 m^{-1}z_0e^{-cm(\rho-R_0)}\quad\text{on }X\setminus B_{R_0}.
	\] 
	Let $f:X\to\mathbb{R}$ be the smooth function given by $f(x):=2 m^{-1}z_0e^{-cm(\rho-R_0)}$, for all $x\in X$. Note that we have $f\in C_{\beta}^{0,\alpha}(X)$ for any $\beta<-3$ and $\alpha\in (0,1)$. Thus, by Theorem \ref{thm: VC_regularity} \ref{itm: VC_iv}, there is a unique solution $u\in C_{-1}^{2,\alpha}(X)$ to $\Delta u = f$, with $u=A\rho^{-1}+v$, where $v\in C_{-1-\mu}^{2,\alpha}(X)$ for some $\mu\in (0,\nu)$. Now take $M\geqslant 1$ large enough so that
	\[
	(m-|\Phi|)|_{\partial B_{R_0}}\leqslant Mu|_{\partial B_{R_0}}.
	\] Since $\Delta(m-|\Phi|)\leqslant f\leqslant Mf$ on $X\setminus B_{R_0}$, and both $m-|\Phi|$ and $u$ decay to zero at infinity, it follows from the maximum principle that $m-|\Phi|\leqslant Mu$ on $X\setminus B_{R_0}$. In particular, from the decay of $u$ we conclude that $m-|\Phi|\in C_{-1}^0(X)$. 
	
	Finally, since $m-|\Phi|$ is superharmonic and the Green's functions behave like $\sim\rho^{-1}$, another application of the maximum principle shows that $\rho^{-1}\lesssim_{A,\Phi} m-|\Phi|$ on $X\setminus B_{R_0}$. This completes the proof.
\end{proof}

\subsection{Quadratic decay of $\nabla_A\Phi$ and asymptotic expansion of $m-|\Phi|$}\label{subsec: quadratic_nablaPhi_asymp}
This section is dedicated to prove part \ref{itm: iii_main_2} of Theorem \ref{thm: main_2} and Corollary \ref{cor: main_asymp_exp_Phi}. 

We start proving the following:
\begin{theorem}\label{thm: quadratic_decay_nablaPhi}
	Let $(X^3,g)$ be an AC oriented $3$-manifold with only one end and let $P\to X$ be a principal $\rm SU(2)$-bundle. Suppose that $(A,\Phi)\in\mathscr{C}(P)$ is a solution to the second order equations \eqref{eq:2nd_Order_Eq_1} and \eqref{eq:2nd_Order_Eq_2}. Denote by $m$ the mass of $(A,\Phi)$, given by Theorem \ref{thm: finite_mass}, and suppose that $m>0$. Then $|\nabla_A\Phi|\in C_{-2}^0(X)$. Moreover, if we let $\mu\in (0,\nu)$ be as in \eqref{eq: Holder_mu}, then we can write 
	\begin{equation}\label{eq: asymp_exp_Phi2}
	|\Phi|^2 = m^2 -\frac{2\|\nabla_A\Phi\|_{L^2(X)}^2}{\mathrm{Vol}(\Sigma)}\frac{1}{\rho} + O(\rho^{-1-\mu}).
	\end{equation}
\end{theorem}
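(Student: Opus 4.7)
The strategy is to decompose $|\nabla_A\Phi|^2 = |d|\Phi||^2 + |(\nabla_A\Phi)^\perp|^2$ along the conical end, control each summand separately using the exponential decay already established in Theorem~\ref{thm: exp_decay_transv}, and then invoke the weighted H\"older regularity theory for the Laplacian (Theorem~\ref{thm: VC_regularity}~\ref{itm: VC_iv}) to produce both the quadratic decay of $\nabla_A\Phi$ and the asymptotic expansion of $|\Phi|^2$ in a unified manner.

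For the first claim $|\nabla_A\Phi|\in C_{-2}^0(X)$, I would start from the identity $|\nabla_A\Phi|^2 = |d|\Phi||^2 + |(\nabla_A\Phi)^\perp|^2$, valid on $\rho\geqslant R_0$ since $|\Phi|\geqslant m/2>0$ there by Remark~\ref{rmk: finite_mass}. The transverse component is $O(e^{-cm\rho})$ by Theorem~\ref{thm: exp_decay_transv}, so it is harmless. For $|d|\Phi||$, I would reuse the relation $\Delta(m-|\Phi|) = |\Phi|^{-1}|(\nabla_A\Phi)^\perp|^2$ from the proof of Corollary~\ref{cor: decay_m-Phi}, whose right-hand side decays exponentially and therefore lies in $C_{-3-\mu}^{0,\alpha}(X)$ after a standard cutoff. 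Applying Theorem~\ref{thm: VC_regularity}~\ref{itm: VC_iv} with $\beta=-3-\mu$, together with the $C_{-1}^0$-bound on $m-|\Phi|$ furnished by Corollary~\ref{cor: decay_m-Phi} and the uniqueness in that theorem, gives a decomposition $m-|\Phi| = A_0\rho^{-1} + v_0$ with $v_0\in C_{-1-\mu}^{2,\alpha}(X)$. Differentiating yields $|d|\Phi|| = |d(m-|\Phi|)|= O(\rho^{-2})$, so that $|\nabla_A\Phi|^2 = O(\rho^{-4})$, proving $|\nabla_A\Phi|\in C_{-2}^0(X)$.

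For the asymptotic expansion \eqref{eq: asymp_exp_Phi2}, I would apply the same regularity result to the global equation $\Delta(m^2 - |\Phi|^2) = 2|\nabla_A\Phi|^2$. From the previous step we have $|\nabla_A\Phi|^2\in C_{-4}^0(X)$, and a bootstrap via gauge fixing in small Coulomb balls along the end (permissible by bounded geometry and the $\varepsilon$-regularity of Theorem~\ref{thm: e-regularity}) together with Schauder estimates for the elliptic system \eqref{eq:2nd_Order_Eq_1}--\eqref{eq:2nd_Order_Eq_2} upgrades this to $|\nabla_A\Phi|^2\in C_{-4}^{0,\alpha}(X)\subset C_{-3-\mu}^{0,\alpha}(X)$. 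A second application of Theorem~\ref{thm: VC_regularity}~\ref{itm: VC_iv} produces the unique solution in $C_{-1}^{2,\alpha}(X)$ of the form $u = A\rho^{-1}+v$ with
\[
A = \mathrm{Vol}(\Sigma)^{-1}\int_X 2|\nabla_A\Phi|^2 = \frac{2\|\nabla_A\Phi\|_{L^2(X)}^2}{\mathrm{Vol}(\Sigma)}, \qquad v\in C_{-1-\mu}^{2,\alpha}(X).
\]
Since $m^2-|\Phi|^2 = (m-|\Phi|)(m+|\Phi|)$ belongs to $C_{-1}^0(X)$ by Corollary~\ref{cor: decay_m-Phi} and is promoted to $C_{-1}^{2,\alpha}(X)$ by Theorem~\ref{thm: elliptic_reg}, uniqueness forces $m^2-|\Phi|^2 = A\rho^{-1} + v$, which is precisely \eqref{eq: asymp_exp_Phi2}.

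The main obstacle will be verifying the weighted $C^{0,\alpha}$ regularity needed to invoke Theorem~\ref{thm: VC_regularity}~\ref{itm: VC_iv} in the second step, namely the claim $|\nabla_A\Phi|^2\in C_{-3-\mu}^{0,\alpha}(X)$. A potentially cleaner alternative (more in line with the sketch in \S\ref{subsec: organization}) is to derive, from the Bochner formula~\eqref{eq: bochner_dphi} together with the exponential decay of the transverse components, a differential inequality of the form $\Delta|\nabla_A\Phi|^2 \lesssim \rho^{-2}|\nabla_A\Phi|^2 + O(e^{-cm\rho})$ along the end, then apply Lemma~\ref{lem: decay} to obtain $|\nabla_A\Phi|^2 = O(\rho^{-3})$ directly (recovering Lemma~\ref{lem: rough_estimate}), and separately bound $|d|\nabla_A\Phi|^2| = O(\rho^{-4})$ via an analogous Bochner estimate applied to $|\nabla_A\Phi|^2$; combining with the decomposition above then closes the argument with minimal dependence on H\"older bootstraps.
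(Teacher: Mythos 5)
Your proposal is correct, and for the main claim $|\nabla_A\Phi|\in C_{-2}^0(X)$ it takes a genuinely different — and shorter — route than the paper. The paper obtains $|d|\Phi||=O(\rho^{-2})$ by first proving $|\nabla_A\Phi|^2\in C_{-3}^1(X)$ through a second-order Bochner analysis (differentiated Weitzenb\"ock formulas for $\nabla_A^2\Phi$ and $\nabla_A F_A$, an Agmon-type identity showing $\rho\nabla_A\Psi\in L^2$, and Lemma \ref{lem: decay}), and only then feeding $\Delta(m^2-|\Phi|^2)=2|\nabla_A\Phi|^2$ into the weighted Schauder theory; the difficulty there is that the source $2|\nabla_A\Phi|^2$ is dominated by the longitudinal part $|d|\Phi||^2$ — the very quantity being estimated — so a strong a priori bound is needed. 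You break this circularity by working with $|\Phi|$ instead of $|\Phi|^2$: since $|(\nabla_A\Phi)^{||}|=|d|\Phi||$, the Kato-type cancellation gives $\Delta(m-|\Phi|)=|\Phi|^{-1}|(\nabla_A\Phi)^\perp|^2$, whose source is purely transverse and hence exponentially small by Theorem \ref{thm: exp_decay_transv}; Theorem \ref{thm: VC_regularity} \ref{itm: VC_iv} then yields $m-|\Phi|=A_0\rho^{-1}+v_0$ and so $|d|\Phi||=O(\rho^{-2})$ with no hard work. This is exactly the mechanism the paper uses in Corollary \ref{cor: asymp_exp_Phi}, but reordered so that it does not rely on Theorem \ref{thm: quadratic_decay_nablaPhi}; there is no circularity, since that part of Corollary \ref{cor: asymp_exp_Phi} only invokes Theorem \ref{thm: exp_decay_transv}, Corollaries \ref{cor: e-reg_decay} and \ref{cor: decay_m-Phi}, and the linear theory. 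What the paper's heavier route buys in exchange is the decay rate $|\nabla_A^2\Phi|,|\nabla_A F_A|=O(\rho^{-5/2})$, which your argument does not produce.

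Two small points need tightening. First, the uniqueness in Theorem \ref{thm: VC_regularity} \ref{itm: VC_iv} is among solutions in $C_{-1}^{2,\alpha}(X)$, so the $C_{-1}^0$ bound from Corollary \ref{cor: decay_m-Phi} must be promoted via Theorem \ref{thm: elliptic_reg} (or one compares $\phi(m-|\Phi|)$ with the constructed solution by the maximum principle, since their difference is harmonic and decays); you do this in your second step but omit it in the first. Relatedly, exponential decay of the source alone does not give the weighted H\"older seminorm — one should record that $d\bigl(|\Phi|^{-1}|(\nabla_A\Phi)^\perp|^2\bigr)$ also decays exponentially, which follows from the exponential decay of $[\nabla_A\Phi,\Phi]$ together with the boundedness of $\nabla_A^2\Phi$ from Corollary \ref{cor: e-reg_decay}. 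Second, for the expansion of $|\Phi|^2$ your claimed intermediate regularity $|\nabla_A\Phi|^2\in C_{-4}^{0,\alpha}(X)$ is more than the sketched Coulomb-gauge bootstrap obviously delivers; what is actually needed is only $|\nabla_A\Phi|^2\in C_{-3-\mu}^{0,\alpha}(X)$ for some $\beta\in[-3-\mu,-3)$, and this follows by interpolating the $C^0$ bound $O(\rho^{-4})$ against $|d(|\nabla_A\Phi|^2)|\leqslant 2|\nabla_A^2\Phi||\nabla_A\Phi|=O(\rho^{-2})$, provided $\alpha$ and $\mu$ are taken small (which is permissible). With these adjustments the argument closes, and the identification of the coefficient $A=\mathrm{Vol}(\Sigma)^{-1}\int_X 2|\nabla_A\Phi|^2$ is immediate from \eqref{eq: subharmonic}.
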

\begin{proof}
	We claim that it suffices to show that 
	\begin{equation}\label{eq: crucial_decay_nablaphi}
		|\nabla_A\Phi|^2\in C_{-3}^1(X).
	\end{equation}
	Indeed, assume that this decay property holds. On the one hand, since $C_{-3}^1(X)\hookrightarrow C_{-3}^{0,\alpha}(X)$, it follows from \eqref{eq: crucial_decay_nablaphi} that $|\nabla_A\Phi|^2\in C_{-3}^{0,\alpha}(X)$, for any given $\alpha\in (0,1)$. On the other hand, since $m^2-|\Phi|^2\leqslant 2m(m-|\Phi|)$ on $X$, it follows from Corollary \ref{cor: decay_m-Phi} that $m^2-|\Phi|^2\in C_{-1}^0(X)$. Thus, recalling from equation \eqref{eq: subharmonic} that $\Delta(m^2-|\Phi|^2) = 2|\nabla_A\Phi|^2$, it follows from Theorem \ref{thm: elliptic_reg} that we actually have $m^2-|\Phi|^2\in C_{-1}^{2,\alpha}(X)$. In particular, $|d(|\Phi|^2)|\lesssim_{A,\Phi}\rho^{-2}$ and therefore, for $\rho\geqslant R_0\gg 1$,
	\[
	|(\nabla_A\Phi)^{||}|=|d|\Phi||=2^{-1}|\Phi|^{-1}|d(|\Phi|^2)|\leqslant m^{-1}|d(|\Phi|^2)|\lesssim_{A,\Phi}\rho^{-2}.
	\] Using the exponential decay of the transverse component, $|(\nabla_A\Phi)^{\perp}|\lesssim_{A,\Phi} e^{-cm\rho}$, given by Theorem \ref{thm: exp_decay_transv}, we conclude that $|\nabla_A\Phi|\in C_{-2}^0(X)$. Furthermore, note that it follows from Theorem \ref{thm: VC_regularity} \ref{itm: VC_iv} that we can write $m^2-|\Phi|^2=A\rho^{-1}+v$, with $A:=\text{Vol}(\Sigma)^{-1}\int_X 2|\nabla_A\Phi|^2$ and $v\in C_{-1-\mu}^{2,\alpha}(X)$, thus showing \eqref{eq: asymp_exp_Phi2}.
	
	Therefore, we are left to show \eqref{eq: crucial_decay_nablaphi}. Define $\Psi:=(\nabla_A\Phi,F_A)\in\Omega^1\oplus\Omega^2(X,\mathfrak{su}(2)_P)$. By Lemma \ref{lem: rough_estimate}, we already known that $|\Psi|^2\in C_{-3}^0(X)$, so it suffices to show that $|\nabla_A\Psi|^2\in C_{-5}^0(X)$. 
	
	Let us first prove that $\rho\nabla_A\Psi\in L^2(X)$. Start noting that by the finite energy condition $\Psi\in L^2(X)$, and inequality \eqref{ineq: ref_Bochner} of Corollary \ref{cor: bochner_along_the_end}, it follows that for any $f\in W^{1,{\infty}}(X)$ we have\footnote{By density, it suffices to prove the identity \eqref{eq: Agmon} for $f\in C_c^{\infty}(X)$, which is done by using successive integration by parts, Leibniz rule and rearranging.}
	\begin{equation}\label{eq: Agmon}
	\|\nabla_A(f\Psi)\|_{L^2(X)}^2 = \|df\otimes \Psi\|_{L^2(X)}^2 + \int_X f^2\langle\nabla_A^{\ast}\nabla_A\Psi,\Psi\rangle.
	\end{equation} Now take $f=\rho_n$, where $\rho_n(x):=\min\{\rho(x),n\}$ for all $x\in X$. Since $|d\rho_n|\lesssim 1$, using inequality \eqref{ineq: ref_Bochner} and $|\mathcal{R}ic_g|=O(\rho^{-2})$ we have
	\begin{align}
		\|\nabla_A(\rho_n \Psi)\|_{L^2(X)}^2 \lesssim \|\Psi\|_{L^2(X)}^2 + \int_X \rho_n^2\rho^{-2}|\Psi|^2 \lesssim \mathcal{E}_X(A,\Phi).
	\end{align} Hence $\nabla_A(\rho \Psi)\in L^2(X)$, and since $\Psi\in L^2(X)$, we also get
	\[
	\rho\nabla_A\Psi = \nabla_A(\rho \Psi) - d\rho\otimes \Psi\in L^2(X),
	\] as we wanted. We now give an upper bound for $\Delta|\nabla_A\Psi|^2$ by bounding $\langle\nabla_A^{\ast}\nabla_A(\nabla_A\Psi),\nabla_A\Psi\rangle$. So we want to bound the right-hand side of the following two equations:
	\[
	\langle\nabla_A^{\ast}\nabla_A(\nabla_A^2\Phi),\nabla_A^2\Phi\rangle = \underbrace{\langle\nabla_A(\nabla_A^{\ast}\nabla_A(\nabla_A\Phi)),\nabla_A^2\Phi\rangle}_{(I)} + \underbrace{\langle [\nabla_A^{\ast}\nabla_A,\nabla_A](\nabla_A\Phi),\nabla_A^2\Phi\rangle}_{(II)}
	\] and
	\[
	\langle\nabla_A^{\ast}\nabla_A(\nabla_A F_A),\nabla_A F_A\rangle = \underbrace{\langle\nabla_A(\nabla_A^{\ast}\nabla_A F_A),\nabla_A F_A\rangle}_{(I')} + \underbrace{\langle [\nabla_A^{\ast}\nabla_A,\nabla_A](F_A),\nabla_A F_A\rangle}_{(II')}.
	\] We first deal with the first terms, $(I)$ and $(I')$, using the Bochner formulas from Lemma \ref{lem: Bochner_estimate}. Begin noting that taking the covariant derivative of \eqref{eq: rough_bochner_dphi} we get\footnote{Here we use $T\# Q$ to denote a generic multilinear expression involving the components of two tensors $T$ and $Q$ at most one of which is $\mathfrak{g}_P$-valued, while $T\#_{\mathfrak{g}}Q$ (note the subscript $\mathfrak{g}$) means a multilinear expression relating two $\mathfrak{g}_P$-valued tensors by combining its components using the Lie bracket $[\cdot{},\cdot{}]$.}
	\begin{align}
		\nabla_A(\nabla_A^{\ast}\nabla_A(\nabla_A\Phi)) &= \nabla\mathcal{R}ic_g\#\nabla_A\Phi + \mathcal{R}ic_g\#\nabla_A^2\Phi + \nabla_A F_A\#_{\mathfrak{g}}\nabla_A\Phi + F_A\#_{\mathfrak{g}}\nabla_A^2\Phi\\
		&\quad+ \nabla_A[[\nabla_A\Phi,\Phi],\Phi].
	\end{align} We now analyze each inner product arising from the above expression. First, since $|\nabla^j\mathcal{R}ic_g|=O(\rho^{-2-j})$, using Young's inequality and $|\Psi|^2\in C_{-3}^0(X)$, it follows that for $\rho\geqslant R_0$, we have:
	\begin{align}
		\langle\nabla\mathcal{R}ic_g\#\nabla_A\Phi + \mathcal{R}ic_g\#\nabla_A^2\Phi,\nabla_A^2\Phi\rangle &\lesssim |\nabla\mathcal{R}ic_g||\nabla_A\Phi||\nabla_A^2\Phi| + |\mathcal{R}ic_g||\nabla_A^2\Phi|^2\\
		&\lesssim_{A,\Phi} \rho^{-3}\rho^{-3/2}|\nabla_A^2\Phi| + \rho^{-2}|\nabla_A^2\Phi|^2\\
		&\lesssim_{A,\Phi} \rho^{-7} +\rho^{-2}|\nabla_A^2\Phi|^2.\label{eq: upper_estimate_nablaphi} 
	\end{align}
	Furthermore, using Young's inequality, Corollary \ref{cor: e-reg_decay} and the exponential decay of the transverse components given by Theorem \ref{thm: exp_decay_transv}, for $\rho\geqslant R_0$ we have:
	\begin{align}
		\langle \nabla_A F_A\#_{\mathfrak{g}}\nabla_A\Phi,\nabla_A^2\Phi\rangle &\lesssim |(\nabla_A F_A)^{||}||(\nabla_A\Phi)^{\perp}||(\nabla_A^2\Phi)^{\perp}| + |(\nabla_A F_A)^{\perp}||(\nabla_A\Phi)^{||}||(\nabla_A^2\Phi)^{\perp}|\\
		&\quad+|(\nabla_A F_A)^{\perp}||(\nabla_A\Phi)^{\perp}||(\nabla_A^2\Phi)^{||}|\\
		&\lesssim_{A,\Phi} e^{-cm\rho} + e^{-cm\rho}|\nabla_A^2\Phi|^2,\\
		\langle F_A\#_{\mathfrak{g}}\nabla_A^2\Phi,\nabla_A^2\Phi\rangle &\lesssim |F_A^{||}||(\nabla_A^2\Phi)^{\perp}|^2 + |F_A^{\perp}||(\nabla_A^2\Phi)^{\perp}||(\nabla_A^2\Phi)^{||}|\\
		&\lesssim_{A,\Phi} |F_A^{||}||(\nabla_A^2\Phi)^{\perp}|^2 + e^{-cm\rho} + e^{-cm\rho}|\nabla_A^2\Phi|^2,
	\end{align} and
	\begin{align}
	\langle\nabla_A[[\nabla_A\Phi,\Phi],\Phi],\nabla_A^2\Phi\rangle +|\Phi|^2|(\nabla_A^2\Phi)^{\perp}|^2 &\lesssim |(\nabla_A\Phi)^{\perp}|^2|(\nabla_A^2\Phi)^{||}|+ |(\nabla_A\Phi)^{\perp}||(\nabla_A\Phi)^{||}||(\nabla_A^2\Phi)^{\perp}|\\
	&\lesssim_{A,\Phi} e^{-cm\rho} + e^{-cm\rho}|\nabla_A^2\Phi|^2.
	\end{align} Also note that by taking $R_0\gg 1$, for $\rho\geqslant R_0$ we have
	\[
		c|F_A^{||}||(\nabla_A^2\Phi)^{\perp}|^2 - |\Phi|^2|(\nabla_A^2\Phi)^{\perp}|^2 \leqslant \left(c|F_A^{||}|-\frac{m^2}{4}\right)|(\nabla_A^2\Phi)^{\perp}|^2\leqslant 0,
	\] where the last inequality follows by Corollary \ref{cor: e-reg_decay}. In conclusion, we get that for $\rho\geqslant R_0$ we have
	\[
	(I)\lesssim_{A,\Phi} \rho^7 + \rho^{-2}|\nabla_A^2\Phi|^2.
	\]
	
	Now taking the covariant derivative of \eqref{eq: rough_bochner_curv} we get
	\begin{align}
		\nabla_A(\nabla_A^{\ast}\nabla_A F_A) &= \nabla\mathcal{R}ic_g\# F_A + \mathcal{R}ic_g\#\nabla_A F_A + \nabla_A F_A\#_{\mathfrak{g}} F_A - 2[\nabla_A^2\Phi,\nabla_A\Phi]\\
		&\quad+ \nabla_A[[F_A,\Phi],\Phi].
	\end{align} So by the same reasoning as before, note that for $\rho\geqslant R_0$ we have
\begin{equation}\label{eq: upper_estimate_F} 
\langle\nabla\mathcal{R}ic_g\# F_A + \mathcal{R}ic_g\#\nabla_A F_A,\nabla_A F_A\rangle \lesssim_{A,\Phi} \rho^{-7} + \rho^{-2}|\nabla_A F_A|^2,
\end{equation}
\begin{align}
	\langle \nabla_A F_A\#_{\mathfrak{g}} F_A, \nabla_A F_A\rangle &\lesssim |F_A^{||}||(\nabla_A F_A)^{\perp}|^2+|(\nabla_A F_A)^{||}||F_A^{\perp}||(\nabla_A F_A)^{\perp}|,\\
	&\lesssim_{A,\Phi} |F_A^{||}||(\nabla_A F_A)^{\perp}|^2 + e^{-cm\rho} + e^{-cm\rho}|\nabla_A F_A|^2,\\
	\langle [\nabla_A^2\Phi,\nabla_A\Phi],\nabla_A F_A\rangle &\lesssim |(\nabla_A^2 \Phi)^{||}||(\nabla_A\Phi)^{\perp}||(\nabla_A F_A)^{\perp}| + |(\nabla_A^2 \Phi)^{\perp}||(\nabla_A\Phi)^{\perp}||(\nabla_A F_A)^{||}|\\
	&\quad+|(\nabla_A^2 \Phi)^{\perp}||(\nabla_A\Phi)^{||}||(\nabla_A F_A)^{\perp}|,\\
	&\lesssim e^{-cm\rho} + e^{-cm\rho}|\nabla_A F_A|^2,
\end{align} and
\begin{align}
\langle\nabla_A[[F_A,\Phi],\Phi],\nabla_A F_A\rangle + |\Phi|^2|(\nabla_A F_A)^{\perp}|^2 &\lesssim  + |F_A^{\perp}||(\nabla_A\Phi)^{||}||(\nabla_A F_A)^{\perp}| + |F_A^{||}||(\nabla_A\Phi)^{\perp}||(\nabla_A F_A)^{\perp}|\\
&\quad + |F_A^{\perp}||(\nabla_A\Phi)^{\perp}||(\nabla_A F_A)^{||}|\\
&\lesssim e^{-cm\rho} + e^{-cm\rho}|\nabla_A F_A|^2.
\end{align} Moreover, again by Corollary \ref{cor: e-reg_decay}, for $\rho\geqslant R_0$ we have
\[
c|F_A^{||}||(\nabla_A F_A)^{\perp}|^2 - |\Phi|^2|(\nabla_A F_A)^{\perp}|^2 \leqslant \left(c|F_A^{||}|-\frac{m^2}{4}\right)|(\nabla_A F_A)^{\perp}|^2\leqslant 0.
\] Hence
\[
(I')\lesssim_{A,\Phi} \rho^{-7} + \rho^{-2}|\nabla_A F_A|^2.
\]

On the other hand, for any $u\in\Gamma(V\otimes\mathfrak{g}_P)$, where $V\to X$ is a tensor bundle, we have (see \cite[Lemma 20]{Floer1995config})
	\[
	[\nabla_A^{\ast}\nabla_A,\nabla](u) = -\mathcal{R}ic_g(d_A u) - \ast[\ast F(\nabla_A),d_A u] - d_A^{\ast}[u,F(\nabla_A)],
	\] where $F(\nabla_A)$ denotes the curvature of the connection $\nabla_A$ on $V\otimes\mathfrak{g}_P$. Thus, when upper estimating the terms $(II)$ and $(II')$ we will only get terms that already appear in our estimates for $(I)$ and $(I')$ respectively. Therefore, we conclude that for $\rho\geqslant R_0$ we have
	\[
	\Delta |\nabla_A\Psi|^2 \lesssim_{A,\Phi} \rho^{-7} + \rho^{-2}|\nabla_A\Psi|^2.
	\]	
	From this differential inequality along the end and the fact that $\rho\nabla_A\Psi\in L^2(X)$, we can use Lemma \ref{lem: decay} \ref{itm: decay_ii} to conclude that $|\nabla_A\Psi|^2\in C_{-5}^0(X)$. This completes the proof.
\end{proof}
As a consequence, we now prove Corollary \ref{cor: main_asymp_exp_Phi}:
\begin{corollary}\label{cor: asymp_exp_Phi}
	Continue the hypotheses of Theorem \ref{thm: exp_decay_transv}. Let $\mu\in (0,\nu)$ be as in \eqref{eq: Holder_mu}. Then we have
	\begin{equation}\label{eq: asymp_Phi}
		|\Phi| = m - \frac{\|\nabla_A\Phi\|_{L^2(X)}^2}{m\mathrm{Vol}(\Sigma)}\frac{1}{\rho} + O(\rho^{-1-\mu})\quad\text{as }\rho\to\infty.
	\end{equation} In particular, if $(A,\Phi)$ is furthermore a monopole of mass $m>0$ and charge $k$ then
	\begin{equation}\label{eq: asymp_Phi_monopole}
		|\Phi| = m - \frac{4\pi k}{\mathrm{Vol}(\Sigma)}\frac{1}{\rho} + O(\rho^{-1-\mu})\quad\text{as }\rho\to\infty.
	\end{equation}
\end{corollary}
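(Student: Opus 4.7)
The plan is straightforward because the main analytic work is already done by Theorem \ref{thm: quadratic_decay_nablaPhi}; the corollary follows by taking a square root and applying the energy formula from Theorem \ref{thm: main_1}. First, by Theorem \ref{thm: quadratic_decay_nablaPhi}, there is $\mu \in (0,\nu)$ (which we may shrink so that $\mu < 1$) such that
\[
|\Phi|^2 = m^2 - \frac{2\|\nabla_A\Phi\|_{L^2(X)}^2}{\mathrm{Vol}(\Sigma)}\frac{1}{\rho} + O(\rho^{-1-\mu}) \quad\text{as }\rho\to\infty.
\]

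Next, since $m > 0$, Remark \ref{rmk: finite_mass} (or Corollary \ref{cor: decay_m-Phi}) ensures $|\Phi| \geqslant m/2$ outside a sufficiently large ball, so $|\Phi|$ stays uniformly bounded away from zero. Dividing through by $m^2$ inside the square root yields
\[
|\Phi| = m\sqrt{1 - \frac{2\|\nabla_A\Phi\|_{L^2(X)}^2}{m^2\mathrm{Vol}(\Sigma)}\frac{1}{\rho} + O(\rho^{-1-\mu})},
\]
and since the argument under the square root tends to $1$, I would expand via $\sqrt{1+x} = 1 + x/2 + O(x^2)$. The quadratic remainder contributes an $O(\rho^{-2})$ term which is absorbed into $O(\rho^{-1-\mu})$ by our choice $\mu < 1$. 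This produces the first claimed expansion
\[
|\Phi| = m - \frac{\|\nabla_A\Phi\|_{L^2(X)}^2}{m\,\mathrm{Vol}(\Sigma)}\frac{1}{\rho} + O(\rho^{-1-\mu}).
\]

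For the monopole case, I would invoke the energy formula $\mathcal{E}_X(A,\Phi) = 4\pi m k$ of Theorem \ref{thm: main_1}, together with the identity $|F_A| = |{\ast}\nabla_A\Phi| = |\nabla_A\Phi|$ which holds pointwise for solutions of \eqref{eq: monopole}. This gives $\|\nabla_A\Phi\|_{L^2(X)}^2 = \mathcal{E}_X(A,\Phi) = 4\pi m k$, and substituting into the previous display yields \eqref{eq: asymp_Phi_monopole}.

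There is essentially no obstacle here — the step worth flagging is simply keeping track of the exponent in the error term, which forces one to assume (without loss) $\mu < 1$ so that the $O(\rho^{-2})$ coming from the Taylor expansion of the square root is subsumed into $O(\rho^{-1-\mu})$.
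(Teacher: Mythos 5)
Your proof is correct, and it takes a genuinely shorter route than the paper's. The paper proves \eqref{eq: asymp_Phi} from scratch: it sets $u:=m-|\Phi|$, observes that $\Delta u=|\Phi|^{-1}|(\nabla_A\Phi)^{\perp}|^2$ decays exponentially by Theorem \ref{thm: exp_decay_transv}, applies the weighted H\"older theory (Theorems \ref{thm: elliptic_reg} and \ref{thm: VC_regularity} \ref{itm: VC_iv}) to get $u=A\rho^{-1}+O(\rho^{-1-\mu})$, and then identifies the coefficient $A$ by a Stokes/boundary-term computation showing $-\int_X\Delta|\Phi|=\tfrac{1}{m}\|\nabla_A\Phi\|_{L^2(X)}^2$ — a step which itself needs the quadratic decay $|\nabla_A\Phi|\lesssim\rho^{-2}$ to control $\int_{\partial B_R}|\nabla_A\Phi|$ and the finite-mass condition to replace $|\Phi|$ by $m$ in the boundary integral. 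You bypass all of this by starting from the expansion \eqref{eq: asymp_exp_Phi2} of $|\Phi|^2$, which is already part of the statement of Theorem \ref{thm: quadratic_decay_nablaPhi} and is available under exactly the hypotheses of the corollary, and simply taking a square root; since $|\Phi|\geqslant m/2$ near infinity this is legitimate, and the Taylor expansion of $\sqrt{1+x}$ produces the stated coefficient. Your handling of the error exponent is the right thing to flag: the $O(x^2)=O(\rho^{-2})$ remainder is absorbed into $O(\rho^{-1-\mu})$ precisely when $\mu\leqslant 1$, and since the H\"older bound \eqref{eq: Holder_mu} only improves when $\mu$ is decreased (for $d(y,z)\leqslant d(x,y)/2$ one has $d(y,z)^{\mu-\mu'}\lesssim d(x,y)^{\mu-\mu'}$), this is indeed without loss. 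The monopole case is also handled correctly: $|F_A|=|\nabla_A\Phi|$ pointwise gives $\mathcal{E}_X(A,\Phi)=\|\nabla_A\Phi\|_{L^2(X)}^2=4\pi mk$ by \eqref{eq: Energy_Formula}, exactly as the paper intends. The trade-off is that your argument leans entirely on the "moreover" clause of Theorem \ref{thm: quadratic_decay_nablaPhi}, whereas the paper's version only uses the decay $|\nabla_A\Phi|\in C^0_{-2}$ from that theorem and is therefore somewhat more self-contained; both are sound.
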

\begin{proof}
	We prove \eqref{eq: asymp_Phi}; then \eqref{eq: asymp_Phi_monopole} follows using the energy formula \eqref{eq: Energy_Formula}. Choose $R_0$ large enough so that $|\Phi|\geqslant\frac{m}{2}>0$ on $X\setminus B_{R_0}$. Let $\phi:=1-\chi_{2R_0}$, $u:=m-|\Phi|$ and $f:=|\Phi|^{-1}|(\nabla_A\Phi)^{\perp}|^2$. Then $\Delta u=f$ on $X\setminus B_{R_0}$ and
	\[
	\Delta(\phi u)=\phi f + u\Delta\phi - 2\langle d\phi,du\rangle\quad\text{on }X.
	\] Now, on the one hand, note that the last two terms in the right hand side of the above equation are compactly supported. On the other hand, using the exponential decay of the transverse components given by Theorem \ref{thm: exp_decay_transv}, and the fact that $\nabla^2\Phi$ decays (Corollary \ref{cor: e-reg_decay}), it follows that $\phi f$ and its derivative decay exponentially. Since $\phi u$ decays like $\rho^{-1}$ (by Corollary \ref{cor: decay_m-Phi}), it follows from Theorem \ref{thm: elliptic_reg} and Theorem \ref{thm: VC_regularity} \ref{itm: VC_iv} that $\phi u\in C_{-1}^{2,\alpha}(X)$ and
	\[
	\phi u  = A\rho^{-1} + v,
	\] where $A:=\frac{1}{\text{Vol}(\Sigma,g_{\Sigma})}\int_X \Delta(\phi u)$ and $v\in C_{-1-\mu}^{2,\alpha}(X)$. Now note that $\phi\equiv 1$ for $\rho\geqslant 2R_0$, so by dominated convergence and Stokes' theorem we have
	\begin{align}
	\int_X \Delta(\phi u) &= \lim_{R\to\infty} \int_{B_R}\Delta(\phi u)\\
	&= -\lim_{R\to\infty} \int_{\partial B_R}\partial_r (\phi u)\\
	&= -\lim_{R\to\infty} \int_{\partial B_R}\partial_r u\\
	&= \lim_{R\to\infty} \int_{B_R}\Delta u = \int_X \Delta u.
	\end{align}
	
	By the definitions of $\phi$, $u$ and $A$, it follows that we can write
	\[
	m-|\Phi| = \frac{-\int_X\Delta |\Phi|}{\mathrm{Vol}(\Sigma)}\frac{1}{\rho} + O(\rho^{-1-\mu}),\quad\text{on }X\setminus B_{2R_0}.
	\] In order to finish the proof, it remains to show that
	\[
	-\int_X\Delta |\Phi| = \frac{1}{m}\int_X|\nabla_A\Phi|^2.
	\] Start noting that by the finiteness of both integrals, dominated convergence and Stokes' theorem we have
	\begin{align}
		-\int_X\Delta |\Phi| = -\lim_{R\to\infty}\int_{B_R}\Delta |\Phi| = \lim_{R\to\infty}\int_{\partial B_R}\partial_r|\Phi|
	\end{align} and, further using \eqref{eq:2nd_Order_Eq_1},
	\begin{align}
		\int_X |\nabla_A\Phi|^2 &= \lim_{R\to\infty}\int_{B_R} |\nabla_A\Phi|^2\\
		&= -\lim_{R\to\infty}\frac{1}{2}\int_{B_R}\Delta |\Phi|^2 \\
		&= \lim_{R\to\infty}\frac{1}{2}\int_{\partial B_R}\partial_r(|\Phi|^2)\\
		&= \lim_{R\to\infty}\int_{\partial B_R}|\Phi|\partial_r|\Phi|.
	\end{align} Now using the quadratic decay $|\nabla_A\Phi|\lesssim_{A,\Phi}\rho^{-2}$ given by Theorem \ref{thm: quadratic_decay_nablaPhi}, note that
	\[
	\int_{\partial B_R}|\nabla_A\Phi|\lesssim_{A,\Phi} 1.	
	\] Therefore, using Kato's inequality $|\partial_r|\Phi||\leqslant|\nabla_A\Phi|$ and that $(A,\Phi)$ has finite mass $m$ we get:
	\begin{align}
		\lim_{R\to\infty}\int_{\partial B_R}\left|\left(1-\frac{|\Phi|}{m}\right)\partial_r|\Phi|\right| \lesssim \lim_{R\to\infty} \frac{1}{m}\sup_{\partial B_R}(m-|\Phi|) = 0.
	\end{align} Putting it all together shows the desired equality and completes the proof. 
\end{proof}

\subsection{Quadratic decay of the curvature along the end}\label{subsec: quadratic}
This section is dedicated to the proof of part \ref{itm: iv_main_2} of Theorem \ref{thm: main_2}. Throughout this section, let $(X^3,g)$ be an AC oriented $3$-manifold with rate $\nu>0$, connected asymptotic link $\Sigma^2$, and fix a radius function $\rho$ on $X$. Let $P$ be a principal $\rm SU(2)$-bundle over $X$.

We start noting that in the case \ref{itm: a_main_2} of Theorem \ref{thm: main_2}, part \ref{itm: iv_main_2} follows immediately from Theorem \ref{thm: quadratic_decay_nablaPhi}. Thus in this section we focus on proving \ref{itm: iv_main_2} for general critical points under the assumption \ref{itm: b_main_2}. That is, we prove the following: 
\begin{theorem}\label{thm: quadratic_decay}
	Assume that the Gaussian curvature $K^{\Sigma}$ of $\Sigma$ is positive. Let $(A,\Phi)\in\mathscr{C}(P)$ be a solution to the second order equations \eqref{eq:2nd_Order_Eq_1} and \eqref{eq:2nd_Order_Eq_2}. Denote by $m$ the mass of $(A,\Phi)$, given by Theorem \ref{thm: finite_mass}, and suppose that $m>0$. Then $|F_A|\in C_{-2}^{0}(X)$.
\end{theorem}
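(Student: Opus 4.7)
The plan is to derive an improved Bochner inequality along the end, combining (i) the inequality \eqref{ineq: ref_Bochner_F} of Lemma \ref{lem: ref_Bochner}, (ii) a refined Kato inequality for $F_A$ with an error term controlled by the exponentially decaying quantity $[\nabla_A\Phi,\Phi]$ from Theorem \ref{thm: exp_decay_transv}, and (iii) a scaling argument exploiting $K^\Sigma>0$ to sharpen the leading coefficient. The improved inequality then feeds into BKN Moser iteration (Proposition \ref{prop: BKN_decay}) to yield an initial polynomial decay, which is bootstrapped to the sharp rate via the weighted H\"older regularity of the Laplacian on AC $3$-manifolds (Theorem \ref{thm: VC_regularity}).

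For the improved Bochner, first note that \eqref{ineq: ref_Bochner_F} together with Theorems \ref{thm: exp_decay_transv} and \ref{thm: quadratic_decay_nablaPhi} reduce, for $\rho\geqslant R_0$, to
\[
\tfrac{1}{2}\Delta |F_A|^2 + |\nabla_A F_A|^2 \leqslant |\mathcal{R}ic_g||F_A|^2 + C_{A,\Phi}e^{-cm\rho},
\]
absorbing all mixed longitudinal-transverse contributions into exponentially decaying errors. Since $d_A F_A=0$ (Bianchi) and, by \eqref{eq:2nd_Order_Eq_2}, $d_A^{\ast}F_A=[\nabla_A\Phi,\Phi]=O(e^{-cm\rho})$, a refined Kato inequality with error term in the spirit of \cite[Theorem 5]{smith2019removeability} provides a universal $\kappa>0$ and $C>0$ such that $|\nabla_A F_A|^2 \geqslant (1+\kappa)|\nabla|F_A||^2 - C|[\nabla_A\Phi,\Phi]|^2$. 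Combining with the algebraic identity $\Delta|F_A|^q = q|F_A|^{q-2}(|F_A|\Delta|F_A|-(q-1)|\nabla|F_A||^2)$ for $q=1+\kappa$, and decomposing $\mathcal{R}ic_g$ in the adapted frame of Appendix \ref{app: A} so that its leading tangential eigenvalues $(K^\Sigma-1)\rho^{-2}$ appear explicitly, a scaling argument on annular regions $\{R\leqslant\rho\leqslant 2R\}$ with $R\to\infty$ allows one to identify the effective Schr\"odinger coefficient. The upshot is the improved Bochner
\begin{equation}\label{eq: improved_Bochner_plan}
\Delta|F_A|^q \leqslant c_\Sigma\, \rho^{-2}|F_A|^q + C_{A,\Phi}e^{-cm\rho} \quad\text{for }\rho\geqslant R_0,
\end{equation}
with $c_\Sigma$ strictly below the critical threshold above which Moser iteration fails to produce nontrivial polynomial decay; the strict inequality is precisely what $K^\Sigma>0$ buys.

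For the decay itself, Corollary \ref{cor: e-reg_decay} gives $|F_A|^q\in L^1(X)$, while $\rho^{-2}\in L^{3/2}(X)\cap L^\infty(X)$ satisfies the integral decay hypothesis of Lemma \ref{lem: moser_iteration}. Hence Proposition \ref{prop: BKN_decay} applied to $u=|F_A|^q$ yields an initial decay $|F_A|=O(\rho^{-\beta_0})$ for some $\beta_0>0$. Feeding this back into the Bochner and invoking interior Schauder estimates puts $\Delta|F_A|^2\in C^{0,\alpha}_{\gamma}$ for some $\gamma<-2$; successive applications of Theorem \ref{thm: VC_regularity} \ref{itm: VC_iv} produce an asymptotic expansion $|F_A|^2 = A\rho^{-1}+v$ with $v$ in a weighted H\"older space of faster decay, and the $L^1$-integrability of $|F_A|^2$ together with the cubic volume growth (which makes $\rho^{-1}\notin L^1(X)$) forces $A=0$. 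Iterating finitely many times, each cycle improving the decay rate by the excess regularity afforded by Theorem \ref{thm: VC_regularity}, one concludes $|F_A|^2\in C^0_{-4}(X)$, i.e., $|F_A|\in C^0_{-2}(X)$.

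The main obstacle in this plan lies in the first step: securing the strict inequality $c_\Sigma<c_\ast$ in \eqref{eq: improved_Bochner_plan} requires a careful bookkeeping of the refined Kato constant $\kappa$ against the cone-Ricci term identified by the scaling argument, and it is precisely where the hypothesis $K^\Sigma>0$ is indispensable --- the general AC case remains open except in the monopole sub-case \ref{itm: a_main_2}, which is handled separately via the quadratic decay of $\nabla_A\Phi$ established in Theorem \ref{thm: quadratic_decay_nablaPhi}.
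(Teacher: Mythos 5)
Your overall architecture matches the paper's: refined Kato inequality, Bochner formula with the Ricci tensor computed in the adapted frame of Appendix \ref{app: A}, a scaling argument exploiting $K^{\Sigma}>0$, then Moser iteration and a weighted H\"older bootstrap. However, two concrete steps in your execution fail as written.

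First, the exponent. The refined Kato inequality $\lvert\nabla_A F_A\rvert^2\geqslant \tfrac{3}{2}\lvert d\lvert F_A\rvert\rvert^2-\lvert[\nabla_A\Phi,\Phi]\rvert^2$ (i.e.\ Kato constant $1+\kappa$ with $\kappa=\tfrac12$) is used to \emph{lower} the admissible power: from $\Delta\lvert F_A\rvert^{\alpha}=\alpha\lvert F_A\rvert^{\alpha-2}\bigl((2-\alpha)\lvert d\lvert F_A\rvert\rvert^2+\tfrac12\Delta\lvert F_A\rvert^2\bigr)$, the gradient term has coefficient $1-\kappa-\alpha$, so the smallest admissible exponent is $\alpha=1-\kappa=\tfrac12$, not $q=1+\kappa=\tfrac32$. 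This matters decisively: Proposition \ref{prop: BKN_decay} returns $u=O(\rho^{-1+\varepsilon})$ for \emph{whatever} function $u$ you feed it, so applied to $\lvert F_A\rvert^{3/2}$ it yields only $\lvert F_A\rvert=O(\rho^{-2/3})$ — worse than the rough bound of Lemma \ref{lem: rough_estimate} — whereas applied to $\lvert F_A\rvert^{1/2}$ it yields $\lvert F_A\rvert=O(\rho^{-2+2\varepsilon})$. Taking the square root is the entire point of the constant $\tfrac32$.

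Second, and more seriously, your improved Bochner inequality \eqref{eq: improved_Bochner_plan} has the wrong shape. The claim that $\rho^{-2}\in L^{3/2}(X)$ is false on an AC $3$-manifold ($\int\rho^{-3}\,dV\sim\int_1^{\infty}r^{-1}\,dr$ diverges), so Proposition \ref{prop: BKN_decay}, which requires $f\in L^{n/2}$, cannot be applied to a potential $c_{\Sigma}\rho^{-2}$ for \emph{any} $c_{\Sigma}>0$; and Lemma \ref{lem: moser_iteration} and Lemma \ref{lem: decay}~\ref{itm: decay_i} with such a potential are stuck at $u=O(\rho^{-3/p})$, i.e.\ at the already-known $\lvert F_A\rvert=O(\rho^{-3/2})$, regardless of the size of the constant (cf.\ the example $u_{\beta}=\rho^{-\beta}$ in the remark following Theorem \ref{thm: quadratic_decay}). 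There is no useful "critical threshold" for the constant here. What $K^{\Sigma}>0$ actually buys, after rescaling so that $K^{\Sigma}\geqslant 1$, is a \emph{sign}: the tangential eigenvalues of $\mathcal{R}ic_g$ are $\rho^{-2}(K^{\Sigma}-1)+O(\rho^{-2-\nu})$, and they enter the Bochner terms $\langle\mathcal{R}ic_g\#F_A,F_A\rangle$, $\langle\mathcal{R}ic_g\#\nabla_A\Phi,\nabla_A\Phi\rangle$ with a favorable sign, so the entire $\rho^{-2}$ contribution can be \emph{discarded}, leaving a potential $C\rho^{-2-\nu}$ (the AC error — note this is only polynomially, not exponentially, small, contrary to your $e^{-cm\rho}$ error term). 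Since $\rho^{-2-\nu}\in L^{3/2}(X)$, Proposition \ref{prop: BKN_decay} then applies legitimately to $u=\lvert F_A\rvert^{1/2}$, and the final bootstrap (the paper compares with the solution of $\Delta v=C\rho^{-3-\mu/2}$ via Theorem \ref{thm: VC_regularity}~\ref{itm: VC_iv} and the maximum principle; your expansion-plus-$A=0$ argument for $\lvert F_A\rvert^2$ is a workable alternative) completes the proof.
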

\begin{remark}
	The proof of Theorem \ref{thm: quadratic_decay} that is given below does not use anything from \S\ref{subsec: quadratic_nablaPhi_asymp}, and in fact it also yield an independent proof of the quadratic decay of $\nabla_A\Phi$ in the case where the asymptotic link $\Sigma$ has positive Gaussian curvature.
\end{remark}
The remainder of this section is devoted to the proof of Theorem \ref{thm: quadratic_decay}.

We start proving the following refined Kato inequalities with ``error terms''; the proof is a minor modification of the proof of a general result appearing in \cite[Theorem 5]{smith2019removeability}.
\begin{proposition}\label{prop: refined_Kato}
	Let $A\in\mathscr{A}(P)$ and suppose that $F\in \Omega^2(X,\mathfrak{g}_P)$ is such that\footnote{\emph{e.g.} when $F=F_A$ (Bianchi identity).} $d_A F=0$, and $\theta\in\Omega^1(X,\mathfrak{g}_P)$ is such that\footnote{\emph{e.g.} when $\theta=\nabla_A\Phi$ and \eqref{eq:2nd_Order_Eq_1} holds.} $d_A^{\ast}\theta=0$. Then
	\begin{equation}\label{ineq: refined_Kato_F}
		\frac{3}{2}\left|d|F|\right|^2\leqslant |\nabla_A F|^2 + |d_A^{\ast}F|^2
	\end{equation} and
	\begin{equation}\label{ineq: refined_Kato_nablaPhi}
		\frac{3}{2}\left|d|\theta|\right|^2\leqslant |\nabla_A \theta|^2 + |d_A\theta|^2.
	\end{equation}
\end{proposition}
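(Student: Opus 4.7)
Both inequalities are pointwise statements, and both sides vanish on the zero locus of the field in question; off that locus $|F|$ and $|\theta|$ are smooth, so it suffices to establish the bounds at any $x$ with $F(x)\neq 0$ (respectively $\theta(x)\neq 0$). My first reduction is that in dimension three the two statements are equivalent: the Hodge star $\ast\colon\Omega^2(X,\mathfrak{g}_P)\to\Omega^1(X,\mathfrak{g}_P)$ is a parallel pointwise isometry with $|F|=|\ast F|$, $|\nabla_A F|=|\nabla_A(\ast F)|$, $d_AF=0\iff d_A^\ast(\ast F)=0$, and $|d_A^\ast F|=|d_A(\ast F)|$. Applying \eqref{ineq: refined_Kato_nablaPhi} to $\theta:=\ast F$ therefore yields \eqref{ineq: refined_Kato_F}, so it is enough to prove the $1$-form version.

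Fix $x$ with $\theta(x)\neq 0$. I would pick an orthonormal frame $\{e_1,e_2,e_3\}$ of $T_xX$ and an orthonormal basis $\{v_a\}$ of $\mathfrak{g}_{P,x}$ so that $\theta(x)=|\theta|(x)\,e^1\otimes v_1$, and work in geodesic normal coordinates centered at $x$ together with a local trivialization of $P$ in which the connection form of $A$ vanishes at $x$; then $\nabla_A$, $d_A$ and $d_A^\ast$ reduce at $x$ to the flat Euclidean operators acting componentwise. Writing $T_{ij}^a:=\langle(\nabla_{e_i}\theta)(e_j),v_a\rangle$, a direct computation at $x$ gives
\[
|\nabla_A\theta|^2=\sum_{a,i,j}(T_{ij}^a)^2,\qquad |d_A\theta|^2=\tfrac12\sum_{a,i,j}(T_{ij}^a-T_{ji}^a)^2,
\]
\[
|d|\theta||^2=\sum_i(T_{i1}^1)^2,\qquad (d_A^\ast\theta)(x)=-\sum_a\Big(\sum_i T_{ii}^a\Big)v_a,
\]
the formula for $|d|\theta||^2$ following from $d|\theta|^2=2\langle\nabla_A\theta,\theta\rangle$ and the special form of $\theta(x)$. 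The hypothesis $d_A^\ast\theta=0$ thus reads $\sum_i T_{ii}^a=0$ for every $a$. Contributions from $a\geq 2$ vanish on the left-hand side of \eqref{ineq: refined_Kato_nablaPhi} while being nonnegative on the right-hand side, so the proposition reduces to the following purely algebraic claim: for every real traceless $3\times 3$ matrix $M=(M_{ij})$,
\[
\tfrac{3}{2}\sum_i M_{i1}^2 \;\leq\; \sum_{i,j}M_{ij}^2 + \tfrac12\sum_{i,j}(M_{ij}-M_{ji})^2.
\]

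This algebraic inequality I verify by direct computation. Decompose $M=S+A$ into its symmetric and antisymmetric parts; since $\mathrm{tr}\,M=0$, one has $S_{11}=-S_{22}-S_{33}$. Parametrize $S$ by $s_i:=S_{ii}$ (with $s_1+s_2+s_3=0$) and $p:=S_{12}$, $q:=S_{13}$, $r:=S_{23}$, and $A$ by $\alpha:=A_{12}$, $\beta:=A_{13}$, $\gamma:=A_{23}$. Expanding both sides and using $s_1^2=(s_2+s_3)^2$ produces the explicit identity
\[
(\text{RHS})-(\text{LHS}) \;=\; \tfrac12(s_2-s_3)^2 + \tfrac12(p+3\alpha)^2 + \tfrac12(q+3\beta)^2 + 2r^2 + 6\gamma^2 \;\geq\; 0,
\]
which settles the claim. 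The constant $3/2$ is sharp (equality holds, for instance, when $M$ is diagonal with $s_1=-2$, $s_2=s_3=1$) and is the refined Kato constant for co-closed $1$-forms in three dimensions; the strategy mirrors \cite[proof of Theorem 5]{smith2019removeability}.

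The \emph{main obstacle}, such as it is, is to carry out the bookkeeping in the algebraic inequality above transparently and to verify that the extra terms introduced by not assuming $d_A\theta=0$ (respectively $d_A^\ast F=0$) reassemble into exactly the error term on the right-hand side. Beyond that, no analytic difficulty arises: once the frame, trivialization and normal coordinates are fixed at $x$, everything reduces to the pointwise algebraic identity above, and the Hodge-star reduction handles the $2$-form case from the $1$-form one.
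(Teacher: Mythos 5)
Your overall strategy — reduce the $2$-form case to the $1$-form case via the Hodge star, then reduce the $1$-form case to a pointwise algebraic inequality for traceless $3\times 3$ matrices — is sound and is genuinely more self-contained than the paper's proof, which merely explains how to adapt the ``replacement plus Cauchy--Schwarz'' argument of Smith--Uhlenbeck to the present situation. The Hodge-star reduction is valid ($\ast$ is a parallel pointwise isometry interchanging $d_A$ and $\pm\ast d_A\ast$ in dimension $3$), and your algebraic identity
\[
(\text{RHS})-(\text{LHS})=\tfrac12(s_2-s_3)^2+\tfrac12(p+3\alpha)^2+\tfrac12(q+3\beta)^2+2r^2+6\gamma^2
\]
checks out, with the stated equality case confirming that $3/2$ is sharp.

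There is, however, a genuine gap at the normalization step: you cannot in general choose orthonormal bases so that $\theta(x)=|\theta|(x)\,e^1\otimes v_1$. An element of $T_x^{\ast}X\otimes\mathfrak{g}_{P,x}$ is decomposable only if it has rank one as a $3\times\dim\mathfrak{g}$ matrix, and for $\mathfrak{g}=\mathfrak{su}(2)$ a generic $\theta(x)$ has rank $3$. Consequently $|d|\theta||^2=\sum_i\langle\nabla_i\theta,\hat\theta\rangle^2$ is not $\sum_i(T^1_{i1})^2$ in general, and the contributions from $a\geqslant 2$ do not drop out of the left-hand side as you claim. Fortunately the gap is repairable without degrading the constant: by the singular value decomposition write $\hat\theta=\sum_k\lambda_k\,e^k\otimes v_k$ with $\sum_k\lambda_k^2=1$, so that $\partial_i|\theta|=\sum_k\lambda_k T^k_{ik}$; Cauchy--Schwarz in $k$ gives $|d|\theta||^2\leqslant\sum_k\sum_i(T^k_{ik})^2$, and applying your algebraic inequality to each traceless matrix $T^k$ with distinguished column $k$ (the inequality is invariant under permuting the frame) and summing over $k$ yields the claim, since the remaining matrices $T^a$ contribute nonnegatively to the right-hand side. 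You need to insert this step; as written, the proof rests on a false normalization.
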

\begin{proof}
	We start noting that the stated inequalities do not follow directly from the result stated in \cite[Theorem 5]{smith2019removeability}. Indeed, the refined inequality appearing in \cite[Theorem 5]{smith2019removeability} for general bundle-valued $2$-forms is proved only in dimensions $n\geqslant 4$, and although the proof of the refined inequality for general bundle-valued $1$-forms given in that reference is valid in any dimension $n\geqslant 3$, the corresponding result is weaker than \eqref{ineq: refined_Kato_nablaPhi} since the constant factor appearing in the left-hand side in the general case is $(n+1)/n$, and for $n=3$ this is less than the constant $3/2$ as we stated in \eqref{ineq: refined_Kato_nablaPhi}. Nevertheless, it is explained below how the proof of the inequalities in \cite[Theorem 5]{smith2019removeability} can be easily modified to prove inequalities \eqref{ineq: refined_Kato_F} and \eqref{ineq: refined_Kato_nablaPhi} by considering our particular situation, where $n=3$ and the $2$-form $F$ and the $1$-form $\theta$ satisfy the equations $d_A F=0$ and $d_A^{\ast}\theta=0$.
	
	In order to prove inequality \eqref{ineq: refined_Kato_F}, one follows the proof of inequality (3.2) in \cite[Theorem 5]{smith2019removeability} and notes that the right-hand side of equation (3.5) in that reference has only 2 terms instead of 3 in the present situation where $d_A F=0$, so when it is stated afterwards in that reference that ``Each such replacement has either 3 or $n-1$ terms'', one should note that in our case each such replacement has $n-1=2$ terms instead, so that the Cauchy--Schwarz inequality that is used afterwards can still be used, with $n-1=2$, and the rest of the proof goes through yielding the desired inequality. 
	
	As for proving inequality \eqref{ineq: refined_Kato_nablaPhi}, one follows the proof of inequality (3.3) in \cite[Theorem 5]{smith2019removeability} and notes that the right-hand side of equation (3.14) in that reference has only $n-1=2$ terms instead of $n$ in the present case where $d_A^{\ast}\theta=0$, so that the Cauchy--Schwarz inequality that is used afterwards can be used with $n$ replaced by $n-1=2$, and following the rest of the proof one gets the desired conclusion.
\end{proof}
We now get the following refined Bochner inequality along the end under an additional assumption on the asymptotic link.
\begin{proposition}\label{prop: refined_Kato_Bochner}
	Continue the hypotheses of Theorem \ref{thm: quadratic_decay} and suppose furthermore that the Gaussian curvature $K^{\Sigma}$ of the asymptotic link $\Sigma$ satisfies $K^{\Sigma}\geqslant 1$. If we set $\Psi:=(\nabla_A\Phi,F_A)\in\Omega^1\oplus\Omega^2(X,\mathfrak{su}(2)_P)$, then there is $R_0\gg 1$ such that for $\rho\geqslant R_0$ we have
	\begin{equation}\label{ineq: refined_Bochner_Psi}
		\Delta|\Psi|^{1/2}\lesssim \rho^{-2-\nu}|\Psi|^{1/2}.
	\end{equation}
\end{proposition}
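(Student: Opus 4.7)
The proof will combine the Bochner--Weitzenböck identities of Lemma~\ref{lem: Bochner_estimate} with the refined Kato inequalities of Proposition~\ref{prop: refined_Kato}. Writing $|\Psi|^2 = |\nabla_A\Phi|^2 + |F_A|^2$, a direct computation from $|\Psi|^{1/2} = (|\Psi|^2)^{1/4}$ together with the identity $\tfrac{1}{2}\Delta|\Psi|^2 = \langle \nabla_A^{*}\nabla_A\Psi,\Psi\rangle - |\nabla_A\Psi|^2$ gives, wherever $|\Psi|>0$,
\begin{equation*}
\Delta |\Psi|^{1/2} = \frac{\langle \nabla_A^{*}\nabla_A\Psi,\Psi\rangle - |\nabla_A\Psi|^2}{2|\Psi|^{3/2}} + \frac{3\,|d|\Psi||^2}{4|\Psi|^{3/2}}.
\end{equation*}
Now the Bianchi identity gives $d_A F_A = 0$; equation \eqref{eq:2nd_Order_Eq_1} together with $d_A\nabla_A\Phi = [F_A,\Phi]$ gives $d_A^{*}(\nabla_A\Phi) = 0$; and $d_A^{*}F_A = [\nabla_A\Phi,\Phi]$ by \eqref{eq:2nd_Order_Eq_2}. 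Hence Proposition~\ref{prop: refined_Kato} applies to both components of $\Psi$ and yields, after summing, $|\nabla_A^2\Phi|^2 + |\nabla_A F_A|^2 \geq \tfrac{3}{2}\bigl(|d|\nabla_A\Phi||^2 + |d|F_A||^2\bigr) - |\Xi|^2$, where $\Xi := ([\nabla_A\Phi,\Phi],[F_A,\Phi])$. The elementary inequality $|d|\Psi||^2 \leq |d|\nabla_A\Phi||^2 + |d|F_A||^2$ (from $|\Psi|\, d|\Psi| = |\nabla_A\Phi|\, d|\nabla_A\Phi| + |F_A|\, d|F_A|$ and Cauchy--Schwarz) then gives $|\nabla_A\Psi|^2 \geq \tfrac{3}{2}|d|\Psi||^2 - |\Xi|^2$. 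Substituting, the gradient terms cancel \emph{exactly}, leaving
\begin{equation*}
\Delta|\Psi|^{1/2} \leq \frac{\langle \nabla_A^{*}\nabla_A\Psi,\Psi\rangle + |\Xi|^2}{2|\Psi|^{3/2}}.
\end{equation*}

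The numerator is controlled via the Bochner formulas \eqref{eq: rough_bochner_dphi} and \eqref{eq: rough_bochner_curv}, which together give
\begin{equation*}
\langle \nabla_A^{*}\nabla_A\Psi,\Psi\rangle = -\mathcal{R}ic_g(\nabla_A\Phi,\nabla_A\Phi) - \mathcal{R}ic_g(\ast F_A,\ast F_A) + \mathcal{Q} - |\Xi|^2,
\end{equation*}
where $\mathcal{Q}$ collects the cubic commutator terms $-2\langle\ast[\ast F_A,\nabla_A\Phi],\nabla_A\Phi\rangle -\langle[\nabla_A\Phi,\nabla_A\Phi],F_A\rangle -2\sum_{i,j,k}\langle[F_{ik},F_{kj}],F_{ij}\rangle$. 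The explicit $-|\Xi|^2$ precisely cancels the $+|\Xi|^2$ above. Each summand of $\mathcal{Q}$ has the form $\langle[X,Y],Z\rangle$ with $X,Y,Z \in \{F_A, \nabla_A\Phi\}$; by $[E^{||},E^{||}] = 0$ and the splitting \eqref{eq: inner_prod_decomp}, at least one of the three factors must enter through its $\Phi$-transverse component, so $|\mathcal{Q}| \lesssim |\Psi|^2\bigl(|(\nabla_A\Phi)^{\perp}| + |F_A^{\perp}|\bigr)$; by Theorem~\ref{thm: exp_decay_transv} this is bounded by $c_{A,\Phi}\, e^{-cm\rho}|\Psi|^2$ for $\rho \geq R_0$. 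For the Ricci terms, the positivity of $K^{\Sigma}$ enters: up to rescaling the link metric by a positive constant (which preserves the AC structure while merely dilating the radius function) we may assume $K^{\Sigma} \geq 1$, and the computation in Appendix~\ref{app: A} shows that in an adapted frame along the end the Ricci tensor of $g$ agrees with the cone Ricci---whose eigenvalues are $0$ and $(K^{\Sigma}-1)\rho^{-2}\geq 0$---up to an error of order $\rho^{-2-\nu}$. Consequently $-\mathcal{R}ic_g(\alpha,\alpha) \leq c\, \rho^{-2-\nu}|\alpha|^2$ for any $1$- or $2$-form $\alpha$. Dividing the resulting estimate by $2|\Psi|^{3/2}$ and choosing $R_0$ large enough so that $e^{-cm\rho} \leq \rho^{-2-\nu}$ absorbs the exponential term yields the claim.

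The main delicate point is the precise numerical matching: the refined Kato constant $3/2$ is \emph{exactly} what cancels the coefficient $3/4$ of $|d|\Psi||^2/|\Psi|^{3/2}$ arising in the chain rule for $\Delta|\Psi|^{1/2}$, and this alignment depends on (i) $\dim X = 3$, (ii) the second-order equations supplying the divergence-free hypotheses required by Proposition~\ref{prop: refined_Kato}, and (iii) our choice of the specific power $1/2$. Without this cancellation the surviving $|d|\Psi||^2/|\Psi|^{3/2}$ term would destroy the subsequent Moser iteration needed to obtain the sharp quadratic decay. To deal with zeros of $|\Psi|$ one applies the preceding calculation to the regularisation $(|\Psi|^2 + \varepsilon)^{1/4}$, for which every expression is smooth, and then sends $\varepsilon \downarrow 0$ in the usual distributional sense.
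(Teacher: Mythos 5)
Your argument is correct and is essentially the paper's own proof: the exact cancellation of the $|d|\Psi||^{2}$-term between the chain rule for $|\Psi|^{1/2}$ and the refined Kato inequalities of Proposition \ref{prop: refined_Kato} (applied via $d_AF_A=0$ and $d_A^{\ast}\nabla_A\Phi=0$, with error terms $|\Xi|^2$ supplied by \eqref{eq:2nd_Order_Eq_1}--\eqref{eq:2nd_Order_Eq_2}), followed by the Bochner formulas of Lemma \ref{lem: Bochner_estimate}, the exponential decay of the transverse components from Theorem \ref{thm: exp_decay_transv}, and the Appendix \ref{app: A} computation of $\mathcal{R}ic_g$ under $K^{\Sigma}\geqslant 1$, is precisely the route taken in the text. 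Your explicit Cauchy--Schwarz step $|d|\Psi||^{2}\leqslant|d|\nabla_A\Phi||^{2}+|d|F_A||^{2}$ and the $\varepsilon$-regularisation at zeros of $|\Psi|$ are useful details left implicit in the paper, while the aside about rescaling to achieve $K^{\Sigma}\geqslant 1$ is unnecessary here (that hypothesis is already assumed in the proposition; the scaling argument belongs to the proof of Theorem \ref{thm: quadratic_decay}) and is in any case misstated, since one rescales the whole metric $g$ rather than the link metric alone.
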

\begin{proof}
	For any $\alpha\in (0,1)$ a standard computation gives
	\begin{align}
	\Delta |\Psi|^{\alpha} &= \alpha |\Psi|^{\alpha - 2}\left((2-\alpha)|d|\Psi||^2 + \frac{1}{2}\Delta|\Psi|^2\right)\nonumber\\
	&= \alpha |\Psi|^{\alpha - 2}\left((2-\alpha)|d|\Psi||^2 + \langle\nabla_A^{\ast}\nabla_A \Psi,\Psi\rangle -|\nabla_A\Psi|^2\right).\label{eq: std_computation}
	\end{align} On the other hand, it follows from the second order equations \eqref{eq:2nd_Order_Eq_1} and \eqref{eq:2nd_Order_Eq_2}, the Bianchi identity $d_A F_A=0$ and Proposition \ref{prop: refined_Kato} that
	\begin{equation}\label{ineq: der_refined_Kato}
		-|\nabla_A\Psi|^2\leqslant -\frac{3}{2}|d|\Psi||^2 + |[F_A,\Phi]|^2 + |[\nabla_A\Phi,\Phi]|^2.
	\end{equation}
	Combining \eqref{eq: std_computation} and \eqref{ineq: der_refined_Kato} implies
	\[
	\Delta |\Psi|^{\alpha} \leqslant \alpha |\Psi|^{\alpha - 2}\left((1/2-\alpha)|d|\Psi||^2 + \langle\nabla_A^{\ast}\nabla_A \Psi,\Psi\rangle + |[F_A,\Phi]|^2 + |[\nabla_A\Phi,\Phi]|^2\right),
	\] so that by taking $\alpha=1/2$ we have
	\begin{equation}
	\Delta |\Psi|^{1/2} \leqslant \frac{1}{2}|\Psi|^{1/2 - 2}\left(\langle\nabla_A^{\ast}\nabla_A \Psi,\Psi\rangle + |[F_A,\Phi]|^2 + |[\nabla_A\Phi,\Phi]|^2\right).
	\end{equation}
	Now we use the Bochner--Weitzenb\"ock formulas of Lemma \ref{lem: Bochner_estimate} to get  
	 \begin{align}\label{eq: improved_bochner_F}
	 	\Delta |\Psi|^{1/2} \leqslant&\text{ }\frac{1}{2}|\Psi|^{1/2 - 2}(\langle \mathcal{R}ic_g{\#}\nabla_A\Phi,\nabla_A\Phi\rangle -2\langle\ast[\ast F_A,\nabla_A\Phi],\nabla_A\Phi\rangle\\
	 	&+\langle \mathcal{R}ic_g{\#} F_A,F_A\rangle -\langle [\nabla_A\Phi,\nabla_A\Phi], F_A\rangle -\sum_{i,j,k}\langle[F_{ik},F_{kj}],F_{ij}\rangle).
	 \end{align}
	 Next we observe that using \eqref{eq: inner_prod_decomp} and the exponential decay of the transverse components given by Theorem \ref{thm: exp_decay_transv}, together with Young's inequality, it follows that the three inner products on the right-hand side of \eqref{eq: improved_bochner_F} that contain Lie brackets $[\cdot{},\cdot{}]$ can all be bounded above by $c'e^{-c''m\rho}|\Psi|^2$, which in turn can be bounded by $c\rho^{-2-\nu}|\Psi|^2$. Hence, in order to conclude the proof, it remains for us to obtain upper bounds of the form $c\rho^{-2-\nu}|\Psi|^2$ on the other two remaining inner products, $\langle \mathcal{R}ic_g{\#}\nabla_A\Phi,\nabla_A\Phi\rangle$ and $\langle \mathcal{R}ic_g{\#}F_A,F_A\rangle$. 
	 
	 According to the computation \eqref{eq: Ricci_AC} of Appendix \ref{app: A}, in an adapted orthonormal frame for the cone metric, we can write the AC metric $g_{ij}$ and its Ricci tensor $\mathcal{R}_{ij}$ in such a way that, for $\rho\geqslant R_0$, we have $g_{ij} = \delta_{ij} + O(\rho^{-\nu})$ for all $i,j$, and $\mathcal{R}_{ij}=O(\rho^{-2-\nu})$ if $i\neq j$ or $i=j=1$, and moreover $\mathcal{R}_{22}$ and $\mathcal{R}_{33}$ are of the form $\rho^{-2}(K^{\Sigma}-1)+O(\rho^{-2-\nu})$. Now recall from Lemma \ref{lem: Bochner_estimate} the precise formulas \eqref{eq: inn_prod_nablaPhi} and \eqref{eq: inn_prod_F} for the inner products $\langle \mathcal{R}ic_g{\#}\nabla_A\Phi,\nabla_A\Phi\rangle$ and $\langle \mathcal{R}ic_g{\#}F_A,F_A\rangle$, respectively. Note that these formulas are given in an orthonormal frame with respect to $g$ and, as one can readily see from the proof of such formulas, in the present frame we get the extra terms $O(\rho^{-2-\nu})|\nabla_A\Phi|^2$ in \eqref{eq: inn_prod_nablaPhi} and $O(\rho^{-2-\nu})|F_A|^2$ in \eqref{eq: inn_prod_F}. Using this and the formulas for the Ricci curvature in this frame, we can compute the following:
	 \begin{align}
	 	\langle \mathcal{R}ic_g{\#}\nabla_A\Phi,\nabla_A\Phi\rangle &= O(\rho^{-2-\nu})|\nabla_A\Phi|^2-\sum_{i,k} \mathcal{R}_{ik}\langle(\nabla_A\Phi)_i,(\nabla_A\Phi)_k\rangle\\
	 	&=O(\rho^{-2-\nu})|\nabla_A\Phi|^2-\sum_{i\neq k} \mathcal{R}_{ik}\langle(\nabla_A\Phi)_i,(\nabla_A\Phi)_k\rangle - \sum_{i} \mathcal{R}_{ii}|(\nabla_A\Phi)_i|^2\\
	 	&=O(\rho^{-2-\nu})|\nabla_A\Phi|^2 + O(\rho^{-2-\nu})|(\nabla_A\Phi)_1|^2\\
	 	&\quad+(-\rho^{-2}(K^{\Sigma}-1)+O(\rho^{-2-\nu}))(|(\nabla_A\Phi)_2|^2+|(\nabla_A\Phi)_3|^2)\\
	 	&\lesssim \rho^{-2-\nu}|\nabla_A\Phi|^2, \quad\text{(since $K^{\Sigma}\geqslant 1$)}
	 \end{align} and
	 \begin{align}
	 	\langle \mathcal{R}ic_g{\#}F_A,F_A\rangle &= O(\rho^{-2-\nu})|F_A|^2-\sum_{i,j}\sum_{k}\mathcal{R}_{ik}\left\langle F_{jk},F_{ij}\right\rangle -S_g\left|F_A\right|^{2}\\ 
	 	&= O(\rho^{-2-\nu})|F_A|^2 -\sum_{j}\sum_{i\neq k}\sum_{k}\mathcal{R}_{ik}\left\langle F_{jk},F_{ij}\right\rangle  +2\sum_{i<j}\mathcal{R}_{ii}\left|F_{ij}\right|^{2}-S_g\sum_{i<j}\left|F_{ij}\right|^{2}\\
	 	& = O(\rho^{-2-\nu})|F_A|^2+ O(\rho^{-2-\nu})|F_A|^2 +\sum_{i<j}\left(2\mathcal{R}_{ii}-S_g\right)\left|F_{ij}\right|^{2}\\
	 	& = O(\rho^{-2-\nu})|F_A|^2+(-2\rho^{-2}(K^{\Sigma}-1)+O(\rho^{-2-\nu}))(|F_{12}|^2 + |F_{13}|^2) + O(\rho^{-2-\nu})|F_{23}|^2\\
	 	&\lesssim\rho^{-2-\nu}|F_A|^2. \quad\text{(again since $K^{\Sigma}\geqslant 1$)}
	 \end{align}
  This completes the proof.
\end{proof}
We are now ready to prove the main theorem of this section.
\begin{proof}[Proof of Theorem \ref{thm: quadratic_decay}]
	We start recalling the following basic scaling property of the second order equations \eqref{eq:2nd_Order_Eq_1} and \eqref{eq:2nd_Order_Eq_2}: if $\lambda\in(0,\infty)$, then $(A,\Phi)$ is a solution to \eqref{eq:2nd_Order_Eq_1} and \eqref{eq:2nd_Order_Eq_2} on $(X^3,g)$ if and only if $(A,\lambda^{-1}\Phi)$ is a solution to \eqref{eq:2nd_Order_Eq_1} and \eqref{eq:2nd_Order_Eq_2} on $(X^3,\lambda^2 g)$ (see \emph{e.g.} \cite[Proposition 2.1]{fadel2019limit}). It follows that the estimates
	\[
	\sup_X \rho^2|F_A|<\infty\quad\text{and}\quad\sup_X \rho^2|\nabla_A\Phi|<\infty
	\] that we want to prove are invariant under these rescalings. Therefore, since $K^{\Sigma}>0$, after scaling we can assume that $K^{\Sigma}\geqslant 1$. Thus, by Proposition \ref{prop: refined_Kato_Bochner}, there is $R_0\gg 1$ and $C>0$ such that the function $u:=|\Psi|^{1/2}$ satisfies
	\begin{equation}\label{ineq: crucial_quadratic}
	\Delta u\leqslant f u \quad\text{on }X\setminus B_{R_0},
	\end{equation} where $f:=C\rho^{-2-\nu}\in L^{q}(X)$ for any $q\in [3/2,\infty)$. Moreover, $u\in L^4(X)$ since $u^4=|F_A|^2+|\nabla_A\Phi|^2\in L^1(X)$. Recalling also that $(X^3,g)$ satisfies the $L^2$-Sobolev inequality of Theorem \ref{thm: Sobolev_AC} and for any fixed reference point $o\in X$ there is $C_o>0$ such that $V(o,r)\leqslant C_o r^n$ for any $r>0$, we are then able to apply the decay result of Proposition \ref{prop: BKN_decay} to conclude
	\begin{equation}\label{eq: almost_quadratic}
	u=O_{\varepsilon}(\rho^{-1+\varepsilon})\quad\text{for any }\varepsilon>0. 
	\end{equation}
	Now let $\mu\in(0,\nu)$ be as in \eqref{eq: Holder_mu} and take $\varepsilon:=\nu-\mu/2>0$. Then from \eqref{ineq: crucial_quadratic} and \eqref{eq: almost_quadratic} we get $\Delta u\leqslant C'\rho^{-3-\mu/2}$ on $X\setminus B_{R_0}$, for some $C'>0$. Using Theorem \ref{thm: embedding}, it is easy to see that $h:=C'\rho^{-3-\mu/2}\in C_{-3-\mu/2}^{k,\alpha}$ for any $\alpha\in (0,1)$ and $k\in\mathbb{N}_0$. Therefore, it follows from Theorem \ref{thm: VC_regularity} \ref{itm: VC_iv} that there is a unique $v\in C_{-1}^{k+2,\alpha}$ with $\Delta v = h$. By taking $M\geqslant 1$ large enough, we can assume $u\leqslant Mv$ on $\partial B_{R_0}$, and since $u$ decays to zero at infinity (Corollary \ref{cor: e-reg_decay}), by the maximum principle we have $u\leqslant Mv$ on $X\setminus B_{R_0}$, from where we conclude that $u\in C_{-1}^0(X)$, \emph{i.e.} $|F_A|,|\nabla_A\Phi|\in C_{-2}^0(X)$ as we wanted.
\end{proof} 
\begin{remark}[Alternative proof of the quadratic decay of $F_A$]
	Continue the hypotheses of Theorem \ref{thm: quadratic_decay}. Then, using the Bianchi identity and the second order equations \eqref{eq:2nd_Order_Eq_1} and \eqref{eq:2nd_Order_Eq_2}, it follows that $\eta:=\ast F_A - \nabla_A\Phi$ is a $\mathfrak{g}_P$-valued $1$-form satisfying $d_A^{\ast}\eta = 0$ and $d_A\eta = -[\ast\eta,\Phi]$. Thus, using the refined Kato inequality of Proposition \ref{prop: refined_Kato} for $\eta$, together with the Bochner formula \eqref{eq: bochner_xi} in Lemma \ref{lem: Bochner_estimate}, and doing the same computation as in the proof of Proposition \ref{prop: refined_Kato_Bochner}, we get that
	\[
	\Delta |\eta|^{1/2}\leqslant\frac{1}{2}|\eta|^{1/2- 2}(-\mathcal{R}ic_g(\eta,\eta)-\langle\ast[\eta,\eta],\eta\rangle).
	\] In particular, if $K^{\Sigma}\geqslant 1$ then using the same arguments as in the proof of Proposition \ref{prop: refined_Kato_Bochner} we get, for $\rho\geqslant R_0$,
	\[
	\Delta |\eta|^{1/2}\lesssim \rho^{-2-\nu}|\eta|^{1/2}.
	\] Then we can proceed just like in the above proof of Theorem \ref{thm: quadratic_decay} to prove that whenever $K^{\Sigma}>0$, then after scaling we have that $u:=|\eta|^{1/2}$ satisfies \eqref{ineq: crucial_quadratic}. So that following the same arguments of that proof, using that $u^4=|\eta|^2=|\ast F_A -\nabla_A\Phi|^2\in L^1(X)$, we can conclude that $|\ast F_A - \nabla_A\Phi|\in C_{-2}^{0}(X)$. This combined with the quadratic decay of $\nabla_A\Phi$ proved in Theorem \ref{thm: quadratic_decay_nablaPhi}, also gives $|F_A|\in C_{-2}^0(X)$.
\end{remark}

\begin{remark}[On the positive Gaussian curvature assumption]
	On the above proof of Theorem \ref{thm: quadratic_decay}, the positivity assumption on the Gaussian curvature $K^{\Sigma}$ of $\Sigma$, combined with a scaling argument and Proposition \ref{prop: refined_Kato_Bochner}, led to the strong differential inequality \eqref{ineq: crucial_quadratic} which in turn was the crucial ingredient to deduce the desired quadratic decay rate $|F_A|\in C_{-2}^0(X)$.
	
	Now, without any hypothesis on $K^{\Sigma}$ or, more generally, without any further \emph{a priori} knowledge on the Ricci curvature tensor $\mathcal{R}ic_g$ of $X$, and/or further hypothesis on $(A,\Phi)$, one can see (by following the proof of Proposition \ref{prop: refined_Kato_Bochner}) that the combination of the refined Kato inequalities with the Bochner formulas and the exponential decay of the transverse components \emph{a priori} just imply that the function $u:=|\Psi|^{1/2}$, which lies in $L^4(X)$, satisfies a differential inequality of the following form:
	\begin{equation}\label{eq: crit_dif_ineq}
	\Delta u\leqslant C\rho^{-2}u\quad\text{on}\quad X\setminus B_{R_0},
	\end{equation} for some constants $C>0$ and $R_0\gg 1$. From here, it follows from Lemma \ref{lem: decay} \ref{itm: decay_i}, or alternatively from the Moser iteration result of Lemma \ref{lem: moser_iteration}, that the function $u$ satisfies
	\begin{equation}\label{eq: apriori_best_decay}
		u=O(\rho^{-3/4}).
	\end{equation} Therefore, in principle, from the differential inequality \eqref{eq: crit_dif_ineq} we would get only that $|F_A|,|\nabla_A\Phi|\in C_{-3/2}^0(X)$, which turns out to be the non-optimal statement that we had already proven in Lemma \ref{lem: rough_estimate}. 

	We note that, in the abstract, \emph{i.e.} without further knowledge on the constant $C$ in \eqref{eq: crit_dif_ineq}, or on the integrability of $u$, the conclusion \eqref{eq: apriori_best_decay} is actually the sharpest general statement on the polynomial decay rate of a nonnegative function $u\in L^4(X)$ satisfying \eqref{eq: crit_dif_ineq}. Indeed, consider the following simple example. Suppose $X=\mathbb{R}^3$ with the standard flat metric and for each $\beta\in (3/4, 1)$ let $u_{\beta}:=\rho^{-\beta}$, where $\rho$ is a radius function; by definition $\rho$ is a smooth extension of the distance function $r(x):=|x|^{1/2}$ in $\mathbb{R}^3$ such that $\rho\geqslant 1$ throughout $\mathbb{R}^3$ and $\rho=r$ on $\mathbb{R}^3\setminus B_2$. Thus, $u_{\beta}$ is a smooth nonnegative function in $L^4(\mathbb{R}^3)$ satisfying $\Delta u_{\beta} = \beta(1-\beta)\rho^{-2}u_{\beta}$ on $\mathbb{R}^3\setminus B_2$, so that in particular $u_{\beta}$ satisfies \eqref{eq: crit_dif_ineq} for any $C\geqslant 3/16\geqslant\beta(1-\beta)>0$ and $R_0\geqslant 2$. Notice that one can take $\beta$ as close as $3/4$ as one wants.
\end{remark}


\subsection{Limiting configuration}\label{subsec: lim_config}
Finally, we prove the last part of our second main result stated in the introduction, \emph{i.e.} Theorem \ref{thm: main_2} \ref{itm: v_main_2}.

\begin{theorem}\label{thm: limiting_configuration}
	Let $(X^3,g)$ be an AC oriented $3$-manifold with connected link $(\Sigma^2,g_{\Sigma})$, and let $P\to X$ be a principal $\rm SU(2)$-bundle. Let $(A,\Phi)\in\mathscr{C}(P)$ be a solution to the second order equations \eqref{eq:2nd_Order_Eq_1} and \eqref{eq:2nd_Order_Eq_2}. Denote by $m$ the finite mass of $(A,\Phi)$, given by Theorem \ref{thm: finite_mass}, and suppose that $m>0$. Assume that $|F_A|,|\nabla_A\Phi|\in C_{-2}^0(X)$. Then there exists a principal $\rm SU(2)$-bundle $P_{\infty}\to\Sigma$ and a smooth configuration $(A_{\infty},\Phi_{\infty})\in\mathscr{A}(P_{\infty})\times\Gamma(\mathfrak{su}(2)_{P_{\infty}})$ such that the following hold:
	\begin{itemize}
		\myitem[(i)]\label{itm: i_limit} $(A,\Phi)|_{\Sigma_R}\to (A_{\infty},\Phi_{\infty})$ uniformly as $R\to\infty$.
		\myitem[(ii)]\label{itm: ii_limit} $\nabla_{A_{\infty}}\Phi_{\infty} = 0$.
		\myitem[(iii)]\label{itm: iii_limit} $A_{\infty}$ is a reducible Yang--Mills connection on $(\Sigma^2,g_{\Sigma})$.
	\end{itemize}
\end{theorem}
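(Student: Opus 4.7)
The plan is to work on the conical end $C(\Sigma)=(1,\infty)_r\times\Sigma$ via the diffeomorphism $\varphi$; since the AC bound $|\varphi^{\ast}g-g_C|_{g_C}=O(r^{-\nu})$ ensures decay estimates are equivalent for $g$ and the cone metric $g_C=dr^2+r^2g_{\Sigma}$, I work with whichever is convenient. First I would place $\varphi^{\ast}A$ into a radial gauge via parallel transport along the rays, so that $(\varphi^{\ast}A)(\partial_r)\equiv 0$; this trivialises $\varphi^{\ast}P|_{C(\Sigma)}\cong(1,\infty)\times P_{\infty}$ for a principal $\SU(2)$-bundle $P_{\infty}\to\Sigma$, which is the limiting bundle. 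In this gauge $\varphi^{\ast}A$ is a path $r\mapsto a(r)\in\mathscr{A}(P_{\infty})$ and $\varphi^{\ast}\Phi$ a path $r\mapsto\Phi(r)\in\Gamma(\mathfrak{su}(2)_{P_{\infty}})$, and one has the orthogonal decompositions
\[
F_A=dr\wedge\partial_r a+F_{a(r)}^{\Sigma},\qquad \nabla_A\Phi=dr\otimes\partial_r\Phi+\nabla_{a(r)}^{\Sigma}\Phi.
\]

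Second, uniform convergence of $\Phi$ is immediate from the hypothesis: $|\partial_r\Phi|\leqslant|\nabla_A\Phi|_{g_C}\lesssim_{A,\Phi}r^{-2}$ is integrable on $[R_0,\infty)$ uniformly in $\sigma\in\Sigma$, so $\Phi(r,\sigma)$ is uniformly Cauchy and converges to a limit $\Phi_{\infty}\in\Gamma(\mathfrak{su}(2)_{P_{\infty}})$, smooth by Corollary \ref{cor: e-reg_decay} and elliptic bootstrap on $\Sigma$. The connection is more delicate: the orthogonality $|dr\wedge\partial_r a|_{g_C}^{2}=r^{-2}|\partial_r a|_{g_{\Sigma}}^{2}$ together with $|F_A|_g\lesssim_{A,\Phi}r^{-2}$ yields only $|\partial_r a|_{g_{\Sigma}}\lesssim_{A,\Phi}r^{-1}$, which is marginally non-integrable; \emph{this is the main obstacle}. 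To overcome it I would split $\partial_r a=(\partial_r a)^{||}+(\partial_r a)^{\perp}$ along the longitudinal-transverse decomposition induced by $\Phi$. The transverse piece is tame: from the exponential decay of transverse components (Theorem \ref{thm: exp_decay_transv}), $|[F_A,\Phi]|_g\lesssim_{A,\Phi}e^{-cm(\rho-R_0)}$, combined with $|\Phi|\geqslant m/2$, one obtains $|(\partial_r a)^{\perp}|_{g_{\Sigma}}\lesssim_{A,\Phi}r\,e^{-cm(r-R_0)}$, integrable in $r$, so $a^{\perp}(r)$ converges uniformly. For the longitudinal part, $a^{||}(r)$ is essentially an abelian connection on the $U(1)$-subbundle of $\mathfrak{su}(2)_{P_{\infty}}$ stabilising $\Phi_{\infty}/|\Phi_{\infty}|$; using the residual $r$-independent gauge freedom on $\Sigma$ to impose a Coulomb condition $d^{\ast}_{g_{\Sigma}}(a^{||}(r)-a^{||}_{\text{ref}})=0$ relative to a fixed abelian reference connection $a^{||}_{\text{ref}}$, Hodge theory reduces convergence to controlling the harmonic and exact parts separately. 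The harmonic part is pinned down by the $R$-independent flux $\int_{\Sigma_R}\langle\hat{\Phi},F_A\rangle$ (see the proof of Theorem \ref{thm: integrality}), while the exact part is governed by the Yang--Mills--Higgs equation $d_A^{\ast}F_A=[\nabla_A\Phi,\Phi]$, whose right-hand side vanishes exponentially and which, after projection to the longitudinal sector, becomes an elliptic equation on $\Sigma$ with integrable-in-$r$ source. This yields uniform convergence $a(r)\to a_{\infty}$ and establishes \ref{itm: i_limit}.

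Finally, \ref{itm: ii_limit} and \ref{itm: iii_limit} follow by passing to the limit of the defining equations on $\Sigma_R$ as $R\to\infty$. Upgrading the $C^0$-convergence just established to $C^k$-convergence (combining it with the uniform higher-order bounds from Corollary \ref{cor: e-reg_decay} and elliptic interpolation), together with $|\nabla_A\Phi|\to 0$ on $\Sigma_R$, forces $\nabla_{A_{\infty}}\Phi_{\infty}=0$, proving \ref{itm: ii_limit}. Since $\Phi_{\infty}\neq 0$ is $A_{\infty}$-parallel, $A_{\infty}$ preserves the $\pm im/2$-eigenspace splitting $P_{\infty}\times_{\SU(2)}\mathbb{C}^{2}\cong\mathscr{L}\oplus\mathscr{L}^{-1}$ and therefore reduces to the $U(1)$-stabiliser of $\Phi_{\infty}$. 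Taking the limit of \eqref{eq:2nd_Order_Eq_2} on $\Sigma_R$ and using the exponential vanishing of $[\nabla_A\Phi,\Phi]$ gives $d^{\ast}_{A_{\infty}}F_{A_{\infty}}=0$, so $A_{\infty}$ is a reducible Yang--Mills connection, completing \ref{itm: iii_limit}.
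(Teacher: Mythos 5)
Your overall route is essentially the paper's: pass to a radial gauge on the cone, use the decomposition $F_A=dr\wedge\partial_r a+F_{a(r)}$, integrate in $r$ to produce the limit, and then pass to the limit in \eqref{eq:2nd_Order_Eq_1}--\eqref{eq:2nd_Order_Eq_2} to obtain \ref{itm: ii_limit} and \ref{itm: iii_limit}. (The paper first extracts subsequential limits via Uhlenbeck compactness on the rescaled unit cylinders $\rho^{-1}([R,R+1])$ and only then invokes the radial gauge; you work in radial gauge throughout, and your argument for the convergence of $\Phi$ via integrability of $|\partial_r\Phi|\lesssim \rho^{-2}$ is clean.) You also correctly isolate the delicate point: measured in the fixed metric $g_\Sigma$, quadratic curvature decay only gives $|\partial_r a|_{g_\Sigma}\lesssim r^{-1}$, which is borderline non-integrable, and your exponential bound $|(\partial_r a)^\perp|_{g_\Sigma}\lesssim r\,e^{-cm(r-R_0)}$ for the transverse part is correct and disposes of that piece.

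The gap is in the longitudinal part. Writing $a^{||}(r)=a^{||}_{\mathrm{ref}}+\alpha(r)$ and Hodge-decomposing $\alpha(r)$ on $\Sigma$, the flux $\int_{\Sigma_R}\langle\hat{\Phi},F_A\rangle$ controls only the cohomology class of the abelianized curvature $d\alpha(r)$ (equivalently $\deg\mathscr{L}$); it carries no information about the harmonic component of $\alpha(r)$ itself, which parametrizes the flat $\rm U(1)$ connections — the holonomies around $1$-cycles of $\Sigma$ — and is an independent degree of freedom. Its radial derivative is the harmonic projection of $(\partial_r a)^{||}$, for which the only bound you have is again $O(r^{-1})$, so when $b_1(\Sigma)>0$ nothing in your sketch makes this part converge. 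The inputs you cite are genuinely insufficient: the $2$-form $\tfrac{dr}{r}\wedge\gamma$, with $\gamma$ a harmonic $1$-form on $\Sigma$, is closed, decays quadratically, and has divergent period integrals over $[R_0,R]\times\ell$, so closedness of $\langle\hat{\Phi},F_A\rangle$ modulo exponentially small errors plus the flux cannot rule out drifting holonomy; one must bring in the co-closedness supplied by \eqref{eq:2nd_Order_Eq_2} (or the refined radial expansion of $|\Phi|$) to exclude this mode, and that step is missing. Similarly, the claim that projecting $d_A^{\ast}F_A=[\nabla_A\Phi,\Phi]$ to the longitudinal sector yields "an elliptic equation on $\Sigma$ with integrable-in-$r$ source" skips the actual separation of the $\partial_r^2 a$ and $d_{\Sigma}^{\ast}F_a$ contributions. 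So the main analytic difficulty is correctly flagged but not actually resolved.
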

\begin{remark}
	According to Theorem \ref{thm: quadratic_decay_nablaPhi} and Theorem \ref{thm: quadratic_decay}, the quadratic decay assumptions $|F_A|,|\nabla_A\Phi|\in C_{-2}^0(X)$ on the above Theorem \ref{thm: limiting_configuration} holds true if we suppose at least one of the following holds:
	\begin{itemize}
		\myitem[($\dagger$)] $(A,\Phi)$ is a monopole, \emph{i.e.} a solution to equation \eqref{eq: monopole}.
		\myitem[($\dagger\dagger$)] $\Sigma$ has positive Gaussian curvature.
	\end{itemize} Therefore the result of Theorem \ref{thm: limiting_configuration} implies Theorem \ref{thm: main_2} \ref{itm: v_main_2}.
\end{remark}
\begin{proof}[Proof of Theorem \ref{thm: limiting_configuration}]
	To avoid cumbersome notation, in this proof the symbol ``$\lesssim$" actually means ``$\lesssim_{A,\Phi}$", \emph{i.e.} the implicit constant may depend on $(A,\Phi)$. By assumption, we have
	\begin{equation}
		|F_A|_g^2\lesssim \rho^{-4}\quad\text{and}\quad |\nabla_A\Phi|_g^2\lesssim \rho^{-4}.
	\end{equation}
	Now consider the cylinders $C_R = \rho^{-1}([R,R+1])$ with the conical metric $g_C$ which for large $R\gg 1$ approximates well the metric $g$. Then, we rescale $g_C$ by $r^{-2}$ to obtain the cylindrical metric 
	\begin{equation}
		h = r^{-2}g_C = d t^2 + g_\Sigma,
	\end{equation}
	where $t = \log (r)$. With respect to this translation-invariant metric we can identify all the cylinders $C_R$ with $(C = [0,1]_t \times \Sigma,h)$. Moreover, from the above we have\footnote{Note that $F_A$ is a bundle-valued $2$-form while $\nabla_A\Phi$ is a bundle-valued $1$-form, that's why we get different estimates for each of them in the cylindrical metric $h$.}
	\begin{equation}\label{ineq: bounds_cylinder}
		|F_A|_h^2 \lesssim 1\quad\text{and}\quad |\nabla_A\Phi|_h^2 \lesssim e^{-2t}.
	\end{equation}
	In particular, the restrictions $A_i = A|_{C_i}$ seen as connections over $C$ have uniformly bounded curvature with respect to $h$. Thus, Uhlenbeck's compactness results \cite{uhlenbeck1982connections} apply and by possibly passing to a subsequence, $A_i$ converges modulo gauge, as $i \to \infty$, to a connection $A_\infty$ on $C$.
	
	We now argue that such a limiting connection is unique and does not depend on the subsequence. For that consider $A_i$ on $C_i$ written in radial gauge with respect to $r$, \emph{i.e.} $A_i = a_i (r)$ with $a_i (\cdot)$ a $1$-parameter family of connections over $\Sigma$ parametrized by $r \in [R,R+1]$. Then $F_{A_i} = dr \wedge \del_r a_i(r) + F_{a_i}(r)$, where $F_{a_i}(r)$ is the curvature of $a_i(r)$ over $\lbrace r \rbrace \times \Sigma$. Using this, we find $|\del_r A_i|_g \leqslant |F_{A_i}|_r \lesssim r^{-2}$ and so
	\begin{equation}
		\int_R^{R+1} |\del_r A_i|_g dr \lesssim R^{-1},
	\end{equation}
	which decays as $R \to \infty$. This then shows that the limit 
	\begin{equation}
		A_\infty = \lim\limits_{r \to \infty} A (r),
	\end{equation}
	exists and it is independent of the coordinate $r$, so that it is a pullback of a connection over $\Sigma$. Thus, it agrees with the connection $A_\infty$ obtained as the uniform limit of the $A_i$, which is therefore pulled back from $\Sigma$.

	Now consider the restrictions $\Phi_R:=\Phi|_{C_R}$ seen as a 1-parameter family of Higgs fields in the fixed cylinder $C = [0,1]_t \times \Sigma$ with the fixed metric $h$. Then \eqref{ineq: bounds_cylinder} implies that $|\nabla_A \Phi_R|_h^2 \lesssim R^{-2}$ converges to zero as $R \to \infty$. This together with the uniform bound $|\Phi_R | \lesssim m$ (valid since $(A,\Phi)$ has finite mass $m>0$, see Remark \ref{rmk: finite_mass}) shows that $\Phi_R\to \Phi_\infty$ uniformly over $C$ with $\nabla_{A_\infty} \Phi_\infty = 0$. In particular, $\del_t \Phi_\infty = 0$ and so $\Phi_\infty$ is independent of $t$, or $r$, and so it is pulled back from $\Sigma$. This completes the proof of both \ref{itm: i_limit} and \ref{itm: ii_limit}.

	Finally, \ref{itm: i_limit} and \ref{itm: ii_limit} together with equations \eqref{eq:2nd_Order_Eq_1} and \eqref{eq:2nd_Order_Eq_2} immediately imply that $d_{A_\infty}^{\ast}F_{A_\infty}=0$, \emph{i.e.} ${A_\infty}$ is a Yang--Mills connection. Furthermore, $A_{\infty}$ is reducible since $|\Phi_{\infty}|=m>0$ and $\nabla_{A_{\infty}}\Phi_{\infty}=0$. This shows part \ref{itm: iii_limit}, thereby completing the proof the theorem. 

\end{proof}

\appendix

\section{Ricci curvature of an AC manifold in an adapted frame}\label{app: A}

Let $(X^n,g)$ be an AC manifold with only one end, dimension $n\geqslant 3$, rate $\nu>0$ and asymptotic link $(\Sigma^{n-1},g_{\Sigma})$. Fix $\rho$ a radius function on $X$ and denote by $g_C=dr^2 + r^2g_{\Sigma}$ the cone metric on $C:=(1,\infty)_r\times\Sigma^{n-1}$. 

Let $\{\tilde{e}^2,\tilde{e}^3,\ldots,\tilde{e}^n\}$ be an orthonormal coframe on $\Sigma$ with respect to $g_{\Sigma}$. Then, setting $e^1:=dr$ and $e^{\alpha}:=r\tilde{e}^{\alpha}$ for $2\leqslant \alpha \leqslant n$, the set $\{e^i\}_{i=1}^n$ forms an orthonormal coframe of the metric cone $(C,g_C)$. Denote by $\mathcal{R}_{\alpha\beta}^{\Sigma}$ and $\mathcal{R}_{ij}^C$ the Ricci tensors of $g_{\Sigma}$ and $g_C$ in the frames $\{\tilde{e}^{\alpha}\}_{\alpha=2}^n$ and $\{e^i\}_{i=1}^n$ respectively. Then a quick computation (see \cite[Equations (A.3) and (A.4) of Appendix A]{li2012geometric}) yields:
\begin{align}
	\mathcal{R}_{ij}^C =\begin{cases}\label{eq: Ricci_cone}
		0,\quad\text{if $i=1$ or $j=1$}\\
		r^{-2}\left(\mathcal{R}_{\alpha\beta}^{\Sigma}-(n-2)\delta_{\alpha\beta}\right),\quad\text{if $2\leqslant i=\alpha,j=\beta\leqslant n$}.
	\end{cases}
\end{align}
Now identify the end of $X$ with the cone $C$ and write the AC metric $g$ and its Ricci tensor along the end in the above frame $\{e^i\}_{i=1}^n$ as $g_{ij}$ and $\mathcal{R}_{ij}$ respectively. By the AC condition we have $|g_{ij}-\delta_{ij}|=O(r^{-\nu})$ and $|\mathcal{R}_{ij}-\mathcal{R}_{ij}^C|=O(r^{-2-\nu})$ as $r\to\infty$. Therefore, using \eqref{eq: Ricci_cone}, along the end we get
\begin{align}
	g_{ij} &= \delta_{ij} + O(\rho^{-\nu}),\quad\text{and}\\
	\mathcal{R}_{ij} &=\begin{cases}\label{eq: Ricci_AC_general}
		O(\rho^{-2-\nu}),\quad\text{if $i=1$ or $j=1$}\\
		r^{-2}\left(\mathcal{R}_{\alpha\beta}^{\Sigma}-(n-2)\delta_{\alpha\beta}\right) + O(\rho^{-2-\nu}),\quad\text{if $2\leqslant i=\alpha,j=\beta\leqslant n$}.
	\end{cases}
\end{align}
In particular, it follows that in general $|\mathcal{R}ic_g| = O(\rho^{-2})$. When $(X^n,g)$ is \emph{asymptotically euclidean} (AE), \emph{i.e.} when $(\Sigma,g_{\Sigma})=(\mathbb{S}^{n-1},g_{\mathbb{S}^{n-1}})$ is the round $(n-1)$-sphere, then $\mathcal{R}_{\alpha\beta}^{\Sigma} = (n-2)\delta_{\alpha\beta}$ and then we get that $|\mathcal{R}ic_g|=O(\rho^{-2-\nu})$ decays faster than quadratically.
 
Now let us restrict to the $3$-dimensional case, \emph{i.e.} when $n=3$. If $K^{\Sigma}$ denotes the Gaussian curvature of the surface $\Sigma^2$, then it is well known that
\[
\mathcal{R}_{\alpha\beta}^{\Sigma} = K^{\Sigma} \delta_{\alpha\beta}.
\] Thus, from \eqref{eq: Ricci_AC_general}, along the end we get
\begin{align}
	g_{ij} &= \delta_{ij} + O(\rho^{-\nu}),\quad\text{and}\\
	\mathcal{R}_{ij} &=\begin{cases}\label{eq: Ricci_AC}
		O(\rho^{-2-\nu}),\quad\text{if $i=1$ or $j=1$}\\
		\rho^{-2}\left(K^{\Sigma}-1\right)\delta_{ij} + O(\rho^{-2-\nu}),\quad\text{if $2\leqslant i,j\leqslant 3$}.
	\end{cases}
\end{align} 
When $(X^3,g)$ is AE, \emph{i.e.} $(\Sigma,g_{\Sigma})=(\mathbb{S}^2,g_{\mathbb{S}^2})$, then $K^{\Sigma}\equiv 1$ and we get $\mathcal{R}_{ij} = O(\rho^{-2-\nu})$.


\begin{bibdiv}
	\begin{biblist}
		
		\bib{afuni2019regularity}{article}{
			author={Afuni, A.},
			title={Regularity and vanishing theorems for {Y}ang--{M}ills--{H}iggs
				pairs},
			date={2019},
			journal={Archiv der Mathematik},
			volume={112},
			number={5},
			pages={547\ndash 558},
		}
		
		\bib{Atiyah88}{book}{
			author={Atiyah, M.~F.},
			author={Hitchin, N.},
			title={The geometry and dynamics of magnetic monopoles},
			series={M.~B. Porter Lectures},
			publisher={Princeton University Press},
			date={1988},
			ISBN={9780691084800},
			note={Russian translation published as \textit{Geometriya i dinamika
					magnitnykh monopolei} (1991). MR:934202. Zbl:0671.53001.},
		}
		
		\bib{aronson1967bounds}{article}{
			author={Aronson, D.~G.},
			title={Bounds for the fundamental solution of a parabolic equation},
			date={1967},
			journal={Bulletin of the American Mathematical society},
			volume={73},
			number={6},
			pages={890\ndash 896},
		}
		
		\bib{aubin2013some}{book}{
			author={Aubin, T.},
			title={{Some nonlinear problems in Riemannian geometry}},
			publisher={Springer Science \& Business Media},
			date={2013},
		}
		
		\bib{bando1989construction}{article}{
			author={Bando, S.},
			author={Kasue, A.},
			author={Nakajima, H.},
			title={On a construction of coordinates at infinity on manifolds with
				fast curvature decay and maximal volume growth},
			date={1989},
			journal={Inventiones mathematicae},
			volume={97},
			number={2},
			pages={313\ndash 349},
		}
		
		\bib{bourguignon1981stability}{article}{
			author={Bourguignon, J.~P.},
			author={Lawson, H.~B.},
			title={Stability and isolation phenomena for {Y}ang--{M}ills fields},
			date={1981},
			journal={Communications in Mathematical Physics},
			volume={79},
			number={2},
			pages={189\ndash 230},
		}
		
		\bib{barlow2018stability}{article}{
			author={Barlow, M.},
			author={Murugan, M.},
			title={{Stability of the elliptic Harnack inequality}},
			date={2018},
			journal={Annals of Mathematics},
			volume={187},
			number={3},
			pages={777\ndash 823},
		}
		
		\bib{bogomol1976stability}{article}{
			author={Bogomolnyi, E.~B.},
			title={The stability of classical solutions},
			date={1976},
			journal={Sov. J. Nucl. Phys.},
			volume={24},
			number={4},
		}
		
		\bib{cheeger1982finite}{article}{
			author={Cheeger, J.},
			author={Gromov, M.},
			author={Taylor, M.},
			title={{Finite propagation speed, kernel estimates for functions of the
					Laplace operator, and the geometry of complete Riemannian manifolds}},
			date={1982},
			journal={Journal of Differential Geometry},
			volume={17},
			number={1},
			pages={15\ndash 53},
		}
		
		\bib{croke1988volumes}{article}{
			author={Croke, C.~B.},
			author={Karcher, H.},
			title={{Volumes of small balls on open manifolds: lower bounds and
					examples}},
			date={1988},
			journal={Transactions of the American Mathematical Society},
			volume={309},
			number={2},
			pages={753\ndash 762},
		}
		
		\bib{cherkis2021instantons}{article}{
			author={Cherkis, S.~A.},
			author={Larrain-Hubach, A.},
			author={Stern, M.},
			title={{Instantons on multi-Taub-NUT spaces I: Asymptotic form and index
					theorem}},
			date={2021},
			journal={Journal of Differential Geometry},
			volume={119},
			number={1},
			pages={1\ndash 185},
		}
		
		\bib{Ernst1995}{incollection}{
			author={Ernst, K.~D.},
			title={{The ends of the monopole moduli space over
					$\mathbb{R}^3\#$(homology sphere): Part I}},
			date={1995},
			booktitle={{The Floer Memorial Volume}},
			publisher={Springer},
			pages={355\ndash 408},
		}
		
		\bib{ernst1995ends}{incollection}{
			author={Ernst, K.~D.},
			title={{The ends of the monopole moduli space over
					$\mathbb{R}^3\#$(homology sphere): Part II}},
			date={1995},
			booktitle={{The Floer Memorial Volume}},
			publisher={Springer},
			pages={409\ndash 434},
		}
		
		\bib{Floer1995config}{incollection}{
			author={Floer, A.},
			title={The configuration space of {Y}ang--{M}ills--{H}iggs theory on
				asymptotically flat manifolds},
			date={1995},
			booktitle={{The Floer Memorial Volume}},
			publisher={Springer},
			pages={43\ndash 75},
		}
		
		\bib{Floer1995}{incollection}{
			author={Floer, A.},
			title={Monopoles on asymptotically flat manifolds},
			date={1995},
			booktitle={The {F}loer {M}emorial {V}olume},
			editor={Hofer, Helmut},
			editor={Taubes, Clifford~H.},
			editor={Weinstein, Alan},
			editor={Zehnder, Eduard},
			series={Progress in Mathematics},
			volume={133},
			publisher={Birkhäuser Basel},
			pages={3\ndash 41},
			url={http://dx.doi.org/10.1007/978-3-0348-9217-9_2},
		}
		
		\bib{fadel2020asymptotic}{article}{
			author={Fadel, D.},
			author={Nagy, \'A.},
			author={Oliveira, G.},
			title={The asymptotic geometry of $\rm{G}_2$-monopoles},
			date={2022},
			eprint={2009.06788},
			note={To appear in Memoirs of the American Mathematical Society},
		}
		
		\bib{fadel2019limit}{article}{
			author={Fadel, D.},
			author={Oliveira, G.},
			title={The limit of large mass monopoles},
			date={2019},
			journal={Proceedings of the London Mathematical Society},
			volume={119},
			number={6},
			pages={1531\ndash 1559},
		}
		
		\bib{Fabes1989}{article}{
			author={Fabes, E.~B.},
			author={Stroock, D.~W.},
			title={A new proof of {M}oser's parabolic {H}arnack inequality using the
				old ideas of {N}ash},
			date={1989},
			journal={Analysis and Continuum Mechanics: A Collection of Papers
				Dedicated to J. Serrin on His Sixtieth Birthday},
			pages={459\ndash 470},
		}
		
		\bib{Gaffney54}{article}{
			author={Gaffney, M.~P.},
			title={A special {S}tokes's theorem for complete {R}iemannian
				manifolds},
			language={English},
			date={1954},
			ISSN={0003486X},
			journal={Annals of Mathematics},
			volume={60},
			number={1},
			pages={pp. 140\ndash 145},
			url={http://www.jstor.org/stable/1969703},
		}
		
		\bib{goddard1977gauge}{article}{
			author={Goddard, P.},
			author={Nuyts, J.},
			author={Olive, D.},
			title={Gauge theories and magnetic charge},
			date={1977},
			journal={Nuclear Physics B},
			volume={125},
			number={1},
			pages={1\ndash 28},
		}
		
		\bib{groisser1997sharp}{article}{
			author={Groisser, D.},
			author={Parker, T.~H.},
			title={{Sharp decay estimates for Yang--Mills fields}},
			date={1997},
			journal={Communications in Analysis and Geometry},
			volume={5},
			number={3},
			pages={439\ndash 474},
		}
		
		\bib{grigor1991heat}{article}{
			author={Grigor'yan, A.},
			title={{The heat equation on noncompact Riemannian manifolds}},
			date={1991},
			journal={Matematicheskii Sbornik},
			volume={182},
			number={1},
			pages={55\ndash 87},
		}
		
		\bib{grigor1999analytic}{article}{
			author={Grigor'yan, A.},
			title={{Analytic and geometric background of recurrence and
					non-explosion of the Brownian motion on Riemannian manifolds}},
			date={1999},
			journal={Bulletin of the American Mathematical Society},
			volume={36},
			number={2},
			pages={135\ndash 249},
		}
		
		\bib{groisser1984integrality}{article}{
			author={Groisser, D.},
			title={{Integrality of the monopole number in $\rm{SU}(2)$
					Yang--Mills--Higgs theory on $\mathbb{R}^3$}},
			date={1984},
			journal={Communications in Mathematical Physics},
			volume={93},
			number={3},
			pages={367\ndash 378},
		}
		
		\bib{grigor2005stability}{article}{
			author={Grigor'yan, A.},
			author={Saloff-Coste, L.},
			title={{Stability results for Harnack inequalities}},
			date={2005},
			journal={Annales de l'institut Fourier},
			volume={55},
			number={3},
			pages={825\ndash 890},
		}
		
		\bib{guneysu2016sequences}{article}{
			author={G{\"u}neysu, B.},
			title={{Sequences of Laplacian cut-off functions}},
			date={2016},
			journal={The Journal of Geometric Analysis},
			volume={26},
			number={1},
			pages={171\ndash 184},
		}
		
		\bib{hamilton1982three}{article}{
			author={Hamilton, R.~S.},
			title={{Three-manifolds with positive Ricci curvature}},
			date={1982},
			journal={Journal of Differential geometry},
			volume={17},
			number={2},
			pages={255\ndash 306},
		}
		
		\bib{hebey2000nonlinear}{book}{
			author={Hebey, E.},
			title={Nonlinear analysis on manifolds: {S}obolev spaces and
				inequalities},
			publisher={American Mathematical Society},
			date={2000},
			volume={5},
		}
		
		\bib{hohloch2009hypercontact}{article}{
			author={Hohloch, S.},
			author={Noetzel, G.},
			author={Salamon, D.~A.},
			title={{Hypercontact structures and Floer homology}},
			date={2009},
			journal={Geometry \& Topology},
			volume={13},
			number={5},
			pages={2543\ndash 2617},
		}
		
		\bib{jerison1986poincare}{article}{
			author={Jerison, D.},
			title={The {P}oincar{\'e} inequality for vector fields satisfying
				{H}{\"o}rmander’s condition},
			date={1986},
			journal={Duke Mathematical Journal},
			volume={53},
			number={2},
			pages={503\ndash 523},
		}
		
		\bib{Jaffe1980}{book}{
			author={Jaffe, A.},
			author={Taubes, C.~H.},
			title={Vortices and monopoles},
			series={Progress in Physics},
			publisher={Birkh\"auser Boston},
			address={Mass.},
			date={1980},
			volume={2},
			ISBN={3-7643-3025-2},
			note={Structure of Static Gauge Theories},
			review={\MR{MR614447 (82m:81051)}},
		}
		
		\bib{kottke2015dimension}{article}{
			author={Kottke, C.},
			title={Dimension of monopoles on asymptotically conic 3-manifolds},
			date={2015},
			journal={Bulletin of the London Mathematical Society},
			volume={47},
			number={5},
			pages={818\ndash 834},
		}
		
		\bib{kottke2015partial}{article}{
			author={Kottke, C.},
			author={Singer, M.},
			title={Partial compactification of monopoles and metric asymptotics},
			date={2015},
			eprint={1512.02979},
			note={To appear in Memoirs of the American Mathematical Society},
		}
		
		\bib{li2012geometric}{book}{
			author={Li, P.},
			title={Geometric analysis},
			publisher={Cambridge University Press},
			date={2012},
			volume={134},
		}
		
		\bib{liu20133}{article}{
			author={Liu, G.},
			title={{3-manifolds with nonnegative Ricci curvature}},
			date={2013},
			journal={Inventiones mathematicae},
			volume={193},
			number={2},
			pages={367\ndash 375},
		}
		
		\bib{li1987symmetric}{article}{
			author={Li, P.},
			author={Tam, L.~F.},
			title={{Symmetric Green's functions on complete manifolds}},
			date={1987},
			journal={American Journal of Mathematics},
			volume={109},
			number={6},
			pages={1129\ndash 1154},
		}
		
		\bib{li1991heat}{article}{
			author={Li, P.},
			author={Tam, L.~F.},
			title={The heat equation and harmonic maps of complete manifolds},
			date={1991},
			journal={Inventiones mathematicae},
			volume={105},
			number={1},
			pages={1\ndash 46},
		}
		
		\bib{li1992harmonic}{article}{
			author={Li, P.},
			author={Tam, L.~F.},
			title={Harmonic functions and the structure of complete manifolds},
			date={1992},
			journal={Journal of Differential Geometry},
			volume={35},
			number={2},
			pages={359\ndash 383},
		}
		
		\bib{malgrange1956existence}{article}{
			author={Malgrange, B.},
			title={{Existence et approximation des solutions des {\'e}quations aux
					d{\'e}riv{\'e}es partielles et des {\'e}quations de convolution}},
			date={1956},
			journal={Annales de l'institut Fourier},
			volume={6},
			pages={271\ndash 355},
		}
		
		\bib{Mar02}{thesis}{
			author={Marshall, S.},
			title={{Deformations of special Lagrangian submanifolds}},
			type={Ph.D. Thesis},
			date={2002},
		}
		
		\bib{melrose1994spectral}{incollection}{
			author={Melrose, R.~B.},
			title={{Spectral and scattering theory for the Laplacian on
					asymptotically Euclidian spaces}},
			date={1994},
			booktitle={{Spectral and Scattering Theory: Proceedings of the Taniguchi
					International Workshop}},
			publisher={CRC Press},
			pages={85\ndash 130},
		}
		
		\bib{minerbe2009weighted}{article}{
			author={Minerbe, V.},
			title={{Weighted Sobolev inequalities and Ricci flat manifolds}},
			date={2009},
			journal={Geometric and Functional Analysis},
			volume={18},
			number={5},
			pages={1696\ndash 1749},
		}
		
		\bib{nakajima1988compactness}{article}{
			author={Nakajima, H.},
			title={Compactness of the moduli space of {Y}ang--{M}ills connections in
				higher dimensions},
			date={1988},
			journal={Journal of the Mathematical Society of Japan},
			volume={40},
			number={3},
			pages={383\ndash 392},
		}
		
		\bib{ni2002poisson}{article}{
			author={Ni, L.},
			title={{The Poisson equation and Hermitian--Einstein metrics on
					holomorphic vector bundles over complete noncompact K{\"a}hler manifolds}},
			date={2002},
			journal={Indiana University Mathematics Journal},
			pages={679\ndash 704},
		}
		
		\bib{oliveira2014thesis}{thesis}{
			author={Oliveira, G.},
			title={Monopoles in higher dimensions},
			type={Ph.D. Thesis},
			date={2014},
		}
		
		\bib{oliveira2016monopoles}{article}{
			author={Oliveira, G.},
			title={Monopoles on {AC} $3-$manifolds},
			date={2016},
			journal={Journal of the London Mathematical Society},
			volume={93},
			number={3},
			pages={785\ndash 810},
		}
		
		\bib{oliveira2021yang}{article}{
			author={Oliveira, G.},
			author={Waldron, A.},
			title={{Yang-Mills flow on special-holonomy manifolds}},
			date={2021},
			journal={Advances in Mathematics},
			volume={376},
			pages={107418},
		}
		
		\bib{bps1975exact}{article}{
			author={Prasad, M.~K.},
			author={Sommerfield, C.~M.},
			title={Exact classical solution for the't {H}ooft monopole and the
				{J}ulia-{Z}ee dyon},
			date={1975},
			journal={Physical Review Letters},
			volume={35},
			pages={760\ndash 762},
		}
		
		\bib{rade1993decay}{article}{
			author={Rade, J.},
			title={{Decay estimates for Yang--Mills fields: two new proofs}},
			date={1993},
			journal={Global Analysis in Modern Mathematics (Orono, 1991, Waltham,
				1992), Publish or Perish, Houston},
			pages={91\ndash 105},
		}
		
		\bib{reiris2015ricci}{article}{
			author={Reiris, M.},
			title={{On Ricci curvature and volume growth in dimension three}},
			date={2015},
			journal={Journal of Differential Geometry},
			volume={99},
			number={2},
			pages={313\ndash 357},
		}
		
		\bib{saloff2002aspects}{book}{
			author={Saloff-Coste, L.},
			title={Aspects of {S}obolev--type inequalities},
			publisher={Cambridge University Press},
			date={2002},
			volume={289},
		}
		
		\bib{saloff1992poincare}{article}{
			author={Saloff-Coste, L.},
			title={{A note on Poincaré, Sobolev, and Harnack inequalities}},
			date={1992},
			ISSN={1073-7928},
			journal={International Mathematics Research Notices},
			volume={1992},
			number={2},
			pages={27\ndash 38},
			url={https://doi.org/10.1155/S1073792892000047},
		}
		
		\bib{sa2020current}{incollection}{
			author={S{\'a}~Earp, H.},
			title={{Current Progress on $\rm{G}_2$-Instantons over Twisted Connected
					Sums}},
			date={2020},
			booktitle={Lectures and surveys on $\rm{G}_2$-manifolds and related topics},
			publisher={Springer},
			pages={319\ndash 348},
		}
		
		\bib{smith2019removeability}{article}{
			author={Smith, P.},
			author={Uhlenbeck, K.},
			title={Removeability of a codimension four singular set for solutions of
				a {Y}ang--{M}ills--{H}iggs equation with small energy},
			date={2019},
			journal={Surveys in Differential Geometry},
			volume={24},
			number={1},
			pages={257\ndash 291},
		}
		
		\bib{Taubes1982}{article}{
			author={Taubes, C.~H.},
			title={The existence of a non-minimal solution to the $\rm{SU}(2)$
				{Y}ang--{M}ills--{H}iggs equations on $\mathbb{R}^3$. {P}art {I}},
			date={1982},
			ISSN={1432-0916},
			journal={Communications in Mathematical Physics},
			volume={86},
			number={2},
			pages={257\ndash 298},
			url={https://doi.org/10.1007/BF01206014},
		}
		
		\bib{tian2005bach}{article}{
			author={Tian, G.},
			author={Viaclovsky, J.},
			title={{Bach-flat asymptotically locally Euclidean metrics}},
			date={2005},
			journal={Inventiones mathematicae},
			volume={160},
			number={2},
			pages={357\ndash 415},
		}
		
		\bib{uhlenbeck1982connections}{article}{
			author={Uhlenbeck, K.~K.},
			title={Connections with ${L}^{p}$ bounds on curvature},
			date={1982},
			journal={Communications in Mathematical Physics},
			volume={83},
			number={1},
			pages={31\ndash 42},
			url={http://projecteuclid.org/euclid.cmp/1103920743},
		}
		
		\bib{uhlenbeck1982removable}{article}{
			author={Uhlenbeck, K.~K.},
			title={Removable singularities in {Y}ang--{M}ills fields},
			date={1982},
			journal={Communications in Mathematical Physics},
			volume={83},
			number={1},
			pages={11\ndash 29},
			url={http://projecteuclid.org/euclid.cmp/1103920742},
		}
		
		\bib{varopoulos1983potential}{article}{
			author={Varopoulos, N.},
			title={Potential theory and diffusion on {R}iemannian manifolds},
			date={1983},
			journal={Conference on Harmonic Analysis in honor of Antoni Zygmund},
			volume={1},
			pages={821\ndash 837},
		}
		
		\bib{van2009regularity}{article}{
			author={van Coevering, C.},
			title={Regularity of asymptotically conical {R}icci--flat {K}\"ahler
				metrics},
			date={2010},
			eprint={0912.3946},
		}
		
		\bib{walpuski2017compactness}{article}{
			author={Walpuski, T.},
			title={A compactness theorem for {F}ueter sections},
			date={2017},
			journal={Commentarii Mathematici Helvetici},
			volume={92},
			number={4},
			pages={751\ndash 776},
		}
		
		\bib{yau1976some}{article}{
			author={Yau, S.~T.},
			title={Some function-theoretic properties of complete {R}iemannian
				manifolds and their applications to geometry},
			date={1976},
			ISSN={00222518, 19435258},
			journal={Indiana University Mathematics Journal},
			volume={25},
			number={7},
			pages={659\ndash 670},
			url={http://www.jstor.org/stable/24891285},
		}
		
		\bib{zhu1993finiteness}{article}{
			author={Zhu, S.~H.},
			title={{A finiteness theorem for Ricci curvature in dimension three}},
			date={1993},
			journal={Journal of Differential Geometry},
			volume={37},
			number={3},
			pages={711\ndash 727},
		}
		
	\end{biblist}
\end{bibdiv}

\end{document}